\begin{document}

\dtitle{Stabilizers of Actions of Lattices in Products of Groups}
\dauthor[Darren Creutz]{Darren Creutz}{darren.creutz@vanderbilt.edu}{Vanderbilt University}{}
\datewritten{9 September 2013}

\dabstract{%
\large
We prove that any ergodic nonatomic probability-preserving action of an irreducible lattice in a semisimple group, at least one factor being connected and higher-rank, is essentially free.  This generalizes the result of Stuck and Zimmer \cite{SZ94} that the same statement holds when the ambient group is a semisimple real Lie group and every simple factor is higher-rank.

We also prove a generalization of a result of Bader and Shalom \cite{BS04} by showing that any probability-preserving action of a product of simple groups, at least one with property $(T)$, which is ergodic for each simple subgroup is either essentially free or essentially transitive.

Our method involves the study of relatively contractive maps and the Howe-Moore property, rather than the relaying on algebraic properties of semisimple groups and Poisson boundaries, and introduces a generalization of the ergodic decomposition to invariant random subgroups of independent interest.
}

\makepreprint

\section{Introduction}

Generalizing a particular case of Margulis' breakthrough work \cite{Ma79} showing that irreducible lattices in higher-rank semisimple groups have no nontrivial infinite index normal subgroups, Nevo and Stuck and Zimmer \cite{SZ94},\cite{nevozimmer} showed that irreducible lattices in semisimple real Lie groups, each simple factor having higher-rank, admit no nonatomic actions that are not essentially free (if one takes the Bernoulli shift action of a lattice modulo a normal subgroup and treats it as an action of the lattice, the Nevo-Stuck-Zimmer result then recovers Margulis' result).  However, the question of actions of lattices in semisimple groups in general (allowing $p$-adic and removing the higher-rank assumption) remained open.

Bader and Shalom \cite{BS04}, more recently, proved a normal subgroup theorem for irreducible lattices in products of simple nondiscrete groups: as with lattices in semisimple groups, the only nontrivial normal subgroups of an irreducible lattice in a product of simple nondiscrete groups are all of finite index.  While the methods of Bader and Shalom do provide information about the actions of products of groups (specifically, they obtain that if a product of two groups, both with property $(T)$, acts on a probability space in such a way that each simple factor acts ergodically then the action is either essentially free or essentially transitive), their methods do not yield information about the actions of lattices in products, leaving open the question about lattices in general semisimple groups.

Addressing this issue, the author and Peterson \cite{CP12}, introduced a new method for studying lattices in semisimple groups, based on the commensurator approach developed by the author and Shalom \cite{CS11a},\cite{creutzphd}, and showed that actions of irreducible lattices in products of groups with the Howe-Moore property (in particular, semisimple groups), at least one with property $(T)$, at least one totally disconnected and such that every connected (real) factor has property $(T)$, also only admit essentially free actions on nonatomic probability spaces.  However, the requirement of higher-rank (property $(T)$) remained.

Our purpose here is to present a new proof of the results of Nevo and Stuck and Zimmer and of Bader and Shalom (in particular, without making use of their factor theorems), and to make substantial progress on removing the higher-rank (property $(T)$) requirement.  Unlike the methods in \cite{SZ94} and \cite{BS04}, which focus on the Poisson boundary, and, in the case of Stuck and Zimmer, on algebraic properties of semisimple groups, we follow an approach much more in the spirit of that of \cite{CP12} focusing on contractive spaces and the Howe-Moore property.  Our work here, combined with the work of the author and Peterson in \cite{CP12}, yields the following:
\begin{theorem*}[Corollary \ref{C:85prime}]
Let $G = G_{1}, \times \cdots \times G_{k}$ be a product of at least two simple nondiscrete noncompact locally compact second countable groups with the Howe-Moore property, at least one with property $(T)$ and such that if any of the $G_{j}$ are connected then at least one connected $G_{j}$ has property $(T)$.  Let $\Gamma < G$ be an irreducible lattice and let $(X,\nu)$ be a nonatomic ergodic probability-preserving $\Gamma$-space.  Then $\Gamma \actson (X,\nu)$ has finite stabilizers.
\end{theorem*}

In particular, for semisimple groups we obtain:
\begin{corollary*}[Corollary \ref{C:latticesfinalstate}]
Let $G$ be a semisimple group with trivial center and no compact factors with at least one simple factor being a connected (real) Lie group with property $(T)$ (of higher-rank for example).  Let $\Gamma < G$ be an irreducible lattice and $(X,\nu)$ be a nonatomic ergodic measure-preserving $\Gamma$-space.  Then $\Gamma \actson (X,\nu)$ is essentially free.
\end{corollary*}

Unlike the methods of \cite{CP12}, we do obtain information about the actions of the ambient groups.
In particular, we sharpen the result of Bader and Shalom on actions of products of groups by removing the requirement that all groups have property $(T)$ and instead only requiring one group to have it (we also remove the requirement that the groups without property $(T)$ be simple):
\begin{theorem*}[Corollary \ref{C:bsnew2}]
Let $G$ be a product of a simple locally compact second countable group with property $(T)$ and an arbitrary locally compact second countable group and let $(X,\nu)$ be a faithful measure-preserving $G$-space that is ergodic for both groups.  Then $G \actson (X,\nu)$ is either essentially free or essentially transitive.
\end{theorem*}

Moreover, when both groups do have property $(T)$, our methods allow us to relax the requirement that each factor act ergodically:
\begin{corollary*}[Corollary \ref{C:actprodTreal}]
Let $G_{j}$ be locally compact second countable groups for $j = 1, \ldots, k$ with $k \geq 2$ each with property $(T)$.  Set $G = G_{1} \times \cdots \times G_{k}$ and let $(X,\nu)$ be an ergodic measure-preserving $G$-space.
Assume that there exist closed subgroups $H_{j} < G_{j}$ such that the spaces of $\prod_{\ell \ne j}G_{\ell}$-ergodic components is isomorphic to $(G_{j}/H_{j},\mathrm{Haar})$ for each $j$  and such that any nontrivial normal subgroup of $H_{j}$ has finite index in $H_{j}$.  Then either at least one $G_{j} \actson (X,\nu)$ essentially free or $G \actson (X,\nu)$ is essentially transitive.
\end{corollary*}

Even when the ambient group does not have any property $(T)$ (for example, lattices in products of at least two rank-one groups), one can obtain some information about the action.  If $G$ is a product of at least two simple locally compact second countable groups or be a semisimple group with at least two factors and $\Gamma$ is an irreducible lattice in $G$ then any ergodic nonatomic probability-preserving action of $\Gamma$ is either essentially free or is weakly amenable and any ergodic probability-preserving action of $G$ that is ergodic for each simple factor of $G$ is either essentially free or weakly amenable (Corollary \ref{C:bsnew2} and Theorem \ref{T:83prime} combined with the results in \cite{CP12}).  Note that weak amenability of the action implies that almost every stabilizer subgroup is coamenable (see Remark \ref{R:coamen}).  We mention that the previous statement for semisimple real Lie groups (without any higher-rank assumption) and irreducible lattices in semisimple real Lie groups is implicit already in \cite{SZ94}.

\subsection{Stabilizers of Actions and Random Subgroups}

Invariant random subgroups are the natural setting for the study of stabilizers of probability-preserving actions.  The study of stabilizers goes back at least to Moore, \cite{moore66} Chapter 2, and Ramsay, \cite{ramsay} Section 9 (see also Adams and Stuck \cite{adamsstuck} Section 4).
Bergeron and Gaboriau \cite{gaboriau} noticed the similarities between normal subgroups and invariant random subgroups and recently this has been the focus of much attention: \cite{seven}, \cite{AGV12}, \cite{bowen12}, \cite{CP12}, \cite{kostya}, \cite{grig1}, \cite{grig2}, \cite{td1}, \cite{td2}, \cite{vershik11}, \cite{vershiktotallynonfree}.
Random subgroups play a key role in our technique, and we prove several results involving invariant random subgroups in general that are of independent interest.

Given a measure-preserving action of a group on a probability space, the pushforward of the measure to the space of closed subgroups obtained by mapping each point to its stabilizer subgroup gives rise to a conjugation-invariant probability measure on the space of subgroups, that is, gives rise to an invariant random subgroup.
As shown in \cite{AGV12} (for discrete groups) and \cite{CP12} (for the locally compact case), every invariant random subgroup arises in this way.  We generalize this result in two directions.  Firstly, we show that every quasi-invariant random subgroup (probability measure on the space of closed subgroups that is quasi-invariant under conjugation) arises as the stabilizers of a quasi-invariant action of the group.  Secondly, we study the notion of subgroups of random subgroups (introduced in \cite{CP12}, we say that a random subgroup $\alpha$ is a subgroup of a random subgroup $\beta$ when there exists a joining $\rho$ of $\alpha$ and $\beta$ such that for $\rho$-almost every $(H,K)$ it holds that $H$ is a subgroup of $K$).  Given an equivariant map $(X,\nu) \to (Y,\eta)$ of $G$-spaces, the pushforward measure $\stab_{*}\nu$ is evidently a subgroup of $\stab_{*}\eta$.  We show that such pairs of random subgroups always arise as the stabilizers of actions of quotient maps of $G$-spaces (Theorem \ref{T:extensionsrg}).

Building on this, we introduce the notion of a free extension of an action by a random subgroup: given an action of a group $G$ on a probability space $(X,\nu)$ and given a $G$-equivariant map $\varphi$ from $X$ to random subgroups of $G$ such that $\varphi(x)$ is supported on subgroups of $\stab(x)$ almost everywhere, we show there exists an a $G$-space, the free extension of $(X,\nu)$ by $\varphi$, having stabilizers equal to $\varphi_{*}\nu$ such that $(X,\nu)$ is a quotient of this space in a canonical way.  In particular, given a subgroup of $\stab_{*}\nu$, there is always an extension of $(X,\nu)$ having stabilizers given by that subgroup.

Crucial to our work here, we also introduce the notion of the quotient of a space by a random subgroup, a generalization of the ergodic decomposition for a normal subgroup.  Recall that if $N$ is a normal subgroup of $G$ and $(X,\nu)$ is a $G$-space, one can define the space of $N$-ergodic components by considering the algebra of $N$-invariant functions.  We generalize this to considering the ``invariant functions for a random subgroup below $(X,\nu)$" (defined precisely in section \ref{s:quotrs}) and prove various universality properties of the quotient space.  We then apply this quotienting procedure to actions of products of groups by considering the random subgroup obtained by taking the projections of the stabilizer groups to each factor.  This functor, the product random subgroups functor (see section \ref{s:prg}), allows us to study action of products of groups at a much finer level of detail than the ergodic decomposition functor used by Bader and Shalom.

\subsection{The Contractive Factor Theorem for Products}

The second major ingredient in our work is a factor theorem for actions of products of groups based on the notion of relatively contractive maps introduced in \cite{CP12}.  Our factor theorem generalizes the Bader-Shalom factor theorem and allows us to study actions of products of groups which are not necessarily ergodic when restricted to each factor:
\begin{theorem*}[Theorem \ref{T:IFT}]
Let $G = G_{1} \times G_{2}$ be a product of two locally compact second countable groups and let $\mu_{j} \in P(G_{j})$ be admissible probability measures for $j=1,2$.  Set $\mu = \mu_{1} \times \mu_{2}$.

Let $(B,\beta)$ be the Poisson boundary for $(G,\mu)$ and let $(X,\nu)$ be a measure-preserving $G$-space.  Let $(W,\rho)$ be a $G$-space such that there exist $G$-maps $\pi : (B \times X, \beta\times\nu) \to (W,\rho)$ and $\varphi : (W,\rho) \to (X,\nu)$ with $\varphi \circ \pi$ being the natural projection to $X$.

Let $(W_{1},\rho_{1})$ be the space of $G_{2}$-ergodic components of $(W,\rho)$ and let $(W_{2},\rho_{2})$ be the space of $G_{1}$-ergodic components.  Likewise, let $(X_{1},\nu_{1})$ and $(X_{2},\nu_{2})$ be the ergodic components of $(X,\nu)$ for $G_{2}$ and $G_{1}$, respectively.

Then $(W,\rho)$ is $G$-isomorphic to the independent relative joining of $(W_{1},\rho_{1}) \times (W_{2},\rho_{2})$ and $(X,\nu)$ over $(X_{1},\nu_{1}) \times (X_{2},\nu_{2})$.
\end{theorem*}

The factor theorem of Bader and Shalom requires that $X_{1}$ and $X_{2}$ be trivial and can be phrased as saying that in that case $W$ is always isomorphic to the independent joining of $W_{1} \times W_{2}$ and $X$.  Their theorem follows from a careful study of properties of the Poisson boundary.  Our theorem, on the other hand, only makes use of two (easy) properties of the Poisson boundary: that it contractive and that it an amenable space.  Replacing the study of boundary dynamics, we make use of a result about uniqueness of relatively contractive joinings which may be of independent interest:
\begin{theorem*}[Corollary \ref{C:reljoinunique}]
Let $G$ be a locally compact second countable group and let $(X,\nu)$, $(Y,\eta)$, $(Z,\zeta)$ and $(W,\rho)$ be $G$-spaces such that the following diagram of $G$-maps commutes:
\begin{diagram}
(W,\rho)		&\rTo	&(X,\nu)\\
\dTo	&			&\dTo\\
(Y,\eta)		&\rTo	&(Z,\zeta)
\end{diagram}
If the vertical maps are relatively measure-preserving and the horizontal maps are relatively contractive then $(W,\rho)$ is isomorphic to the independent relative joining of $(X,\nu)$ and $(Y,\eta)$ over $(Z,\zeta)$.
\end{theorem*}

This contractive factor theorem allows us to study actions of lattices in products of groups by inducing the action to the ambient group and considering intermediate factors.  Since in general the induced action will not be ergodic for each factor, our theorem allows us to study lattices where the Bader-Shalom theorem does not.  Combining the induced action with the ``projected action" (see section \ref{S:projact}) obtained by considering the stabilizers of the original action of the lattice and projecting them to each factor and then taking free extensions by the corresponding random subgroups, we obtain enough information about the stabilizers of the induced action to study the stabilizers of the original action.

\subsection[Relaxing the Property (T) Requirement]{Relaxing the Property $(T)$ Requirement}

The third major ingredient in our work is the use of a type of relative property $(T)$, in the form of resolutions (introduced by de Cornulier \cite{yves}), to relax the requirement (present in all previous work on the subject of actions of semisimple groups and lattices) that every simple factor have property $(T)$.  In the work of Stuck and Zimmer, Bader and Shalom, and the author and Peterson, the requirement of the ambient group having property $(T)$, and therefore the lattice also having property $(T)$, was a necessary step in moving from knowing the equivalence relation of an action is amenable to knowing the action is essentially transitive.

We develop a new approach to the study of actions generating an amenable orbit equivalence relation when the group involved does not have property $(T)$ but has ``some" property $(T)$ (in the case of products, one factor having property $(T)$ and in the case of lattices, admitting a resolution which in turn comes from one factor having property $(T)$).  As an example, we obtain the following statement:
\begin{theorem*}[Theorem \ref{T:resolutions}]
Let $G = G_{1} \times G_{2}$ be a product of two locally compact second countable groups such that $G_{2}$ has property $(T)$.  Let $(X,\nu)$ be an ergodic measure-preserving $G$-space such that $G \actson (X,\nu)$ weakly amenably and not essentially transitively.  Let $\mathcal{H}$ be the subspace of $L^{2}(X,\nu)$ consisting of the $G_{2}$-invariant functions that are not $G$-invariant.  Then there exists a sequence of almost invariant vectors in $\mathcal{H}$.
\end{theorem*}

The previous theorem immediately implies that if both $G_{1}$ and $G_{2}$ act ergodically and $G$ acts weakly amenably then the action is essentially transitive.  The same ideas allow us to conclude a similar result for actions of lattices in such products.

\section{Preliminaries}

\subsection[G-Spaces and G-Maps]{$G$-Spaces and $G$-Maps}

\begin{definition}
Let $G$ be a locally compact second countable group.  A \textbf{$G$-space} is a probability space $(X,\nu)$ equipped with an action of $G$ such that $\nu$ is quasi-invariant under the action (the class of null sets is preserved by the action).  This will be written $G \actson (X,\nu)$.
\end{definition}

\begin{definition}
Let $G$ be a locally compact group and $G \actson (X,\nu)$ a $G$-space.  The \textbf{translate of $\nu$ by $g \in G$} is the probability measure $g\nu$ defined by $g\nu(E) = \nu(g^{-1}E)$ for all measurable sets $E$.  If $\mu \in P(G)$ is a probability measure on $G$, the \textbf{convolution of $\nu$ by $\mu$} is the probability measure $\mu * \nu \in P(X)$ given by
\[
\mu * \nu(E) = \int_{G} g\nu(E)~d\mu(g) = \int_{G} \nu(g^{-1}E)~d\mu(g).
\]
\end{definition}

\begin{definition}
Let $G$ be a locally compact second countable group and $(X,\nu)$ a $G$-space.  Then $(X,\nu)$ is \textbf{measure-preserving} when $g\nu = \nu$ for all $g \in G$.  If $\mu \in P(G)$ is a probability measure on $G$ such that $\mu * \nu = \nu$ then $(X,\nu)$ is \textbf{$\mu$-stationary}.
\end{definition}

\begin{definition}
Let $G$ be a locally compact second countable group and let $(X,\nu)$ and $(Y,\eta)$ be $G$-spaces.  A measurable map $\pi : X \to Y$ such that $\pi_{*}\nu = \eta$ is a \textbf{$G$-map} when $\pi$ is $G$-equivariant: $\pi(gx) = g\pi(x)$ for all $g \in G$ and almost every $x \in X$ (here $\pi_{*}\nu$ is the \textbf{pushforward measure} defined by, for $E$ a measurable subset of $Y$, $\pi_{*}\nu(E) = \nu(\pi^{-1}(E))$).
\end{definition}

\begin{definition}
Let $G$ be a locally compact second countable group and $\pi : (X,\nu) \to (Y,\eta)$ a $G$-map of $G$-spaces.  The \textbf{disintegration} of $\nu$ over $\eta$ is the almost everywhere unique map $D_{\pi} : Y \to P(X)$ such that the support of $D_{\pi}(y)$ is contained in $\pi^{-1}(y)$ and that $\int_{Y} D_{\pi}(y)~d\eta(y) = \nu$.
\end{definition}

\begin{definition}
Let $G$ be a locally compact second countable group and $\pi : (X,\nu) \to (Y,\eta)$ a $G$-map of $G$-spaces.  Then $\pi$ is \textbf{relatively measure-preserving} when the disintegration of $\nu$ over $\eta$ via $\pi$, $D_{\pi} : Y \to P(X)$, is $G$-equivariant: $D_{\pi}(gy) = gD_{\pi}(y)$ for all $g \in G$ and almost every $y \in Y$.
\end{definition}

We also need the following well-known characterization of relatively measure-preserving:
\begin{theorem}\label{T:relmpRN}
Let $G$ be a locally compact second countable group and $\pi : (X,\nu) \to (Y,\eta)$ a $G$-map of $G$-spaces.  Then $\pi$ is relatively measure-preserving if and only if the Radon-Nikodym derivatives satisfy $\frac{dg\nu}{d\nu}(x) = \frac{dg\eta}{d\eta}(\pi(x))$ almost everywhere.
\end{theorem}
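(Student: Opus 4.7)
The plan is to express both sides of the equivalence in terms of two different ways to write a disintegration of the translated measure $g\nu$ over its pushforward $\pi_\ast(g\nu) = g\eta$, and then invoke uniqueness of disintegration.

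First I would verify the basic identity. Starting from $\nu = \int D_\pi(y)\,d\eta(y)$, a direct change of variables shows that for every $g \in G$
\[
g\nu \;=\; \int g D_\pi(y)\,d\eta(y) \;=\; \int g D_\pi(g^{-1}z)\,dg\eta(z),
\]
and since $gD_\pi(g^{-1}z)$ is supported in $g\pi^{-1}(g^{-1}z) = \pi^{-1}(z)$, this is a valid disintegration of $g\nu$ over $g\eta$. By uniqueness of disintegration, the fiber measures $z \mapsto gD_\pi(g^{-1}z)$ agree $g\eta$-almost everywhere (equivalently $\eta$-a.e.) with the canonical disintegration $D^{g\nu}_\pi$ of $g\nu$ over $g\eta$.

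For the forward direction, I would assume $D_\pi$ is $G$-equivariant. Then $gD_\pi(y) = D_\pi(gy)$ a.e., so the computation above gives
\[
g\nu \;=\; \int D_\pi(gy)\,d\eta(y) \;=\; \int D_\pi(y)\,dg\eta(y) \;=\; \int D_\pi(y)\,\frac{dg\eta}{d\eta}(y)\,d\eta(y).
\]
Since $D_\pi(y)$ is supported in $\pi^{-1}(y)$, the factor $\frac{dg\eta}{d\eta}(y)$ inside the integral equals $\frac{dg\eta}{d\eta}(\pi(x))$ when integrated against $D_\pi(y)$, which identifies $\frac{dg\nu}{d\nu}(x) = \frac{dg\eta}{d\eta}(\pi(x))$ as the Radon-Nikodym derivative.

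For the converse, assume the Radon-Nikodym identity. Then writing $h = \frac{dg\eta}{d\eta}$,
\[
g\nu \;=\; (h\circ\pi)\,\nu \;=\; \int h(y)\,D_\pi(y)\,d\eta(y) \;=\; \int D_\pi(y)\,dg\eta(y),
\]
so $D_\pi$ itself is a disintegration of $g\nu$ over $g\eta$. Uniqueness of disintegration combined with the identity from the first step forces $D_\pi(z) = gD_\pi(g^{-1}z)$ for $\eta$-almost every $z$, i.e.\ $D_\pi(gy) = gD_\pi(y)$ almost everywhere, which is exactly the relatively measure-preserving condition. The only real subtlety, and the point I would be most careful about, is tracking that the two expressions being compared really are disintegrations over the same base measure $g\eta$ and that uniqueness applies after passing from $\eta$-null to $g\eta$-null sets; everything else is routine manipulation of pushforwards and Radon-Nikodym derivatives.
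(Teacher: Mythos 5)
Your proof is correct and is essentially the paper's argument: the paper compresses everything into the chain-rule factorization $\frac{dg\nu}{d\nu}(x) = \frac{d\,gD_{\pi}(g^{-1}\pi(x))}{d\,D_{\pi}(\pi(x))}(x)\,\frac{dg\eta}{d\eta}(\pi(x))$, which is precisely the identity your comparison of the two disintegrations of $g\nu$ over $g\eta$ establishes, with uniqueness of disintegration playing the role of uniqueness of the Radon--Nikodym derivative. Your version just unpacks that one line, including the correct handling of the equivalence of $\eta$- and $g\eta$-null sets.
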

\begin{proof}
By uniqueness of the Radon-Nikodym derivative,
\[
\frac{dg\nu}{d\nu}(x) = \frac{d gD_{\pi}(g^{-1}\pi(x))}{d D_{\pi}(\pi(x))}(x)\frac{dg\eta}{d\eta}(\pi(x)).
\]
Therefore $\frac{dg\nu}{d\nu}(x) = \frac{dg\eta}{d\eta}(\pi(x))$ if and only if $\frac{d gD_{\pi}(g^{-1}\pi(x))}{d D_{\pi}(\pi(x))}(x) = 1$ almost surely which says precisely that $\pi$ is relatively measure-preserving.
\end{proof}

\begin{definition}
Let $G$ be a locally compact second countable group and $(X,\nu)$ a $G$-space.  Let $\mathcal{F} \subseteq L^{\infty}(X,\nu)$ be a closed $G$-invariant subalgebra.  A \textbf{point realization} or \textbf{Mackey point realization} of $\mathcal{F}$ is a $G$-space $(Y,\eta)$ where there exists a $G$-map $\pi : (X,\nu) \to (Y,\eta)$ such that $\mathcal{F} = \{ f \circ \pi : f \in L^{\infty}(Y,\eta) \}$.
\end{definition}

\begin{theorem}[Mackey \cite{Ma62}, \cite{mackey}]\label{T:mackey}
Let $G$ be a locally compact second countable group and $(X,\nu)$ a $G$-space.  Let $\mathcal{F} \subseteq L^{\infty}(X,\nu)$ be a closed $G$-invariant subalgebra.  Then there exists a point realization of $\mathcal{F}$.
\end{theorem}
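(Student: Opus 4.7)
The plan is to produce $(Y,\eta)$ as a Mackey point realization of a countably generated $G$-invariant sub-$\sigma$-algebra associated with $\mathcal{F}$, and then promote the induced action on the measure algebra to a genuine Borel $G$-action. First, I would exploit second countability twice. Since $(X,\nu)$ is standard, $L^{2}(X,\nu)$ is separable; in particular, the image of $\mathcal{F}$ in $L^{2}(X,\nu)$ is separable, so I can choose a countable sequence in $\mathcal{F}$ whose $L^{2}$-closed linear span equals the image of $\mathcal{F}$. Picking also a countable dense $G_{0}\subseteq G$, the $\mathbb{Q}[i]$-$*$-algebra $\mathcal{A}_{0}\subseteq\mathcal{F}$ generated by these functions and their $G_{0}$-translates is countable and $G_{0}$-invariant, and is still $L^{2}$-dense in $\mathcal{F}$.

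Next, let $\mathcal{B}_{0}$ be the (countably generated) sub-$\sigma$-algebra of the Borel algebra of $X$ generated by $\mathcal{A}_{0}$. The classical theorem on countably generated sub-$\sigma$-algebras of standard probability spaces (Rokhlin, also essentially Mackey) produces a standard Borel space $(Y,\mathcal{B}_{Y})$ and a measurable map $\pi_{0}:X\to Y$ with $\pi_{0}^{-1}(\mathcal{B}_{Y})=\mathcal{B}_{0}$ modulo $\nu$-null sets; set $\eta=(\pi_{0})_{*}\nu$. Then $\{f\circ\pi_{0}:f\in L^{\infty}(Y,\eta)\}$ consists of exactly the $\mathcal{B}_{0}$-measurable bounded functions on $X$. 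Every element of $\mathcal{F}$ is $\mathcal{B}_{0}$-measurable by construction; conversely, any bounded $\mathcal{B}_{0}$-measurable function lies in the $L^{2}$-closure of $\mathcal{A}_{0}\subseteq\mathcal{F}$ and hence, by bounded convergence and the fact that $\mathcal{F}$ is $L^{\infty}$-closed (and therefore closed under bounded $L^{2}$-limits), belongs to $\mathcal{F}$. This gives the desired identification of algebras.

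The remaining task, which I expect to be the main obstacle, is upgrading the tautological action on the measure algebra to a genuine jointly measurable Borel action of $G$ on $(Y,\eta)$ for which $\pi_{0}$ becomes equivariant. The $G$-invariance of $\mathcal{F}$ implies that $g\mathcal{B}_{0}=\mathcal{B}_{0}$ modulo null sets for every $g\in G$, so each $g$ induces a measure-class-preserving automorphism $T_{g}$ of the measure algebra of $(Y,\eta)$, and the assignment $g\mapsto T_{g}$ is a homomorphism that is Borel because the Radon--Nikodym cocycle on $X$ is $\mathcal{B}_{0}$-measurable (by relative-measure-theoretic considerations or by direct computation on the dense countable subalgebra $\mathcal{A}_{0}$). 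Invoking the Mackey--Varadarajan point-realization theorem for Borel actions on standard measure algebras, I can replace $(Y,\eta)$ by a $G$-invariant conull Borel subset on which the induced action is a genuine Borel action of $G$, and adjust $\pi_{0}$ on a null set to an equivariant map $\pi:(X,\nu)\to(Y,\eta)$.

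Finally, once this $G$-action is in place, the identification $\mathcal{F}=\{f\circ\pi:f\in L^{\infty}(Y,\eta)\}$ established in the previous step completes the proof. The delicate points to verify carefully are the $L^{\infty}$-closedness argument ensuring no strictly larger subalgebra arises from $\mathcal{B}_{0}$, and the joint-measurability step; everything else is standard descriptive set theory applied to the separability input furnished by second countability of $G$ and of $(X,\nu)$.
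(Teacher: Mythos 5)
The paper offers no proof of this statement: it is quoted as a classical result of Mackey, so there is nothing internal to compare against. Your outline is essentially the standard argument behind that citation --- cut down to a countable $G_{0}$-invariant subalgebra $\mathcal{A}_{0}$, realize the countably generated $\sigma$-algebra $\mathcal{B}_{0}$ by a Rokhlin-type factor map $\pi_{0}:X\to Y$, observe that $G$-invariance of $\mathcal{F}$ makes each $g$ act on the measure algebra of $(Y,\eta)$, and then point-realize that Boolean action and correct $\pi_{0}$ on a null set to be equivariant --- and it is sound as a reduction. Two caveats are worth making explicit. First, your step ``any bounded $\mathcal{B}_{0}$-measurable function belongs to $\mathcal{F}$'' uses closure of $\mathcal{F}$ under bounded $L^{2}$-limits, i.e.\ you must read ``closed subalgebra'' as weak*-closed (a von Neumann subalgebra); for a merely norm-closed subalgebra the pullback algebra $\{f\circ\pi\}$ would be its weak*-closure and the theorem as stated would fail, so this reading is forced but should be said. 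Second, the heavy lifting in your last step is delegated to the Mackey--Varadarajan realization of Boolean $G$-actions (plus the fact that the intertwining of measure algebras can be implemented by an a.e.-defined equivariant point map), which is essentially the content of the result being cited; as a self-contained proof your text is therefore a reduction to that theorem rather than a proof of it, which is acceptable here but should be acknowledged. Also, the measurability of $g\mapsto T_{g}$ is better justified by continuity of the Koopman-type representation of $G$ on $L^{2}(X,\nu)$ restricted to the $\mathcal{B}_{0}$-measurable functions than by any claim that the Radon--Nikodym cocycle of $X$ is $\mathcal{B}_{0}$-measurable, which need not hold and is not needed --- quasi-invariance of $\eta$ under the induced automorphisms is automatic from $g\mathcal{B}_{0}=\mathcal{B}_{0}$ modulo null sets.
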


\subsection{Stabilizers of Actions}

\begin{definition}
Let $G$ be a locally compact second countable group and $(X,\nu)$ a $G$-space.  The \textbf{stabilizer subgroups} are $\stab(x) = \{ g \in G : gx = x \}$ for each $x \in X$.
\end{definition}

\begin{definition}
Let $G$ be a locally compact second countable group and $(X,\nu)$ a $G$-space.  Then $G \actson (X,\nu)$ is \textbf{essentially free} when  $\stab(x) = \{ e \}$ for almost every $x \in X$.
\end{definition}

\begin{definition}
Let $G$ be a locally compact second countable group and $(X,\nu)$ a $G$-space.  Then $G \actson (X,\nu)$ is \textbf{faithful} when the kernel of the action is trivial.
\end{definition}

\begin{definition}
Let $G$ be a locally compact second countable group and $(X,\nu)$ a $G$-space.  Then $G \actson (X,\nu)$ is \textbf{essentially transitive} when there exists $x_{0} \in X$ such that $\nu(G \cdot x_{0}) = 1$ (a full measure orbit).
\end{definition}

\begin{definition}
Let $G$ be a locally compact second countable group and $\pi : (X,\nu) \to (Y,\eta)$ a $G$-map of $G$-spaces.  Then $\pi$ is \textbf{orbital} when $\stab(x) = \stab(\pi(x))$ almost everywhere.
\end{definition}

\subsection{Weakly Amenable Actions}

Introduced by Zimmer \cite{Zi77}, the notion of weakly amenable actions will play a crucial role in our study of the stabilizers of actions of groups.

\begin{definition}
Let $G \actson (X,\nu)$ be a quasi-invariant action of a locally compact second countable group.  Let $E$ be a separable Banach space and write $E_{1}^{*}$ for the unit ball in the dual of $E$.  Let $\alpha : G \times X \to \mathrm{Iso}(E)$ be a cocycle.  Denote the dual cocycle $\alpha^{*}$ by $\alpha^{*}(g,x) = (\alpha(g,x)^{-1})^{*}$.  Let $A_{x} \subseteq E_{1}^{*}$ be a closed convex nonempty set for almost every $x$ such that $\alpha^{*}(g,x)A_{gx} = A_{x}$.  Consider the space
\[
A = \bigsqcup_{x} \{ x \} \times A_{x} \subseteq X \times_{\alpha^{*}} E_{1}^{*}
\]
endowed with the $\alpha^{*}$-twisted action.
This is a closed compact space which is $G$-invariant under the $\alpha^{*}$-twisted action.
Such a space $A$ is called an \textbf{affine $G$-space over $(X,\nu)$}.
\end{definition}

\begin{definition}
The cocycle $\alpha$ is called \textbf{orbital} when $\alpha(g,x) = e$ for all $g \in \mathrm{stab}_{G}(x)$ for almost every $x$.  The affine $G$-space $A$ is called an \textbf{orbital affine $G$-space} when $\alpha$ is orbital.
\end{definition}

\begin{definition}
Let $G$ be a locally compact second countable group and $(X,\nu)$ a $G$-space.
$G \actson (X,\nu)$ is \textbf{amenable} when for every affine $G$-space $A$ over $(X,\nu)$ there exists an $\alpha^{*}$-invariant function $f : X \to E_{1}^{*}$ such that $f(x) \in A_{x}$ for almost every $x$: $\alpha^{*}$-invariant means $f(x) = \alpha^{*}(g,x)f(gx)$.  $G \actson (X,\nu)$ is \textbf{weakly amenable} when that condition holds for all orbital affine $G$-spaces over $(X,\nu)$.
\end{definition}


\begin{proposition}[Stuck-Zimmer \cite{SZ94}]\label{P:weakamenmaps}
Let $G$ be a locally compact second countable group, $(X,\nu)$ a $G$-space and $(B,\beta)$ an amenable $G$-space.  Let $A$ be an affine $G$-space over $X$.  Then there exists $G$-maps
\[
(B \times X, \beta \times \nu) \to (A,\alpha) \to (X,\nu)
\]
such that the composition is the natural projection to $X$ and $\alpha$ is the pushforward of $\beta \times \nu$.
\end{proposition}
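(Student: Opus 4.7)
The plan is to leverage amenability of $(B,\beta)$ against a single compact convex $G$-space built from $A$, and then to unwrap the resulting equivariant map into a section over $B \times X$. First, I would assemble the space of measurable sections of $A \to X$: let $\tilde A$ denote the set of (equivalence classes of) measurable $f : X \to E_{1}^{*}$ with $f(x) \in A_{x}$ for $\nu$-almost every $x$. Since $E$ is separable, $\tilde A$ embeds in the closed unit ball of $L^{\infty}(X,\nu;E^{*}) \cong L^{1}(X,\nu;E)^{*}$ equipped with the weak-$*$ topology; as a closed convex subset cut out by fiberwise closed convex constraints it is compact, convex, and nonempty (a measurable selection from the multifunction $x \mapsto A_{x}$ provides a point in $\tilde A$). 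I would then equip $\tilde A$ with the natural $G$-action $(g \cdot f)(x) = \alpha^{*}(g,g^{-1}x) f(g^{-1}x)$, twisted by the Radon--Nikodym cocycle $\tfrac{dg^{-1}\nu}{d\nu}$ so as to pair correctly with the predual $L^{1}$; the covariance condition $\alpha^{*}(g,x) A_{gx} = A_{x}$ is precisely what makes $\tilde A$ invariant, turning $\tilde A$ into a compact convex $G$-space.

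Amenability of $(B,\beta)$ applied to $\tilde A$ (viewed as a trivial affine bundle over $B$) then supplies a measurable $G$-equivariant map $F : B \to \tilde A$, i.e.\ $F(gb) = g \cdot F(b)$ for $\beta$-almost every $b$ and every $g \in G$. Setting $\phi(b,x) := F(b)(x) \in A_{x}$, measurability of $F$ together with Fubini yields joint measurability in $(b,x)$, and the equivariance of $F$ translates, upon unwinding the twisted action on $\tilde A$, into $\phi(gb, gx) = \alpha^{*}(g,x) \phi(b,x)$. I would then define $\Phi : B \times X \to A$ by $\Phi(b,x) = (x, \phi(b,x))$. By construction $\Phi$ is $G$-equivariant, its composition with the structure map $A \to X$ is the projection $B \times X \to X$, and taking $\alpha := \Phi_{*}(\beta \times \nu)$ delivers the required factorization.

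The main technical obstacle is the first step: establishing $\tilde A$ as a genuine compact convex $G$-space. One must verify that the Radon--Nikodym-twisted action on $L^{\infty}(X,\nu;E^{*})$ is continuous in the weak-$*$ topology (a computation using density of $C_{c}(G) \otimes L^{1}(X)$ in $L^{1}(G \times X)$), that $\tilde A$ is preserved by it (which is exactly where the hypothesis $\alpha^{*}(g,x) A_{gx} = A_{x}$ is used), and that $\tilde A$ is nonempty via measurable selection applied to the closed-convex-valued multifunction $x \mapsto A_{x}$. Once these are in place, the extraction of the equivariant map from amenability is routine, and the passage from $F : B \to \tilde A$ to $\Phi : B \times X \to A$ reduces to standard measurable function theory. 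The proposition therefore amounts to the observation that the ``bundle of sections'' construction converts an affine $G$-space over $X$ into a compact convex $G$-space, at which point amenability of $B$ yields the section directly.
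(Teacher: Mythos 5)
Your argument is sound, but note that the paper itself does not prove this proposition: it explicitly defers to \cite{SZ94}, where the statement is implicit, and to \cite{CP12} for a written proof, so the comparison is with that reference argument rather than with anything in the text. The reference route is shorter: pull the affine space $A$ back to an affine $G$-space over $B \times X$ (same fibers $A_{x}$, cocycle $\tilde\alpha(g,(b,x)) = \alpha(g,x)$) and invoke Zimmer's fact that the product of an amenable $G$-space with any $G$-space is amenable; the resulting invariant section $B \times X \to E_{1}^{*}$ landing in $A_{x}$ immediately gives the map $(B \times X, \beta\times\nu) \to (A,\alpha)$, and the rest is bookkeeping. What you do instead is essentially inline the proof of that product lemma in the special case needed: you package the measurable sections of $A \to X$ into a single weak-$*$ compact convex set $\tilde A$ in the ball of $L^{1}(X,\nu;E)^{*}$, view it as a constant-fibered affine $G$-space over $(B,\beta)$ via the Radon--Nikodym-twisted isometric cocycle on $L^{1}(X,\nu;E)$, and apply amenability of $B$ directly. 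That buys independence from the product-amenability lemma at the cost of the technical work you flag, and it is a perfectly legitimate proof.

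Three points deserve more care than you give them, though none is fatal. First, weak-$*$ closedness of $\tilde A$ (hence compactness) is not automatic from ``fiberwise closed convex constraints''; it uses that $A$ is a closed subset of $X \times E_{1}^{*}$ with closed convex fibers, so that membership $f(x) \in A_{x}$ a.e.\ can be tested against a countable family of affine conditions measurable in $x$. Second, joint measurability of $(b,x) \mapsto F(b)(x)$ is not a Fubini statement: a weak-$*$ measurable map into the ball of $L^{\infty}(X,\nu;E^{*})$ must be realized by a jointly measurable representative, a standard but genuine lemma (it is exactly the kind of realization argument Zimmer uses throughout). Third, watch the inverse conventions: with the paper's normalization $\alpha^{*}(g,x)A_{gx} = A_{x}$ and its notion of invariant section $f(x) = \alpha^{*}(g,x)f(gx)$, the identity you want is $\phi(b,x) = \alpha^{*}(g,x)\phi(gb,gx)$, so the action you place on $\tilde A$ should be adjusted (your displayed relation $\phi(gb,gx) = \alpha^{*}(g,x)\phi(b,x)$ is off by an inverse relative to these conventions); this is bookkeeping, fixed by choosing the cocycle on $L^{1}(X,\nu;E)$ consistently with the dual cocycle on $A$.
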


The proof of the previous statement is implicit in \cite{SZ94}; the reader is referred to \cite{CP12} for a concrete proof.

\subsection{Amenable Equivalence Relations}

The notion of amenability for equivalence relations was introduced by Zimmer in \cite{Zi77}.  The reader is also referred to Kechris and Miller \cite{kechris} for more detailed information.

\begin{definition}
Let $G$ be a locally compact second countable group and $(X,\nu)$ be a measure-preserving $G$-space.  The \textbf{orbit equivalence relation} generated by the action is given by $R$ where $xRy$ when there exists $g \in G$ such that $gx = y$.
\end{definition}

\begin{definition}
Let $(X,\nu)$ be a measure space.  An equivalence relation $R \subseteq X \times X$ is \textbf{measurable} when there exists a $\sigma$-finite measure $\rho$ on $R$ such that the projection $R \to X$ sends $\rho$ to $\nu$.
\end{definition}

When $G$ is a locally compact second countable group and $(X,\nu)$ a measure-preserving $G$-space, the orbit equivalence relation generated by the action is measurable: let $R = \{ (x, gx) : x \in X, g \in G \}$ be the equivalence relation generated by the action and let $m$ be a Haar measure on $G$ and consider the map $p : X \times G \to R$ given by $p(x,g) = (x,gx)$; then $\rho = p_{*}(\nu \times m)$ is $\sigma$-finite and makes $R$ measurable.

\begin{definition}
Let $(X,\nu)$ be a $\sigma$-finite measure space.  A \textbf{mean} on $(X,\nu)$ is a linear functional $m \in L^{\infty}(X,\nu)^{*}$ that is positive and has $m(\bbone) = 1$.
\end{definition}

\begin{definition}
Let $(X,\nu)$ be a measure space and $R \subseteq X \times X$ be a measurable equivalence relation.  A map $m : x \mapsto m_{x}$ is a \textbf{mean} on $R$ when for almost every $x \in X$, $m_{x}$ is a mean on $[x]$, the equivalence class of $x$, and the map $x \mapsto m_{x}$ is measurable in the sense that for any $F \in L^{\infty}(R)$, writing $F_{x} : [x] \to \mathbb{R}$ by $F_{x}(y) = F(x,y)$, it holds that $x \mapsto m_{x}(F_{x})$ is a measurable map.
\end{definition}

\begin{definition}
Let $(X,\nu)$ be a measure space and $R \subseteq X \times X$ be a measurable equivalence relation.  A map $m : x \mapsto m_{x}$ is an \textbf{invariant mean} when it is a mean such that $m_{x} = m_{y}$ for almost every $x \in X$ and all $y \in [x]$.
\end{definition}

\begin{definition}
The equivalence relation $R_{G \actson (X,\nu)}$ is \textbf{amenable} when there exists an invariant mean for $R_{G \actson (X,\nu)}$.
\end{definition}

\begin{remark}\label{R:coamen}
Recall that a subgroup $H < G$ is said to be \textbf{coamenable} in $G$ when there is a $G$-invariant mean on $G / H$.  If $G \actson (X,\nu)$ gives rise to an amenable equivalence relation then for almost every $x \in X$, the stabilizer subgroup $\stab(x)$ is coamenable in $G$ (since the orbit $[x]$ is isomorphic to $G / \stab(x)$).
\end{remark}

\begin{theorem}[Zimmer \cite{Zi77}]\label{T:amener}
Let $G$ be a locally compact second countable group and $(X,\nu)$ be a measure-preserving $G$-space.  Then the orbit equivalence relation of $G \actson (X,\nu)$ is amenable if and only if $G \actson (X,\nu)$ is weakly amenable.
\end{theorem}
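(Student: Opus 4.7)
Orbitality of $\alpha : G \times X \to \mathrm{Iso}(E)$ forces $\alpha(g_1,x) = \alpha(g_2,x)$ whenever $g_1 x = g_2 x$ (since then $g_2^{-1} g_1 \in \stab(x)$ and the cocycle identity combined with $\alpha|_{\stab(x) \times \{x\}} = e$ yields this), so $\alpha$ descends to a measurable cocycle $\tilde\alpha$ on the orbit equivalence relation $R$ satisfying $\tilde\alpha^*(x,y) A_y = A_x$ for $y \in [x]$. Given an invariant mean $\{m_x\}$ on $R$, use measurable selection on the field of nonempty closed convex subsets of the weak-$*$ compact metrizable ball $E_1^*$ to pick a measurable $s : X \to E_1^*$ with $s(x) \in A_x$, and define $f : X \to E_1^*$ weak-$*$ by
\[
\langle f(x), \phi \rangle \;=\; m_x\bigl( y \mapsto \langle \tilde\alpha^*(x,y) s(y), \phi \rangle \bigr) \qquad \text{for all } \phi \in E.
\]
Then $f(x) \in A_x$ because the integrand lies in the weak-$*$ closed convex set $A_x$ and a mean of a bounded function valued in a closed convex set takes values in that set. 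The $\alpha^*$-invariance $f(x) = \alpha^*(g,x) f(gx)$ then follows by pulling $\alpha^*(g,x)$ inside the mean (using weak-$*$ linearity), applying the dual cocycle identity for $\tilde\alpha^*$ on $R$, and using the invariance $m_x = m_{gx}$ of the mean along orbits.

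\textbf{Reverse direction (weakly amenable action implies amenable relation).} Package the fibered datum ``mean on $[x]$'' as a single orbital affine $G$-space over $X$ and apply the hypothesis. Take $E = L^1(G)$ with left Haar measure, so that $E_1^* = L^\infty(G)_1$ is a weak-$*$ compact metrizable ball, and let $A_x \subset L^\infty(G)_1$ be the weak-$*$ closed convex set of right-$\stab(x)$-invariant means on $L^\infty(G)$; these are nonempty and correspond bijectively to means on $G/\stab(x) \cong [x]$ via the orbit map $h \mapsto hx$. Equip the bundle $\bigsqcup_x \{x\} \times A_x$ with a cocycle built from left translations on $L^1(G)$, adjusted so that the resulting cocycle into $\mathrm{Iso}(E)$ is orbital in the strict sense (trivial on every $\stab(x)$) and satisfies $\alpha^*(g,x) A_{gx} = A_x$ via the conjugation $\stab(gx) = g \stab(x) g^{-1}$. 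Weak amenability then supplies an $\alpha^*$-invariant section $f$ with $f(x) \in A_x$, which reads off directly as the invariant mean $x \mapsto m_x$ on $R$.

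\textbf{Main obstacle.} The delicate step is the construction in the reverse direction: realizing the field $x \mapsto \{\text{means on }[x]\}$ as the fibers of an orbital affine $G$-space inside a single separable dual Banach ball, with a cocycle that is simultaneously (i) strictly orbital --- trivial on the entirety of every $\stab(x)$, not merely on some smaller subgroup --- and (ii) compatible with the orbit structure so that $\alpha^*(g,x) A_{gx} = A_x$. This is handled by carefully passing to right-$\stab(x)$-invariants and tracking measurability of the stabilizer field $x \mapsto \stab(x)$, for which Mackey's point realization Theorem \ref{T:mackey} applied to the stabilizer $\sigma$-algebra is the standard device. Once the framework is set up, the correspondence between $\alpha^*$-invariant sections and invariant means on $R$ is immediate from the definitions, and the theorem follows.
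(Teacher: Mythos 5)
The paper itself gives no proof of this statement --- it is quoted as Zimmer's theorem from \cite{Zi77} --- so your argument can only be measured against the standard proofs in the literature. Your forward direction is essentially that standard argument and is fine in outline: orbitality lets the cocycle descend to the orbit equivalence relation, and pairing a measurable selection $s(y)\in A_y$ against the invariant mean gives, by the usual Hahn--Banach separation argument for weak-$*$ closed convex sets and positivity/unitality of the mean, an $\alpha^{*}$-invariant section in the fibers $A_x$. The only details you gloss over are routine but real: $s$ is defined only $\nu$-a.e.\ while each single orbit is $\nu$-null, so the averaging has to be set up on $R$ with the measure $\rho=p_{*}(\nu\times\mathrm{Haar})$ and a Fubini argument rather than literally ``restricting $s$ to $[x]$''.

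The reverse direction, however, has a genuine gap, located exactly at the step you yourself flag as the main obstacle. With $E=L^{1}(G)$ one has $E_{1}^{*}=L^{\infty}(G)_{1}$, whose elements are functions on $G$ of norm at most $1$; means on $L^{\infty}(G)$ are elements of $L^{\infty}(G)^{*}$, i.e.\ of the \emph{double} dual of $L^{1}(G)$, and are not elements of $L^{\infty}(G)_{1}$. So the sets $A_x$ you describe simply do not live in $E_{1}^{*}$ for your chosen $E$. To house means one would need $E=L^{\infty}(G)$ (or $L^{\infty}$ of an orbit), which is not separable, so the resulting bundle is not an affine $G$-space in the sense required by the definition of weak amenability. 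There is a second, independent tension: the natural translation cocycle on a bundle of means is not orbital (translation by $g\in\mathrm{stab}(x)$ is not the identity operator on $E$), while replacing it by the trivial cocycle forces $A_{gx}=A_x$ as literal subsets of $E_{1}^{*}$, which your base-point-dependent identification of means on $[x]$ with invariant means on $G$ does not satisfy; your proposal to ``adjust'' the cocycle does not resolve this, and Mackey point realization of the stabilizer field is not the relevant tool. The known proofs of this implication do not realize the field of means directly as an orbital affine space: they first extract approximate-invariance data living in separable spaces (Reiter-type densities in $L^{1}$, or a norm-one projection $L^{\infty}(R)\to L^{\infty}(X)$ built as a limit) and only then obtain the mean by weak-$*$ compactness (see \cite{Zi77}, and \cite{cfw} for the related characterizations). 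As written, your argument establishes only the forward implication.
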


\begin{theorem}[Connes-Feldman-Weiss \cite{cfw}]\label{T:cfw}
Let $G$ be a locally compact second countable group and $(X,\nu)$ be an ergodic measure-preserving $G$-space.  Then the orbit equivalence relation of $G \actson (X,\nu)$ is amenable if and only if $G \actson (X,\nu)$ is orbit equivalent to a free ergodic action of $\mathbb{R}$ or $\mathbb{Z}$ depending on whether $G$ is discrete (two actions $G \actson (X,\nu)$ and $H \actson (Y,\eta)$ are orbit equivalent when there exists a measure-space isomorphism $\theta : (X,\nu) \to (Y,\eta)$ such that for all $g \in G$ and almost every $x \in X$ there exists $h \in H$ such that $\theta(gx) = h\theta(x)$).
\end{theorem}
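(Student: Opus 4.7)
The plan is to prove both directions, with the reverse (amenability implies orbit equivalence to an action of $\mathbb{Z}$ or $\mathbb{R}$) being much deeper. For the easy direction, I would first note that amenability of an orbit equivalence relation is an invariant of orbit equivalence: if $\theta:(X,\nu)\to(Y,\eta)$ implements an orbit equivalence and $y\mapsto m_y$ is an invariant mean on the $Y$-side relation, then $m_{\theta^{-1}(y)}$ pushed through $\theta^{-1}$ gives an invariant mean on the $X$-side. It then suffices to check that the orbit relation of a free ergodic action of an amenable group (such as $\mathbb{Z}$ or $\mathbb{R}$) is amenable: choose a F\o{}lner sequence $F_{n}$ in the group and set $m_{x}(f)=\lim_{\mathcal{U}}\frac{1}{|F_{n}|}\int_{F_{n}}f(gx)\,dg$ along a nonprincipal ultrafilter, verifying invariance along orbits from the F\o{}lner property.

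For the hard direction, the strategy is to prove that every amenable probability-preserving countable (or, after a cross-section reduction, locally countable) equivalence relation on a standard probability space is \emph{hyperfinite}, i.e.\ is the increasing union of finite Borel subrelations, and then invoke Dye's theorem that any two ergodic type~II$_{1}$ hyperfinite equivalence relations are orbit equivalent. Since the orbit relation of a free ergodic $\mathbb{Z}$-action is trivially hyperfinite (take $R_{n}$ to be the restriction to $[-n,n]$), and a free ergodic $\mathbb{R}$-action becomes hyperfinite after passing to a measurable Ambrose cross-section, Dye's theorem finishes the identification in both the discrete and nondiscrete cases.

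The heart of the proof is therefore the hyperfiniteness conclusion, and I would follow the Connes--Feldman--Weiss path. From the invariant mean $x\mapsto m_{x}$, a Day-type convexity argument upgrades weak invariance to a Reiter condition: for every finite list $\phi_{1},\dots,\phi_{k}$ of partial transformations in the full pseudogroup and every $\epsilon>0$, there is a measurable selector $x\mapsto \mu_{x}$ of finitely supported probability measures on $[x]$ with $\|\phi_{i}\mu_{x}-\mu_{x}\|_{1}<\epsilon$ off a set of small measure. Diagonalising over a countable dense collection of pseudogroup elements yields measurable F\o{}lner sets $F_{n}(x)\subseteq[x]$. The delicate step is to convert such a pointwise F\o{}lner selector into an honest finite Borel subrelation $R_{n}$ via a Rohlin-type tower construction inside the equivalence relation, and to arrange the $R_{n}$ to be nested so that $\bigcup_{n}R_{n}$ coincides with $R$ almost everywhere.

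The main obstacle is precisely this last step: producing a genuinely increasing sequence of finite Borel subrelations from a merely approximate F\o{}lner system is where the subtle measurable selection and tower-packing combinatorics lie, requiring the CFW maximality argument that iteratively enlarges a partial finite subrelation while keeping it compatible with the already-constructed tower. Once hyperfiniteness is in hand, no amenability hypothesis on $G$ itself is ever invoked, so the conclusion holds for arbitrary locally compact second countable $G$, and Dye's theorem together with the hyperfiniteness of the model $\mathbb{Z}$- and $\mathbb{R}$-actions closes the argument.
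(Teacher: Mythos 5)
The paper does not prove this statement at all: Theorem \ref{T:cfw} is quoted as a black box from Connes--Feldman--Weiss \cite{cfw}, so there is no internal argument to compare yours against. Your outline is, in substance, the standard CFW route: the easy direction via orbit-equivalence invariance of the invariant mean plus a F\o{}lner averaging argument for the model $\mathbb{Z}$- and $\mathbb{R}$-actions, and the hard direction via (amenable $\Rightarrow$ Reiter condition $\Rightarrow$ hyperfinite) followed by Dye's theorem, with cross-sections handling the nondiscrete case. As a roadmap this is accurate, but be aware that everything you label the ``heart'' is exactly the content of the cited paper and is not supplied here: the Day/Reiter upgrade, the measurable F\o{}lner selector, and above all the maximality/tower argument producing a nested exhausting sequence of finite Borel subrelations are each substantial, and for nondiscrete $G$ the reduction is not just ``take an Ambrose cross-section and apply Dye'' --- one must also recover the full relation with uncountable classes from the cross-section relation (the flow-under-a-function structure) to exhibit orbit equivalence with a free ergodic $\mathbb{R}$-action, which is part of what \cite{cfw} actually proves. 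Two smaller technical cautions: an ultrafilter limit of measurable functions need not be measurable, so your definition $m_{x}(f)=\lim_{\mathcal{U}}\frac{1}{|F_{n}|}\int_{F_{n}}f(gx)\,dg$ needs to be replaced or supplemented (e.g.\ by weak* limits of the absolutely continuous means, or by invoking Zimmer's result that actions of amenable groups are amenable) to meet the measurability requirement in the paper's definition of a mean on $R$; and in the cross-section step you should track whether the reduced relation is of type II$_{1}$ or II$_{\infty}$, since Dye's theorem as you state it applies to the II$_{1}$ case and the general identification requires the corresponding uniqueness statement for hyperfinite relations of each type. With those caveats, your proposal correctly reconstructs the architecture of the cited theorem rather than conflicting with anything in the paper.
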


\begin{proposition}\label{P:weakamen}
Let $G$ be a locally compact second countable group and $\pi : (Y,\eta) \to (X,\nu)$ be a $G$-map of $G$-spaces such that $G \actson (X,\nu)$ is weakly amenable and $\pi$ is orbital.  Then $G \actson (Y,\eta)$ weakly amenably.
\end{proposition}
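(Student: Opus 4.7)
The plan is to work through orbit equivalence relations: by Theorem \ref{T:amener}, weak amenability of a $G$-space is equivalent to amenability of its orbit equivalence relation, so it will suffice to pull back an invariant mean from $R_{G \actson (X,\nu)}$ to $R_{G \actson (Y,\eta)}$.

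The decisive consequence of orbitality is that $\pi$ restricts to a bijection on each $G$-orbit: the orbit map $G/\stab(y) \to G y$ is a bijection, as is $G/\stab(\pi(y)) \to G \pi(y)$, and since $\stab(y) = \stab(\pi(y))$ almost everywhere the two quotients coincide, so $\pi|_{G y}$ is identified with the identity map. Writing $\pi_y : [y]_{R_Y} \to [\pi(y)]_{R_X}$ for this bijection, and given an invariant mean $\{m_x\}_{x \in X}$ on $R_X$ supplied by Theorem \ref{T:amener} applied to $(X,\nu)$, I define the pullback $\tilde m_y$ on $L^\infty([y]_{R_Y})$ by $\tilde m_y(F) = m_{\pi(y)}(F \circ \pi_y^{-1})$. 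Positivity and normalization of $\tilde m_y$ are immediate, and $R_Y$-invariance follows because if $y_1 R_Y y_2$ then $[y_1]_{R_Y} = [y_2]_{R_Y}$ and $\pi(y_1) R_X \pi(y_2)$, forcing $\pi_{y_1}^{-1} = \pi_{y_2}^{-1}$ and $m_{\pi(y_1)} = m_{\pi(y_2)}$, hence $\tilde m_{y_1} = \tilde m_{y_2}$.

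For measurability, the assignment $(y_1, y_2) \mapsto (\pi(y_1), \pi(y_2), y_1)$ is a Borel bijection from $R_Y$ onto the Borel set $\{(x_1, x_2, y_1) \in R_X \times Y : \pi(y_1) = x_1\}$; its inverse is Borel by Lusin-Souslin and sends $(x_1, x_2, y_1)$ to $(y_1, y_2)$ where $y_2 \in G y_1$ is the unique preimage of $x_2$ singled out by orbitality. Each $\tilde F \in L^\infty(R_Y)$ thus transfers to a bounded Borel function of $(x_1, x_2, y_1)$, whence $y \mapsto \tilde m_y(\tilde F_y)$ inherits measurability from $m$. A second application of Theorem \ref{T:amener} to $(Y,\eta)$ then delivers the desired weak amenability.

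The main hurdle is precisely this measurability step, resting on the Borel isomorphism between $R_Y$ and a Borel subset of $R_X \times Y$. An alternative route would attempt to build an orbital affine $G$-space over $X$ directly from $(A,\alpha)$ over $Y$ and pull back the invariant section furnished by weak amenability of $X$; this, however, would require a globally $G$-equivariant measurable section of $\pi$, which need not exist when $D_\pi(x)$ charges several $G$-orbits of $Y$, so the equivalence-relation approach is cleaner.
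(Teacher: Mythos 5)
Your proposal is correct and follows essentially the same route as the paper: reduce via Theorem \ref{T:amener} to amenability of the orbit equivalence relations, use orbitality to identify each $G$-orbit in $Y$ bijectively with its image orbit in $X$, and pull back the invariant mean fiberwise, checking invariance and measurability. Your Lusin--Souslin treatment of measurability is simply a more detailed version of the paper's remark that the identification map $\psi$ and $y \mapsto m_{\pi(y)}$ are measurable.
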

\begin{proof}
Let $R_{Y} = \{ (y, gy) : y \in Y, g \in G \}$ and $R_{X} = \{ (x,gx) : x \in X, g \in G \}$ be the equivalence relations of the actions.  Define the set $S = \{ (y, g\pi(y)) : y \in Y, g \in G \} \subseteq Y \times X$.  For each $y \in Y$, the map $[y] \to [\pi(y)]$ given by $gy \mapsto g\pi(y)$ is one-one since $\pi$ is orbital.  Then the map $\psi : S \to R_{Y}$ by $\psi(y,g\pi(y)) = (y,gy)$ is a well-defined measurable map.  Let $m$ be an invariant mean on $R_{X}$.  Define a map $M : y \mapsto M_{y}$ as follows: for $F \in L^{\infty}(R_{Y})$ consider $F \circ \psi : S \to \mathbb{R}$ and write $(F \circ \psi)_{y} : [\pi(y)] \to \mathbb{R}$ as $(F \circ \psi)_{y}(g\pi(y)) = F(\psi(y,g\pi(y))) = F(y,gy) = F_{y}(gy)$.  Define $M_{y}(F_{y}) = m_{\pi(y)}((F \circ \psi)_{y})$.  Then $y \mapsto M_{y}(F_{y})$ is measurable since $y \mapsto \pi(y) \mapsto m_{\pi}(y)$ is measurable and $\psi$ is measurable.  Then $M$ is a mean on $R_{Y}$.  Also, $F_{gy} = F_{y}$ and $m_{\pi(gy)} = m_{g\pi(y)} = m_{\pi(y)}$ so $M$ is invariant.  Hence $G \actson (Y,\eta)$ is weakly amenable.
\end{proof}

\begin{proposition}\label{P:weakamenprod}
Let $G_{1}$ and $G_{2}$ be locally compact second countable groups and let $(X_{1},\nu_{1})$ be a weakly amenable $G_{1}$-space and $(X_{2},\nu_{2})$ a weakly amenable $G_{2}$-space.  Then $(X_{1} \times X_{2}, \nu_{1} \times \nu_{2})$ is a weakly amenable $G_{1} \times G_{2}$-space (with the product action).
\end{proposition}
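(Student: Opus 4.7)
My plan is to reduce the claim to amenability of the orbit equivalence relation via Theorem \ref{T:amener} and then to assemble an invariant mean for the product relation out of the invariant means on the factors. The weak amenability hypotheses together with Theorem \ref{T:amener} supply invariant means $m^{(1)}$ on the orbit equivalence relation $R_{1}$ of $G_{1} \actson (X_{1},\nu_{1})$ and $m^{(2)}$ on the orbit equivalence relation $R_{2}$ of $G_{2} \actson (X_{2},\nu_{2})$. Writing $R$ for the orbit equivalence relation of the product action, it therefore suffices to build an invariant mean on $R$ and apply Theorem \ref{T:amener} in the reverse direction.

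The structural observation driving the construction is that the $R$-equivalence class of $(x_{1},x_{2})$ is precisely $[x_{1}] \times [x_{2}]$, so $R$ is canonically identified with $R_{1} \times R_{2}$ as a measurable equivalence relation. Under this identification the natural candidate is the iterated mean
\[
M_{(x_{1},x_{2})}(F) \;=\; m^{(1)}_{x_{1}}\left(y_{1} \mapsto m^{(2)}_{x_{2}}\bigl(y_{2} \mapsto F(y_{1},y_{2})\bigr)\right)
\]
for $F \in L^{\infty}([x_{1}] \times [x_{2}])$. Positivity and normalization of $M_{(x_{1},x_{2})}$ are immediate from the corresponding properties of the $m^{(j)}_{x_{j}}$, and invariance of $M$ along $R$-classes reduces to the invariance of each $m^{(j)}$ along $R_{j}$-classes, since for $(g_{1},g_{2}) \in G_{1} \times G_{2}$ we have $m^{(1)}_{g_{1}x_{1}} = m^{(1)}_{x_{1}}$ and $m^{(2)}_{g_{2}x_{2}} = m^{(2)}_{x_{2}}$.

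The step I expect to be the main obstacle is the measurability requirement: for each $F \in L^{\infty}(R)$ the map $(x_{1},x_{2}) \mapsto M_{(x_{1},x_{2})}(F_{(x_{1},x_{2})})$ must be measurable. For each fixed pair $(x_{1},y_{1}) \in R_{1}$ the function $(x_{2},y_{2}) \mapsto F((x_{1},x_{2}),(y_{1},y_{2}))$ is an element of $L^{\infty}(R_{2})$, so the measurability axiom for $m^{(2)}$ makes the inner value measurable in $x_{2}$ for fixed $(x_{1},y_{1})$. To upgrade this to joint measurability in $(x_{1},x_{2},y_{1})$, I would approximate $F$ uniformly by simple functions built from a countable algebra of measurable rectangles in $R_{1} \times R_{2}$; for such simple functions the iterated mean has a finite-sum form whose measurability in all three variables is immediate from the individual axioms, and sup-norm continuity of means transfers the joint measurability to the general $F$. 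Applying the measurability axiom for $m^{(1)}$ to the outer integration then completes the verification, and Theorem \ref{T:amener} yields weak amenability of $G_{1} \times G_{2} \actson (X_{1} \times X_{2},\nu_{1} \times \nu_{2})$.
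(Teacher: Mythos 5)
Your proposal follows essentially the same route as the paper: both pass through Theorem \ref{T:amener} in each direction, identify the orbit of $(x_{1},x_{2})$ with $[x_{1}]\times[x_{2}]$, and build the invariant mean on the product relation out of the invariant means on the factors; your iterated mean is just a concrete realization of the extension the paper describes by setting $m_{x_{1},x_{2}}(f_{1}\times f_{2})=m_{x_{1}}(f_{1})m_{x_{2}}(f_{2})$ and ``extending by continuity and linearity,'' and in fact it is the cleaner way to define that extension since it is given at once on all of $L^{\infty}$. The mean and invariance verifications are exactly as in the paper.

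One caveat on the measurability step, which is the only place you go beyond the paper's level of detail: the uniform approximation you invoke is not available. Finite linear combinations of indicators of measurable rectangles are \emph{not} sup-norm dense in $L^{\infty}$ of a product (for instance the indicator of $\{y<x\}$ in the unit square cannot be uniformly approximated by functions of the form $\sum_{i} c_{i}\mathbbm{1}_{A_{i}}(x)\mathbbm{1}_{B_{i}}(y)$), so passing from rectangle simple functions to general $F$ by norm continuity of the means does not go through as written. Moreover, since means are only finitely additive, one cannot substitute a monotone-class limit for the norm limit either; what the separate applications of the two measurability axioms give directly is measurability in each variable separately, and joint measurability of $(x_{1},x_{2})\mapsto M_{(x_{1},x_{2})}(F_{(x_{1},x_{2})})$ still requires an argument. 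To be fair, this is precisely the point the paper itself passes over (its proof simply asserts that the product mean is measurable), so apart from this one misleading justification your write-up matches the paper's argument.
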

\begin{proof}
By Theorem \ref{T:amener}, there exist measurable maps on $X_{1}$ and $X_{2}$, written $x_{1} \mapsto m_{x_{1}}$ and $x_{2} \mapsto m_{x_{2}}$, such that $m_{x_{j}}$ is a mean on $L^{\infty}([x_{j}])$ where $[x_{j}]$ is the $G_{j}$-orbit of $x_{j}$ and such that $m_{y_{j}} = m_{x_{j}}$ for all $y_{j} \in [x_{j}]$.  Define the map $(x_{1}, x_{2}) \mapsto m_{x_{1},x_{2}}$ by defining $m_{x_{1},x_{2}}(f_{1} \times f_{2}) = m_{x_{1}}(f_{1}) m_{x_{2}}(f_{2})$ and extending by continuity and linearity to all of $L^{\infty}(X_{1} \times X_{2}, \nu_{1} \times \nu_{2})$.  Then $m_{x_{1},x_{2}}$ are means on $[x_{1},x_{2}]$ and the map is measurable and invariant under the orbit of $G_{1} \times G_{2}$.  So by Theorem \ref{T:amener} and Proposition \ref{P:weakamenprod}, the claim follows.
\end{proof}


\subsection{Irreducible Lattices}

\begin{definition}
Let $G$ be a locally compact second countable group.  A subgroup $\Gamma < G$ is a \textbf{lattice} when it is discrete and has finite covolume (there exists an open set $F \subseteq G$ such that $F\Gamma = G$, $F \cap \Gamma = \{ e \}$ and $\Haar(F) < \infty$).
\end{definition}

\begin{definition}
A lattice $\Gamma$ in a locally compact second countable group $G$ is \textbf{irreducible} when for any noncentral closed normal subgroup $M \normal G$ that is not cocompact, $\Gamma / (\Gamma \cap M)$ is dense in $G / M$.
\end{definition}

Central and cocompact normal subgroups are excepted in the definition to allow for cases such as $\SL_{n}(\mathbb{Z}) < \SL_{n}(\mathbb{R})$ with $M$ being the center and cases such as $G = H \times K$ where $K$ is compact and $\Gamma = \Gamma_{0} \times \{ e \}$ where $\Gamma_{0}$ is an irreducible lattice in $H$.

\begin{proposition}\label{P:irrlattint}
Let $\Gamma < G \times H$ be an irreducible lattice in a product of noncompact nondiscrete locally compact second countable groups.  Then $\Gamma \cap (\{ e \} \times H)$ is contained in $\{ e \} \times Z(H)$ where $Z(H)$ is the center of $H$.
\end{proposition}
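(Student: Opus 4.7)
The plan is to invoke the irreducibility of $\Gamma$ twice: first to upgrade the obvious normalization property of $N := \Gamma \cap (\{e\} \times H)$ to genuine normality in $H$, and then to derive a contradiction if $N$ fails to be central. Throughout, identify $\{e\} \times H$ with $H$ and write $N_{0} \subseteq H$ for the corresponding subgroup, so $N = \{e\} \times N_{0}$. Since $\Gamma$ is discrete in $G \times H$, the intersection $N$ is discrete, hence closed, and $N_{0}$ is a discrete closed subgroup of $H$.

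For any $(g,h) \in \Gamma$ and $(e,n_{0}) \in N$ we have $(g,h)(e,n_{0})(g,h)^{-1} = (e, hn_{0}h^{-1}) \in N$, so $\pi_{H}(\Gamma) \subseteq N_{H}(N_{0})$. To extend this to all of $H$, I apply irreducibility with $M = G \times \{e\}$: this closed normal subgroup is not cocompact since $(G \times H)/M \cong H$ is noncompact, and is noncentral because $G$ is non-abelian (automatic in the paper's applications, where the factors possess the Howe-Moore property). Irreducibility then yields that $\pi_{H}(\Gamma) = \Gamma M / M$ is dense in $H$, and since $N_{H}(N_{0})$ is closed (being an intersection of preimages of the closed set $N_{0}$ under the continuous maps $h \mapsto hnh^{-1}$), we conclude $N_{0} \normal H$. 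Consequently $N$ is normal in $G \times H$, being also centralized by $G \times \{e\}$.

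Now suppose for contradiction that $N_{0} \not\subseteq Z(H)$. Then $N$ is a closed noncentral normal subgroup of $G \times H$, and it is not cocompact since $(G \times H)/N$ surjects onto the noncompact $G$. Applying irreducibility a second time gives that the image of $\Gamma$ in $(G \times H)/N$, namely $\Gamma/(\Gamma \cap N) = \Gamma/N$, is dense. I claim this image is also discrete, which will give a contradiction. Pick an open neighborhood $U$ of $e$ in $G \times H$ with $U \cap \Gamma = \{e\}$, possible by discreteness of $\Gamma$. If $\gamma \in \Gamma$ has $\gamma N$ in the image of $U$, then $\gamma = un$ for some $u \in U$ and $n \in N \subseteq \Gamma$; hence $u = \gamma n^{-1} \in U \cap \Gamma = \{e\}$, forcing $\gamma = n \in N$. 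Finally, $(G \times H)/N$ is nondiscrete, for if $N$ were open in $G \times H$ then this discrete open subgroup would force $G \times H$ to be discrete, contradicting the hypotheses on $G$ and $H$. Since a nondiscrete locally compact group admits no dense discrete subgroup, we have reached the desired contradiction, and $N_{0} \subseteq Z(H)$.

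The main obstacle is the first invocation of irreducibility, where one must verify that $G \times \{e\}$ is noncentral. This reduces to non-commutativity of $G$, delivered in applications by the Howe-Moore property; in a fully general statement an abelian factor would require a separate argument for density of $\pi_{H}(\Gamma)$. Everything else in the plan proceeds from discreteness of $\Gamma$ and the standard fact that no discrete subgroup of a nondiscrete locally compact group can be dense.
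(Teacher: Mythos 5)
Your proof is correct and follows essentially the same route as the paper's: both first use the density of $\mathrm{proj}_{H}\,\Gamma$ in $H$ (from irreducibility) together with discreteness, hence closedness, of $N = \Gamma \cap (\{e\}\times H)$ to conclude that $N$ is normal in $G \times H$, and then, assuming $N$ noncentral and noting it is not cocompact, apply irreducibility again to make $\Gamma/N$ simultaneously dense and discrete in the nondiscrete quotient $(G\times H)/N$, a contradiction. The caveat you flag about needing $G\times\{e\}$ to be noncentral applies equally to the paper's own argument, which asserts density of $\mathrm{proj}_{H}\,\Gamma$ from irreducibility and noncompactness without comment, so your version is if anything slightly more careful on that point.
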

\begin{proof}
Let $N = \Gamma \cap (\{ e \} \times H)$.  Then $N \normal \Gamma$ since $\{ e \} \times H \normal G \times H$.  Therefore $\overline{\mathrm{proj}_{H}~N} \normal \overline{\mathrm{proj}_{H}~\Gamma}$.  Since $\Gamma$ is irreducible and $G$ and $H$ are not compact, $\overline{\mathrm{proj}_{H}~\Gamma} = H$.  On the other hand, $N \subseteq \{ e \} \times H$ so write $N = \{ e \} \times M$ for some $M \normal H$ and observe that $\mathrm{proj}_{H}~N = M$.  Now $M$ is discrete in $H$ since $\Gamma$ is discrete in $G \times H$.  Hence $N \normal G \times H$ is a discrete (hence closed) normal subgroup.

If $N$ is central then $N < Z(G \times H) \cap (\{ e \} \times H) = \{ e \} \times Z(H)$.  So we may assume $N$ is noncentral.  Note that $N$ is not cocompact since $G$ is noncompact.  Therefore the projection of $\Gamma$ to $(G \times H) / N$ is dense: $\Gamma / N$ is dense in $(G \times H) / N$.

On the other hand, the quotient map $G \times H \to (G \times H) / N$ is an open map so if $U$ is an open neighborhood of $e$ in $G \times H$ such that $U \cap \Gamma = \{ e \}$ then the image of $U$ in $(G \times H) / N$ is an open neighborhood of the identity intersecting $\Gamma / N$ only at the identity.  Hence $\Gamma / N$ is discrete in $(G \times H) / N$.

But then $\Gamma / N$ is both dense and discrete in $(G \times H) / N$ hence $(G \times H) / N$ is discrete.  As $N$ is also discrete, this would mean that $G \times H$ is discrete, contradicting our hypotheses.
\end{proof}

\subsection{Induced Actions}

Let $G$ be a locally compact second countable group and $\Gamma < G$ a lattice.  Given a $\Gamma$-space $(X,\nu)$, take $F$ to be a fundamental domain for $G / \Gamma$ with the normalized Haar measure $m$ and define the $G$-space $G \times_{\Gamma} X$ to be $(F \times X, m \times \nu)$ with the action $g \cdot (f,x) = (gf\alpha(g,f), \alpha(g,f)^{-1}x)$ where $\alpha : G \times F \to \Gamma$ is the cocycle such that $gf\alpha(g,f) \in F$ for all $g \in G$ and $f \in F$.  This construction is independent (up to $G$-isomorphism) of the fundamental domain chosen.  The space $G \times_{\Gamma} X$ is the \textbf{induced action}.  Note that it is measure-preserving when $(X,\nu)$ is measure-preserving.

Let $(f,x) \in F \times X$.  Observe that $g \cdot (f,x) = (f,x)$ if and only if $gf\alpha(g,f) = f$ and $\alpha(g,f)^{-1}x = x$.  Therefore
\[
\stab_{G}(f,x) = f \stab_{\Gamma}(x) f^{-1}
\]
for all $(f,x) \in F \times X$.

\begin{proposition}\label{P:weakamenlattice}
Let $\Gamma$ be a lattice in a locally compact second countable group $G$ and let $(X,\nu)$ be a $\Gamma$-space.  Then $\Gamma \actson (X,\nu)$ weakly amenably if and only if $G \actson G \times_{\Gamma} X$ weakly amenably.
\end{proposition}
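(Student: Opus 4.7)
The plan is to argue both directions directly from the characterization of weak amenability via orbital affine $G$-spaces, working in the realization $G \times_{\Gamma} X \cong F \times X$ with the twisted action $g \cdot (f,x) = (gf\beta(g,f), \beta(g,f)^{-1}x)$ (writing the induction cocycle as $\beta$ to avoid clash with the affine cocycle $\alpha$ from the definition).  The key geometric input is that for $g = f\gamma$ in the unique $F\Gamma$-decomposition one has $\beta(g,e) = \gamma^{-1}$ and hence $g \cdot (e,x) = (f, \gamma x)$; combined with $\stab_{G}(f,x) = f\stab_{\Gamma}(x)f^{-1}$ from the excerpt, this sets up a two-way correspondence between orbital affine $\Gamma$-spaces over $X$ and orbital affine $G$-spaces over $F \times X$, which I exploit separately in each direction.

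For the forward implication, I would start from an orbital affine $G$-space $(A, \alpha)$ over $F \times X$, restrict to $\{e\} \times X$ to obtain an orbital affine $\Gamma$-space with $\alpha_{\Gamma}(\gamma, x) := \alpha(\gamma, (e,x))$ and $A_{0, x} := A_{(e, x)}$, invoke weak amenability of $\Gamma$ to get a $\Gamma$-invariant section $s_{0}$, and extend via $s(f, x) := \alpha^{*}(f, (e, x))^{-1} s_{0}(x)$ on $F \times X$.  The fiber constraint $s(f,x) \in A_{(f,x)}$ follows from the $\alpha^{*}$-equivariance of the fibers applied to $(f,x) = f \cdot (e,x)$.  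The crux of the proof is verifying $G$-equivariance: writing $g(f, x) = (f'', x'')$ and $\gamma := \beta(g, f)^{-1}$ (so $x'' = \gamma x$), the group identity $gf = f''\gamma$ in $G$, which is built into the definition of $\beta$, yields the cocycle identity $\alpha^{*}(gf, (e, x)) = \alpha^{*}(f''\gamma, (e, x))$; expanding both sides via the cocycle property and applying $\Gamma$-invariance of $s_{0}$ gives exactly the required equation $s(f, x) = \alpha^{*}(g, (f, x)) s(g(f, x))$.

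For the reverse implication, I would dually induce an orbital affine $G$-space over $F \times X$ from an orbital affine $\Gamma$-space $(A_{0}, \alpha_{\Gamma})$ by setting constant fibers $A_{(f, x)} := A_{0, x}$ and cocycle $\widehat{\alpha}(g, (f, x)) := \alpha_{\Gamma}(\beta(g, f)^{-1}, x)$.  The cocycle identity for $\widehat{\alpha}$ reduces directly to the cocycle identity $\beta(g_{1}g_{2}, f) = \beta(g_{2}, f) \beta(g_{1}, g_{2} \cdot f)$ for $\beta$, which is automatic from the induction construction; orbitality is inherited, since $g \in \stab_{G}(f, x)$ forces $\beta(g, f)^{-1} \in \stab_{\Gamma}(x)$, and the fiber assignment is $\widehat{\alpha}^{*}$-invariant because of the $\alpha_{\Gamma}^{*}$-invariance of $\{A_{0, x}\}$.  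A $G$-invariant section $s$, supplied by weak amenability of $G \actson G \times_{\Gamma} X$, then restricts to $s_{0}(x) := s(e, x)$, and this $s_{0}$ is $\alpha_{\Gamma}$-invariant by specializing the $G$-equivariance equation to $\gamma \in \Gamma$, using $\widehat{\alpha}(\gamma, (e, x)) = \alpha_{\Gamma}(\gamma, x)$ and $\gamma \cdot (e, x) = (e, \gamma x)$.

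The main obstacle I anticipate is careful bookkeeping of the cocycle conventions.  Reading off the paper's invariance definition $f(x) = \alpha^{*}(g, x) f(gx)$, one sees that $\alpha$ must satisfy the right cocycle identity $\alpha(g_{1}g_{2}, x) = \alpha(g_{2}, x) \alpha(g_{1}, g_{2}x)$; the entire forward-direction computation depends on applying this identity, in precisely the correct order, to the two decompositions $gf = g \cdot f$ and $gf = f'' \cdot \gamma$ of the same element of $G$.  All measurability issues are routine given Borel measurability of the fundamental domain $F$ and of the induction cocycle $\beta$.
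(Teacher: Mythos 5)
Your argument is correct in substance but takes a genuinely different route from the paper. You work directly with the definition of weak amenability, inducing and restricting orbital affine spaces together with their twisted cocycles: the key identity in your forward direction, $\alpha^{*}(gf,(e,x)) = \alpha^{*}(f''\gamma,(e,x))$ coming from $gf = f''\gamma$ and expanded by the cocycle property against the $\Gamma$-invariance of $s_{0}$, does yield the required $G$-invariance of $s(f,x) = \alpha^{*}(f,(e,x))^{-1}s_{0}(x)$, and in the reverse direction the cocycle identity for $\widehat{\alpha}(g,(f,x)) = \alpha_{\Gamma}(\beta(g,f)^{-1},x)$ does reduce to $\beta(g_{1}g_{2},f) = \beta(g_{2},f)\beta(g_{1},g_{2}\cdot f)$, with orbitality inherited via $\stab_{G}(f,x) = f\stab_{\Gamma}(x)f^{-1}$. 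The paper never touches affine spaces: it invokes Theorem \ref{T:amener} to convert weak amenability into amenability of the orbit equivalence relation and then transports invariant means between the two relations, integrating over the fundamental domain in both directions. The paper's route is shorter once Theorem \ref{T:amener} is granted and hides all cocycle bookkeeping inside the mean formalism; yours is self-contained at the level of the definition and is the natural analogue of the classical induction/restriction argument for amenable actions.

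The one step you should not wave away as routine is the evaluation on the slice $\{e\}\times X$, which is $m\times\nu$-null whenever $G$ is nondiscrete. The fibers $A_{(f,x)}$, the cocycle identities, and the invariance equation for a section are only given almost everywhere on $F\times X$ (for each fixed $g$), so neither the restriction of the affine $G$-space to $\{e\}\times X$ in the forward direction nor the restriction $s_{0}(x) := s(e,x)$ in the reverse direction is literally defined; and the obvious alternative of averaging over $F$ instead requires the invariance equation at the $f$-dependent elements $g = f\gamma f^{-1}$, which does not follow from ``for each $g$, almost every $(f,x)$'' by Fubini alone. The standard repair stays entirely inside your framework: either realize the induced action on $(G\times X)/\Gamma$ and work with $\Gamma$-equivariant lifts, or use Fubini to pick a generic slice $\{f_{0}\}\times X$, note that it is invariant under the conjugate lattice $f_{0}\Gamma f_{0}^{-1}$, whose element $f_{0}\gamma f_{0}^{-1}$ acts by $(f_{0},x)\mapsto (f_{0},\gamma x)$, and run both of your constructions verbatim with $e$ replaced by $f_{0}$ and $\Gamma$ by $f_{0}\Gamma f_{0}^{-1}$ (weak amenability transfers across the isomorphism $\gamma\mapsto f_{0}\gamma f_{0}^{-1}$). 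The paper's mean-based proof sidesteps this point because its invariance is quantified as ``for almost every $x$ and all points of the orbit of $x$,'' which is exactly the form needed at $f$-dependent group elements. With that adjustment your proof is complete.
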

\begin{proof}
Assume that $\Gamma \actson (X,\nu)$ weakly amenably.  Then by Theorem \ref{T:amener}, the orbit equivalence relation is amenable so there exists a measurable map $x \mapsto m_{x}$ such that $m_{x}$ is a mean on $\ell^{\infty}[x]$ where $[x]$ is the $\Gamma$-orbit of a point $x$ and such that $m_{y} = m_{x}$ for all $y \in [x]$.

Let $F$ be a fundamental domain for $G / \Gamma$ with cocycle $\alpha : G \times F \to \Gamma$ such that $gf\alpha(g,f) \in F$ and let $\rho$ be the normalized Haar measure on $F$.  Observe that the $G$-orbit of a point $(f,x)$ is $G \cdot (f,x) = F \times [x]$.  Given $q \in L^{\infty}(F \times [x], \rho \times \mathrm{count})$ (where $\mathrm{count}$ is the counting measure on $[x]$), write $q_{f}(x) = q(f,x)$ to be the fiber of $q$ over $f \in F$.  Then $q_{f} \in \ell^{\infty}[x]$ for almost every $f \in F$.  Define a mean $M_{f,x}$ on $L^{\infty}(F \times [x])$ by
\[
M_{f,x}(q) = \int_{F} m_{x}(q_{f_{0}})~d\rho(f_{0}).
\]
One easily checks that $M_{f,x}(\bbone) = 1$ and that $M_{f,x} \geq 0$ since $m_{x}$ is a mean.  Observe that
\[
M_{g \cdot (f,x)}(q) = M_{gf\alpha(g,f), \alpha(g,f)^{-1}x}(q)
= \int_{F} m_{\alpha(g,f)^{-1}x}(q_{f_{0}})~d\rho(f_{0}) = \int_{F} m_{x}(q_{f_{0}})~d\rho(f_{0}) = M_{f,x}(q)
\]
so $M_{f,x}$ is invariant.  The map $(f,x) \mapsto M_{f,x}$ is measurable since $x \mapsto m_{x}$ is (and $M_{f,x}$ does not depend on $f$).  Therefore the orbit equivalence relation of $G \actson G \times_{\Gamma} X$ is amenable hence the action is weakly amenable by Theorem \ref{T:amener}.

Conversely, assume that $G \actson G \times_{\Gamma} X$ is weakly amenable.  Let $(f,x) \mapsto M_{f,x}$ be an invariant mean.  Define $m_{x}$ by, for $q \in \ell^{\infty}[x]$, set $\widetilde{q}(f,x) = q(x) \in L^{\infty}(F \times [x])$ and set
\[
m_{x}(q) = \int_{F} M_{f,x}(\widetilde{q})~d\rho(f).
\]
Then $x \mapsto m_{x}$ is measurable since $(f,x) \mapsto M_{f,x}$ is and $m_{x}$ is a mean.  Clearly for $\gamma \in \Gamma$,
\[
m_{\gamma x}(q) = \int_{F} M_{f,\gamma x}(\widetilde{q})~d\rho(f)
= \int_{F} M_{(f \gamma f^{-1}) \cdot (f,x)}(\widetilde{q})~d\rho(f)
= \int_{F} M_{f,x}(\widetilde{q})~d\rho(f) = m_{x}(q)
\]
using that $M_{f,x}$ is invariant under the $G$-action.  Therefore $\Gamma \actson (X,\nu)$ has an amenable orbit equivalence relation hence acts weakly amenably.
\end{proof}

\subsection{The Howe-Moore Property}

\begin{definition}[Howe-Moore \cite{HM79}]
A locally compact second countable group $G$ has the \textbf{Howe-Moore property} when every irreducible unitary representation of $\pi : G \to \mathcal{U}(\mathcal{H})$ without nontrivial invariant vectors has matrix coefficients vanishing at infinity: $\lim_{g \to \infty} \langle \pi(g)x, y \rangle = 0$ as $g$ leaves compact sets for any $x,y \in \mathcal{H}$.
\end{definition}

\begin{theorem}[Schmidt \cite{schmidtmixing}]\label{T:HMmixing}
Let $G$ be a locally compact second countable group.  Then $G$ has the Howe-Moore property if and only if every ergodic $G$-space is mixing: if $(X,\nu)$ is a $G$-space then $\lim_{g \to \infty} \nu(gE \cap F) = \nu(E)\nu(F)$ as $g$ leaves compact sets for all measurable $E,F \subseteq X$.
\end{theorem}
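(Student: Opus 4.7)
The plan is to prove both implications by translating between matrix coefficients of unitary representations and mixing properties of measure-preserving actions, using the Koopman representation on $L^{2}_{0}(X,\nu) = L^{2}(X,\nu) \ominus \mathbb{C}\mathbbm{1}$.

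First I would treat the forward direction (Howe-Moore $\Rightarrow$ every ergodic action is mixing). Given an ergodic measure-preserving $G$-space $(X,\nu)$, ergodicity is equivalent to the Koopman representation $\pi_{X}$ on $L^{2}_{0}(X,\nu)$ having no nonzero $G$-invariant vectors, and mixing is equivalent to $\langle \pi_{X}(g)f,h\rangle \to 0$ as $g$ leaves compact sets for all $f,h \in L^{2}_{0}(X,\nu)$ (applied to $f = \mathbbm{1}_{E} - \nu(E)$ and $h = \mathbbm{1}_{F} - \nu(F)$ this yields $\nu(gE \cap F) \to \nu(E)\nu(F)$). I would then decompose $\pi_{X}$ as a direct integral of irreducibles $\int^{\oplus}\pi_{\xi}\,d\mu(\xi)$. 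Since $\pi_{X}$ has no invariant vectors, the trivial representation cannot appear on a set of positive $\mu$-measure, so $\mu$-almost every $\pi_{\xi}$ is nontrivial; Howe-Moore applied fiberwise gives pointwise decay of each matrix coefficient $\langle \pi_{\xi}(g)v_{\xi},w_{\xi}\rangle$, and dominated convergence (with the integrable bound $\|v_{\xi}\|\|w_{\xi}\|$) promotes this to decay of $\langle \pi_{X}(g)v,w\rangle$ along any sequence leaving compact sets.

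For the backward direction I would use a Gaussian action construction. Given an irreducible unitary representation $\sigma : G \to \mathcal{U}(\mathcal{H})$ without invariant vectors, pass to the underlying orthogonal representation on the real Hilbert space $\mathcal{H}_{\mathbb{R}}$ and build the associated Gaussian probability space $(X,\nu)$ on which $G$ acts in a measure-preserving way and whose Koopman representation decomposes as a sum of symmetric tensor powers of $\sigma$ (in particular containing $\sigma$ as a subrepresentation on the first chaos). Since $\sigma$ has no invariant vectors neither does any nontrivial symmetric power, so $L^{2}_{0}(X,\nu)$ has no invariant vectors and $(X,\nu)$ is ergodic. By hypothesis it is mixing, so the matrix coefficients of $L^{2}_{0}(X,\nu)$, and hence of its subrepresentation $\sigma$, vanish at infinity.

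The main obstacle I expect is the backward direction's Gaussian construction: setting up the real/complex bookkeeping cleanly and verifying that the first chaos really does contain $\sigma$ in a way that forces matrix coefficient decay of $\sigma$ itself. The forward direction's direct integral decomposition is routine but still requires care to ensure the trivial representation appears only on a $\mu$-null set of fibers, since Howe-Moore as stated in the excerpt is phrased only for irreducible representations and the Koopman representation is typically far from irreducible.
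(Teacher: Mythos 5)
The paper itself gives no argument for this theorem (it is quoted from Schmidt), so the only question is whether your proposal is sound. Your forward direction is: the direct-integral decomposition of the Koopman representation into irreducibles, the observation that a positive-measure set of trivial fibers would produce an invariant section and hence an invariant vector, and the dominated-convergence step with dominating function $\|v_{\xi}\|\,\|w_{\xi}\|$ all work for a second countable group acting on a standard probability space. The backward direction, however, has a genuine gap, and it is not the real/complex bookkeeping you flagged. The Koopman representation of the Gaussian action built from the realification $\mathcal{H}_{\mathbb{R}}$ of $\sigma$ is $\bigoplus_{n \geq 1} S^{n}(\mathcal{H}_{\mathbb{R}} \otimes_{\mathbb{R}} \mathbb{C})$, and $\mathcal{H}_{\mathbb{R}} \otimes_{\mathbb{R}} \mathbb{C} \cong \sigma \oplus \overline{\sigma}$; consequently $S^{2}$ contains a copy of $\sigma \otimes \overline{\sigma}$, whose invariant vectors are the Hilbert--Schmidt self-intertwiners of $\sigma$. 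So your claim ``since $\sigma$ has no invariant vectors neither does any nontrivial symmetric power'' fails exactly when $\sigma$ is finite-dimensional: then $\sigma \otimes \overline{\sigma}$ contains the trivial representation and the Gaussian action is \emph{not} ergodic (for a nontrivial character of $\mathbb{R}$ this is the rotation action on $(\mathbb{R}^{2},\mathrm{Gauss})$, where the radius is a nonconstant invariant function). This is precisely the case you cannot afford to lose, because a nontrivial finite-dimensional irreducible representation never has matrix coefficients vanishing at infinity, so the whole content of the backward direction is to show the hypothesis excludes such representations; your construction simply does not apply to them.

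The repair is a dichotomy. If $\sigma$ is irreducible and infinite-dimensional, then $\sigma \oplus \overline{\sigma}$ has no finite-dimensional subrepresentation, hence is weakly mixing, hence the Gaussian action is ergodic and your argument goes through verbatim (the first chaos contains $\sigma$, so mixing of the action kills its coefficients). If $\sigma$ is nontrivial and finite-dimensional, you must instead use the hypothesis directly: let $K = \overline{\sigma(G)} \subseteq \mathcal{U}(n)$, a nontrivial compact group on which $G$ acts by left translation through $\sigma$. Density of the image makes Haar measure ergodic, but the action is not mixing: by Peter--Weyl the Koopman representation on $L^{2}_{0}(K)$ is a sum of nontrivial finite-dimensional representations, whose coefficients cannot vanish at infinity (equivalently, if $g_{n} \to \infty$ then $\sigma(g_{n_{j}}) \to k_{0}$ in $K$ along a subsequence and $\nu(g_{n_{j}}E \cap E) \to \nu(k_{0}E \cap E)$, which differs from $\nu(E)^{2}$ for a suitable $E$). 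This contradicts the hypothesis, so no such $\sigma$ exists, and together with the infinite-dimensional case the backward direction is complete.
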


The main result of \cite{CP12} is:
\begin{theorem}[Creutz-Peterson \cite{CP12}]\label{T:CP12}
Let $G$ be a product of at least two simple nondiscrete noncompact locally compact second countable groups with the Howe-Moore property, at least one of which has property $(T)$, at least one of which is totally disconnected and such that every connected simple factor has propertyy $(T)$.  Let $\Gamma < G$ be an irreducible lattice.  Then any ergodic measure-preserving action of $\Gamma$ has finite stabilizers almost surely or finite index stabilizers almost surely.
\end{theorem}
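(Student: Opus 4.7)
The plan is to induce $\Gamma \actson (X,\nu)$ to a $G$-action, analyze the associated invariant random subgroup of $G$, and extract a factor-by-factor dichotomy by combining Howe-Moore mixing with property $(T)$ and the compact-open subgroup structure of a totally disconnected factor. Because $\Gamma \actson (X,\nu)$ is ergodic, both ``$\stab_{\Gamma}(x)$ is finite'' and ``$\stab_{\Gamma}(x)$ has finite index'' are $\Gamma$-invariant hence $0$--$1$ events; it therefore suffices to derive a contradiction from the assumption that $\stab_{\Gamma}(x)$ is infinite of infinite index almost surely. Inducing to $G \times_{\Gamma} X$ yields (by irreducibility of $\Gamma$ and ergodicity of $\nu$) an ergodic measure-preserving $G$-space whose stabilizers are $f\,\stab_{\Gamma}(x)\,f^{-1}$; pushing the stabilizer map forward produces a $G$-ergodic conjugation-invariant probability measure $\theta$ on the Chabauty space $\mathrm{Sub}(G)$.

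Write $G = G_{1} \times \cdots \times G_{k}$ and push $\theta$ forward along $H \mapsto \overline{\pi_{j}(H)}$ to a conjugation-invariant probability $\theta_{j}$ on $\mathrm{Sub}(G_{j})$. The central step is to show, via Theorem \ref{T:HMmixing}, that any ergodic invariant random subgroup of a simple Howe-Moore group is concentrated on $\{e\}$, on compact subgroups, or on the whole group: if a positive $\theta_{j}$-measure set consisted of proper noncompact closed subgroups, mixing of the conjugation $G_{j}$-action would be incompatible with the existence of a conjugation-invariant probability measure on its support. Ergodicity of $\theta$ then renders the set $S \subseteq \{1,\ldots,k\}$ of factors onto which a typical stabilizer projects densely almost surely constant; if $S$ were empty, $\overline{H}$ would be compact and so $\stab_{\Gamma}(x) = H \cap \Gamma$ would be finite (as $\Gamma$ is discrete), contradicting our standing assumption, so $S \ne \emptyset$.

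Fix a totally disconnected factor $G_{j_{1}}$ (supplied by hypothesis). If $j_{1} \in S$, then compact-open subgroups forming a basis at the identity in $G_{j_{1}}$ place a compact-open $U \leq G_{j_{1}}$ inside the closure of a typical stabilizer; density of $\pi_{j_{1}}(\Gamma)$ in $G_{j_{1}}$ (Proposition \ref{P:irrlattint}), discreteness, and finite covolume of $\Gamma$ in $G$ then force $\stab_{\Gamma}(x)$ to be of finite index in $\Gamma$, the required contradiction. If $j_{1} \notin S$, then every factor in $S$ is non-totally-disconnected; by the hypothesis that every connected simple factor has property $(T)$, at least one (indeed every) factor in $S$ has property $(T)$. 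In this case $\overline{H}$ lies in $\prod_{j \in S}G_{j}$, and because the orbit $G \cdot (f,x) \cong G/\stab_{G}(f,x)$ carries the action but misses the totally disconnected factor direction, one realizes $\stab_{\Gamma}(x)$ as coamenable in $\Gamma$ via an orbital factor; property $(T)$ of the $S$-factor then promotes the coamenable stabilizer to a finite-index one, again a contradiction.

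The main obstacle is the IRS dichotomy of the second paragraph: showing that no ergodic conjugation-invariant probability measure on $\mathrm{Sub}(G_{j})$ can be supported on proper noncompact subgroups. Theorem \ref{T:HMmixing} provides the heart of the argument, but its implementation requires care because the Chabauty space is noncompact and the conjugation action is not free; one must exploit that conjugation-invariance of $\theta_{j}$ combined with mixing forces any non-trivial non-full subgroup in the support to have precompact conjugacy class. The secondary technical point is orchestrating the roles of the property $(T)$ factor, the totally disconnected factor, and the irreducibility of $\Gamma$ (via Proposition \ref{P:irrlattint}) so as to transport the closure-level dichotomy for $\overline{H} \leq G$ back to $\stab_{\Gamma}(x) \leq \Gamma$; this is precisely the role of the triple hypothesis of ``at least one totally disconnected factor'', ``at least one factor with property $(T)$'', and ``every connected simple factor has property $(T)$''.
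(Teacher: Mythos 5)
Your central step is false as stated, and the proof collapses there. You claim that an ergodic conjugation-invariant probability measure on $S(G_{j})$, $G_{j}$ simple with the Howe-Moore property, must concentrate on $\{e\}$, on compact subgroups, or on $G_{j}$ itself, because mixing of the conjugation action would be ``incompatible'' with a support consisting of proper noncompact closed subgroups. Lattices give a counterexample: for $G_{j} = \mathrm{PSL}_{2}(\mathbb{R})$ (or any simple Howe-Moore group admitting a lattice $\Lambda$), the pushforward of Haar measure on $G_{j}/\Lambda$ under $g\Lambda \mapsto g\Lambda g^{-1}$ is an ergodic invariant random subgroup supported on proper noncompact (infinite discrete) subgroups, and the corresponding action is itself mixing by Howe-Moore, so there is no incompatibility whatsoever. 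The way mixing is actually used (both in the arguments of this paper, e.g.\ Theorem \ref{T:stuff}, and in the cited source) is element-wise and exploits the countability of $\Gamma$: if the action is not essentially free, some fixed $\gamma \ne e$ fixes a positive-measure set; if $\langle \mathrm{proj}_{G_{j}}\gamma\rangle$ is unbounded, then the positive-measure set in the projected ($(s_{j})_{*}\nu$-nonfree) action fixed by $\mathrm{proj}_{G_{j}}\gamma$ must, by Theorem \ref{T:HMmixing}, have full measure, forcing $\mathrm{proj}_{G_{j}}\gamma$ into the kernel and, by simplicity and ergodicity, forcing dense projection of the stabilizers; the remaining elements are torsion. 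No classification of invariant random subgroups of the factors is available or needed.

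Beyond that, your third paragraph asserts rather than proves the two hardest implications. That a typical stabilizer projecting densely into a totally disconnected factor, together with discreteness and finite covolume of $\Gamma$, ``forces $\stab_{\Gamma}(x)$ to be of finite index'' is essentially the main content of \cite{CP12} (it rests on the commensurator/contractive-factor machinery and property $(T)$), not a consequence of the facts you list; note also that if $j_{1} \in S$ the closure of the projection is all of $G_{j_{1}}$, so the appeal to a compact-open subgroup inside it does no work. Likewise, ``property $(T)$ of the $S$-factor promotes the coamenable stabilizer to a finite-index one'' is exactly the delicate point: $\Gamma$ itself need not have property $(T)$ when only one factor does, and transferring $(T)$ from a factor to the lattice action requires the relative-$(T)$/resolution apparatus (cf.\ Theorem \ref{T:resolutions} and Proposition \ref{P:reslatts}) or the full hypothesis of \cite{CP12} that every connected factor has $(T)$. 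Finally, be aware that this statement is quoted from \cite{CP12} and is not reproved in the present paper; its actual proof is substantially longer than the outline you propose, and the outline as written does not close either of these gaps.
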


\subsection{Semisimple Groups}

The main class of groups having the Howe-Moore property are the simple real and $p$-adic Lie groups.  Semisimple groups are almost direct products of such groups and as such serve as a main example of our results.  We remark that automorphism groups of regular trees also have the Howe-Moore property and so serve as another example.

\begin{definition}
A \textbf{semisimple group} is an almost direct product of simple real and $p$-adic Lie groups.
\end{definition}

\begin{theorem}[Rothman \cite{rothman}]\label{T:connHMLie}
Let $G$ be a simple connected locally compact second countable group with the Howe-Moore property.  Then $G$ is a simple real Lie group.
\end{theorem}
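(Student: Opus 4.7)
The plan is to apply the Gleason--Yamabe structure theorem for connected locally compact groups in conjunction with the simplicity hypothesis. Recall that Gleason--Yamabe (the solution to Hilbert's fifth problem of Montgomery--Zippin) asserts that every connected locally compact group $G$ has, in each neighborhood of the identity, a compact normal subgroup $K$ such that $G/K$ is a Lie group.

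First I would show that $G$ is itself a Lie group. Fix an open neighborhood $U$ of the identity with compact closure that is not all of $G$ (the trivial case $G = \{e\}$ can be dismissed separately). Apply Gleason--Yamabe to obtain a compact normal subgroup $K \subseteq U$ with $G/K$ a Lie group. Since $K$ is a proper closed normal subgroup of the simple group $G$, simplicity forces $K = \{e\}$ and hence $G$ is already a Lie group.

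Second, I would observe that any Lie group over a non-archimedean local field is totally disconnected in its natural topology, so a connected Lie group must be a real Lie group. Thus $G$ is a connected simple real Lie group in the sense of having no proper closed normal subgroups.

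The last step is to upgrade ``no proper closed normal subgroup'' to ``simple Lie algebra''. This is where the Howe--Moore hypothesis does the essential work: if the Lie algebra $\mathfrak{g}$ were abelian, then $G$ would be a connected abelian real Lie group, isomorphic to $\mathbb{R}^{n} \times \mathbb{T}^{m}$, and the nontrivial unitary characters of such a group furnish one-dimensional unitary irreducible representations whose matrix coefficients have modulus one and hence fail to vanish at infinity, contradicting Howe--Moore unless $G$ is trivial. So $\mathfrak{g}$ is non-abelian, and any nontrivial ideal of $\mathfrak{g}$ integrates to a connected normal analytic subgroup whose closure is a nontrivial closed normal subgroup of $G$; simplicity forces this closure to equal $G$, and a standard semisimple argument then shows $\mathfrak{g}$ itself is simple.

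The main obstacle is the first step: invoking Gleason--Yamabe correctly and verifying that the compact normal subgroup it produces is genuinely proper (which is why one needs to arrange $U$ to have compact closure strictly smaller than $G$). Once $G$ is known to be a Lie group, the remainder is essentially classical structure theory, with the Howe--Moore hypothesis entering only to rule out the abelian Lie algebra case.
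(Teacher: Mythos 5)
The paper offers no proof of this statement at all---it is imported as a citation to Rothman---so your proposal has to stand on its own rather than be measured against an argument in the text. Its first half is fine: Gleason--Yamabe gives, inside any identity neighborhood of a connected locally compact group, a compact (hence closed) normal subgroup with Lie quotient, and after arranging the neighborhood to have proper closure, topological simplicity forces that subgroup to be trivial, so $G$ is a connected real Lie group. (The detour through non-archimedean Lie groups is unnecessary: Gleason--Yamabe already produces a real Lie quotient.)

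The genuine gap is in the passage from ``topologically simple connected Lie group'' to ``simple Lie algebra''. The inference you describe---a nontrivial proper ideal integrates to a normal analytic subgroup, its closure is closed normal, and simplicity forces that closure to be all of $G$---ends in no contradiction: nothing you have said excludes a proper normal analytic subgroup that is merely dense, and the ``standard semisimple argument'' you then invoke presupposes that the Lie algebra is semisimple, which you never establish (the case of a nontrivial solvable radical is untreated). To close the argument you need actual structure theory rather than the bare ideal-closure observation: the solvable radical of a connected Lie group is a \emph{closed} connected normal subgroup, so simplicity makes it trivial or all of $G$; the solvable case is excluded because a nontrivial connected solvable Lie group always has a nontrivial proper closed characteristic subgroup (the nilradical if it is not nilpotent, the center if it is nilpotent and nonabelian, and any nontrivial proper closed subgroup in the abelian case, which always exists); hence the Lie algebra is semisimple, the center (which is closed) must be trivial by simplicity, and a centerless connected semisimple Lie group is the direct product of the closed normal subgroups corresponding to its simple ideals, so simplicity leaves exactly one of them. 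Note also that your use of Howe--Moore against the abelian case leaks: for a torus the vanishing-at-infinity condition is vacuous, so no contradiction arises from unitary characters; the abelian case should instead be killed by simplicity itself. Indeed, with ``simple'' taken as a hypothesis the Howe--Moore property is not really needed for the statement as phrased; it is essential only for the stronger form of Rothman's theorem in which simplicity is not assumed.
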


\begin{theorem}[Howe-Moore \cite{HM79}]
Every simple real and $p$-adic Lie group has the Howe-Moore property.
\end{theorem}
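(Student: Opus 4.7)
The plan is to deduce Howe-Moore from the classical Mautner phenomenon together with the structure theory of simple real and $p$-adic Lie groups. Let $\pi : G \to \mathcal{U}(\mathcal{H})$ be an irreducible unitary representation without invariant vectors. I want to show that for fixed $\xi,\eta \in \mathcal{H}$, the matrix coefficient $g \mapsto \langle \pi(g)\xi, \eta\rangle$ tends to zero as $g$ leaves compact sets.

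First I would use the Cartan (KAK) decomposition $G = KAK$ (available in both the real case and, via the Bruhat–Tits building, in the $p$-adic case with $A$ replaced by a positive monoid in a maximal split torus) to reduce the problem. Writing $g_n = k_n a_n k_n'$, by compactness of $K$ I can pass to a subsequence with $k_n \to k$ and $k_n' \to k'$. Strong continuity of $\pi$ on $K$ then reduces the problem to showing that whenever $a_n \to \infty$ inside a fixed positive Weyl chamber $A^+$, one has $\pi(a_n)\xi \rightharpoonup 0$ weakly.

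Extract a weak subsequential limit $\pi(a_n)\xi \rightharpoonup \xi_0$ (the closed unit ball in $\mathcal{H}$ is weakly compact). The key step is the Mautner trick: for any $u$ in the contracting horospherical subgroup $U^- = \{ u \in G : a_n^{-1} u a_n \to e \}$, strong continuity gives $\pi(a_n^{-1} u a_n)\xi \to \xi$, so
\[
\pi(u)\pi(a_n)\xi - \pi(a_n)\xi = \pi(a_n)\bigl(\pi(a_n^{-1} u a_n)\xi - \xi\bigr) \longrightarrow 0
\]
in norm. Taking weak limits yields $\pi(u)\xi_0 = \xi_0$, so $\xi_0$ is $U^-$-invariant. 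By the dual identity $\langle \pi(a_n)\xi,\eta\rangle = \langle \xi, \pi(a_n^{-1})\eta\rangle$, the same argument applied to $a_n^{-1}$ produces a weak limit $\eta_0$ of $\pi(a_n^{-1})\eta$ that is $U^+$-invariant, and $\langle \xi_0,\eta\rangle = \langle \xi,\eta_0\rangle$.

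The final step is to show that the subspaces $\mathcal{H}^{U^\pm}$ of $U^\pm$-invariants are actually $G$-invariant, whence by irreducibility and the absence of invariant vectors they are zero. Here is where simplicity enters: the closed subgroup of $G$ stabilizing $\xi_0$ contains $U^-$, hence also its normal closure. In a simple (real or $p$-adic) Lie group, $U^-$ together with its opposite $U^+$ generates a non-central normal subgroup, and so (modulo center, which acts by scalars on the irreducible $\pi$) the normal closure of $U^-$ is all of $G$; since $\pi$ has no invariant vectors, $\xi_0 = 0$, and similarly $\eta_0 = 0$, so the matrix coefficient vanishes. The main obstacle is this last structural step, which requires careful case analysis in the $p$-adic setting (and explicit verification that $U^-$ really does generate a dense/normal subgroup modulo the center), but once the Cartan decomposition and Mautner's lemma are in hand, the argument is essentially forced.
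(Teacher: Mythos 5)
The paper does not prove this statement (it is quoted from Howe--Moore \cite{HM79}), so your outline has to be judged against the standard argument, and its skeleton is indeed the classical one: reduce via the Cartan decomposition to a sequence $a_n\to\infty$ in a Weyl chamber, extract weak limits, and use the Mautner trick to show that any weak limit $\xi_0$ of $\pi(a_n)\xi$ is fixed by the horospherical subgroup contracted by the $a_n$. Up to that point the argument is correct (modulo the routine caveat that, after passing to a subsequence, the contracted group $\{u : a_n^{-1}ua_n\to e\}$ may be a proper nontrivial horospherical subgroup rather than the full maximal unipotent, so the concluding step must work for an arbitrary such subgroup).

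The gap is in your final step, which is exactly where the substance of the theorem lies. You claim that ``the closed subgroup of $G$ stabilizing $\xi_0$ contains $U^-$, hence also its normal closure.'' That inference is a non-sequitur: the stabilizer of a vector is closed but not normal, and a subgroup containing $U^-$ need not contain the normal closure of $U^-$ --- the minimal parabolic $P^-$ contains $U^-$, while the normal closure of $U^-$ in a simple group is (up to center) all of $G$. So simplicity plus ``the stabilizer contains $U^-$'' yields nothing by itself. What is actually required is the Mautner phenomenon: a nonzero vector invariant under a nontrivial horospherical subgroup is $G$-invariant. The standard proofs obtain this by reducing to $\mathrm{SL}_2$ subgroups attached to roots: either quote the classification of irreducible unitary representations of $\mathrm{SL}_2(\mathbb{R})$ (resp.\ $\mathrm{SL}_2(\mathbb{Q}_p)$), or argue directly with the bi-$U^+$-invariant positive definite function $\varphi(g)=\langle\pi(g)\xi_0,\xi_0\rangle$: a short matrix computation shows that for a fixed torus element $a$ one can choose $x,z$ (depending on $y$) with $u_xv_yu_z\to a$ as $y\to0$, so bi-invariance and continuity force $\varphi(a)=\varphi(e)=1$, hence $\pi(a)\xi_0=\xi_0$ by equality in Cauchy--Schwarz; the genuine Mautner lemma then gives invariance under both opposite horosphericals of that $\mathrm{SL}_2$, and propagating through the root subgroups (using that the isotropic simple group is generated by them) gives $G$-invariance, so $\xi_0=0$. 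Without supplying this step --- and it is not ``essentially forced,'' it is the heart of the proof --- your argument does not close.
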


\begin{theorem}[Zimmer \cite{zimmer2}]\label{T:countableessfree}
Let $G$ be a noncompact nondiscrete simple real Lie group and let $(X,\nu)$ be a nontrivial ergodic measure-preserving $G$-space.  Let $\Lambda$ be any countable subgroup of $G$.  Then the restriction of the action to $\Lambda$ on $(X,\nu)$ is essentially free.
\end{theorem}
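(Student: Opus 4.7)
The plan is to prove, for each fixed $\lambda \in \Lambda \setminus \{e\}$, that the fixed-point set $F_\lambda = \{x \in X : \lambda x = x\}$ is $\nu$-null; essential freeness of $\Lambda \actson (X,\nu)$ then follows by countable subadditivity applied to $\bigcup_{\lambda \neq e} F_\lambda$. The kernel of the action is a closed normal subgroup of the simple group $G$ and so, since $(X,\nu)$ is nontrivial, is contained in the (discrete) center; passing to the adjoint form we may assume the action is faithful, so that $\lambda \neq e$ implies $\lambda$ acts nontrivially.

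The basic tool is mixing. Simple real Lie groups have the Howe-Moore property, so by Schmidt's theorem (Theorem \ref{T:HMmixing}) the ergodic measure-preserving action $G \actson (X,\nu)$ is mixing: $\nu(g_n E \cap E) \to \nu(E)^2$ for every measurable $E \subseteq X$ whenever $g_n \to \infty$ in $G$. The easier case is when $\lambda^n \to \infty$ in $G$: since $\lambda$ fixes $F_\lambda$ pointwise, $\lambda^n F_\lambda = F_\lambda$ for every $n$, so $\nu(F_\lambda) = \nu(\lambda^n F_\lambda \cap F_\lambda) \to \nu(F_\lambda)^2$, forcing $\nu(F_\lambda) \in \{0,1\}$; the value $1$ would make $\lambda$ act trivially, contradicting faithfulness.

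The remaining case is when $\lambda$ is elliptic, i.e., lies in some compact subgroup of $G$, so that the sequence $\{\lambda^n\}$ is bounded and mixing cannot be invoked along powers of $\lambda$. Instead I pass to conjugates: for any $g \in G$ and any $\mu \in \langle \lambda, g\lambda g^{-1}\rangle$, a point in $F_\lambda \cap g F_\lambda = F_\lambda \cap F_{g\lambda g^{-1}}$ is fixed by both $\lambda$ and $g\lambda g^{-1}$ and hence by $\mu$, giving
\[
F_\lambda \cap g F_\lambda \;\subseteq\; F_\mu.
\]
The plan is to construct a sequence $g_n \to \infty$ in $G$ such that each subgroup $\langle \lambda, g_n \lambda g_n^{-1}\rangle$ contains some element $\mu_n$ with $\mu_n^k \to \infty$ as $k \to \infty$. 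Applying the first case to $\mu_n$ yields $\nu(F_{\mu_n}) = 0$, hence $\nu(F_\lambda \cap g_n F_\lambda) = 0$ for every $n$, and mixing then forces $\nu(F_\lambda)^2 = \lim_n \nu(F_\lambda \cap g_n F_\lambda) = 0$.

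The principal obstacle is the Lie-theoretic construction of $g_n$. Unboundedness of the conjugacy class is immediate, since $C_G(\lambda)$ is a proper closed subgroup of the simple noncompact $G$ and so $G/C_G(\lambda)$ is unbounded. What has content is a genericity statement: the set of $g \in G$ for which $\lambda$ and $g\lambda g^{-1}$ lie in a common compact subgroup of $G$ is a proper closed analytic subset (cut out by positive-codimension conditions in $\mathrm{Lie}(G)$ coming from the requirement that the two commute modulo a bounded subgroup of $G$), so its complement is open, dense, and unbounded, and a sequence $g_n \to \infty$ in this complement can be extracted. Executing this genericity argument carefully, and separately tracking the (at worst central) kernel produced by passage to the adjoint form, is the technical heart of the proof.
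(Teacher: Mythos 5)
The paper itself offers no proof of this statement---it is quoted from Zimmer \cite{zimmer2}, with the remark that a direct proof appears in \cite{CP12}---so your argument has to stand on its own. Its first half does: reducing essential freeness of $\Lambda$ to $\nu(F_\lambda)=0$ for each fixed $\lambda\neq e$, and handling every $\lambda$ with $\overline{\langle\lambda\rangle}$ noncompact by mixing (Howe--Moore plus Theorem \ref{T:HMmixing}) along a subsequence of powers, so that $\nu(F_\lambda)\in\{0,1\}$ and the value $1$ is excluded because the kernel of the action is a closed normal subgroup of the simple group $G$ while the action is nontrivial and ergodic. This is exactly the computation the paper itself runs inside the proof of Theorem \ref{T:stuff}. (Minor quibble: rather than ``passing to the adjoint form,'' which could kill central elements of $\Lambda$, you should divide by the kernel of the action; in the center-free setting in which the theorem is used this kernel is trivial anyway.)

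The elliptic case, which you yourself call the technical heart, is where there is a genuine gap: both statements it rests on are asserted rather than proved, and the justification offered for the first is incorrect. (i) You claim that $B_\lambda=\{g\in G:\ \lambda\ \text{and}\ g\lambda g^{-1}\ \text{lie in a common compact subgroup}\}$ is a proper closed analytic subset ``cut out by \dots the requirement that the two commute modulo a bounded subgroup.'' Lying in a common compact subgroup is not a commutation condition in any sense (maximal compact subgroups are nonabelian), the set is not visibly closed or analytic, and---the real issue---properness and, crucially, unboundedness of its complement are precisely what must be proved. Note that $B_\lambda$ always contains the centralizer $Z_G(\lambda)$, which is noncompact for many elliptic $\lambda$ (for instance $\lambda$ in the centralizer $M$ of a maximal split torus), so $B_\lambda$ is not a small or bounded set and genericity cannot simply be waved at; you must actually produce $g_n\to\infty$ outside $B_\lambda$, uniformly over all simple $G$ and all elliptic $\lambda\neq e$, and no construction is given. (ii) Even granted such $g_n$, you need an element $\mu_n\in\langle\lambda,g_n\lambda g_n^{-1}\rangle$ with unbounded powers; the inference from ``no common compact subgroup'' to ``the generated group contains a non-elliptic element'' is the true but nontrivial fact that a subgroup of a semisimple Lie group all of whose elements are elliptic is relatively compact, which you neither state nor prove. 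This second gap is avoidable---mixing only needs some sequence leaving compact sets, so it suffices that the closed group $\overline{\langle\lambda,g_n\lambda g_n^{-1}\rangle}$ be noncompact, and then the common fixed set of positive measure is killed by the same $\{0,1\}$ argument---but you did not make that move, and in any case it does not repair (i), which remains the missing Lie-theoretic lemma at the core of your elliptic case.
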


\noindent A direct, easy proof of the previous statement appears in \cite{CP12} though it follows from the work of Zimmer in \cite{zimmer2}.

\subsection{Ergodic Decomposition}

Given a $G$-space $(X,\nu)$, consider the $G$-invariant subalgebra of invariant functions $\mathcal{F} = \{ f \in L^{\infty}(X,\nu) : g \cdot f = f \text{ for all $g \in G$} \}$ and let $(\rpf{X}{G},\overline{\nu})$ be the Mackey point realization (Theorem \ref{T:mackey}) of this algebra.  The space $\rpf{X}{G}$ is referred to as the \textbf{ergodic components} of $G \actson (X,\nu)$.  Let $\pi : (X,\nu) \to (\rpf{X}{G},\overline{\nu})$ be the quotient map.  Then $(\pi^{-1}(y),D_{\pi}(y))$ is an ergodic $G$-space for each $y \in \rpf{X}{G}$ and the disintegration decomposition $\nu = \int_{\rpf{X}{G}} D_{\pi}(y)~d\overline{\nu}(y)$ is the \textbf{ergodic decomposition}.

We remark that $G$ acts trivially on $\rpf{X}{G}$ (an easy consequence of the construction: if it did not act trivially there would be some bounded Borel function on $\rpf{X}{G}$ that is not $G$-invariant but the algebra of bounded Borel functions on $\rpf{X}{G}$ consist only of invariant functions).  From this, it is easy to see that each component $(\pi^{-1}(y),D_{\pi}(y))$ is an ergodic $G$-space.

We will need the following fact about ergodic decomposition in what follows:
\begin{proposition}\label{P:ergdecomptrivial}
Let $(X,\nu)$ be a $G$-space and $(Y,\eta)$ be a $G$-space where $G$ acts trivially.  Then $\rpf{(X \times Y)}{G} = (\rpf{X}{G}) \times Y$.
\end{proposition}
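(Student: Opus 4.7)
The plan is to identify the $G$-invariant subalgebra $\mathcal{F}_{X\times Y}\subseteq L^{\infty}(X\times Y,\nu\times\eta)$ explicitly, and then show it is the same subalgebra that defines the Mackey realization of $(\rpf{X}{G})\times Y$. Once both ergodic components spaces are described as Mackey realizations of the same closed $G$-invariant subalgebra, uniqueness of the Mackey point realization gives the claimed isomorphism.

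First I would write down what the $G$-action on $X\times Y$ looks like: since $G$ acts trivially on $Y$, we have $g\cdot(x,y)=(gx,y)$. A function $f\in L^{\infty}(X\times Y)$ is $G$-invariant if and only if $f(gx,y)=f(x,y)$ for almost every $(x,y)$ and every $g\in G$. Using Fubini, for $\eta$-a.e.\ $y\in Y$ the slice $f_{y}(x)=f(x,y)$ is a $G$-invariant element of $L^{\infty}(X,\nu)$, so $f_{y}\in\mathcal{F}_{X}=\{h\in L^{\infty}(X,\nu):g\cdot h=h\}$. Conversely, any bounded measurable function $f$ on $X\times Y$ whose slices $f_{y}$ lie in $\mathcal{F}_{X}$ for a.e.\ $y$ is $G$-invariant. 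This identifies $\mathcal{F}_{X\times Y}$ with the subalgebra of $L^{\infty}(X\times Y,\nu\times\eta)$ consisting of (classes of) functions that, viewed as functions of $x$ for fixed $y$, lie in $\mathcal{F}_{X}$.

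Next I would unpack the Mackey realization on the right-hand side. By definition, there is a $G$-map $\pi_{X}:(X,\nu)\to(\rpf{X}{G},\overline{\nu})$ such that $\mathcal{F}_{X}=\{h\circ\pi_{X}:h\in L^{\infty}(\rpf{X}{G},\overline{\nu})\}$. Consider the map $\Pi=\pi_{X}\times\mathrm{id}_{Y}:(X\times Y,\nu\times\eta)\to(\rpf{X}{G}\times Y,\overline{\nu}\times\eta)$. This is clearly a $G$-map when $G$ acts trivially on both the target factors. The pullback subalgebra $\{F\circ\Pi:F\in L^{\infty}(\rpf{X}{G}\times Y)\}$ is exactly the set of bounded measurable $f$ on $X\times Y$ whose slices $f_{y}$ are pullbacks under $\pi_{X}$, i.e.\ lie in $\mathcal{F}_{X}$ for a.e.\ $y$. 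By the previous paragraph this is precisely $\mathcal{F}_{X\times Y}$.

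Therefore $(\rpf{X}{G}\times Y,\overline{\nu}\times\eta)$ together with the $G$-map $\Pi$ is a point realization of $\mathcal{F}_{X\times Y}$. Since Mackey point realizations are unique up to $G$-isomorphism (Theorem \ref{T:mackey}), this space is $G$-isomorphic to $\rpf{(X\times Y)}{G}$, which is the desired conclusion. The only substantive step is the first paragraph's Fubini-style identification of invariant functions, which relies on $G$ acting trivially on the second factor so that $G$-invariance does not mix the two variables; once that is done the rest is a matter of recognizing the same closed $G$-invariant subalgebra on both sides.
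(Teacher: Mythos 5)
Your proposal is correct, and it reaches the conclusion by a somewhat different mechanism than the paper. Both arguments come down to identifying the $G$-invariant $\sigma$-algebra (equivalently, subalgebra of $L^{\infty}$) of $X\times Y$ with the pullback of that of $(\rpf{X}{G})\times Y$; but the paper does this at the level of invariant sets, using that the fiber measures $D_{\pi}(z)\times\delta_{y}$ of the disintegration are ergodic (a $0$--$1$ law on fibers) to build the set $A\subseteq \rpf{X}{G}\times Y$ with $E=\varphi^{-1}(A)$ mod null, whereas you never invoke ergodicity of the decomposition fibers: you slice, use only the defining property of the Mackey realization ($\mathcal{F}_{X}=\pi_{X}^{*}L^{\infty}(\rpf{X}{G})$), and conclude by uniqueness of point realizations. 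That is arguably more elementary. Two places deserve a word of care, though both are standard. First, the Fubini step interchanging ``for all $g$, a.e.\ $(x,y)$'' with ``a.e.\ $y$, $f_{y}$ invariant for all $g$'' should be run through Haar measure on $G$ (Fubini on $G\times X\times Y$ gives invariance under a.e.\ $g$ for a.e.\ $y$, and a conull subgroup of a locally compact second countable group is the whole group). Second, the nontrivial direction of your third-paragraph claim — that a function whose slices $f_{y}$ lie in $\mathcal{F}_{X}$ is globally of the form $F\circ\Pi$ with $F$ \emph{jointly} measurable — is asserted rather than proved; it follows by setting $F(z,y)=\int f(x,y)\,dD_{\pi_{X}}(z)(x)$, which is jointly measurable and satisfies $F\circ\Pi=f$ a.e.\ because each $f_{y}$ is $\sigma(\pi_{X})$-measurable. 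This conditional-expectation patch is exactly the analogue of the point where the paper instead constructs $A$ from the fiber measures; with it included, your argument is complete.
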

\begin{proof}
Write $(Z,\zeta)$ for the ergodic components of $(X,\nu)$ and $\pi : (X,\nu) \to (Z,\zeta)$ for the decomposition map.
Let $\varphi : (X \times Y, \nu\times\eta) \to (Z\times Y,\zeta\times\eta)$ be given by $\varphi(x,y) = (\pi(x),y)$.

Let $E \subseteq X \times Y$ be a positive measure $G$-invariant set.  For each $(z,y) \in Z \times Y$, define the set
\[
E_{z,y} = E \cap (\pi^{-1}(z) \times \{ y \}).
\]
Then $E_{z,y}$ is a $D_{\pi}(z) \times \delta_{y}$-measurable set.  Since $G$ acts trivially on $Y$ and $Z$, and $gE = E$ for all $g \in G$, we have that $gE_{z,y} = E_{z,y}$ for all $g \in G$.  Now $D_{\pi}(z)$ is ergodic and $\delta_{y}$ is a point mass, hence $D_{\pi}(z) \times \delta_{y}$ is ergodic.  Therefore for almost every $(z,y) \in Z \times Y$, either $E_{z,y}$ is null or has full measure.

Let
\[
A = \{ (z,y) \in Z \times Y : D_{\pi}(z) \times \delta_{y}(E_{z,y}) = 1 \}
\]
and observe that $D_{\pi}(z) \times \delta_{y}(E) = D_{\pi}(z) \times \delta_{y}(E_{z,y})$ since $D_{\pi}(z) \times \delta_{y}$ is supported on $\pi^{-1}(z) \times \{ y \}$.  Therefore, for $(z,y) \in A$ we have that $D_{\pi}(z) \times \delta_{y}(E \symdiff \varphi^{-1}(A)) = 0$ since both $E$ and $\varphi^{-1}(A)$ have full $D_{\pi}(z)\times\delta_{y}$-measure.  On the other hand, for $(z,y) \notin A$ we also have that $D_{\pi}(z)\times\delta_{y}(E \symdiff \varphi^{-1}(A)) = 0$ since both sets are null.

Therefore
\[
\nu\times\eta(E \symdiff \varphi^{-1}(A))
= \int_{Z \times Y} D_{\pi}(z)\times\delta_{y}(E \symdiff \varphi^{-1}(A))~d\zeta\times\eta(z,y) = 0
\]
meaning that any $G$-invariant positive measure set in $(X\times Y,\nu\times\eta)$ belongs to the algebra of measurable sets of $(Z\times Y,\eta\times\eta)$ as claimed.
\end{proof}

\subsection{The Poisson Boundary}

The Poisson boundary of a group will play a relatively minor role in our work compared to its presence in the work of Bader and Shalom \cite{BS04} and in the work of Nevo and Stuck and Zimmer \cite{SZ94},\cite{nevozimmer}.  The main interest we will have in the Poisson boundary is that it is a contractive action and therefore gives rise to relatively contractive maps.  The reader is referred to \cite{BS04} and \cite{creutzphd} for a detailed account of Poisson boundaries in the abstract setting and to \cite{Fu63}, \cite{Fu67}, \cite{Fu71}, \cite{Ka89} and \cite{Ka92} for information on Poisson boundaries of semisimple groups and lattices in semisimple groups.

\begin{definition}[Furstenberg \cite{Fu63}]
Let $G$ be a locally compact second countable group and $\mu \in P(G)$ a probability measure on $G$.  Consider the map $T : G^{\mathbb{N}} \to G^{\mathbb{N}}$ given by $T(w_{1},w_{2},w_{3},\cdots) = (w_{1}w_{2},w_{3},\cdots)$.  The space of $T$-ergodic components of $(G^{\mathbb{N}},\mu^{\mathbb{N}})$ is the Poisson boundary of $(G,\mu)$.
\end{definition}

\begin{theorem}[Furstenberg \cite{Fu63}]\label{T:PBcont}
Let $G$ be a locally compact second countable group and $\mu \in P(G)$.  The action of $G$ on the Poisson boundary is a $\mu$-stationary contractive action.
\end{theorem}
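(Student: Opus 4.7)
The plan is to verify the two assertions separately, both exploiting the defining property that $\pi : (G^{\mathbb{N}}, \mu^{\mathbb{N}}) \to (B,\beta)$ is the projection to $T$-ergodic components, so in particular $\pi \circ T = \pi$ almost everywhere. The $G$-action on $B$ is induced by first-coordinate left multiplication $g \cdot (w_1, w_2, \ldots) = (gw_1, w_2, \ldots)$ on $G^{\mathbb{N}}$; this action commutes with $T$ and therefore preserves the subalgebra of $T$-invariant functions, descending to a $G$-action on $B$ with respect to which $\pi$ is automatically equivariant.

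For $\mu$-stationarity, the first step will be to compute $T_{*}\mu^{\mathbb{N}} = (\mu * \mu) \times \mu^{\mathbb{N}}$, since under $\mu^{\mathbb{N}}$ the first coordinate of $T(w) = (w_1 w_2, w_3, \ldots)$ is the product of two independent $\mu$-distributed variables and the tail remains i.i.d. The identity $\pi \circ T = \pi$ then forces $\pi_{*}\bigl((\mu * \mu) \times \mu^{\mathbb{N}}\bigr) = \beta$. Writing $\mu * \mu = \int_{G} g\mu \, d\mu(g)$ and noting that $(g\mu) \times \mu^{\mathbb{N}}$ is exactly the $g$-translate of $\mu^{\mathbb{N}}$ under the $G$-action, equivariance of $\pi$ identifies this pushforward as $\int_{G} g\beta \, d\mu(g) = \mu * \beta$, yielding $\mu * \beta = \beta$.

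For contractivity, the plan is to apply Doob's martingale convergence theorem. Let $\mathcal{F}_n$ denote the $\sigma$-algebra on $G^{\mathbb{N}}$ generated by the coordinates $w_1, \ldots, w_n$. For any $f \in C(B)$, the bounded martingale $M_n^f(\omega) = \mathbb{E}[f \circ \pi \mid \mathcal{F}_n](\omega)$ converges almost surely to $\mathbb{E}[f \circ \pi \mid \mathcal{F}_\infty](\omega) = f(\pi(\omega))$, since $\mathcal{F}_\infty$ is the full $\sigma$-algebra modulo completion. Iterating $\pi = \pi \circ T^n$ and applying $G$-equivariance of $\pi$ gives the explicit identity $\pi(w_1, \ldots, w_n, v_1, v_2, \ldots) = (w_1 \cdots w_n) \, \pi(v_1, v_2, \ldots)$, whence
\[
M_n^f(\omega) = \int_{B} f \, d\bigl((w_1 \cdots w_n)_{*} \beta\bigr).
\]
Choosing $f$ through a countable dense family in $C(B)$ then yields $(w_1 \cdots w_n)_{*}\beta \to \delta_{\pi(\omega)}$ weakly for $\mu^{\mathbb{N}}$-almost every $\omega$, which is precisely the contractivity of $(B,\beta)$: the $G$-orbit of $\beta$ weak-$\ast$ accumulates at Dirac masses, and the same holds for any $\nu \ll \beta$ after pushing Radon--Nikodym densities through the convergence.

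The only substantive ingredient is Doob's theorem; everything else is bookkeeping with Fubini and equivariance. The mildest subtlety lies in justifying the equivariance identity for $\pi$ on a sequence whose initial segment is fixed and whose tail is integrated out, but this is handled by $T^n$-invariance of $\pi$ combined with $G$-equivariance, both of which are essentially built into the construction of $B$ as the Mackey point realization of the $T$-invariant algebra.
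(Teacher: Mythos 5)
The paper itself gives no proof of this statement -- it is quoted from Furstenberg (with the contractivity nowadays usually attributed to Jaworski's work on SAT actions) -- so your argument can only be measured against the standard one, which is indeed the martingale argument you outline. Your stationarity argument is correct and complete: $T_{*}\mu^{\mathbb{N}} = (\mu*\mu)\times\mu^{\mathbb{N}}$, $\pi\circ T = \pi$, and equivariance of $\pi$ give $\mu*\beta=\beta$ exactly as you say.

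There is, however, a genuine gap in the last step of the contractivity argument. The paper's definition of contractive quantifies over all \emph{measurable} sets: for every $E$ with $\beta(E)>0$ there must exist $g_{n}$ with $\beta(g_{n}E)\to 1$. Weak-$*$ convergence $(w_{1}\cdots w_{n})\beta\to\delta_{\pi(\omega)}$, which is what testing against a countable dense family of $C(B)$ yields, controls only open (or boundary-negligible) sets via portmanteau; for a fixed sequence of measures converging weak-$*$ to $\delta_{b}$ and a Borel set $E\ni b$ of positive measure (say a closed nowhere dense set), the masses $(w_{1}\cdots w_{n})\beta(E)$ need not tend to $1$. So the sentence ``which is precisely the contractivity'' does not follow, and the closing remark about pushing Radon--Nikodym densities through the convergence does not repair it. The fix is to bypass weak-$*$ convergence entirely: Doob's theorem needs only bounded measurable integrands, so apply your martingale identity directly to the indicator function of $E$, giving
\[
\beta\big((w_{1}\cdots w_{n})^{-1}E\big) = \mathbb{E}\big[\mathbbm{1}_{E}\circ\pi \mid \mathcal{F}_{n}\big](\omega) \longrightarrow \mathbbm{1}_{E}(\pi(\omega)) \quad\text{for $\mu^{\mathbb{N}}$-almost every }\omega .
\]
Since $\mu^{\mathbb{N}}(\pi^{-1}(E))=\beta(E)>0$, there is a positive-measure set of $\omega$ for which the limit equals $1$; fixing one such $\omega$ and taking $g_{n}=(w_{1}\cdots w_{n})^{-1}$ gives $\beta(g_{n}E)\to 1$, which is the definition. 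With that substitution (and your correctly flagged Fubini justification of $\pi(w_{1},\ldots,w_{n},v_{1},\ldots)=(w_{1}\cdots w_{n})\pi(v_{1},\ldots)$), the proof is complete.
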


\begin{theorem}[Zimmer \cite{Zi84}]\label{T:PBamen}
Let $G$ be a locally compact second countable group and $\mu \in P(G)$.  The action of $G$ on the Poisson boundary is amenable.
\end{theorem}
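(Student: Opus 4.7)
The plan is, given an affine $G$-space $A = \bigsqcup_{b \in B} \{b\} \times A_{b}$ over $(B,\beta)$ with twisted cocycle $\alpha^{*}$ satisfying $\alpha^{*}(g,b) A_{gb} = A_{b}$, to produce a measurable section $f : B \to E_{1}^{*}$ with $f(b) \in A_{b}$ almost everywhere and $f(b) = \alpha^{*}(g,b) f(gb)$ for each $g \in G$ and almost every $b$. The approach is: first construct a $\mu$-harmonic section by an averaging/fixed-point argument, then use contractivity of the Poisson boundary (Theorem~\ref{T:PBcont}) to upgrade $\mu$-harmonicity to full $\alpha^{*}$-invariance.

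First I would apply a measurable selection theorem (Kuratowski--Ryll-Nardzewski) to produce some Borel section $s_{0} : B \to E_{1}^{*}$ with $s_{0}(b) \in A_{b}$ almost everywhere. Let $\mathcal{S}$ denote the collection of all such sections, viewed in $L^{\infty}(B;E_{1}^{*})$ with the weak-$*$ topology; this set is nonempty, convex, and weak-$*$ compact. Define the averaging operator $T : \mathcal{S} \to \mathcal{S}$ by
\[
(Tf)(b) = \int_{G} \alpha^{*}(g,b) f(gb) \, d\mu(g).
\]
The cocycle identity $\alpha^{*}(g,b) A_{gb} = A_{b}$ together with convexity of each fibre $A_{b}$ ensures $T$ maps $\mathcal{S}$ into itself, and $T$ is affine and weak-$*$ continuous. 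Extracting a weak-$*$ limit point of the Ces\`aro averages $\frac{1}{n}\sum_{k=0}^{n-1} T^{k} s_{0}$ produces a fixed point $f^{*} \in \mathcal{S}$, which is to say $f^{*}$ is a $\mu$-harmonic section: $f^{*}(b) = \int \alpha^{*}(g,b) f^{*}(gb) \, d\mu(g)$ almost everywhere.

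The last step is to show that this $\mu$-harmonic section is in fact $\alpha^{*}$-invariant: $f^{*}(b) = \alpha^{*}(g,b) f^{*}(gb)$ for every $g \in G$ and almost every $b$. For each fixed $g$, the section $b \mapsto \alpha^{*}(g,b) f^{*}(gb)$ also lies in $\mathcal{S}$ and is $\mu$-harmonic; iterating $T$ yields
\[
(T^{n} f^{*})(b) = \int_{G^{n}} \alpha^{*}(w_{1} \cdots w_{n}, b) f^{*}(w_{1} \cdots w_{n} b) \, d\mu^{\otimes n}(w_{1}, \ldots, w_{n}),
\]
and by contractivity of the $G$-action on $(B,\beta)$ (Theorem~\ref{T:PBcont}) the translates $w_{1} \cdots w_{n} \cdot \beta$ collapse to a point mass $\delta_{\pi(\omega)}$ for almost every path $\omega = (w_{1}, w_{2}, \ldots) \in G^{\mathbb{N}}$, with $\pi$ the boundary map. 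This forces $f^{*}$ and $b \mapsto \alpha^{*}(g,b) f^{*}(gb)$, whose iterates under $T$ share the same asymptotic boundary values, to coincide almost everywhere. The hard part is precisely this last step: extracting genuine $G$-equivariance from $\mu$-harmonicity is the defining feature of the Poisson boundary as the Mackey realization of the shift-invariant algebra on $(G^{\mathbb{N}}, \mu^{\mathbb{N}})$, and every subtlety sits there; the preceding averaging construction is essentially routine.
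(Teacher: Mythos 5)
The paper offers no proof of this statement: it is quoted from Zimmer, whose argument runs through the path space $(G^{\mathbb{N}},\mu^{\mathbb{N}})$ and the amenability of the shift (the boundary action is obtained as a Mackey range over a single transformation), not through harmonic sections, so your route is genuinely different and must stand on its own.

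Your first two steps (measurable selection, then a fixed point of the twisted averaging operator $T$ on the weak-* compact convex set of sections) are fine modulo routine checks. The gap is exactly where you place it, and the mechanism you propose for closing it does not work. First, your auxiliary claim that $b \mapsto \alpha^{*}(g,b)f^{*}(gb)$ is again $\mu$-harmonic is unjustified and in general false: already in the scalar case, if $Pf(b)=\int f(hb)\,d\mu(h)$ and $Pf=f$, then the translate $f_{g}(b)=f(gb)$ satisfies $Pf_{g}(b)=\int f(ghb)\,d\mu(h)$, which is harmonicity with respect to the conjugate measure $g\mu g^{-1}$, not $\mu$; $T$ does not commute with the $G$-translates. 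Second, the martingale structure coming from $T$-harmonicity runs along the reversed products $w_{n}\cdots w_{1}$ (iterating $T$ applies new increments on the outside), and for these products the translates $w_{n}\cdots w_{1}\beta$ converge only in distribution, not almost surely; the point-mass collapse of $w_{1}\cdots w_{n}\beta$ from Theorem \ref{T:PBcont} is therefore not available along the iterates, and the phrase ``share the same asymptotic boundary values'' has no justified meaning.

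More fundamentally, the implication you need — every $T$-fixed section is $\alpha^{*}$-invariant — is not a consequence of contractivity at all. Already for the trivial cocycle with fibres $A_{b}=[0,1]$ it asserts that every $F\in L^{\infty}(B,\beta)$ with $\int F(gb)\,d\mu(g)=F(b)$ a.e.\ is essentially constant. Via the Poisson transform this is equivalent to the statement that every bounded bi-harmonic function on $G$ (harmonic for both left and right $\mu$-convolution) is constant, i.e.\ to ergodicity of the diagonal $G$-action on $\check{B}\times B$, where $\check{B}$ is the boundary of the reflected walk. That is Kaimanovich's double ergodicity theorem, a substantially deeper input than Theorem \ref{T:PBcont} (contractivity/SAT of a single boundary), and your affine-bundle setting needs it in a version with coefficients. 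So the last step is a genuine missing ingredient: either import double ergodicity with coefficients and then carry out the harmonic-implies-invariant argument honestly, or abandon the averaging approach and prove amenability the way Zimmer does, by transporting the invariant-section problem to the path space, where the shift generates an amenable equivalence relation.
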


\subsection{Ergodic Decomposition and Poisson Boundaries}

We will need a basic fact about the ergodic decomposition of the product of the Poisson boundary and a measure-preserving space due to Bader and Shalom \cite{BS04}.

\begin{proposition}[Bader-Shalom, \cite{BS04} Corollary 2.18]
Let $(B,\beta)$ be the Poisson boundary for $(G,\mu)$ where $\mu$ is an admissible measure on $G$ and let $(X,\nu)$ be an ergodic measure-preserving $G$-space.  Then $(B \times X, \beta\times\nu)$ is an ergodic $\mu$-stationary $G$-space.
\end{proposition}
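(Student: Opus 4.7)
The plan splits into two parts: checking $\mu$-stationarity is a direct computation, while ergodicity reduces, via the Poisson transform, to the fact that bounded $\mu$-harmonic functions on an ergodic measure-preserving $G$-space are constant. For stationarity, since $(X,\nu)$ is measure-preserving we have $g\nu=\nu$ for every $g\in G$, and since $(B,\beta)$ is $\mu$-stationary we have $\mu*\beta=\beta$; by Fubini, $\mu*(\beta\times\nu) = \int_{G}(g\beta\times g\nu)\,d\mu(g) = \bigl(\int_{G} g\beta\,d\mu(g)\bigr)\times\nu = (\mu*\beta)\times\nu = \beta\times\nu$.

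For ergodicity, let $f\in L^{\infty}(B\times X,\beta\times\nu)^{G}$ and set $\varphi(x)=\int_{B}f(b,x)\,d\beta(b)$. Using $G$-invariance in the form $f(b,g^{-1}x)=f(gb,x)$ together with $\mu*\beta=\beta$, one computes $\int_{G}\varphi(g^{-1}x)\,d\mu(g) = \int_{G}\int_{B}f(gb,x)\,d\beta(b)\,d\mu(g) = \int_{B}f(b,x)\,d(\mu*\beta)(b) = \varphi(x)$, so $\varphi$ is a bounded $\mu$-harmonic function on the measure-preserving space $(X,\nu)$. A standard Jensen-inequality argument in $L^{2}(X,\nu)$ gives $\|\varphi\|_{2}^{2}\le\int\int|\varphi(g^{-1}x)|^{2}\,d\nu(x)\,d\mu(g)=\|\varphi\|_{2}^{2}$ by $G$-invariance of $\nu$, with equality forcing $\varphi(g^{-1}x)=\varphi(x)$ for $\mu$-a.e.\ $g$ and $\nu$-a.e.\ $x$; admissibility of $\mu$ then propagates this to all of $G$, making $\varphi$ genuinely $G$-invariant. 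Ergodicity of $(X,\nu)$ now forces $\varphi\equiv c$ for some constant $c$.

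To upgrade $\varphi\equiv c$ to $f\equiv c$, I would invoke the defining universal property of the Poisson boundary: the Poisson transform $L^{\infty}(B,\beta)\to H^{\infty}(G,\mu)$ sending $\phi\mapsto(g\mapsto\int\phi(gb)\,d\beta(b))$ is an isometric isomorphism onto the bounded $\mu$-harmonic functions on $G$. For $\nu$-a.e.\ $x$, apply this to $\phi_{x}(b):=f(b,x)\in L^{\infty}(B)$: its Poisson transform is $g\mapsto\int f(gb,x)\,d\beta(b)=\int f(b,g^{-1}x)\,d\beta(b)=\varphi(g^{-1}x)=c$. Injectivity of the Poisson transform (which maps the constant $c$ to $c$) forces $\phi_{x}\equiv c$ in $L^{\infty}(B)$, so $f(b,x)=c$ for $\beta$-a.e.\ $b$ and $\nu$-a.e.\ $x$; Fubini concludes ergodicity.

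The main obstacle is the upgrade from $\varphi$ being $\mu$-harmonic to being $G$-invariant, together with the final Poisson-transform step; both lean essentially on admissibility of $\mu$, and one must keep track that the measure-preserving hypothesis on $(X,\nu)$ is what converts harmonicity into invariance. An alternative route that bypasses the Poisson transform would use contractivity of $(B,\beta)$ from Theorem~\ref{T:PBcont}: for $\mu^{\mathbb{N}}$-a.e.\ sample path $(w_{n})=(g_{1}\cdots g_{n})$ the measures $w_{n}\beta$ converge weakly to $\delta_{b_{\infty}}$ with $b_{\infty}$ distributed as $\beta$, and passing to the limit in $\int f(b,x)\,d(w_{n}\beta)(b)=\varphi(w_{n}^{-1}x)=c$ yields $f(b_{\infty},x)=c$ for $\beta\times\nu$-a.e.\ $(b_{\infty},x)$ after the usual approximation to reduce to a separating family of test functions.
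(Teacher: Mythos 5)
Your proposal is correct, but there is nothing in the paper to compare it with: the paper does not prove this proposition, it imports it from Bader--Shalom (\cite{BS04}, Corollary 2.18), remarking only that ergodicity is the nontrivial point. What you give is a sound, self-contained proof along the standard lines. The stationarity computation is immediate, and your reduction of ergodicity is the classical one: the fibre average $\varphi(x)=\int_B f(b,x)\,d\beta(b)$ of a $G$-invariant $f$ satisfies $\int_G\varphi(g^{-1}x)\,d\mu(g)=\varphi(x)$ (so it is harmonic for the reflected measure $\check\mu$ rather than $\mu$, which is harmless since the $L^2$/Jensen equality argument uses only invariance of $\nu$, and the support of $\check\mu$ still generates $G$, so the propagation to a closed subgroup of full measure and hence to all of $G$ goes through), whence $\varphi$ is $G$-invariant and, by ergodicity of $(X,\nu)$, constant $c$. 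The one step needing more care than you give it is the upgrade from $\varphi\equiv c$ to $f\equiv c$ via the Poisson transform: the identity $P\phi_x(g)=\varphi(g^{-1}x)$ is only available for Haar-almost every $g$ (the invariance $f(gb,gx)=f(b,x)$ is an almost-everywhere statement), so injectivity of the Poisson transform must be supplemented either by the observation that, $\mu$ being admissible (spread out), a bounded $\mu$-harmonic function vanishing Haar-a.e.\ vanishes identically, or by arguing along the random walk. Your alternative route does exactly the latter and is the cleaner one: $\int f(b,x)\,d(w_n\beta)(b)=\varphi(w_n^{-1}x)=c$ is a bounded martingale converging almost surely to $f(b_\infty,x)$ with $(b_\infty,x)$ distributed as $\beta\times\nu$, which avoids the weak-$*$ approximation and any continuity issue and finishes the proof. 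Either version is complete; relative to the paper, your argument simply supplies a proof of the quoted fact using only stationarity of $\beta$, the $L^2$ argument for measure-preserving actions, and the martingale (or Poisson-transform) characterization of the boundary.
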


The only difficulty in the proof of the above statement is the ergodicity.  We will need the following extension of their result:
\begin{proposition}\label{P:ergdecomperg}
Let $(B,\beta)$ be the Poisson boundary for $(G,\mu)$ where $\mu$ is an admissible measure on $G$, let $(C,\eta)$ be any $G$-quotient of $(B,\beta)$ and let $(X,\nu)$ be a measure-preserving $G$-space.  Then $\rpf{(C \times X)}{G} = \rpf{X}{G}$.
\end{proposition}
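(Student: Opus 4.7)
The plan is to reduce to the ergodic case and then invoke the Bader--Shalom ergodicity statement. The main observation is that although $(X,\nu)$ is not assumed ergodic, we can ergodic-decompose $X$ and handle each fiber using Bader--Shalom.

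First I would set up notation: let $\pi : (X,\nu) \to (\rpf{X}{G},\overline{\nu})$ be the ergodic decomposition map with disintegration $D_{\pi} : \rpf{X}{G} \to P(X)$, so $(\pi^{-1}(y), D_{\pi}(y))$ is an ergodic $G$-space for $\overline{\nu}$-almost every $y$. Let $p : (B,\beta) \to (C,\eta)$ be the given $G$-quotient map. I would then consider the map
\[
\Phi : (C \times X, \eta\times\nu) \to (\rpf{X}{G},\overline{\nu}), \qquad \Phi(c,x) = \pi(x).
\]
Since $G$ acts trivially on $\rpf{X}{G}$, $\Phi$ is a $G$-map, and the disintegration of $\eta \times \nu$ over $\overline{\nu}$ via $\Phi$ is $y \mapsto \eta \times D_{\pi}(y)$, supported on $\Phi^{-1}(y) = C \times \pi^{-1}(y)$.

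The key step is to show that each fiber $(C \times \pi^{-1}(y), \eta \times D_{\pi}(y))$ is an ergodic $G$-space. For this I would apply the Bader--Shalom ergodicity result (the Corollary 2.18 statement already cited above) to the ergodic measure-preserving $G$-space $(\pi^{-1}(y), D_{\pi}(y))$, obtaining that $(B \times \pi^{-1}(y), \beta \times D_{\pi}(y))$ is ergodic. The map $p \times \mathrm{id} : B \times \pi^{-1}(y) \to C \times \pi^{-1}(y)$ is a $G$-map pushing $\beta \times D_{\pi}(y)$ forward to $\eta \times D_{\pi}(y)$, and ergodicity passes to $G$-quotients, so the fiber is indeed ergodic.

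Finally I would conclude: given any $G$-invariant $f \in L^{\infty}(C \times X, \eta\times\nu)$, its restriction to each fiber $\Phi^{-1}(y)$ is $G$-invariant on an ergodic $G$-space, hence $\overline{\nu}$-almost everywhere equal to a constant $h(y)$. A standard measurability argument (or a direct application of Theorem \ref{T:mackey} to the algebra of $G$-invariant functions on $C \times X$ together with the obvious inclusion of $\pi^{*}L^{\infty}(\rpf{X}{G})$) shows $h$ is measurable and $f = h \circ \Phi$. Conversely, every function pulled back via $\Phi$ from $\rpf{X}{G}$ is manifestly $G$-invariant on $C \times X$. Hence the algebra of $G$-invariant $L^{\infty}$-functions on $C \times X$ coincides with $\Phi^{*}L^{\infty}(\rpf{X}{G})$, which is exactly the statement $\rpf{(C \times X)}{G} = \rpf{X}{G}$. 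The main (mild) obstacle is verifying fiberwise ergodicity of $C \times \pi^{-1}(y)$, which is handled cleanly by combining the Bader--Shalom statement with the trivial fact that $G$-quotients of ergodic $G$-spaces are ergodic.
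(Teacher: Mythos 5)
Your proposal is correct and takes essentially the same route as the paper: both arguments apply the Bader--Shalom ergodicity statement fiberwise over the ergodic decomposition of $X$ and exploit the fact that $G$ acts trivially on the space of ergodic components, so that each fiber is $G$-invariant and invariant functions must be constant on fibers. The only cosmetic difference is the direction of transfer: you push ergodicity of $B \times \pi^{-1}(y)$ down to $C \times \pi^{-1}(y)$ via $p \times \mathrm{id}$ and work directly in $C \times X$, whereas the paper pulls the invariant set of $C \times X$ back to $B \times X$ via $\tau_{0} \times \mathrm{id}$ and argues there; these are the same pullback argument run in opposite directions.
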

\begin{proof}
Write $(Z,\zeta)$ for the ergodic components of $(X,\nu)$ and let $\pi : (X,\nu) \to (Z,\zeta)$ be the decomposition map.  Let $\varphi : (B \times X, \beta\times\nu) \to (Z,\zeta)$ be given by $\varphi(b,x) = \pi(x)$.  Let $\tau : B \times X \to C \times X$ be given by $\tau(b,x) = (\tau_{0}(b),x)$ where $\tau_{0} : B \to C$ is the $G$-map making $(C,\eta)$ a $G$-quotient of $(B,\beta)$.

Let $E \subseteq C \times X$ be a positive measure $G$-invariant set.  For each $z \in Z$, let $E_{z} = \tau^{-1}(E) \cap (B \times \pi^{-1}(z))$.  Since $G$ acts trivially on $Z$, $E_{z}$ is a $G$-invariant set.  Consider $D_{\varphi}(z) = \beta\times D_{\pi}(z)$.  Since $\nu$ is measure-preserving, so is $D_{\pi}(z)$ for each $z$.  By the above Proposition, $D_{\varphi}(z)$ is then ergodic for each $z$.

Therefore, $D_{\varphi}(z)(E_{z})$ is either null or conull for each $z$.  Let $A \subseteq Z$ be the set of $z$ such that $D_{\varphi}(z)(\tau^{-1}(E)) = D_{\varphi}(z)(E_{z}) = 1$.  Then $D_{\varphi}(z)(\tau^{-1}(E) \symdiff \varphi^{-1}(A)) = 0$ for almost every $z$ since either both $\tau^{-1}(E)$ and $\varphi^{-1}(A)$ are full (when $z \in A$) or both null (when $z \notin A$).  Hence
\[
\beta\times\nu(\tau^{-1}(E) \symdiff \varphi^{-1}(A)) = \int_{Z} D_{\varphi}(z)(\tau^{-1}(E) \symdiff \varphi^{-1}(A))~d\zeta(z) = 0.
\]
Therefore $\eta \times \nu(E \symdiff \tau(\varphi^{-1}(A))) = \beta\times\nu(\tau^{-1}(E) \symdiff \varphi^{-1}(A)) = 0$
meaning that every $G$-invariant measurable set in $C \times X$ belongs to the algebra of measurable sets of $\rpf{X}{G}$ as claimed.
\end{proof}

\subsection{The Invariants Product Functor}

We recall now the invariants product functor of Bader and Shalom \cite{BS04}.
Let $G = G_{1} \times G_{2}$ be a product of groups and let $(X,\nu)$ be an ergodic $G$-space.  Write $\rpf{X}{G_{j}}$ for the space of $G_{j}$-ergodic components of $X$, for $j=1,2$.  Then $G_{j}$ acts trivially on $\rpf{X}{G_{j}}$ and $G_{3-j}$ acts ergodically (since $G$ acts ergodically on $X$).  We will write $(X_{1},\nu_{1})$ to be the space of $G_{2}$-ergodic components with the push-forward of $\nu$ and likewise write $(X_{2},\nu_{2})$ for the space of $G_{1}$-ergodic components.

The \textbf{invariants product functor} is the functor $F^{G}$ that assigns $F^{G}(X,\nu) = (X_{1},\nu_{1}) \times (X_{2},\nu_{2})$, which we treat as a $G$-space with the diagonal $G$-action.  That this is indeed a functor is shown in Bader-Shalom in the sense that given a $G$-map $\pi : (X,\nu) \to (Y,\eta)$ of ergodic $G$-spaces, define $F^{G}(\pi) = \pi_{1} \times \pi_{2}$ where $\pi_{j} : (X_{j},\nu_{j}) \to (Y_{j},\eta_{j})$ is the Mackey point realization (Theorem \ref{T:mackey}) of the inclusion at the level of $\sigma$-algebras, and the following diagram commutes:
\begin{diagram}
(X,\nu)		&\rTo^{\pi}		&(Y,\eta)\\
\dTo_{F^{G}}	&			&\dTo_{F^{G}}\\
(X_{1},\nu_{1}) \times (X_{2},\nu_{2})	&\rTo^{\pi_{1} \times \pi_{2}}	&(Y_{1},\eta_{1}) \times (Y_{2},\eta_{2})
\end{diagram}
In general, the mapping $(X,\nu) \to F^{G}(X,\nu)$ need not be a $G$-map (though of course $\pi_{j}$ is a $G_{j}$-map so $F^{G}(\pi)$ is always a product of $G_{1}$- and $G_{2}$-maps).  However, in the case of ergodic stationary $G$-spaces the map is a $G$-map:
\begin{proposition}[Bader-Shalom \cite{BS04} Proposition 1.10]\label{P:1.10}
Let $G = G_{1} \times G_{2}$ be a product of two locally compact second countable groups and let $\mu_{j} \in P(G_{j})$ be admissible probability measures for $j=1,2$.  Set $\mu = \mu_{1} \times \mu_{2}$.  If $(X,\nu)$ is a $\mu$-stationary ergodic $G$-space then $(X,\nu) \to (X_{1},\nu_{1}) \times (X_{2},\nu_{2})$ is a relatively measure-preserving $G$-map.
\end{proposition}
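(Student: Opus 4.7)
By Theorem \ref{T:relmpRN}, the map $\pi_1 \times \pi_2 \colon (X, \nu) \to (X_1 \times X_2, \nu_1 \times \nu_2)$ is relatively measure-preserving iff for all $g = (g_1, g_2) \in G$ and $\nu$-almost every $x$,
\[
\frac{dg\nu}{d\nu}(x) \;=\; \frac{dg_1\nu_1}{d\nu_1}(\pi_1(x)) \cdot \frac{dg_2\nu_2}{d\nu_2}(\pi_2(x)),
\]
the right side equaling $\frac{dg(\nu_1 \times \nu_2)}{d(\nu_1 \times \nu_2)}(\pi_1(x), \pi_2(x))$ because $G_{3-j}$ acts trivially on $(X_j, \nu_j)$. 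Factoring $g = (g_1, e)(e, g_2)$ and applying the cocycle identity for Radon-Nikodym derivatives, the identity reduces to showing each $\pi_j$ is $G_j$-relatively measure-preserving, i.e.\ $\frac{d(g_j, e)\nu}{d\nu}$ is $\pi_j$-measurable for every $g_j \in G_j$. By symmetry, fix $j = 1$.

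The plan is to exploit the Poisson boundary $(B, \beta)$ of $(G, \mu)$. Because $\mu = \mu_1 \times \mu_2$, Furstenberg's construction gives $(B, \beta) = (B_1 \times B_2, \beta_1 \times \beta_2)$ $G$-equivariantly with $G_{3-j}$ acting trivially on $B_j$, so $\frac{d(g_1, e)\beta}{d\beta}(b) = \frac{dg_1\beta_1}{d\beta_1}(b_1)$ depends only on $b_1$. The $\mu$-stationarity of $\nu$ produces a $G$-equivariant boundary disintegration $b \mapsto \nu_b \in P(X)$ with $\nu = \int \nu_b \, d\beta(b)$ and $g\nu_b = \nu_{gb}$. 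The assignment $b \mapsto (\pi_1)_* \nu_b$ is $G$-equivariant, and by triviality of the $G_2$-action on $X_1$ is $(e, g_2)$-invariant; by $G_2$-ergodicity of $(B_2, \beta_2)$ it factors through $B_1$ as a $G_1$-equivariant map $\psi_1 \colon B_1 \to P(X_1)$. Uniqueness of the $\mu_1$-stationary boundary disintegration for the $G_1$-ergodic space $(X_1, \nu_1)$ identifies $\psi_1$ with the boundary disintegration of $\nu_1$, so $(\pi_1)_* \nu_b = (\nu_1)_{b_1}$.

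Form the joining $\hat\nu = \int \delta_b \times \nu_b \, d\beta(b)$ on $B \times X$, for which $\frac{d(g_1, e)\hat\nu}{d\hat\nu}(b, x) = \frac{dg_1\beta_1}{d\beta_1}(b_1)$. Pushing down to $X$ against the conditional distribution $\kappa_x \in P(B)$ of $b$ given $x$ under $\hat\nu$ expresses
\[
\frac{d(g_1, e)\nu}{d\nu}(x) \;=\; \int_{B_1} \frac{dg_1\beta_1}{d\beta_1}(b_1)\, d(\kappa_x)_1(b_1).
\]
The identification $(\pi_1)_* \nu_b = (\nu_1)_{b_1}$ promotes to the conditional independence $b_1 \perp X \mid \pi_1(X)$ under $\hat\nu$: pushing $\hat\nu$ forward via $(b, x) \mapsto (b_1, \pi_1(x))$ yields the analogous joining for $(X_1, \nu_1)$, and comparing disintegrations forces $(\kappa_x)_1$ to be $\pi_1(x)$-measurable. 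The Radon-Nikodym derivative is therefore $\pi_1$-measurable, and must agree with $\frac{dg_1\nu_1}{d\nu_1}\circ\pi_1$ by uniqueness of Radon-Nikodym derivatives. The crux of the argument is this last conditional-independence step: transferring the factoring of the forward disintegration $b \mapsto \nu_b$ through $B_1$ into a factoring of the backward conditional $x \mapsto \kappa_x$ through $\pi_1(x)$, which uses both the product structure $\beta = \beta_1 \times \beta_2$ and the triviality of the $G_2$-action on $X_1$ in an essential way.
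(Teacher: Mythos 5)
The paper itself offers no proof of this proposition --- it is quoted from Bader--Shalom \cite{BS04} --- so there is nothing internal to compare against and your argument must stand on its own. Most of your framework is sound: the reduction via Theorem \ref{T:relmpRN} and the cocycle identity, the product decomposition $(B,\beta)=(B_1\times B_2,\beta_1\times\beta_2)$, the equivariant boundary disintegration $b\mapsto\nu_b$, the formula $\frac{d(g_1,e)\nu}{d\nu}(x)=\int_{B_1}\frac{dg_1\beta_1}{d\beta_1}(b_1)\,d(\kappa_x)_1(b_1)$, and the identification $(\pi_1)_*\nu_b=(\nu_1)_{b_1}$ are all correct (the last one rests on uniqueness of the $G_1$-equivariant map $B_1\to P(X_1)$ with barycenter $\nu_1$, which you assert without justification; it is true, e.g.\ via contractivity of $B_1$ and the uniqueness of relative joinings as in Corollary \ref{C:reljoinunique} over a trivial base, but it is not free).

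The genuine gap is exactly the step you call the crux. From $q_*\hat\nu=\hat\nu_1$, where $q(b,x)=(b_1,\pi_1(x))$, you conclude that $(\kappa_x)_1$ is $\pi_1$-measurable. But equality of these pushforwards only determines the joint law of the pair $(b_1,\pi_1(x))$, i.e.\ it says that the average of $(\kappa_x)_1$ over each fiber of $\pi_1$ is the conditional law of $b_1$ given $\pi_1(x)$; it says nothing about the conditional law of $b_1$ given the finer $\sigma$-algebra generated by $x$, so ``comparing disintegrations'' cannot force $(\kappa_x)_1$ to be constant along the fibers. What you need is the conditional independence $\sigma(b_1)\perp\sigma(X)\mid\sigma(\pi_1)$ under $\hat\nu$, and this does not follow from $(\pi_1)_*\nu_b=(\nu_1)_{b_1}$, which is the \emph{different} conditional independence $\sigma(\pi_1)\perp\sigma(b_2)\mid\sigma(b_1)$. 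Nor is the needed statement cheap: $x\mapsto(\kappa_x)_1$ is not a $G_2$-invariant function on $X$ (under $(e,g_2)$ the measure $\kappa_x$ gets reweighted by $\frac{dg_2\beta_2}{d\beta_2}(b_2)$, which changes its $B_1$-marginal whenever $b_1$ and $b_2$ are correlated under $\kappa_x$), so you cannot simply appeal to $\sigma(\pi_1)$ being the $G_2$-invariant subalgebra; and if you try to obtain it from uniqueness of joinings of $(B_1,\beta_1)$ with $(X,\nu)$, the candidate joining that is relatively independent over $X_1$ has $G_1$-equivariant disintegration over $B_1$ only if $D_{\pi_1}$ is $G_1$-equivariant --- which is precisely the relative measure-preservation being proved. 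So the crux requires a genuinely new argument (Bader--Shalom supply it by working with the conditional measures along the two independent coordinates of the $(\mu_1\times\mu_2)$-random walk), and as written your proof is incomplete. A secondary omission: invoking Theorem \ref{T:relmpRN} with target $(X_1\times X_2,\nu_1\times\nu_2)$ presupposes $(\pi_1\times\pi_2)_*\nu=\nu_1\times\nu_2$; that the pushforward is the product measure is part of the assertion (it is exactly what fails for general non-stationary actions, as the paper remarks about $F^G$) and is never verified.
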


\subsection{Relatively Contractive Maps}

Relatively contractive maps were introduced in \cite{CP12} as a generalization of both the contractive spaces studied by Jaworski \cite{Ja94}, \cite{Ja95} (under the name SAT) and the notion of proximal maps for stationary actions (see e.g.~\cite{FG10}).  In \cite{CP12}, strong uniqueness properties of such maps is proved and we generalize a result in \cite{CP12} regarding joinings of contractive spaces.  This generalization will be the key ingredient in our Intermediate Contractive Factor Theorem.

\begin{definition}[Jaworski \cite{Ja94}]
Let $G$ be a locally compact second countable group and $(X,\nu)$ a $G$-space.  Then $(X,\nu)$ is \textbf{contractive} when for any measurable set $E \subseteq X$ with $\nu(E) > 0$ there exists a sequence $\{g_{n}\}$ in $G$ such that $\nu(g_{n}E) \to 1$.
\end{definition}

\begin{definition}[Creutz-Peterson \cite{CP12} Definition 4.4]
Let $G$ be a locally compact second countable group and $\pi : (X,\nu) \to (Y,\eta)$ a $G$-map of $G$-spaces.  Then $\pi$ is \textbf{relatively contractive} when for any measurable set $E \subseteq X$ and almost every $y \in Y$ such that $D_{\pi}(y)(E) > 0$there exists a sequence $\{ g_{n} \}$ in $G$ such that $g_{n}^{-1}D_{\pi}(g_{n}y)(E) \to 1$.
\end{definition}

\begin{theorem}[Creutz-Peterson \cite{CP12} Theorem 4.15]\label{T:relcontcomp}
Let $G$ be a locally compact second countable group and let $\pi : (X,\nu) \to (Y,\eta)$ and $\psi : (Y,\eta) \to (Z,\zeta)$ be $G$-maps of $G$-spaces.  If $\psi \circ \pi$ is relatively contractive then $\pi$ and $\psi$ are relatively contractive.
\end{theorem}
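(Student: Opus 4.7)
The plan is to establish the two conclusions, that $\psi$ is relatively contractive and that $\pi$ is relatively contractive, by separate arguments. Throughout I rely on the disintegration identities $\pi_{*}D_{\psi\circ\pi}(z) = D_{\psi}(z)$ and $D_{\psi\circ\pi}(z) = \int_{Y} D_{\pi}(y)\,dD_{\psi}(z)(y)$, both standard consequences of uniqueness of disintegration.

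For $\psi$, I would show relative contractivity by pulling back to $X$. Given a measurable $F \subseteq Y$, set $E = \pi^{-1}(F)$. Because $D_{\pi}(y)$ is supported in $\pi^{-1}(y)$, one checks that $D_{\pi}(y)(E) = \mathbbm{1}_{F}(y)$, and consequently $D_{\psi\circ\pi}(z)(E) = D_{\psi}(z)(F)$. Whenever $D_{\psi}(z)(F) > 0$, the relative contractivity of $\psi\circ\pi$ supplies a sequence $g_{n}$ with $(g_{n}^{-1}D_{\psi\circ\pi}(g_{n}z))(E) \to 1$. Using $G$-equivariance of $\pi$ (so $g_{n}E = \pi^{-1}(g_{n}F)$) together with the pushforward identity, this rewrites as $(g_{n}^{-1}D_{\psi}(g_{n}z))(F) \to 1$.

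The harder direction, that $\pi$ is relatively contractive, I plan to handle by contradiction. Suppose otherwise; then there exist a measurable $E \subseteq X$, a positive $\eta$-measure set $A \subseteq Y$, and $\epsilon > 0$ such that for every $y \in A$, $D_{\pi}(y)(E) > 0$ while $(g^{-1}D_{\pi}(gy))(E) \leq 1-\epsilon$ for every $g \in G$. The uniform $\epsilon$ is obtained by writing the set of bad $y$'s as a countable union over $n$ of the measurable sets $\{y : \sup_{g}(g^{-1}D_{\pi}(gy))(E) \leq 1 - 1/n\}$, the supremum being measurable because $G$ is second countable. The key trick is to replace $E$ by $\widetilde{E} = E \cap \pi^{-1}(A)$. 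Because $D_{\pi}(y)$ concentrates on $\pi^{-1}(y)$, one obtains $D_{\pi}(y)(\widetilde{E}) = D_{\pi}(y)(E)\mathbbm{1}_{A}(y)$; for $y \in A$ the intersection with $\pi^{-1}(gA)$ is transparent (since $gy \in gA$), giving $(g^{-1}D_{\pi}(gy))(\widetilde{E}) = (g^{-1}D_{\pi}(gy))(E) \leq 1-\epsilon$; and for $y \notin A$, the fiber measure misses $\pi^{-1}(gA)$, so $(g^{-1}D_{\pi}(gy))(\widetilde{E}) = 0$. In short, the uniform deficiency persists on all of $Y$ for the set $\widetilde{E}$.

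Now I change variables $y' = gy$ in the disintegration $D_{\psi\circ\pi}(gz) = \int D_{\pi}(y')\,dD_{\psi}(gz)(y')$ to obtain
\[
(g^{-1}D_{\psi\circ\pi}(gz))(\widetilde{E}) = \int (g^{-1}D_{\pi}(gy))(\widetilde{E})\,d(g^{-1}D_{\psi}(gz))(y) \leq 1-\epsilon
\]
uniformly in $g \in G$ and $z \in Z$. On the other hand $\eta(A) > 0$ forces $D_{\psi}(z)(A) > 0$ on a positive $\zeta$-measure set of $z$, and on such $z$ we have $D_{\psi\circ\pi}(z)(\widetilde{E}) \geq \int_{A} D_{\pi}(y)(E)\,dD_{\psi}(z)(y) > 0$. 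Applying the relative contractivity of $\psi\circ\pi$ to $\widetilde{E}$ yields $g_{n}$ with $(g_{n}^{-1}D_{\psi\circ\pi}(g_{n}z))(\widetilde{E}) \to 1$, contradicting the uniform bound.

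The main obstacle, which the $\widetilde{E}$ construction is designed to overcome, is that the naive approach of applying contractivity directly to $E$ only yields $L^{1}$-convergence of the integrand $(g_{n}^{-1}D_{\pi}(g_{n}y))(E)$ against the fiber measure $g_{n}^{-1}D_{\psi}(g_{n}z)$, which does not pin down the convergence at any prescribed $y$. Intersecting with $\pi^{-1}(A)$ converts the pointwise failure for $y \in A$ into a quantitative global obstruction that the composition cannot absorb.
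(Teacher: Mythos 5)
Your proposal is essentially correct, but note that the paper itself offers no proof of this statement to compare against: it is quoted verbatim from \cite{CP12} (Theorem 4.21 there) and used as a black box. On its own merits, your two-part argument works. The claim for $\psi$ follows exactly as you say, by testing on saturated sets $E = \pi^{-1}(F)$ and using $\pi_{*}D_{\psi\circ\pi}(z) = D_{\psi}(z)$. The claim for $\pi$ is the nontrivial half, and your localization $\widetilde{E} = E \cap \pi^{-1}(A)$ is the right device: since $D_{\pi}(gy)$ lives on the fiber over $gy$, intersecting with $\pi^{-1}(A)$ kills the contribution of $y \notin A$ and leaves the bound $(g^{-1}D_{\pi}(gy))(\widetilde{E}) \leq 1-\epsilon$ untouched for $y \in A$, so the disintegration identity $D_{\psi\circ\pi} = \int D_{\pi}\,dD_{\psi}$ propagates the uniform defect to the composition, contradicting its relative contractivity. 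Two points are glossed and deserve a sentence each if you write this up: the measurability of $y \mapsto \sup_{g}(g^{-1}D_{\pi}(gy))(E)$ (a supremum over an uncountable group of an a.e.-defined function, handled either by joint measurability plus universal measurability of the projection, or by reducing to a countable dense family as in \cite{CP12}); and the null-set bookkeeping needed to invoke the identities $\pi_{*}D_{\psi\circ\pi}(z') = D_{\psi}(z')$, $g\pi^{-1}(F) = \pi^{-1}(gF)$, and the fiber-support property at the translated points $g_{n}z$, $g_{n}y$, which requires fixing conull sets on which these hold and noting quasi-invariance. These are standard technicalities at the level of rigor of this literature, not gaps in the idea.
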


\begin{theorem}[Creutz-Peterson \cite{CP12} Theorem 4.13]\label{T:relcontB}
Let $G$ be a locally compact second countable group, $(X,\nu)$ a $G$-space and $(B,\beta)$ a contractive $G$-space.  Then the natural projection map $p : (B \times X, \beta \times \nu) \to (X,\nu)$ is relatively contractive.
\end{theorem}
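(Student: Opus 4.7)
The plan is to reduce relative contractivity of $p$ to the hypothesis that $(B,\beta)$ is contractive by an explicit, essentially bookkeeping, computation with the disintegration.

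First I would identify the disintegration. Since $p(b,x) = x$, the fibers are $p^{-1}(x) = B \times \{x\}$, and by Fubini the disintegration of $\beta \times \nu$ over $\nu$ is $D_p(x) = \beta \times \delta_x$. Given a measurable $E \subseteq B \times X$, write $E_x = \{b \in B : (b,x) \in E\}$, which is measurable for $\nu$-almost every $x$ by Fubini. Then $D_p(x)(E) = \beta(E_x)$. In particular $D_p(x)(E) > 0$ if and only if $\beta(E_x) > 0$.

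Next I would translate the defining condition of relative contractivity into a statement purely about $(B,\beta)$. Using the conventions $g\mu(A) = \mu(g^{-1}A)$ and the diagonal action $g(b,x) = (gb,gx)$, one computes
\[
g^{-1} D_p(gx)(E) \;=\; D_p(gx)(gE) \;=\; \beta\bigl((gE)_{gx}\bigr) \;=\; \beta(gE_x),
\]
where the last equality follows because $(gE)_{gx} = \{b : (g^{-1}b,x) \in E\} = gE_x$. So the condition to verify is that for $\nu$-almost every $x$ with $\beta(E_x) > 0$ there is a sequence $\{g_n\} \subseteq G$ with $\beta(g_n E_x) \to 1$.

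Finally, this is exactly the definition of contractivity of $(B,\beta)$ applied to the positive-measure set $E_x \subseteq B$; the sequence $\{g_n\}$ is allowed to depend on $x$, which is consistent with the definition. There is no real obstacle here: the content of the theorem is entirely encoded in the fact that the fibers of $p$ carry the contractive measure $\beta$, and relative contractivity along $p$ is just contractivity of $B$ applied fiberwise. The only point requiring minor care is the measurability of $x \mapsto E_x$ and of the map $x \mapsto g_n(x)$, both of which are routine (the latter follows from a standard measurable selection argument applied to the set of $g$ with $\beta(gE_x) > 1 - 1/n$, which is nonempty for $\nu$-almost every $x$ with $\beta(E_x) > 0$).
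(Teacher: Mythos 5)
Your argument is correct, and it is the natural one: identify $D_p(x) = \beta \times \delta_x$, observe $g^{-1}D_p(gx)(E) = \beta\bigl((gE)_{gx}\bigr) = \beta(gE_x)$, and apply contractivity of $(B,\beta)$ to the positive-measure slice $E_x$, with the sequence allowed to depend on $x$ exactly as Definition 4.4 permits. Note that this paper does not actually prove the statement --- it is imported from \cite{CP12} (Theorem 4.15) --- so there is no in-text proof to compare against, but your fiberwise verification is precisely what the statement amounts to under the quoted definitions. Two small remarks: the cleanest way to dispose of the ``almost every'' bookkeeping is to fix the canonical everywhere-defined version $D_p(x) = \beta \times \delta_x$ of the disintegration (legitimate by a.e.\ uniqueness), after which your identity holds for every $x$ and every $g$ and no exceptional null sets for the points $g_n x$ need to be tracked; and your closing appeal to a measurable selection for $x \mapsto g_n(x)$ is unnecessary, since the definition of relatively contractive only asks for existence of a sequence for almost every individual $x$, with no measurability in $x$ required.
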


\begin{theorem}\label{T:relmprelcon}
Let $G$ be a locally compact second countable group and $\pi : (X,\nu) \to (Y,\eta)$ a $G$-map of $G$-spaces.  If $\pi$ is both relatively measure-preserving and relatively contractive then it is an isomorphism.
\end{theorem}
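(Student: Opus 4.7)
The plan is to argue that the disintegration $D_{\pi}$ must be a point mass almost everywhere, whence $\pi$ admits a measurable section and is therefore an isomorphism. The two hypotheses will be combined as follows: relative measure-preservation says the disintegration transforms equivariantly under $G$, while relative contractiveness says that the disintegration can be ``pushed'' so as to concentrate on any positive-measure fiber set. If both hold, then concentration already happens in $D_{\pi}(y)$ itself.

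Concretely, I would first replace $D_{\pi}$ by a jointly measurable $G$-equivariant version and pass to a full-measure set $Y_{0} \subseteq Y$ on which $D_{\pi}(gy) = g\,D_{\pi}(y)$ holds for all $g \in G$. (This is the standard passage from the a.e.\ identity ``for each $g$'' in the definition of relatively measure-preserving to a single full-measure set $Y_{0}$ via Fubini; this step is essentially bookkeeping but will be the main technical annoyance.) Next, fix any measurable $E \subseteq X$ and any $y \in Y_{0}$ with $D_{\pi}(y)(E) > 0$. By relative contractiveness, choose a sequence $\{g_{n}\} \subseteq G$ with $g_{n}^{-1}D_{\pi}(g_{n}y)(E) \to 1$. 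Since $y \in Y_{0}$, the equivariance gives
\[
g_{n}^{-1}D_{\pi}(g_{n}y) = g_{n}^{-1}\bigl(g_{n}D_{\pi}(y)\bigr) = D_{\pi}(y),
\]
so the left-hand side equals the constant $D_{\pi}(y)(E)$. Passing to the limit forces $D_{\pi}(y)(E) = 1$.

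Thus for almost every $y$, the probability measure $D_{\pi}(y)$ takes only the values $0$ and $1$ on measurable sets. Since $X$ is a standard Borel space, any such measure is a Dirac mass: there exists $\sigma(y) \in \pi^{-1}(y)$ with $D_{\pi}(y) = \delta_{\sigma(y)}$. Measurability of $\sigma$ follows from measurability of $y \mapsto D_{\pi}(y)$ (as a map into $P(X)$) and standard selection, and $\pi \circ \sigma = \mathrm{id}_{Y}$ holds since $D_{\pi}(y)$ is supported in $\pi^{-1}(y)$. Finally, $\sigma \circ \pi = \mathrm{id}_{X}$ almost everywhere because $\nu = \int D_{\pi}(y)\,d\eta(y) = \int \delta_{\sigma(y)}\,d\eta(y) = \sigma_{*}\eta$. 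Hence $\sigma$ is a measurable inverse to $\pi$ and $\pi$ is a $G$-isomorphism.

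The main obstacle, as noted, is the null-set bookkeeping required to apply the equivariance $D_{\pi}(gy) = gD_{\pi}(y)$ to a sequence $\{g_{n}\}$ that is itself chosen depending on $y$. Once a jointly measurable equivariant version of $D_{\pi}$ is secured on a single full-measure $G$-invariant set, the rest of the argument reduces to the one-line identity above and the standard fact that a $\{0,1\}$-valued Borel probability measure on a standard Borel space is a point mass.
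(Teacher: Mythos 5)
The paper itself does not prove this statement (it is quoted from Creutz--Peterson \cite{CP12}), but your argument is correct and follows the same route as the cited proof: relative measure-preservation gives $g_{n}^{-1}D_{\pi}(g_{n}y)=D_{\pi}(y)$ along the contracting sequence, so relative contractiveness forces $D_{\pi}(y)$ to be $\{0,1\}$-valued, hence a point mass, and a point-mass disintegration yields the $G$-isomorphism. The only bookkeeping worth adding is to run the $\{0,1\}$-value argument over a countable algebra generating the Borel $\sigma$-algebra of $X$ (the exceptional null set in the definition of relatively contractive depends on the set $E$), after which a standard approximation argument gives $\{0,1\}$-values on all Borel sets for a single conull set of $y$.
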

\begin{proof}
Let $E$ be a measurable set in $X$.  Since $\pi$ is relatively contractive, for almost every $y \in Y$ such that $D_{\pi}(y)(E) > 0$ there is a sequence $g_{n} \in G$ such that $D_{\pi}(g_{n}y)(g_{n}E) \to 1$.  Since $\pi$ is relatively measure-preserving, $D_{\pi}(g_{n}y)(g_{n}E) = D_{\pi}(y)(E)$.  Therefore $D_{\pi}(y)(E) = 1$ for almost every $y$ such that $D_{\pi}(y)(E) > 0$.  As this holds for all measurable sets $E$ this means $\pi$ is an isomorphism.
\end{proof}

\begin{corollary}\label{C:mpquotrelcon}
Let $G$ be a locally compact second countable group and $(X,\nu)$ a contractive $G$-space.  If $\pi : (X,\nu) \to (Y,\eta)$ is a relatively measure-preserving $G$-map of $G$-spaces then it is an isomorphism.
\end{corollary}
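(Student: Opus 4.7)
The plan is to reduce this corollary to Theorem \ref{T:relmprelcon}, which says that a $G$-map that is simultaneously relatively measure-preserving and relatively contractive is an isomorphism. Since relative measure-preservation is given, the only thing to prove is that $\pi$ is relatively contractive.

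To establish that, the key observation is that the contractivity of the $G$-space $(X,\nu)$ is equivalent to the relative contractivity of the constant $G$-map from $(X,\nu)$ to the trivial one-point $G$-space $(\ast,\delta)$. Indeed, in the relative contractivity condition for this map the disintegration is just $\nu$ itself, $G$ acts trivially on the target, and so $g_n^{-1}D(g_n\ast)(E) \to 1$ reduces exactly to $\nu(g_n E) \to 1$, which is the hypothesis that $(X,\nu)$ is contractive. Factor the constant map as
\[
(X,\nu) \xrightarrow{\pi} (Y,\eta) \to (\ast,\delta).
\]
Then the composition is relatively contractive, so Theorem \ref{T:relcontcomp} (relative contractivity descends to each factor of a composition) implies that $\pi$ itself is relatively contractive.

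Having shown that $\pi$ is both relatively measure-preserving (by hypothesis) and relatively contractive (by the previous paragraph), Theorem \ref{T:relmprelcon} applies and yields that $\pi$ is a $G$-isomorphism. There is no genuine obstacle: the entire argument is a short bookkeeping exercise on top of the two structural theorems \ref{T:relcontcomp} and \ref{T:relmprelcon} from \cite{CP12}, with the only nontrivial observation being the reinterpretation of contractivity as relative contractivity of the map to a point.
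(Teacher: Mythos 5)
Your proof is correct and is essentially the intended derivation: contractivity of $(X,\nu)$ is precisely relative contractivity of the map to the trivial one-point space, so factoring that map through $(Y,\eta)$ and applying Theorem \ref{T:relcontcomp} shows $\pi$ is relatively contractive, whence Theorem \ref{T:relmprelcon} gives the isomorphism. This matches how the corollary follows from the cited results of \cite{CP12}, so there is nothing to add.
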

\begin{proof}
This follows from the previous theorem and the observation that any map from a contractive space is relatively contractive (the map from $(X,\nu)$ to the trivial one-point system is relatively contractive and so by Theorem \ref{T:relcontcomp} then so is $\pi$).
\end{proof}

\subsection{Joinings}

Joinings will play a key role in both our contractive factor theorem and in the study of random subgroups.  The reader is referred to \cite{glasner} for more information on joinings.

\begin{definition}
Let $G$ be a locally compact second countable group and let $(X,\nu)$ and $(Y,\eta)$ be $G$-spaces.  A \textbf{joining} of $(X,\nu)$ and $(Y,\eta)$ is a probability measure $\alpha \in P(X \times Y)$ such that $(p_{X})_{*}\alpha = \nu$ and $(p_{Y})_{*}\alpha = \eta$ where $p_{X}$ and $p_{Y}$ are the natural projections from $X \times Y$ to $X$ and $Y$.  The space $(X \times Y,\alpha)$ is then a $G$-space with  the diagonal action.
\end{definition}

\begin{definition}
Let $(X,\nu)$ and $(Y,\eta)$ be $G$-spaces with a common $G$-quotient $(Z,\zeta)$, that is a diagram of $G$-maps and $G$-spaces as follows:
\begin{diagram}
		&			&(X,\nu)\\
		&			&\dTo^{\pi} \\
(Y,\eta)	&\rTo^{\varphi}	&(Z,\zeta)
\end{diagram}
Treat $X \times Y$ as a $G$-space with the diagonal action.
A $G$-quasi-invariant Borel probability measure $\rho \in P(X \times Y)$ is a \textbf{relative joining} of $(X,\nu)$ and $(Y,\eta)$ over $(Z,\zeta)$ when the following diagram of $G$-maps commutes:
\begin{diagram}
(X \times Y,\rho)		&\rTo^{p_{X}}	&(X,\nu)\\
\dTo^{p_{Y}}	&			&\dTo^{\pi} \\
(Y,\eta)		&\rTo^{\varphi}	&(Z,\zeta)
\end{diagram}
where $p_{X}$ and $p_{Y}$ are the natural projections from $X \times Y$ to $X$ and $Y$, respectively.
\end{definition}

In general, the product $\nu \times \eta$ is not a relative joining of $(X,\nu)$ and $(Y,\eta)$ over $(Z,\zeta)$ unless $(Z,\zeta)$ is trivial since we require that $\pi \circ p_{X} = \varphi \circ p_{Y}$ almost everywhere.  However, there is a notion of independent joining in the relative case:
\begin{definition}
Let $(X,\nu)$ and $(Y,\eta)$ be $G$-spaces with common $G$-quotient $(Z,\zeta)$.  Let $\pi : (X,\nu) \to (Z,\zeta)$ and $\varphi : (Y,
\eta) \to (Z,\zeta)$ be the quotient maps.  The probability measure $\rho \in P(X \times Y)$ given by
\[
\rho = \int_{Z} D_{\pi}(z) \times D_{\varphi}(z)~d\zeta(z)
\]
is the \textbf{independent relative joining} of $(X,\nu)$ and $(Y,\eta)$ over $(Z,\zeta)$.
\end{definition}

Of course, the independent relative joining is a relative joining.  We also note that the independent joining $\nu \times \eta$ is the independent relative joining over the trivial system.

\begin{proposition}\label{P:irjfactor}
Let $\pi : (X,\nu) \to (Y,\eta)$ be a $G$-map of $G$-spaces.  Then the independent relative joining of $(X,\nu)$ and $(Y,\eta)$ over $(Y,\eta)$ is $G$-isomorphic to $(X,\nu)$.
\end{proposition}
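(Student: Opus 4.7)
The plan is to exhibit an explicit $G$-equivariant isomorphism, namely the graph map $\Phi : X \to X \times Y$ defined by $\Phi(x) = (x, \pi(x))$. Since $\pi$ is a $G$-map, $\Phi$ is $G$-equivariant, and it is clearly injective, so it remains only to verify that $\Phi_{*}\nu$ is exactly the independent relative joining $\rho$ of $(X,\nu)$ and $(Y,\eta)$ over $(Y,\eta)$.

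First I would compute the relevant disintegrations. The map on the $X$-side is $\pi : (X,\nu) \to (Y,\eta)$ with disintegration $D_{\pi}$, while the map on the $Y$-side is the identity $\mathrm{id} : (Y,\eta) \to (Y,\eta)$, whose disintegration is simply $y \mapsto \delta_{y}$. Thus, by definition,
\[
\rho \;=\; \int_{Y} D_{\pi}(y) \times \delta_{y}\, d\eta(y).
\]
Because $\delta_{y}$ is a point mass in the second coordinate and $D_{\pi}(y)$ is supported in $\pi^{-1}(y)$, the measure $D_{\pi}(y) \times \delta_{y}$ is supported on $\{(x,y) : \pi(x) = y\}$, i.e.\ on the graph of $\pi$.

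Next I would check that pushforwards agree. For any measurable $E \subseteq X \times Y$,
\[
\rho(E) \;=\; \int_{Y} \int_{X} \mathbbm{1}_{E}(x,y)\, dD_{\pi}(y)(x)\, d\eta(y) \;=\; \int_{Y} \int_{X} \mathbbm{1}_{E}(x,\pi(x))\, dD_{\pi}(y)(x)\, d\eta(y),
\]
using that $D_{\pi}(y)$ is concentrated on $\pi^{-1}(y)$. By the disintegration formula $\nu = \int_{Y} D_{\pi}(y)\, d\eta(y)$, the right-hand side equals $\int_{X} \mathbbm{1}_{E}(x,\pi(x))\, d\nu(x) = \nu(\Phi^{-1}(E)) = \Phi_{*}\nu(E)$. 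Hence $\Phi_{*}\nu = \rho$, and combined with $G$-equivariance this identifies $(X,\nu)$ with $(X \times Y, \rho)$ as $G$-spaces.

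There is no real obstacle here; the content of the statement is just that disintegrating $\eta$ over itself produces point masses, which forces the independent relative joining over $(Y,\eta)$ to live on the graph of $\pi$. The entire proof is a one-line computation once the disintegrations are written down.
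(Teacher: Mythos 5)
Your proposal is correct and is essentially the paper's argument viewed from the other direction: the paper takes the projection $p : (X \times Y,\rho) \to (X,\nu)$ and shows via the same computation that its disintegration is $D_{p}(x) = \delta_{x} \times \delta_{\pi(x)}$, i.e.\ point masses, so $p$ is an isomorphism, whereas you exhibit the inverse graph map $\Phi(x) = (x,\pi(x))$ and check $\Phi_{*}\nu = \rho$. The key identity $\int_{Y} D_{\pi}(y) \times \delta_{y}\,d\eta(y) = \int_{X} \delta_{x} \times \delta_{\pi(x)}\,d\nu(x)$ is identical in both, so no further comment is needed.
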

\begin{proof}
The independent relative joining is $(X\times Y,\alpha)$ where
\[
\alpha = \int_{Y} D_{\pi}(y) \times \delta_{y}~d\eta(y).
\]
Let $p : X \times Y \to X$ be the projection to $X$.  Let $\alpha_{x} \in P(X \times Y)$ by $\alpha_{x} = \delta_{x} \times \delta_{\pi(x)}$.  Then
\begin{align*}
\int_{X} \alpha_{x}~d\nu(x) &= \int_{Y} \int_{X} \delta_{x} \times \delta_{\pi(x)}~dD_{\pi}(y)(x)~d\eta(y) \\
&= \int_{Y} \int_{X} \delta_{x} \times \delta_{y}~dD_{\pi}(y)(x)~d\eta(y)\\
&= \int_{Y} D_{\pi}(y) \times \delta_{y}~d\eta(y) = \alpha
\end{align*}
and $\alpha_{x}$ is supported on $p^{-1}(x) = \{ x \} \times Y$.  Therefore $D_{p}(x) = \alpha_{x}$ by uniqueness of disintegration.
Since $\alpha_{x}$ is a point mass, then $p$ is an isomorphism so $(X \times Y, \alpha)$ is isomorphic to $(X,\nu)$.
\end{proof}

\subsection{Resolutions}

The notion of resolution, due to de Cornulier \cite{yves}, is intimately connected with notion of relative property $(T)$.  We will make use of resolutions in the easy case when considering a product of two groups, one of which has property $(T)$, to show that weakly amenable actions are in fact essentially transitive in many cases.  The reader is referred to \cite{yves} for a systematic description and proofs.

\begin{definition}[de Cornulier \cite{yves}]
Let $G$ and $Q$ be locally compact second countable groups and let $p : G \to Q$ be a homomorphism with dense image.  Let $f : G \to X$ be any map to a topological space.  Then $f$ \textbf{factors through} $p$ when for every net $\{ g_{i} \}$ in $G$, if $p(g_{i})$ converges in $Q$ then $f(g_{i})$ converges in $X$.  Given an action $G \actson X$ on a topological space, the \textbf{$(Q,f)$-points} of $X$ are $X^{Q} = \{ x \in X : g \mapsto gx \text{ factors through $f$} \}$.
\end{definition}

\begin{proposition}[de Cornulier \cite{yves}]\label{P:closedQpoints}
Let $f : G \to Q$ be a homomorphism of locally compact second countable groups with dense image and let $G \actson X$ be any action on a topological space.  Then the space of $(Q,f)$-points $X^{Q}$ is a closed $G$-invariant set in $X$.
\end{proposition}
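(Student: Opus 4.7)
The statement splits naturally into two parts: $G$-invariance and closedness of $X^{Q}$.

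For $G$-invariance, I would argue directly from the functorial behavior of $p$. Fix $x \in X^{Q}$ and $h \in G$. Given any net $\{g_{i}\}$ in $G$ with $p(g_{i})$ convergent in $Q$, the net $\{g_{i}h\}$ satisfies $p(g_{i}h) = p(g_{i})p(h)$, which is also convergent in $Q$. By the factoring property applied to $x$, the net $g_{i}(hx) = (g_{i}h)x$ converges in $X$, so $hx \in X^{Q}$.

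For closedness, the natural strategy is to reformulate membership in $X^{Q}$ as the existence of a continuous extension of the orbit map. Write $\sigma_{x} : G \to X$, $\sigma_{x}(g) = gx$. I would show $x \in X^{Q}$ if and only if there is a (necessarily unique) continuous map $\tilde\sigma_{x} : Q \to X$ with $\tilde\sigma_{x} \circ p = \sigma_{x}$. Uniqueness follows from density of $p(G)$ in $Q$. For existence, one first checks that $\sigma_{x}$ descends to $p(G)$: if $p(g) = p(g')$, the alternating net $(g,g',g,g',\ldots)$ satisfies the hypothesis of the factoring property, so its image under $\sigma_{x}$ converges, forcing $gx = g'x$. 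Continuity on $p(G)$ and extension to all of $Q$ then follow from density plus the convergence supplied by the factoring condition.

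With this characterization in hand, closedness amounts to the following: if $x_{n} \in X^{Q}$ with $x_{n} \to x$, then $\sigma_{x}$ itself extends continuously along $p$. Since the action is continuous in the state variable, $\tilde\sigma_{x_{n}}(p(g)) = gx_{n} \to gx$ for each $g \in G$, so the pointwise limit of the extensions on $p(G)$ agrees with $\sigma_{x}$. The crux, and the main obstacle, is promoting this pointwise convergence on the dense subset $p(G)$ to the existence of a continuous extension on all of $Q$: given a net $\{g_{i}\}$ with $p(g_{i}) \to q$, one must show $g_{i}x$ converges in $X$. My plan is to justify this by an interchange-of-limits argument, using that $g_{i}x_{n} \to \tilde\sigma_{x_{n}}(q)$ as $i \to \infty$ for each fixed $n$, that $g_{i}x_{n} \to g_{i}x$ as $n \to \infty$ for each fixed $i$, and leveraging the local compactness and metrizability of $Q$ together with the continuity of the maps $\tilde\sigma_{x_{n}}$ to force convergence of $g_{i}x$ to the common double limit. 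The resulting limit map furnishes $\tilde\sigma_{x}$ and places $x$ in $X^{Q}$.
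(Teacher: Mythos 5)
Your $G$-invariance argument is correct and is the standard one: $p(g_{i}h)=p(g_{i})p(h)$ converges whenever $p(g_{i})$ does, so the factoring property for $x$ gives it for $hx$. (Note the paper itself offers no proof of this proposition -- it is quoted from de Cornulier -- so the only question is whether your argument stands on its own.)

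The closedness half has a genuine gap, and it is exactly at the step you flag as the crux. From the data you have -- $g_{i}x_{n}\to\tilde\sigma_{x_{n}}(q)$ in $i$ for each fixed $n$, and $g_{i}x_{n}\to g_{i}x$ in $n$ for each fixed $i$ -- no interchange of limits is possible without a uniformity: you need the family of maps $x\mapsto gx$ to be equicontinuous (uniformly over $g$) and you need $X$ to be complete in a compatible uniform structure, so that you can run a $3\varepsilon$ estimate and then extract a limit. Local compactness and metrizability of $Q$ are irrelevant here, because the obstruction lives in $X$, not in $Q$. In fact, at the stated level of generality ("any action on a topological space") the closedness assertion is simply false, so no argument using only continuity of the action can close this gap. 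For a counterexample take $G=\mathbb{Z}$, $Q=\mathbb{Z}_{p}$ with the dense inclusion, and let $X\subseteq\mathbb{R}^{2}$ be the union of the circles $C_{m}$ of radius $1-\tfrac1m$ and the unit circle $C_{\infty}$, with the homeomorphism $T$ rotating $C_{m}$ by $2\pi a_{m}/p^{m}$ (where $a_{m}=\lfloor p^{m}\beta\rfloor$) and $C_{\infty}$ by $2\pi\beta$, for $\beta$ normal in base $p$; this $T$ is continuous since $a_{m}/p^{m}\to\beta$. Every point of $C_{m}$ lies in $X^{Q}$ (its orbit map factors through $\mathbb{Z}/p^{m}\mathbb{Z}$, a continuous quotient of $\mathbb{Z}_{p}$), but the limit point $x=(1,0)$ does not: the sequence $g_{i}=p^{i}$ converges to $0$ in $\mathbb{Z}_{p}$, while $T^{p^{i}}x$ has angle $2\pi p^{i}\beta$, which equidistributes and does not converge. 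So $X^{Q}$ is not closed there. In the situations where the paper actually invokes the proposition (unitary representations on a Hilbert space, as in Proposition \ref{P:resolutionsprod} and Corollary \ref{C:lattices}), the action is by isometries on a complete space and closedness does follow, by the estimate $\|g_{i}x-g_{j}x\|\leq 2\|x-x_{n}\|+\|g_{i}x_{n}-g_{j}x_{n}\|$, which shows $(g_{i}x)$ is Cauchy, together with completeness. Those two ingredients -- equicontinuity (here, isometry) and completeness -- are what your outline is missing, and any repaired proof must assume and use them.
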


\begin{definition}
Let $f : G \to Q$ be a homomorphism of locally compact second countable groups with dense image and let $\pi : G \to \mathcal{U}(\mathcal{H})$ be a (strongly continuous) unitary representation of $G$ on a Hilbert space.  Let $\mathcal{H}^{Q}$ be the space of $(Q,f)$-points in $\mathcal{H}$ and let $\pi^{Q} : Q \to \mathcal{H}^{Q}$ be the restriction of $\pi$ to $Q$ on $\mathcal{H}^{Q}$.
\end{definition}

\begin{definition}
Let $G$ be a locally compact second countable group and $\pi : G \to \mathcal{U}(\mathcal{H})$ be a unitary representation of $G$ on a Hilbert space.  Then $\pi$ has \textbf{almost invariant vectors} when there exists a sequence $\{ v_{n} \}$ in $\mathcal{H}$ such that $\| v_{n} \| = 1$ for all $n$ and such that for each fixed $g \in G$ it holds that $\lim_{n} \| \pi(g)v_{n} - v_{n} \| \to 0$.
\end{definition}

\begin{definition}
Let $f : G \to Q$ be a homomorphism of locally compact second countable groups with dense image.  Then $f$ is a \textbf{resolution} when for every unitary representation $\pi : G \to \mathcal{U}(\mathcal{H})$ of $G$ on a Hilbert space that has almost invariant vectors, the representation $\pi^{Q} : Q \to \mathcal{U}(\mathcal{H}^{Q})$ also has almost invariant vectors.
\end{definition}

\begin{proposition}[de Cornulier \cite{yves}]\label{P:resolutions}
Let $G = G_{1} \times G_{2}$ be a product of two locally compact second countable groups.  If $G_{2}$ has property $(T)$ then the projection map $\mathrm{proj}_{1} : G \to G_{1}$ is a resolution.
\end{proposition}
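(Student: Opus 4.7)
The plan is to first identify the space $\mathcal{H}^{G_1}$ of $(G_1,\mathrm{proj}_1)$-points with the subspace $\mathcal{H}^{G_2}$ of $G_2$-invariant vectors in $\mathcal{H}$, and then use property $(T)$ of $G_2$ to push any sequence of $G$-almost invariant vectors into this subspace via orthogonal projection, thereby producing $G_1$-almost invariant vectors in $\mathcal{H}^{G_1}$.

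For the first step, if $v\in\mathcal{H}$ is $G_2$-invariant then for any net $g_i=(g_{1,i},g_{2,i})$ with $\mathrm{proj}_1(g_i)=g_{1,i}\to g_1$ in $G_1$ one has $\pi(g_i)v=\pi(g_{1,i},e)v\to\pi(g_1,e)v$ by strong continuity, so $v\in\mathcal{H}^{G_1}$. Conversely, if $v\in\mathcal{H}^{G_1}$ and $h\in G_2$, then the net that alternates between $(e,e)$ and $(e,h)$ projects to the constant net $e$ in $G_1$, so its image under the orbit map must converge in $\mathcal{H}$; this forces $\pi(e,h)v=v$, and hence $v\in\mathcal{H}^{G_2}$. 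Since $G_1$ and $G_2$ commute inside $G$, the subspace $\mathcal{H}^{G_2}$ is $\pi(G_1\times\{e\})$-invariant and the orthogonal projection $P:\mathcal{H}\to\mathcal{H}^{G_2}$ commutes with this $G_1$-action.

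For the second step, let $\{v_n\}$ be a sequence of $G$-almost invariant unit vectors in $\mathcal{H}$ and decompose $v_n=Pv_n+w_n$ with $w_n\in(\mathcal{H}^{G_2})^\perp$. The complement $(\mathcal{H}^{G_2})^\perp$ is a $G_2$-subrepresentation containing no nonzero $G_2$-invariant vectors, so by property $(T)$ of $G_2$ it admits no $G_2$-almost invariant unit vectors. Since $Pv_n$ is $G_2$-fixed, $\|\pi(e,h)w_n-w_n\|=\|\pi(e,h)v_n-v_n\|\to 0$ for every $h\in G_2$, so if $\|w_n\|$ were bounded below by some $c>0$ along a subsequence, the normalized vectors $w_n/\|w_n\|$ along that subsequence would be $G_2$-almost invariant unit vectors in $(\mathcal{H}^{G_2})^\perp$, contradicting property $(T)$. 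Thus $\|w_n\|\to 0$ and $\|Pv_n\|\to 1$. Setting $u_n=Pv_n/\|Pv_n\|$ for $n$ large, these are unit vectors in $\mathcal{H}^{G_1}$, and for each $g\in G_1$,
\[
\|\pi(g,e)u_n-u_n\|=\|Pv_n\|^{-1}\,\|P(\pi(g,e)v_n-v_n)\|\le\|Pv_n\|^{-1}\,\|\pi(g,e)v_n-v_n\|\to 0,
\]
so $\{u_n\}$ is a sequence of almost invariant vectors for $\pi^{G_1}$ on $\mathcal{H}^{G_1}$, showing that $\mathrm{proj}_1$ is a resolution.

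The only real subtlety is the step upgrading ``no $G_2$-invariant vectors in $(\mathcal{H}^{G_2})^\perp$'' to ``no $G_2$-almost invariant vectors there,'' which is precisely the standard quantitative content of property $(T)$ for a locally compact second countable group (and which is where, and only where, the hypothesis on $G_2$ enters); the rest of the argument is just the linear algebra of the invariant/complement decomposition together with the commutation of the $G_1$- and $G_2$-actions.
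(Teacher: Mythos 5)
Your proof is correct, and it is worth noting that the paper does not actually prove this proposition at all: it is quoted as a black box from de Cornulier's general theory of resolutions. What you have done is give a direct, self-contained argument for the special case at hand, and the two halves of your argument line up nicely with what the paper does elsewhere. The first half (the identification of the $(G_{1},\mathrm{proj}_{1})$-points of $\mathcal{H}$ with the $G_{2}$-invariant vectors, via strong continuity in one direction and the alternating sequence $(e,e),(e,h),(e,e),\dots$ in the other) is exactly the computation the paper carries out later inside Proposition \ref{P:resolutionsprod}, where it is used \emph{after} invoking the resolution property; your proof shows it can instead be used as the first step of a proof of the resolution property itself. The second half (orthogonal projection $P$ onto $\mathcal{H}^{G_{2}}$, which commutes with $\pi(G_{1}\times\{e\})$, together with the spectral-gap formulation of property $(T)$ applied to $(\mathcal{H}^{G_{2}})^{\perp}$ to force $\|v_{n}-Pv_{n}\|\to 0$) replaces de Cornulier's machinery entirely, so your route is more elementary, at the cost of not yielding the general notion of resolution that the paper also uses for lattices (Proposition \ref{P:reslatts}). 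One small point worth making explicit: the paper's definition of almost invariant vectors is sequential and pointwise in $g$, while property $(T)$ is usually phrased via uniformity on compact sets; these agree here because the positive definite functions $g\mapsto\langle\pi(g)v_{n},v_{n}\rangle$ converge pointwise to the constant $1$ and hence uniformly on compacta, so your appeal to the ``no almost invariant vectors in a representation without invariant vectors'' form of property $(T)$ is legitimate for locally compact second countable groups.
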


\begin{proposition}[de Cornulier \cite{yves}]
Let $G$ and $Q$ be locally compact second countable groups and let $p : G \to Q$ be a resolution.  Let $\Gamma < G$ be a lattice.  Then $p : \Gamma \to \overline{p(\Gamma)}$ is a resolution.
\end{proposition}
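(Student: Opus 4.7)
The plan is to reduce to the ambient group via induction. Let $\sigma : \Gamma \to \mathcal{U}(\mathcal{H})$ be a unitary representation with almost invariant vectors, and form $\pi = \mathrm{Ind}_\Gamma^G \sigma$, realized on the Hilbert space $\mathcal{H}_\pi$ of measurable functions $f : G \to \mathcal{H}$ with $f(g\gamma) = \sigma(\gamma^{-1})f(g)$, square-integrable over a fundamental domain. Since $\Gamma$ is a lattice, the quasi-regular representation $L^2(G/\Gamma) \cong \mathrm{Ind}_\Gamma^G 1_\Gamma$ weakly contains the trivial representation of $G$, and continuity of the induction functor in the Fell topology then gives that $\pi$ has almost invariant vectors for $G$. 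Applying the resolution hypothesis to $\pi$ yields almost invariant vectors for $\pi^Q$ in $\mathcal{H}_\pi^Q$, which remain almost invariant when restricted to the subgroup $Q' = \overline{p(\Gamma)} \subseteq Q$.

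The main step is to transfer these to almost invariant vectors for $\sigma^{Q'}$ in $\mathcal{H}^{Q'}$. Writing $N = \Gamma \cap \ker(p)$, any $f \in \mathcal{H}_\pi^Q$ is left $\ker(p)$-invariant, and combined with the right $\Gamma$-equivariance this forces $f(g) \in \mathcal{H}^N$ for every $g$. I will smooth each almost invariant vector $f_n \in \mathcal{H}_\pi^Q$ by convolving with a continuous compactly supported function on $G$, which preserves both membership in $\mathcal{H}_\pi^Q$ (using the standard identity exchanging $\pi$-convolution with left translation together with the factoring of $f_n$ through $p$) and the almost invariance. With $f_n$ represented by a continuous $\mathcal{H}^N$-valued function, I set $v_n := f_n(e)$; the equivariance identity $\sigma(\gamma) v_n = f_n(\gamma^{-1}) = (\pi(\gamma) f_n)(e)$ combined with the $Q$-factoring of $g \mapsto \pi(g) f_n$ will show that $\gamma \mapsto \sigma(\gamma) v_n$ factors through $p|_\Gamma$, placing $v_n \in \mathcal{H}^{Q'}$, and a norm estimate $\|\sigma(\gamma) v_n - v_n\| \leq C \|\pi(\gamma) f_n - f_n\|$ on a fixed finite test set will transfer almost invariance from $\pi$ to $\sigma^{Q'}$.

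The main obstacle will be in this last transfer: $L^2$-convergence of $\pi(\gamma_i) f_n$ in $\mathcal{H}_\pi$ does not directly yield pointwise convergence at the identity of the value $f_n(\gamma_i^{-1})$, so the choice of smoothing and continuous representative must be coordinated carefully to ensure that the evaluation at $e$ both lands in $\mathcal{H}^{Q'}$ with uniformly positive norm and produces the requisite pointwise approximation. Handling this requires exploiting that the $(Q,p)$-factoring provides additional regularity on the smoothed $f_n$ beyond mere membership in $\mathcal{H}_\pi$, and localizing the estimates to a neighborhood of the identity coset in $G/\Gamma$ where the fundamental domain behaves well with respect to the finite set of group elements under consideration.
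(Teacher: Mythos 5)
The paper does not actually prove this proposition; it is quoted from de Cornulier's work on resolutions, so your attempt has to be judged on its own terms. The first stage of your plan is fine: $\mathrm{Ind}_\Gamma^G\sigma$ almost has invariant vectors (continuity of induction in the Fell topology together with $1_G\leq L^2(G/\Gamma)$ for a lattice), the resolution hypothesis then gives almost invariant vectors $f_n$ in the $(Q,p)$-points $\mathcal{H}_\pi^Q$, these are indeed $\ker p$-invariant with values in $\mathcal{H}^N$, and almost invariance passes to the closed subgroup $\overline{p(\Gamma)}$. None of that is where the theorem lives.

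The genuine gap is the transfer step, which you flag but do not carry out, and the specific device you propose does not deliver either of the two things it must. Write $f_n=\pi(\phi)h_n$ for the smoothed vector and $v_n=f_n(e)$. (a) For almost invariance you need $\|\sigma(\gamma)v_n-v_n\|$ small; unwinding the definitions, $\sigma(\gamma)v_n-v_n=\int\phi(u)\,\bigl[(\pi(u^{-1}\gamma u)h_n-h_n)\bigr](u^{-1})\,du$, an integral of \emph{pointwise} values in which the group element is coupled to the evaluation point. $L^2$-almost invariance under each fixed group element gives no control over such ``diagonal'' integrals (a bump of small support drifting with $u$ makes every $\|\pi(u^{-1}\gamma u)h_n-h_n\|_{\mathcal{H}_\pi}$ tiny while the diagonal values stay of size one), and your claimed estimate $\|\sigma(\gamma)v_n-v_n\|\leq C\|\pi(\gamma)f_n-f_n\|$ is exactly what fails: evaluation at $e$ is unbounded on $\mathcal{H}_\pi$, and smoothing only yields $\|(\pi(\psi)h)(e)\|\leq C(\psi)\|h\|$ for a \emph{fixed} kernel $\psi$, which bounds the offending term by $\|h_n\|=1$ rather than by the invariance defect; the natural decomposition $\pi(\gamma)\pi(\phi)=\pi(\phi)\pi(\gamma)+\pi(L_\gamma\phi-R_{\gamma^{-1}}\phi)$ leaves a commutator term whose value at $e$ is, after a change of variables, the very quantity you are trying to bound. (b) For membership $v_n\in\mathcal{H}^{Q'}$ you need $(\pi(\gamma_i)f_n)(e)$ to converge whenever $p(\gamma_i)$ converges; the $Q$-point property only gives convergence of $\pi(\gamma_i)f_n$ in the $\mathcal{H}_\pi$-norm, and since $\pi(\gamma_i)f_n=\pi(L_{\gamma_i}\phi)h_n$ with kernels translated off to infinity, no uniform evaluation bound is available, and commuting $\pi(\gamma_i)$ past the smoothing reproduces the same unproved convergence. (You also never secure $\|v_n\|$ bounded below, and moving the evaluation point to fix that destroys the identity $\sigma(\gamma)v=f(\gamma^{-1})$, replacing $\gamma$ by conjugates $x\gamma x^{-1}$ -- the same diagonal problem again.) This is precisely the difference between the classical lattice argument for property $(T)$, where an exactly invariant induced vector is a.e.\ constant and evaluation is harmless, and the ``almost invariant'' setting of resolutions; overcoming it is the substance of de Cornulier's theorem, and ``additional regularity from the $(Q,p)$-factoring'' plus ``localizing near the identity coset'' is not yet an argument. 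A smaller unaddressed point: even with suitable $v_n$ in hand, almost invariance of $\sigma^{Q'}$ must be verified for every $q\in\overline{p(\Gamma)}$, not only for elements of $p(\Gamma)$, which requires a uniformity-on-compacta argument (e.g.\ via functions of positive type); that part is routine, but the main transfer step as proposed does not stand.
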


Combining the previous two propositions:
\begin{proposition}\label{P:reslatts}
Let $G$ and $H$ be locally compact second countable groups such that $H$ has property $(T)$ and let $\Gamma < G \times H$ be an irreducible lattice.  Then the projection map $\mathrm{proj}_{G} : \Gamma \to G$ is a resolution.
\end{proposition}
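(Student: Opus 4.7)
The plan is a direct chain of applications of the two de Cornulier propositions immediately above, with irreducibility used only at the end to identify the target group. First I would invoke Proposition \ref{P:resolutions}, applied to the product $G \times H$ with the property $(T)$ factor being $H$, to conclude that the projection map $p := \mathrm{proj}_{G} : G \times H \to G$ is a resolution. Since $\Gamma$ is a lattice in $G \times H$, I would then apply the preceding (unlabeled) proposition of de Cornulier, which says resolutions restrict to lattices, to get that the restriction $p|_{\Gamma} : \Gamma \to \overline{p(\Gamma)}$ is a resolution.

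At this point the statement would be essentially proved except that the target is $\overline{p(\Gamma)}$ rather than $G$. The remaining task is to show that these coincide, and this is precisely what irreducibility of $\Gamma$ buys. I would set $M = \{e\} \times H \trianglelefteq G \times H$, observe that $(G \times H)/M \cong G$ canonically, and note that under this identification $\Gamma/(\Gamma \cap M)$ is exactly $p(\Gamma)$. In the generic situation $M$ is neither central in $G \times H$ (which fails exactly when $H$ is abelian) nor cocompact (which fails exactly when $G$ is compact), so the definition of irreducible lattice gives directly that $\Gamma/(\Gamma \cap M)$ is dense in $(G \times H)/M$, i.e., $\overline{p(\Gamma)} = G$.

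The only mildly delicate point is the edge cases of irreducibility, where $M$ is central or cocompact. If $G$ is compact then any homomorphism into $G$ trivially factors through $p$ on nets, and the resolution property for $p|_{\Gamma} : \Gamma \to G$ is inherited from $p|_\Gamma : \Gamma \to \overline{p(\Gamma)}$ since $\overline{p(\Gamma)}$ is a closed subgroup of the compact group $G$ and the $(Q,f)$-points for these two targets agree on the image. If $H$ is abelian, then $H$ having property $(T)$ forces $H$ to be compact (abelian $(T)$ groups are compact), reducing to the previous case. Once $\overline{p(\Gamma)} = G$ in all cases, the composition yields that $\mathrm{proj}_{G} : \Gamma \to G$ is a resolution, as desired.

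The main obstacle here is purely bookkeeping: there is no new analytic content, and the only place one must be careful is verifying that the hypotheses of the de Cornulier restriction-to-lattices proposition, together with irreducibility, genuinely produce a resolution \emph{onto $G$ itself} rather than onto a possibly proper closed subgroup.
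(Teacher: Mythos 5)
Your proposal is correct and is essentially the paper's own argument: the paper proves this proposition simply by ``combining the previous two propositions,'' i.e.\ Proposition~\ref{P:resolutions} applied to $G\times H$ followed by de Cornulier's restriction-to-lattices statement, with irreducibility tacitly supplying the density $\overline{\mathrm{proj}_{G}(\Gamma)}=G$ exactly as you spell out. Your extra discussion of the degenerate cases ($G$ compact or $H$ abelian, where the exemptions in the definition of irreducibility kick in) goes beyond what the paper does and is not quite right as stated (the abelian-$H$ case does not reduce to the compact-$G$ case, and there the projection need not even have dense image), but these cases lie outside the setting in which the proposition is used, so they do not affect the substance of the proof.
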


\section{Random Subgroups}

Invariant random subgroups are an active area of research and are the natural setting for the study of stabilizers of actions of groups.  We present here a systematic approach to treating random subgroups as subgroups of one another and how this interacts with the possible stabilizers of actions of the group.

\begin{definition}
Let $G$ be a locally compact second countable group.  Denote by $S(G)$ the space of closed subgroups of $G$ endowed with the Chabauty topology.  Let $G$ act on $S(G)$ by conjugation.  A Borel probability measure $\eta \in P(S(G))$ that is invariant under the conjugation action is an \textbf{invariant random subgroup}.
\end{definition}

We generalize the notion of invariant random subgroup to quasi-invariant actions:
\begin{definition}
Let $G$ be a locally compact second countable group and denote by $S(G)$ the space of closed subgroups of $G$ endowed with the Chabauty topology and the action of $G$ by conjugation.  A Borel probability measure $\eta \in P(S(G))$ is a \textbf{random subgroup} (or more precisely, a \textbf{quasi-invariant random subgroup}) when it is quasi-invariant under the conjugation action.
\end{definition}

The following generalizes the equivalent statement for measure-preserving actions and invariant random subgroups due to Abert-Glasner-Vir\'{a}g \cite{AGV12}:
\begin{theorem}
Let $G \actson (X,\nu)$ be a quasi-invariant action of a locally compact second countable group.  Then the map $\mathrm{stab} : X \to S(G)$ by $\mathrm{stab}(x) = \{ g \in G : gx = x \}$ gives rise to a random subgroup $\mathrm{stab}_{*}\nu$.  This will be an invariant random subgroup precisely when the action is measure-preserving.

Conversely, given a random subgroup $\eta \in P(S(G))$ there exists a quasi-invariant action $G \actson (X,\nu)$ such that $\mathrm{stab}_{*}\nu = \eta$.  Moreover, this action will be a measure-preserving extension of $(S(G),\eta)$.
\end{theorem}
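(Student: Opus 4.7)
The forward direction is routine. The set $\{(g,x) \in G \times X : gx = x\}$ is Borel by joint measurability of the action, so $x \mapsto \mathrm{stab}(x)$ is a Borel map into the Chabauty space $S(G)$ (by standard selection from a Borel relation with closed fibers). The identity $\mathrm{stab}(gx) = g\,\mathrm{stab}(x)\,g^{-1}$ gives the equivariance $g \cdot \mathrm{stab}_*\nu = \mathrm{stab}_*(g\nu)$, from which quasi-invariance of $\mathrm{stab}_*\nu$ under conjugation is immediate, and invariance follows when $\nu$ is $G$-invariant.

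For the converse, the plan is to realize $(X,\nu)$ as a Borel bundle of coset spaces over $(S(G),\eta)$. First, I would decompose $\eta = \int \eta_{[H]}\, d\bar\eta([H])$ over the orbit space $S(G)/G$ of the conjugation action, using the Mackey point realization (Theorem~\ref{T:mackey}) of the algebra of conjugation-invariant bounded Borel functions on $S(G)$ to produce both the quotient and the associated conditional measures. For each conjugacy class $[H]$, pick a Borel representative $H$ (measurably, on a set of full $\bar\eta$-measure, via a standard selection theorem) and let the corresponding component of $X$ be the coset space $G/H$ with the standard left-translation $G$-action; the stabilizer map on this component is $gH \mapsto gHg^{-1}$, a $G$-equivariant surjection onto $[H] \cong G/N_G(H)$. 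On each $G/H$ place a $G$-quasi-invariant probability measure $m_H$ whose pushforward under $gH \mapsto gHg^{-1}$ equals $\eta_{[H]}$; such $m_H$ is obtained from any Haar-class probability measure on $G/H$ by a Radon-Nikodym adjustment. Assembling $X = \bigsqcup_{[H]} G/H$ with $\nu = \int m_H\, d\bar\eta([H])$ gives $\mathrm{stab}_*\nu = \eta$ by Fubini.

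To see that $\pi : (X,\nu) \to (S(G),\eta)$ is a measure-preserving extension, I would apply Theorem~\ref{T:relmpRN}: the Radon-Nikodym identity $\frac{dg\nu}{d\nu}(x) = \frac{dg\eta}{d\eta}(\pi(x))$ holds fiberwise since $m_H$ and $\eta_{[H]}$ both arise from the Haar class on $G$, and $\pi$ collapses the $N_G(H)$-homogeneous fibers of $G/H \to G/N_G(H)$ where the modular behavior is trivial.

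The main obstacle is descriptive-set-theoretic: the orbit space $S(G)/G$ under conjugation can fail to be standard Borel, so the construction of a Borel section $[H] \mapsto H$ and a measurable family $H \mapsto m_H$ must proceed on Borel subsets of full $\bar\eta$-measure using classical selection theorems (Luzin--Novikov, Kuratowski--Ryll-Nardzewski). Since $\eta$ is a probability measure concentrated on such a set, this suffices for the construction.
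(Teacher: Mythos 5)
Your forward direction is fine, but the converse construction has two genuine gaps. First, you cannot ``decompose $\eta = \int \eta_{[H]}\,d\bar\eta([H])$ over the orbit space $S(G)/G$'': the Mackey point realization of the conjugation-invariant functions is the \emph{ergodic decomposition}, not the orbit space, and its fibers are ergodic components, not conjugacy classes. For a typical random subgroup --- say a nonatomic ergodic invariant random subgroup of a countable group, where every conjugacy class is countable hence $\eta$-null --- the conjugation orbit equivalence relation is non-smooth on a conull set, so there is no Borel selection $[H]\mapsto H$ of representatives on any full-measure set; Luzin--Novikov and Kuratowski--Ryll-Nardzewski do not apply to this situation. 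So the construction ``one coset space $G/H$ per conjugacy class'' cannot get started in the stated generality. Second, even where representatives exist, taking the fiber to be $G/H$ with a Haar-class quasi-invariant probability measure cannot produce the asserted measure-preserving extension of $(S(G),\eta)$, nor a measure-preserving action when $\eta$ is invariant: already for $\eta = \delta_{\{e\}}$ with $G$ noncompact your recipe forces the space to be $G$ itself carrying a quasi-invariant probability measure, and no such measure is invariant, so the ``Moreover'' clause fails. (There is also an unproved assumption that the conditional measures $\eta_{[H]}$ lie in the Haar measure class of each orbit, which is what your Radon--Nikodym adjustment needs.)

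The paper's route (Theorem \ref{T:extensionsrg} applied to the one-point space, i.e.\ Corollary \ref{C:easy}) avoids both problems by fibering directly over $(S(G),\eta)$ rather than over a (nonexistent) orbit space: for each individual $H \in S(G)$ one takes the fiber to be $G/H$ with Haar probability when $H$ is cocompact, and otherwise the \emph{Gaussian probability space} built from $L^{2}(G/H)$, glued by a measurable cocycle sending the fiber over $H$ to the fiber over $gHg^{-1}$. No choice of conjugacy representatives is needed, the fiberwise action of $N_{G}(H)/H$ is essentially free (Haar case, and \cite{AEG} Proposition 1.2 in the Gaussian case), which is what makes $\stab(H,q) = H$ exactly rather than merely containing $H$, and the Radon--Nikodym computation in Theorem \ref{T:extensionsrg} shows the projection to $(S(G),\eta)$ is relatively measure-preserving, giving the measure-preserving extension and the invariant case simultaneously. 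If you want to salvage your approach, you would essentially have to reproduce these two ingredients: replace coset-space fibers by probability-measure-preserving fibers with prescribed stabilizer (the Gaussian trick), and replace the orbit-space disintegration by a bundle over $(S(G),\eta)$ itself.
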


The previous theorem is actually a special case of Theorem \ref{T:extensionsrg} and will be proved as Corollary \ref{C:easy} below.

\subsection{Subgroups of Random Subgroups}

Subgroups of invariant random subgroups were introduced in \cite{CP12}.  We generalize this idea to quasi-invariant random subgroups.

\begin{definition}
Let $\rho, \zeta \in P(S(G))$ be random subgroups of a locally compact second countable group $G$.  Then $\rho$ is a \textbf{subgroup} of a $\zeta$ when there exists a joining $\alpha \in P(S(G) \times S(G))$ of $\rho$ and $\zeta$ such that for $\alpha$-almost every $(H,L) \in S(G) \times S(G)$, it holds that $H$ is a subgroup of $L$.  This will be written $\rho < \zeta$.
\end{definition}

\begin{proposition}
The property of being a subgroup is a transitive relation on random subgroups.
\end{proposition}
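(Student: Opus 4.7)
My plan is to prove transitivity by gluing the two witnessing joinings along their common $\zeta$-marginal. Suppose $\rho < \zeta$ and $\zeta < \tau$, witnessed respectively by joinings $\alpha \in P(S(G) \times S(G))$ of $(\rho, \zeta)$ and $\beta \in P(S(G) \times S(G))$ of $(\zeta, \tau)$, with $H \subseteq L$ for $\alpha$-a.e.\ $(H,L)$ and $L \subseteq K$ for $\beta$-a.e.\ $(L,K)$. The goal is to produce a joining of $\rho$ and $\tau$ exhibiting containment, and the natural candidate is the two-step coupling that passes through $\zeta$.

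First I would disintegrate both joinings over their common $\zeta$-marginal. Since $S(G)$ with the Chabauty topology is Polish, and in particular standard Borel, disintegrations exist: write $\alpha = \int_{S(G)} \alpha_{L} \times \delta_{L}\, d\zeta(L)$ and $\beta = \int_{S(G)} \delta_{L} \times \beta_{L}\, d\zeta(L)$. These conditional measures can be chosen so that $\alpha_{L}$ is supported on $\{H \in S(G) : H \subseteq L\}$ and $\beta_{L}$ on $\{K \in S(G) : L \subseteq K\}$ for $\zeta$-almost every $L$, since the subgroup-containment relation is Borel in $S(G) \times S(G)$ (in fact closed in the Chabauty topology).

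Next I would build a probability measure on the triple product $S(G)^{3}$ by
$$\omega = \int_{S(G)} \alpha_{L} \times \delta_{L} \times \beta_{L}\, d\zeta(L).$$
Its projection onto the first two coordinates recovers $\alpha$, onto the last two coordinates recovers $\beta$, and hence its first and third marginals are $\rho$ and $\tau$. For $\omega$-almost every triple $(H, L, K)$ both $H \subseteq L$ and $L \subseteq K$ hold, so $H \subseteq K$ by transitivity of ordinary subgroup inclusion. Letting $\gamma \in P(S(G) \times S(G))$ be the pushforward of $\omega$ under $(H, L, K) \mapsto (H, K)$ yields a joining of $\rho$ and $\tau$ with $\gamma$-a.e.\ pair satisfying the containment relation, which is exactly $\rho < \tau$.

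The only point requiring genuine care is arranging the disintegrations $\alpha_{L}$ and $\beta_{L}$ to be simultaneously Borel in $L$ and supported on the respective containment slices; this is a standard application of disintegration on a standard Borel space combined with the Borelness (in fact closedness) of the containment relation, so I do not anticipate any real obstacle beyond checking these measurability details.
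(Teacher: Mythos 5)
Your proof is correct and follows essentially the same route as the paper: both arguments disintegrate the two witnessing joinings over the common middle marginal, form the relatively independent coupling on $S(G)^{3}$, and project to the outer coordinates to obtain the joining witnessing containment. The measurability points you flag are handled implicitly in the paper by the standard disintegration theorem, and the a.e.\ containment of the conditional measures follows automatically from the a.e.\ containment under the original joinings, so there is no gap.
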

\begin{proof}
Let $\alpha, \beta, \rho \in P(S(G))$ be random subgroups of a group $G$ such that $\alpha < \beta$ and $\beta < \rho$.  Let $\psi \in P(S(G) \times S(G))$ be a joining of $\alpha$ and $\beta$ such that $H < L$ for $\psi$-almost every $(H,L)$ and let $\varphi \in P(S(G) \times S(G))$ be a joining of $\beta$ and $\rho$ such that $L < K$ for $\varphi$-almost every $(L,K)$.  Let $p_{A} : S(G) \times S(G) \to S(G)$ be the projection to the first coordinate and $p_{B} : S(G) \times S(G) \to S(G)$ the projection to the second.

Observe that $D_{p_{A}}(L) = \delta_{L} \times \varphi_{L}$ for $\varphi_{L} \in P(S(G))$ such that $\int_{S(G)} \delta_{L} \times \varphi_{L}~d\beta(L) = \varphi$.  Likewise, $D_{p_{B}}(L) = \psi_{L} \times \delta_{L}$ for $\psi_{L} \in P(S(G))$ such that $\int_{S(G)} \psi_{L} \times \delta_{L}~d\beta(L) = \psi$.

Define $\tau \in P(S(G) \times S(G) \times S(G))$ by
\[
\tau = \int_{S(G)} \psi_{L} \times \delta_{L} \times \varphi_{L}~d\beta(L).
\]
Then, letting $p_{j} : S(G) \times S(G) \times S(G) \to S(G)$ be the projections,
\[
(p_{1})_{*}\tau = \int_{S(G)} \psi_{L}~d\beta(L) = (p_{A})_{*} \int_{S(G)} \psi_{L} \times \delta_{L}~d\beta(L) = (p_{A})_{*}\psi = \alpha
\]
and likewise
\[
(p_{2})_{*}\tau = \beta \quad\quad\text{and}\quad\quad (p_{3})_{*}\tau = \rho.
\]
Therefore $\tau$ is a joining of $\alpha$ and $\beta$ and $\rho$.

Note that $(p_{1} \times p_{2})_{*}\tau = \psi$ and that $(p_{2} \times p_{3})_{*}\tau = \varphi$.
For $\tau$-almost every $(H,L,K)$ we then have that $H < L$ and $L < K$.  Hence $H < K$ for $(p_{1} \times p_{3})_{*}\tau$-almost every $(H,K)$.  As $(p_{1} \times p_{3})_{*}\tau$ is a joining of $\alpha$ and $\rho$, this shows that $\alpha < \rho$.
\end{proof}

\begin{definition}
Let $\rho, \zeta \in P(S(G))$ be random subgroups of a locally compact second countable group $G$.  Then $\rho$ is a \textbf{normal subgroup} of a $\zeta$ when there exists a joining $\alpha \in P(S(G) \times S(G))$ of $\rho$ and $\zeta$ such that for $\alpha$-almost every $(H,L) \in S(G) \times S(G)$, it holds that $H$ is a normal subgroup of $L$.  This will be written $\rho \normal \zeta$.
\end{definition}

\begin{definition}
A random subgroup $\rho \in P(S(G))$ of a locally compact second countable group is \textbf{simple} when the only normal subgroups of it are trivial: if $\eta \normal \rho$ then for any joining $\alpha$ witnessing that $\eta < \rho$, for $\alpha$-almost every $(H,L) \in S(G) \times S(G)$, either $H = e$ or $H = L$. 
\end{definition}

Note that if $\rho$ is a simple ergodic random subgroup and $\eta \normal \rho$ is also ergodic then $\eta = \rho$ or $\eta = \delta_{e}$.

The main reason for introducing the notion of subgroups of random subgroups is the following relativization of the fact that stabilizers of quasi-invariant actions give rise to random subgroups:
\begin{theorem}
Let $G$ be a locally compact second countable group and let $\pi : (X,\nu) \to (Y,\eta)$ be a $G$-map of $G$-spaces.  Then $\stab_{*}\nu$ is a subgroup of $\stab_{*}\eta$.
\end{theorem}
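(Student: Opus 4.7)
The plan is to construct an explicit joining directly from the $G$-map $\pi$. The guiding observation is that $G$-equivariance forces $\stab(x) \subseteq \stab(\pi(x))$ for almost every $x$: if $g \in \stab(x)$, then $g \pi(x) = \pi(gx) = \pi(x)$, so $g \in \stab(\pi(x))$. Thus the pair $(\stab(x), \stab(\pi(x)))$ already lies in the diagonal ``subgroup'' locus of $S(G) \times S(G)$ for every $x$ where $\pi$ is equivariant, which by hypothesis is a conull set.

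Concretely, I would define the measurable map
\[
\Phi : X \to S(G) \times S(G), \qquad \Phi(x) = \bigl(\stab(x),\, \stab(\pi(x))\bigr),
\]
and set $\alpha = \Phi_{*}\nu \in P(S(G) \times S(G))$. Measurability of $\Phi$ follows from measurability of $\stab : X \to S(G)$ (which is implicit in the preceding discussion of the stabilizer map giving rise to a random subgroup) and measurability of $\pi$. Then I verify that $\alpha$ is a joining of $\stab_{*}\nu$ and $\stab_{*}\eta$: writing $p_{1}, p_{2}$ for the two coordinate projections of $S(G) \times S(G)$, we have $p_{1} \circ \Phi = \stab$, so $(p_{1})_{*}\alpha = \stab_{*}\nu$, and $p_{2} \circ \Phi = \stab \circ \pi$, so
\[
(p_{2})_{*}\alpha = (\stab \circ \pi)_{*}\nu = \stab_{*}(\pi_{*}\nu) = \stab_{*}\eta,
\]
using that $\pi$ is a $G$-map so $\pi_{*}\nu = \eta$. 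Finally, by the observation above, $\Phi(x)$ lies in the set $\{(H,L) \in S(G) \times S(G) : H \subseteq L\}$ for $\nu$-almost every $x$, hence $\alpha$ is concentrated on this set. By the definition of subgroup of a random subgroup, this gives $\stab_{*}\nu < \stab_{*}\eta$.

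I do not anticipate a serious obstacle: the only technical point worth a sentence is that $\stab : X \to S(G)$ is Borel, which is a standard fact (the Chabauty topology on $S(G)$ is Polish and the stabilizer map is the pointwise intersection of the closed sets $\{g : gx = x\}$, hence Borel). Everything else is a direct unwinding of the definitions, making this essentially a bookkeeping proof.
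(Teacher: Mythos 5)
Your proof is correct and is essentially identical to the paper's argument: the paper defines the joining as $\alpha = \int_{X} \delta_{\stab(x)} \times \delta_{\stab(\pi(x))}\,d\nu(x)$, which is exactly your pushforward $\Phi_{*}\nu$, and verifies the marginals and the almost-sure inclusion $\stab(x) \subseteq \stab(\pi(x))$ in the same way.
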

\begin{proof}
Define $\alpha \in P(S(G) \times S(G))$ by
\[
\alpha = \int_{X} \delta_{\stab(x)} \times \delta_{\stab(\pi(x))}~d\nu(x).
\]
Then the projection to the first coordinate $\mathrm{pr}_{1} : S(G) \times S(G) \to S(G)$ has the property that
\[
(\mathrm{pr}_{1})_{*}\alpha = \int_{X} \delta_{\stab(x)}~d\nu(x) = \stab_{*}\nu
\]
and the projection to the second coordinate has the property that
\[
(\mathrm{pr}_{2})_{*}\alpha = \int_{X} \delta_{\stab(\pi(x))}~d\nu(x) = \stab_{*}\pi_{*}\nu = \stab_{*}\eta.
\]
Therefore $\alpha$ is a joining of $\stab_{*}\nu$ and $\stab_{*}\eta$.  Now $\stab(x) < \stab(\pi(x))$ for all $x \in X$ since $\pi$ is a $G$-map and therefore for $\alpha$-almost every $(H,L) \in S(G) \times S(G)$ it holds that $H < L$.
\end{proof}

\begin{theorem}\label{T:crazy}
Let $G$ be a locally compact second countable group and $\pi : (X,\nu) \to (Y,\eta)$ a $G$-map of $G$-spaces such that $\stab(x)$ is constant on each fiber: for $\eta$-almost every $y \in Y$, it holds that $\stab(x)$ is constant for $D_{\pi}(y)$-almost every $x \in \pi^{-1}(y)$.  Then $\stab_{*}\nu$ is a normal random subgroup of $\stab_{*}\eta$.
\end{theorem}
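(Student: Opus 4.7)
The plan is to reuse the joining construction from the preceding theorem and reduce the desired normality to a pointwise statement about stabilizers in fibers. Define
\[
\alpha = \int_{X} \delta_{\stab(x)} \times \delta_{\stab(\pi(x))}~d\nu(x) \in P(S(G) \times S(G)),
\]
which, exactly as in the preceding theorem, is a joining of $\stab_{*}\nu$ and $\stab_{*}\eta$. To conclude that $\stab_{*}\nu \normal \stab_{*}\eta$, it suffices to show that for $\nu$-almost every $x$ the subgroup $\stab(x)$ is normal in $\stab(\pi(x))$, since this is precisely the condition on $\alpha$-almost every $(H,L)$ in the image of $x \mapsto (\stab(x), \stab(\pi(x)))$.

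The hypothesis of fiberwise constancy produces a measurable map $\Psi : Y \to S(G)$, defined $\eta$-almost everywhere, such that $\stab(x) = \Psi(\pi(x))$ for $\nu$-almost every $x$. The key step is to show that $\Psi$ is $G$-equivariant with respect to the conjugation action on $S(G)$, i.e., $\Psi(gy) = g\Psi(y)g^{-1}$ for $\eta$-almost every $y$ and each $g$. This follows from the general identity $\stab(gx) = g\stab(x)g^{-1}$, together with the fact that $x \mapsto gx$ maps $\pi^{-1}(y)$ to $\pi^{-1}(gy)$ and pushes $D_{\pi}(y)$ to a measure in the same measure class as $D_{\pi}(gy)$ (which follows from the quasi-invariance of $\nu$ and essential uniqueness of disintegration as in Theorem \ref{T:relmpRN}), so the $D_{\pi}(y)$-a.s.\ constancy of $\stab(x) = \Psi(y)$ on the source fiber transports to $D_{\pi}(gy)$-a.s.\ constancy of $\stab(gx) = g\Psi(y)g^{-1}$ on the target fiber.

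Once $G$-equivariance of $\Psi$ is established, a standard Mackey-type argument replaces $\Psi$ by a strictly equivariant Borel representative on a $G$-invariant $\eta$-conull subset of $Y$. On this set, for any $h \in \stab(y)$, strict equivariance gives $\Psi(y) = \Psi(hy) = h\Psi(y)h^{-1}$, so $h \in N_{G}(\Psi(y))$ and hence $\Psi(y) \normal \stab(y)$. Since $\stab(y) = \stab(\pi(x))$ depends only on $\pi(x)$ and $\stab(x) = \Psi(\pi(x))$ almost everywhere, we conclude $\stab(x) \normal \stab(\pi(x))$ for $\nu$-almost every $x$, completing the proof.

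The main subtle point is the passage from the almost-everywhere equivariance of $\Psi$ (one conull set for each $g$) to the per-point normality statement at almost every $y$: the naive direct argument would want to fix $x$ and vary $h \in \stab(\pi(x))$, but that quantification is not a priori measurable. Passing through the equivariant map $\Psi$ on the base $(Y,\eta)$ and invoking the standard strictification of equivariance circumvents this difficulty cleanly.
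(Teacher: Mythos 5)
Your proof is correct and takes essentially the same route as the paper's: the paper likewise descends $\stab$ to a measurable map $s$ on $Y$ with $\stab(x)=s(\pi(x))$, computes for $g \in \stab(\pi(x))$ that $g\stab(x)g^{-1} = \stab(gx) = s(\pi(gx)) = s(g\pi(x)) = s(\pi(x)) = \stab(x)$, and then concludes via the graph joining $x \mapsto (\stab(x),\stab(\pi(x)))$, which is exactly your $\alpha$. The only difference is that the paper runs this pointwise computation directly without dwelling on the quantification subtlety you flag (the null set for the identity $\stab = s\circ\pi$ must be controlled as $g$ varies over $\stab(\pi(x))$); your detour through almost-everywhere equivariance of the descended map on $(Y,\eta)$ and its Mackey-type strictification is a careful and fully compatible way of justifying that step.
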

\begin{proof}
Since $\stab$ is constant on fibers, it descends to a measurable map $s : Y \to S(G)$ such that $\stab(x) = s(\pi(x))$ for almost every $x \in X$.  For such an $x \in X$ and for $g \in \stab(\pi(x))$,
\[
g\stab(x)g^{-1} = \stab(gx) = s(\pi(gx)) = s(g\pi(x)) = s(\pi(x)) = \stab(x)
\]
meaning that $\stab(x) \normal \stab(\pi(x))$.  The joining $\alpha \in P(S(G) \times S(G))$ given by $\alpha = (\stab \circ (\mathrm{id} \times \pi))_{*}\nu$ where $\stab \circ (\mathrm{id} \times \pi) : X \to X \times Y$ by $(\stab \circ (\mathrm{id}\times\pi))(x) = (\stab(x),\stab(\pi(x)))$ then shows that $\stab_{*}\nu$ is a normal subgroup of $\stab_{*}\eta$.
\end{proof}

\subsection{Subgroups of Random Subgroups Correspond to Quotient Maps}\label{s:subrsquot}

\begin{theorem}\label{T:extensionsrg}
Let $G$ be a locally compact second countable group and $(X,\nu)$ a $G$-space.  Let $\varphi : X \to P(S(G))$ be a $G$-equivariant map such that for $\nu$-almost every $x \in X$ and $\varphi(x)$-almost every $H \in S(G)$ it holds that $H < \stab(x)$.  Then there exists a $G$-space $(Y,\eta)$ and a $G$-map $\pi : (Y,\eta) \to (X,\nu)$ such that $\stab_{*}\eta$ is the barycenter of $\varphi_{*}\nu$.
Moreover, $\pi$ is relatively measure-preserving.
\end{theorem}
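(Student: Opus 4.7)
The plan is to realize $(Y, \eta)$ as a Bernoulli-type extension of $X \times S(G)$ in which the third factor is tailored so that a generic point contributes exactly the subgroup $H$ to the joint stabilizer. For each closed subgroup $H \leq G$, let $\zeta_H \in P([0,1]^{G})$ denote the pushforward of the product Lebesgue measure on $[0,1]^{H \backslash G}$ under the coset projection; equivalently, $\zeta_H$ is supported on $Z_H = \{f \in [0,1]^{G} : f(hk) = f(k) \text{ for all } h \in H,\, k \in G\}$, the space of left-$H$-invariant functions. Set $Y = X \times S(G) \times [0,1]^{G}$, define
\[
\eta = \int_{X} \int_{S(G)} \delta_{x} \times \delta_{H} \times \zeta_{H} \, d\varphi(x)(H) \, d\nu(x),
\]
and let $G$ act diagonally via $g \cdot (x, H, f) = (gx,\, gHg^{-1},\, g.f)$ with $(g.f)(k) = f(g^{-1}k)$. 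A short computation shows that $f \mapsto g.f$ sends $Z_H$ onto $Z_{gHg^{-1}}$ and carries $\zeta_H$ to $\zeta_{gHg^{-1}}$, so together with the $G$-equivariance of $\varphi$ this action preserves the support of $\eta$.

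Let $\pi : Y \to X$ be the projection. Its disintegration $D_\pi(x) = \int_{S(G)} \delta_H \times \zeta_H \, d\varphi(x)(H)$ satisfies $g D_\pi(x) = D_\pi(gx)$, by the $G$-equivariance $\varphi(gx) = g_{*}\varphi(x)$ together with $g_{*}\zeta_H = \zeta_{gHg^{-1}}$. By Theorem \ref{T:relmpRN} this makes $\pi$ a relatively measure-preserving $G$-map, and in particular $\eta$ is $G$-quasi-invariant.

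For the stabilizer we have
\[
\stab_{Y}(x, H, f) = \stab_{X}(x) \cap N_{G}(H) \cap \stab_{[0,1]^{G}}(f),
\]
and the crux is that $\stab_{[0,1]^{G}}(f) \cap N_{G}(H) = H$ for $\zeta_H$-almost every $f$. The inclusion $\supseteq$ is immediate from left-$H$-invariance. For $\subseteq$, the $N_{G}(H)$-action on $(Z_H, \zeta_H)$ factors through $N_{G}(H)/H$, which acts freely on $H \backslash G$ via the well-defined formula $gH \cdot Hk = Hgk$ (well-defined precisely because $g \in N_{G}(H)$); the induced Bernoulli action on $[0,1]^{H \backslash G}$ is therefore essentially free. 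Concretely, if $g \in N_{G}(H)$ satisfies $g.f = f$ then $\bar f(Hg^{-1}k) = \bar f(Hk)$ for all $k$, and since the coordinates of $\bar f$ are independent uniform variables under $\zeta_H$ while the condition $Hg^{-1}k = Hk$ reduces to $g \in H$ independently of $k$, this forces $g \in H$. Combined with the hypothesis $H < \stab_{X}(x)$ we conclude $\stab_{Y}(x, H, f) = H$ almost everywhere, so
\[
\stab_{*}\eta = \int_{X} \int_{S(G)} \delta_{H} \, d\varphi(x)(H) \, d\nu(x) = \int_{X} \varphi(x) \, d\nu(x),
\]
the barycenter of $\varphi_{*}\nu$.

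The main obstacle lies in measurability: one needs $H \mapsto \zeta_H$ to be a Borel map into $P([0,1]^{G})$, the set $\{(H,f) : f \in Z_H\}$ to be Borel in $S(G) \times [0,1]^{G}$, and the essentially-free property of the Bernoulli factor to hold uniformly in $H$. Each of these is standard given the second countability of $G$ and the Chabauty topology on $S(G)$; alternatively, $[0,1]^{G}$ can be replaced by the Poisson suspension of any essentially free probability-measure-preserving $G$-action on a standard Borel space, avoiding uncountable product measures entirely.
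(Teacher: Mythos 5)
Your overall architecture is the same as the paper's: build a fibered extension whose fiber over $(x,H)$ (with $H$ distributed according to $\varphi(x)$) is a probability space on which the stabilizer of a generic point, inside the relevant normalizer, is exactly $H$, and deduce relative measure-preservation from the fact that the $G$-action permutes the fiber measures. The gap is in your choice of fiber model. The theorem is asserted for an arbitrary locally compact second countable $G$, and for non-discrete $G$ the coset Bernoulli space $[0,1]^{H\backslash G}$ with product Lebesgue measure is not a standard probability space (its measure algebra is non-separable: the coordinate events are uncountably many elements at pairwise symmetric-difference distance bounded below), and the shift is not a jointly measurable $G$-action; so $(Y,\eta)$ is not a $G$-space in the sense used throughout the paper, where disintegration, Mackey point realizations and the measurability of $x \mapsto \stab(x)$ all presuppose standardness. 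Independently of this, your freeness argument is per-element: for each fixed $g \in N_G(H)\setminus H$ the set $\{f : g.f = f\}$ is $\zeta_H$-null, but to conclude $\stab(f)\cap N_G(H) = H$ for almost every $f$ you must control the union over the typically uncountable set $N_G(H)\setminus H$, and no such argument is given -- this is precisely the step where the countable-group proof does not transfer. Finally, the proposed repair is not correct as stated: a Poisson suspension is built from an infinite measure-preserving action, and $G/H$ need not carry any $G$-invariant measure for a general closed subgroup $H$; moreover what is needed fiberwise is not some essentially free $G$-action but one whose generic stabilizer inside $N_G(H)$ is exactly $H$, varying measurably and conjugation-equivariantly in $H$.

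For comparison, the paper resolves exactly these points by taking the fiber over $(x,H)$ to be $G/H$ with its Haar probability measure when $H$ is cocompact, and otherwise the Gaussian probability space attached to $L^{2}(G/H)$: the Gaussian functor applies to the quasi-regular representation, so no invariant measure on $G/H$ is needed, the resulting space is standard with a measurable cocycle action (cleaned up by Mackey's point realization), and essential freeness of the induced $N_{\stab(x)}(H)/H$-action is quoted from Proposition 1.2 of \cite{AEG} -- which is precisely the statement your uncountable-union step would have to supply. If you restrict to countable $G$, your Bernoulli-over-cosets construction is correct and is essentially the construction used in \cite{AGV12} and \cite{CP12}; but as written it does not prove the theorem in the stated generality.
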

\begin{proof}
Observe that for almost every $x \in X$, $\varphi(x)$ is an invariant random subgroup of $\stab(x)$ since $H < \stab(x)$ for $\varphi(x)$-almost every $H$ and since the $G$-equivariance of $\varphi$ gives that for $g \in \stab(x)$, $g \cdot \varphi(x) = g \varphi(x) g^{-1} = \varphi(gx) = \varphi(x)$.

Fix a probability measure $\rho \in P(G)$ in the class of the Haar measure.
For each $x \in X$ and $H \in S(\stab(x))$, let $(Q_{x,H},\rho_{x,H})$ be the Gaussian probability space corresponding to an infinite direct sum of $L^{2}(G / H)$ where $\rho_{x,H}$ is the pushforward of $\rho$ under the quotient map $q_{H} : G \to G/H$.  Let $Q = ((Q_{x,H},\rho_{x,H}))_{x \in X, H \in S(\stab(x))}$ be the field of probability spaces just constructed (this is a measurable field following the same reasoning as in \cite{CP12} Theorem 3.3).

Define the cocycle $\alpha : G \times X \times S(G) \to Q$ such that $\alpha(g,x,H) \in \mathrm{Aut}(Q_{x,H}, Q_{gx, gHg^{-1}})$ is the induced automorphism from the operator $T_{g,x,H}$ from the infinte direct sum of $L^{2}(Q_{x,H},\rho_{x,H})$ to the infinite direct sum of $L^{2}(Q_{gx,gHg^{-1}},\rho_{gx,gHg^{-1}})$ given by
\[
(T_{g,x,H}f)(kgHg^{-1}) = f(kgH)\sqrt{\frac{d(q_{H})_{*}(\rho g^{-1})}{d\rho_{x,H}}(kgH)}.
\]
Define the probability space $(Q,\rho)$ by
\[
\big{(}Q = \bigsqcup_{x} \bigsqcup_{H} Q_{x,H}, \rho = \int_{X} \int_{S(G)} \rho_{x,H}~d\varphi(x)(H)~d\nu(x)\big{)}
\]
equipped with the $G$-action coming from the cocycle $\alpha$.  The cocycle identity holds almost everywhere so by Mackey's point realization \cite{Ma62}, as $G$ is locally compact and second countable, after removing a null set we may assume the cocycle identity holds everywhere.

Note that $g \cdot \rho_{x,H} = \rho_{gx, gHg^{-1}}$ and therefore, using the equivariance of $\varphi$,
\begin{align*}
g \cdot \rho &= \int_{X} \int_{S(G)} \rho_{gx,gHg^{-1}}~d\varphi(x)(H)~d\nu(x)
= \int_{X} \int_{S(G)} \rho_{gx,H}~dg\varphi(x)(H)~d\nu(x) \\
&= \int_{X} \int_{S(G)} \rho_{gx,H}~d\varphi(gx)(H)~d\nu(x)
= \int_{X} \int_{S(G)} \rho_{x,H}~d\varphi(x)(H)~dg\nu(x)
\end{align*}
meaning that $\rho$ is quasi-invariant under the $G$-action since $\nu$ is.

For $x \in X$ and $H \in S(\stab(x))$, the map $g \mapsto \alpha(g,x,H)$ defines an action of $N_{\stab(x)}(H) / H$ on $Q_{x,H}$ which is essentially free (Proposition 1.2 in \cite{AEG}).  Now for $q \in Q_{x,H}$ and $g \in G$ we have that $g \cdot (x,H,q) = (gx, gHg^{-1}, \alpha(g,x,H)q)$ meaning that $g \cdot (x,H,q) = (x,H,q)$ if and only if $gx = x$ and $gHg^{-1} = H$ and $\alpha(g,x,H)q = q$ so if and only if $g \in \stab(x)$ and $g \in N_{\stab(x)}(H)$ and $\alpha(g,x,H)q = q$ hence if and only if $g \in H$.
Therefore
\[
\stab_{*}\rho = \int_{X} \int_{H} \stab_{*}\rho_{x,H}~d\varphi(x)(H)~d\nu(x) = \int_{X} \int_{H} \delta_{H}~d\varphi(x)(H)~d\nu(x) = \int_{X} \varphi(x)~d\nu(x)
\]
as required.

Define $\pi : (Q,\rho) \to (X,\nu)$ by $\pi(x,H,q) = x$.  Then
\[
\pi(g \cdot (x,H,q)) = \pi(gx, gHg^{-1}, \alpha(g,x,H)q) = gx
\]
so $\pi$ is a $G$-map.

To see that $\pi$ is relatively measure-preserving, observe that for $f \in L^{\infty}(Q,\rho)$,
\begin{align*}
\int_{Q} f(x,H,q) &\frac{dg\nu}{d\nu}(x)~d\rho(x,H,q) \\
&= \int_{X} \int_{S(G)} \int_{Q_{x,H}} f(x,H,q) \frac{dg\nu}{d\nu}(x)~d\rho_{x,H}(q)~d\varphi(x)(H)~d\nu(x) \\
&= \int_{X} \int_{S(G)} \int_{Q_{x,H}} f(x,H,q)~d\rho_{x,H}(q)~d\varphi(x)(H)~dg\nu(x) \\
&= \int_{Q} f(x,H,q)~d(g \cdot \rho)(x,H,q)
\end{align*}
and therefore
$\frac{dg\cdot\rho}{d\rho}(x,H) = \frac{dg\nu}{d\nu}(x)$ meaning that $\pi$ is relatively measure-preserving (Theorem \ref{T:relmpRN}).
\end{proof}

\begin{theorem}\label{T:extensionsrgerg}
Let $G$ be a locally compact second countable group and $(X,\nu)$ an ergodic $G$-space.  Let $\varphi : X \to P(S(G))$ be a $G$-equivariant map such that for $\nu$-almost every $x \in X$ and $\varphi(x)$-almost every $H \in S(G)$ it holds that $H < \stab(x)$.  Then there exists an ergodic $G$-space $(Y,\eta)$ and a $G$-map $\pi : (Y,\eta) \to (X,\nu)$ such that $\stab_{*}\eta$ is the barycenter of $\varphi_{*}\nu$.
\end{theorem}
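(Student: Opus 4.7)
The plan is to reduce to Theorem \ref{T:extensionsrg} and then extract an ergodic component. I would first apply Theorem \ref{T:extensionsrg} to $\varphi$ and $(X,\nu)$ to obtain a (not necessarily ergodic) $G$-space $(Q,\rho)$ together with a relatively measure-preserving $G$-map $\pi : (Q,\rho) \to (X,\nu)$ whose stabilizer pushforward $\stab_{*}\rho$ equals the barycenter $\bar{\eta} := \int_{X}\varphi(x)\,d\nu(x)$. Letting $p : (Q,\rho) \to (Z,\zeta)$ denote the $G$-ergodic decomposition map with disintegration $\rho = \int_{Z} D(z)\,d\zeta(z)$, the aim is then to select $z$ so that $(Y,\eta) := (Q,D(z))$ realizes both desired properties.

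The first substantive step is to show that $\pi_{*}D(z) = \nu$ for $\zeta$-almost every $z$, so that $\pi$ remains a $G$-map from each ergodic component onto $(X,\nu)$. Since $\pi$ is relatively measure-preserving, Theorem \ref{T:relmpRN} gives $\frac{dg\rho}{d\rho}(q) = \frac{dg\nu}{d\nu}(\pi(q))$, and the same identity transfers to each ergodic component $D(z)$ on its support. A direct computation of $\frac{dg\pi_{*}D(z)}{d\pi_{*}D(z)}$ then shows it equals $\frac{dg\nu}{d\nu}$, forcing the density $\frac{d\pi_{*}D(z)}{d\nu}$ to be a $G$-invariant measurable function on $(X,\nu)$; by ergodicity this density is $\nu$-essentially constant, and since both measures are probability measures it must be identically $1$.

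The main obstacle is the stabilizer distribution: we have the decomposition $\bar{\eta} = \int_{Z} \stab_{*}D(z)\,d\zeta(z)$ into the ergodic IRSs $\stab_{*}D(z)$, and it must be shown that $\stab_{*}D(z) = \bar{\eta}$ for a positive $\zeta$-measure set of $z$. The plan is to exploit the $G$-equivariance of $\varphi$ together with the ergodicity of $(X,\nu)$: for any conjugation-invariant Borel set $A \subseteq S(G)$, the map $x \mapsto \varphi(x)(A)$ is $G$-invariant, hence $\nu$-essentially constant, so that $\bar{\eta}$ and $\varphi(x)$ agree on the conjugation-invariant $\sigma$-algebra of $S(G)$ for almost every $x$. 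Combining this with uniqueness of the ergodic decomposition of $(Q,\rho)$ and the fact that $\stab$ intertwines the $G$-actions on $Q$ and on $S(G)$, one then concludes that $\stab_{*}D(z) = \bar{\eta}$ for $\zeta$-almost every $z$, after which setting $(Y,\eta) := (Q,D(z))$ for any such $z$ completes the proof. I expect this last matching step to be the subtle point, since care is needed to promote agreement on the conjugation-invariant $\sigma$-algebra to equality of the full IRSs $\stab_{*}D(z)$ and $\bar{\eta}$.
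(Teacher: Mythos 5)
Your overall architecture is the same as the paper's: apply Theorem \ref{T:extensionsrg} to get a relatively measure-preserving extension $(Q,\rho) \to (X,\nu)$ with $\stab_{*}\rho = \mathrm{bar}~\varphi_{*}\nu$, pass to the ergodic decomposition, and show that almost every ergodic component still maps onto $(X,\nu)$ and has the right stabilizer distribution. Your first step is essentially sound, though your Radon--Nikodym computation tacitly assumes $\pi_{*}D(z) \ll \nu$ before speaking of the density $\frac{d\pi_{*}D(z)}{d\nu}$; this is not automatic, and it is cleaner to use relative measure-preservation directly: for a $G$-invariant set $A \subseteq Q$ the function $x \mapsto D_{\pi}(x)(A)$ is $G$-invariant by equivariance of the disintegration, hence constant by ergodicity of $\nu$, which yields $\pi_{*}D(z) = \nu$ for almost every $z$ without any absolute continuity issue.

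The genuine gap is exactly where you flag it, and your sketched route cannot close it. Your observation that $\varphi(x)$ agrees with $\bar{\eta} = \mathrm{bar}~\varphi_{*}\nu$ on conjugation-invariant sets is correct, but it is a statement comparing $\varphi(x)$ with $\bar{\eta}$, whereas what must be controlled is how $\bar{\eta}$ splits in the identity $\bar{\eta} = \stab_{*}\rho = \int_{Z}\stab_{*}D(z)\,d\zeta(z)$; agreement on the invariant $\sigma$-algebra gives no leverage on that decomposition. Concretely, take $X$ a single point and $\varphi$ assigning to it a non-ergodic invariant random subgroup $\rho_{0}$: your agreement statement holds trivially ($\varphi(x) = \bar{\eta} = \rho_{0}$), yet each $\stab_{*}D(z)$, being the stabilizer distribution of an ergodic space, is an ergodic random subgroup and cannot equal $\rho_{0}$. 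So no argument built only from your two observations plus ``uniqueness of the ergodic decomposition'' can succeed; some input forcing ergodicity, hence extremality, of the barycenter itself is indispensable. That is what the paper supplies: it argues that $\varphi_{*}\nu$ is ergodic (pushforward of the ergodic $\nu$ under the equivariant map $\varphi$), concludes that $\mathrm{bar}~\varphi_{*}\nu$ is an ergodic, hence extremal, random subgroup, and then the convex decomposition $\mathrm{bar}~\varphi_{*}\nu = \int_{Z}\stab_{*}D(z)\,d\zeta(z)$ forces $\stab_{*}D(z) = \mathrm{bar}~\varphi_{*}\nu$ almost everywhere. (The one-point example above also shows that this passage from ergodicity of $\varphi_{*}\nu$ to ergodicity of its barycenter is the delicate point of the argument, so it is precisely the ingredient your proposal is missing rather than a routine detail.)
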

\begin{proof}
Let $(Q,\rho)$ be the construction from Theorem \ref{T:extensionsrg} such that there exists a $G$-map $\tau : (Q,\rho) \to (X,\nu)$ with the barycenter of $\varphi_{*}\nu$ being $\stab_{*}\rho$.  Consider the ergodic decomposition $\psi : (Q,\rho) \to (R,\kappa)$.  Then $G$ acts trivially on $(R,\kappa)$ and almost every fiber $(\psi^{-1}(r),D_{\psi}(r))$ is an ergodic $G$-space.  Observe that
\[
\int_{R} \stab_{*}D_{\psi}(r)~d\kappa(r) = \stab_{*} \int_{R} D_{\psi}(r)~d\kappa(r) = \stab_{*}\rho = \mathrm{bar}~\varphi_{*}\nu.
\]
Since $\nu$ is ergodic, so is $\varphi_{*}\nu$ (treating $(P(S(G)),\varphi_{*}\nu)$ as $G$-space).  Therefore $(S(G),\mathrm{bar}~\varphi_{*}\nu)$ is also an ergodic $G$-space.  Now ergodic random subgroups are extremal, and therefore, since $\int_{R} \stab_{*}D_{\psi}(r)~d\kappa(r)$ is a convex combination of random subgroups, $D_{\psi}(r)$ must be constant and equal to $\mathrm{bar}~\varphi_{*}\nu$ for almost every $r \in R$.  Therefore for almost every fiber, the map $\pi : (\psi^{-1}(r),D_{\psi}(r)) \to (\pi(\psi^{-1}(x)),\pi_{*}D_{\psi}(r))$ has the required properties.  Observe that since $(X,\nu)$ is ergodic, $\pi_{*}D_{\psi}(r) = \nu$ almost everywhere and therefore $\pi : (\psi^{-1}(r),D_{\psi}(r)) \to (X,\nu)$ has the required properties.
\end{proof}

\begin{corollary}\label{C:easy}
Let $\rho \in P(S(G))$ be a random subgroup of a locally compact second countable group $G$.  Then there exists a $G$-space $(X,\nu)$ such that $\stab_{*}\nu = \rho$.  Moreover, if $\rho$ is an invariant random subgroup then $(X,\nu)$ is measure-preserving.
\end{corollary}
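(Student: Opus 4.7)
The plan is to deduce this corollary as a direct special case of Theorem \ref{T:extensionsrg}. The idea is to feed the random subgroup $\rho$ into the theorem by taking $(S(G), \rho)$ itself as the base $G$-space and taking $\varphi$ to be the tautological Dirac map $H \mapsto \delta_H$.

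Concretely, I would first observe that since $\rho$ is a random subgroup of $G$, it is by definition quasi-invariant under the conjugation action, so $(S(G), \rho)$ is a $G$-space in the sense of the paper. Define $\varphi : S(G) \to P(S(G))$ by $\varphi(H) = \delta_H$. This map is $G$-equivariant because $\varphi(gHg^{-1}) = \delta_{gHg^{-1}} = g \cdot \delta_H = g \cdot \varphi(H)$. The support condition of Theorem \ref{T:extensionsrg} is automatic here: for $\rho$-almost every $H$ and $\varphi(H)$-almost every $K$ we have $K = H$, and on the other hand $\mathrm{stab}_G(H)$ under the conjugation action is the normalizer $N_G(H)$, which contains $H$. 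Thus every hypothesis of Theorem \ref{T:extensionsrg} is satisfied.

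Applying the theorem yields a $G$-space $(Y, \eta)$ together with a relatively measure-preserving $G$-map $\pi : (Y, \eta) \to (S(G), \rho)$ whose stabilizer random subgroup satisfies
\[
\mathrm{stab}_{*}\eta \;=\; \mathrm{bar}(\varphi_{*}\rho) \;=\; \int_{S(G)} \delta_H \, d\rho(H) \;=\; \rho.
\]
Setting $(X, \nu) = (Y, \eta)$ then gives the desired $G$-space with $\mathrm{stab}_{*}\nu = \rho$.

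For the ``moreover'' clause, suppose $\rho$ is actually invariant, so $(S(G), \rho)$ is a measure-preserving $G$-space and $\frac{dg\rho}{d\rho} \equiv 1$. Because $\pi$ is relatively measure-preserving, Theorem \ref{T:relmpRN} gives
\[
\frac{dg\eta}{d\eta}(y) \;=\; \frac{dg\rho}{d\rho}(\pi(y)) \;=\; 1
\]
for almost every $y$, so $\eta$ is $G$-invariant and $(X, \nu)$ is measure-preserving. There is no real obstacle here; the only thing to double-check is that the two pieces of structure the corollary needs, namely the quasi-invariance of $\rho$ (for $\varphi$ to produce a legitimate $G$-space to feed into Theorem \ref{T:extensionsrg}) and the $G$-equivariance of the Dirac map, line up cleanly with the hypotheses of the extension theorem, which they do.
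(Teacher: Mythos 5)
Your proposal is correct: $(S(G),\rho)$ is a legitimate $G$-space by the quasi-invariance built into the definition of a random subgroup, the Dirac map $\varphi(H)=\delta_H$ is conjugation-equivariant, the support condition holds because $\mathrm{stab}(H)=N_G(H)\supseteq H$, the barycenter of $\varphi_*\rho$ is $\int\delta_H\,d\rho(H)=\rho$, and the ``moreover'' clause follows from relative measure preservation via Theorem \ref{T:relmpRN} exactly as you say. The paper instead applies Theorem \ref{T:extensionsrg} to the trivial one-point space with $\varphi(0)=\rho$, which is formally shorter; but note that $G$-equivariance of that constant map forces $g\cdot\rho=\rho$ for every $g$, so the paper's instantiation literally covers only the invariant case (where the measure-preserving conclusion is automatic, since relative measure preservation over a point is absolute). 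Your instantiation over $(S(G),\rho)$ works verbatim for merely quasi-invariant $\rho$, so it actually proves the full statement of Corollary \ref{C:easy}, and it has the additional benefit of exhibiting $(X,\nu)$ as a relatively measure-preserving extension of $(S(G),\rho)$ itself, which is precisely the extra structure (``a measure-preserving extension of $(S(G),\eta)$'') claimed in the unnumbered theorem that this corollary is meant to establish.
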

\begin{proof}
Apply Theorem \ref{T:extensionsrg} to the trivial one-point space and the map $\varphi : 0 \to P(S(G))$ given by $\varphi(0) = \rho$.
\end{proof}

\subsection{Free Extensions}

\begin{definition}
Let $G$ be a locally compact second countable group, $(X,\nu)$ a $G$-space and $\varphi: X \to P(S(G))$ a $G$-equivariant map such that $\varphi(x)$ is an invariant random subgroup of $\stab(x)$ almost everywhere.  The construction from Theorem \ref{T:extensionsrg} is the \textbf{free extension of $(X,\nu)$ by $\varphi$}.
\end{definition}

\begin{theorem}\label{T:freeextcomm}
Let $G$ be a locally compact second countable group and let $\pi : (Y,\eta) \to (X,\nu)$ be a $G$-map of $G$-spaces.  Let $\varphi : X \to P(S(G))$ be a $G$-equivariant measurable map such that $\varphi(x) \in P(\mathrm{stab}(x))$ almost everywhere.  Let $(Z,\zeta)$ be the free extension of $(X,\nu)$ over $\varphi$ and let $(W,\rho)$ be the free extension of $(Y,\eta)$ over $\varphi \circ \pi$.  Then there exists a $G$-map $\tau :  (W,\rho) \to (Z,\zeta)$ such that the resulting diagram commutes.
\end{theorem}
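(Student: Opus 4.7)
My approach is to construct $\tau$ explicitly by unwinding the free extension construction from the proof of Theorem \ref{T:extensionsrg} and exploiting the fact that the fiber spaces $(Q_{x,H},\rho_{x,H})$ and the cocycle $\alpha$ in that construction depend only on the subgroup $H$ and not on the base point $x$. Indeed, $Q_{x,H}$ is defined to be $G/H$ with Haar measure (when $H$ is cocompact) or the Gaussian probability space associated to $L^{2}(G/H)$ (otherwise), neither of which involves $x$; similarly, inspecting the formulas for $\alpha(g,x,H)$ on cosets $kH$ and on the generating operator $T_{g,x,H}$ shows the dependence is only on $g$ and $H$. I therefore write $\alpha(g,H)$ below.

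With this observation in hand, recall that
\begin{equation*}
(Z,\zeta) = \Bigl(\bigsqcup_{x,H} Q_{x,H},\ \int_{X} \int_{S(G)} \rho_{x,H}\,d\varphi(x)(H)\,d\nu(x)\Bigr),
\end{equation*}
and the analogous expression defines $(W,\rho)$ using $\eta$ and $\varphi \circ \pi$. I would define $\tau : W \to Z$ by $\tau(y,H,q) = (\pi(y),H,q)$, which is well-defined because $Q_{y,H}$ and $Q_{\pi(y),H}$ are literally the same probability space. The $G$-equivariance reduces to
\begin{equation*}
\tau(g \cdot (y,H,q)) = (\pi(gy), gHg^{-1}, \alpha(g,H)q) = (g\pi(y), gHg^{-1}, \alpha(g,H)q) = g \cdot \tau(y,H,q),
\end{equation*}
using that $\pi$ is a $G$-map. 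The pushforward identity follows immediately from $\pi_{*}\eta = \nu$:
\begin{equation*}
\tau_{*}\rho = \int_{Y} \int_{S(G)} \rho_{\pi(y),H}\,d\varphi(\pi(y))(H)\,d\eta(y) = \int_{X} \int_{S(G)} \rho_{x,H}\,d\varphi(x)(H)\,d\nu(x) = \zeta.
\end{equation*}
The square with vertical maps the canonical first-coordinate projections $(Z,\zeta) \to (X,\nu)$ and $(W,\rho) \to (Y,\eta)$ and horizontal maps $\tau$ and $\pi$ commutes tautologically, since both compositions send $(y,H,q) \mapsto \pi(y)$.

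The only real content is the observation, implicit in the construction of Theorem \ref{T:extensionsrg}, that the fibers and the cocycle depend only on $H$; once this is extracted, $\tau$ is essentially forced by the explicit form of the free extension, and there is no substantive obstacle beyond verifying measurability in the standard way and the routine pushforward calculation displayed above.
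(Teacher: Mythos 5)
Your proposal is correct and follows essentially the same route as the paper's own proof: both define $\tau(y,H,q) = (\pi(y),H,q)$ after observing that the fiber spaces and the cocycle $\alpha$ from the construction of Theorem \ref{T:extensionsrg} agree on corresponding fibers, and then verify equivariance, the pushforward identity $\tau_{*}\rho = \zeta$, and commutativity of the square exactly as you do. No substantive difference to report.
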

\begin{proof}
For $y \in Y$ and $H \in S(\stab(y))$, let $W_{y,H}$ be the fiber in $W$ over $(y,H)$.  For $x \in X$ and $H \in S(\stab(x))$, let $Z_{x,H}$ be the fiber in $Z$ over $(x,H)$.  Define the map $\tau_{y,H} : W_{y,H} \to Z_{\pi(y),H}$ by $\tau(y,H)(y,H,q) = (\pi(y),H,q)$ which is well-defined since $H < \stab(y) < \stab(\pi(y))$.  Observe that the cocycle $\alpha$ defining the actions on $W$ and $Z$ are identical on each fiber.  Define the map $\tau : W \to Z$ by $\tau(y,H,q) = \tau_{y,H}(y,H,q) = (\pi(y),H,q)$.  Then $\tau_{*}\zeta = \rho$ since $\pi_{*}\eta = \nu$.  Also, $\tau(g \cdot (y,H,q)) = \tau(gy, gHg^{-1}, \alpha(g,y,H)q) = (\pi(gy),gHg^{-1},\alpha(g,\pi(y),H)q) = g \cdot \tau(y,H,q)$.  We also see that, letting $p_{X} : Z \to X$ and $p_{Y} : W \to Y$ be the free extension quotient maps,
\[
p_{X}(\tau(y,H,q)) = p_{X}(\pi(y),H,q) = \pi(y)
\]
and that
\[
\pi(p_{Y}(y,H,q)) = \pi(y)
\]
meaning that the diagram commutes.
\end{proof}

\subsection{Invariant Random Subgroups and Quotient Maps}

\begin{theorem}\label{T:moreuseful}
Let $G$ be a locally compact second countable group and $(X,\nu)$ a measure-preserving $G$-space.  Let $\rho \in P(S(G))$ be an invariant random subgroup such that $\rho$ is a subgroup of $\stab_{*}\nu$.  Then there exists a measure-preserving $G$-space $(Y,\eta)$ and a $G$-map $\pi : (Y,\eta) \to (X,\nu)$ such that $\stab_{*}\eta = \rho$.
\end{theorem}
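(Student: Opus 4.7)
The plan is to reduce to Theorem \ref{T:extensionsrg} by producing a $G$-equivariant map $\varphi : X \to P(S(G))$ such that (i) $\varphi(x)$ is supported on subgroups of $\stab(x)$ for $\nu$-almost every $x$, and (ii) the barycenter $\int_{X} \varphi(x)~d\nu(x)$ equals $\rho$. Once such a $\varphi$ is in hand, Theorem \ref{T:extensionsrg} hands back a relatively measure-preserving $G$-map $\pi : (Y,\eta) \to (X,\nu)$ with $\stab_{*}\eta = \rho$, and Theorem \ref{T:relmpRN} then forces $\eta$ to be measure-preserving: since $\frac{dg\nu}{d\nu} \equiv 1$, the identity $\frac{dg\eta}{d\eta}(y) = \frac{dg\nu}{d\nu}(\pi(y))$ gives $\frac{dg\eta}{d\eta} \equiv 1$ as well.

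To build $\varphi$, use the hypothesis $\rho < \stab_{*}\nu$ to fix a diagonally conjugation-invariant joining $\alpha \in P(S(G) \times S(G))$ with first marginal $\rho$, second marginal $\stab_{*}\nu$, and support contained in $\{(H,L) : H < L\}$. Disintegrating $\alpha$ over the projection $p_{2}$ to the second coordinate yields a measurable field $L \mapsto \alpha_{L}$ in $P(S(G))$ with
$$\alpha = \int_{S(G)} \alpha_{L} \times \delta_{L}~d(\stab_{*}\nu)(L),$$
and each $\alpha_{L}$ is supported on closed subgroups of $L$ for $\stab_{*}\nu$-almost every $L$. The diagonal invariance of $\alpha$ combined with uniqueness of disintegrations forces the equivariance relation $\alpha_{gLg^{-1}} = g\alpha_{L}g^{-1}$ for every $g \in G$ and $\stab_{*}\nu$-almost every $L$. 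Setting $\varphi(x) = \alpha_{\stab(x)}$ then produces a measurable map with property (i) automatic, $G$-equivariance from
$$\varphi(gx) = \alpha_{g\stab(x)g^{-1}} = g\alpha_{\stab(x)}g^{-1} = g \cdot \varphi(x),$$
and property (ii) by the direct computation
$$\int_{X} \varphi(x)~d\nu(x) = \int_{S(G)} \alpha_{L}~d(\stab_{*}\nu)(L) = (p_{1})_{*}\alpha = \rho.$$

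The only genuinely delicate point is the equivariance of $\varphi$, which rests on the ability to choose the joining $\alpha$ to be diagonally $G$-invariant. This is the natural and standard sense of joining in the invariant random subgroup setting, and it is compatible with the subgroup-support requirement because $\{(H,L) : H < L\}$ is itself a $G$-invariant Borel subset of $S(G) \times S(G)$. Once the diagonal invariance of $\alpha$ is secured, the remainder of the argument is bookkeeping: the equivariance of the disintegration follows from uniqueness, and verifying that the free extension construction of Theorem \ref{T:extensionsrg} produces a measure-preserving space is immediate from Theorem \ref{T:relmpRN}.
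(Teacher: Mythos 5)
Your proposal is correct and follows essentially the same route as the paper: the paper also fixes a joining $\alpha$ witnessing $\rho < \stab_{*}\nu$, disintegrates it over the projection to the second coordinate, defines $\varphi(x) = D_{p}(\stab(x))$, and feeds this into Theorem \ref{T:extensionsrg}, with measure-preservation of $(Y,\eta)$ coming from relative measure-preservation over the measure-preserving base. The one delicate point you isolate -- that the joining must be taken diagonally invariant so that uniqueness of disintegration yields $\alpha_{gLg^{-1}} = g\alpha_{L}g^{-1}$ -- is exactly the point the paper disposes of by asserting that $p$ is relatively measure-preserving because $\rho$ and $\stab_{*}\nu$ are invariant random subgroups, so you have reproduced the argument, including its reliance on that choice of joining.
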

\begin{proof}
Let $\alpha \in P(S(G) \times S(G))$ be a joining of $\rho$ and $\stab_{*}\nu$ such that for $\alpha$-almost every $(H,L)$ it holds that $H < L$.  Let $p : (S(G) \times S(G),\alpha) \to (S(G),\stab_{*}\nu)$ be the projection to the second coordinate.
Define the map $\varphi : X \to P(S(G))$ by $\varphi(x) = D_{p}(\stab(x))$, the disintegration of $\alpha$ over $\stab_{*}\nu$ at $\stab(x)$.  Then $\varphi(gx) = D_{p}(\stab(gx)) = D_{p}(g\stab(x)g^{-1}) = g D_{p}(\stab(x)) g^{-1}$ since $D_{p}$ is $G$-equivariant because $p$ is relatively measure-preserving (since $\rho$ and $\stab_{*}\nu$ are invariant random subgroups).

Theorem \ref{T:extensionsrg} then yields a measure-preserving $G$-space $(Y,\eta)$ (a measure-preserving extension of a measure-preserving action is measure-preserving) and a $G$-map $(Y,\eta) \to (X,\nu)$.
\end{proof}

\begin{corollary}
Let $G$ be a locally compact second countable group and let $\rho,\zeta \in P(S(G))$ be invariant random subgroups of $G$ such that $\rho$ is a subgroup of $\zeta$.  Then there exists a $G$-map of measure-preserving $G$-spaces $\pi : (X,\nu) \to (Y,\eta)$ such that $\stab_{*}\nu = \zeta$ and $\stab_{*}\eta = \rho$.
\end{corollary}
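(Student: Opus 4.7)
The plan is a direct two-step application of results already proved in this section. First, invoke Corollary \ref{C:easy} on the invariant random subgroup $\zeta$; this produces a measure-preserving $G$-space $(X,\nu)$ with $\stab_{*}\nu = \zeta$. The measure-preserving claim is part of Corollary \ref{C:easy} and relies on $\zeta$ being an invariant random subgroup rather than merely quasi-invariant.

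Second, the hypothesis that $\rho$ is a subgroup of $\zeta$ now reads as $\rho$ being a subgroup of $\stab_{*}\nu$, which is exactly the input required by Theorem \ref{T:moreuseful}. Applied to $(X,\nu)$ and $\rho$, that theorem produces a measure-preserving $G$-space $(Y,\eta)$ together with a $G$-map $\pi$ connecting $(Y,\eta)$ and $(X,\nu)$ and satisfying $\stab_{*}\eta = \rho$. Combining the two steps yields the pair of measure-preserving $G$-spaces with stabilizer identifications $\stab_{*}\nu = \zeta$ and $\stab_{*}\eta = \rho$, linked by the $G$-map $\pi$.

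Because both Corollary \ref{C:easy} and Theorem \ref{T:moreuseful} are already available, this corollary is essentially a packaging of the two results and no genuine new difficulty arises. The one point worth being explicit about is the order of invocation: one must first realize the \emph{larger} invariant random subgroup $\zeta$ as the stabilizer IRS of a $G$-space, and only then apply Theorem \ref{T:moreuseful} to build the space with the \emph{smaller} stabilizer IRS $\rho$ on top of it, since Theorem \ref{T:moreuseful} is set up as a downward extension, producing a $G$-space whose stabilizer IRS sits inside the stabilizer IRS of the given base. Once this ordering is respected, the desired configuration of spaces, map, and stabilizers falls out immediately from the chain of two invocations.
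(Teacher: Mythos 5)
Your proof is correct and is exactly the paper's argument: realize $\zeta$ as the stabilizer invariant random subgroup of a measure-preserving $G$-space via Corollary \ref{C:easy}, then apply Theorem \ref{T:moreuseful} with $\rho$ a subgroup of that space's stabilizer IRS to obtain the second space and the connecting $G$-map. One small point, which affects the paper's own statement as much as your write-up: Theorem \ref{T:moreuseful} produces the map going \emph{from} the space with stabilizer IRS $\rho$ \emph{to} the space with stabilizer IRS $\zeta$ (stabilizers can only grow along a $G$-map), so the labels in the corollary as literally stated are swapped relative to the direction of $\pi$; your word ``connecting'' glosses over this, but the underlying construction and the order of invocation you identify are the right ones.
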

\begin{proof}
Let $(Y,\eta)$ be the space corresponding to $\zeta$ from Corollary \ref{C:easy} and let $(X,\nu)$ be the construction from Theorem \ref{T:moreuseful}.
\end{proof}

\subsection{Free Extensions by Random Subgroups}

\begin{definition}
Let $G$ be a locally compact second countable group, $(X,\nu)$ a measure-preserving $G$-space and $\rho \in P(S(G))$ an invariant random subgroup of $G$ such that $\rho$ is a subgroup of $\stab_{*}\nu$.  The construction from Theorem \ref{T:moreuseful} is the \textbf{free extension of $(X,\nu)$ by $\rho$}.
\end{definition}

\begin{definition}
Let $G$ be a locally compact second countable group and $\rho \in P(S(G))$ be an invariant random subgroup of $G$.  The \textbf{$\rho$-nonfree action} of $G$ is the free extension of the trivial one-point action by $\rho$.  The (an) \textbf{ergodic $\rho$-nonfree action} of $G$ is any of the ergodic components of the $\rho$-nonfree action.
\end{definition}

\section{Quotienting Out By Random Subgroups}\label{s:quotrs}

We now introduce a generalization of the ergodic decomposition by normal subgroups.  Recall that given a locally compact second countable group $G$ and a closed normal subgroup $N \normal G$, for any $G$-space $(X,\nu)$ one defines the ergodic decomposition of $(X,\nu)$ by $N$ to be $(\rpf{X}{N}, \overline{\nu})$ as the Mackey point realization (Theorem \ref{T:mackey}) of the algebra of $N$-invariant functions in $L^{\infty}(X,\nu)$.  Our goal here is to define a similar decomposition over random subgroups.

\subsection{The Quotient Space By a Random Subgroup}

We now explore the class of random subgroups that live below a $G$-space $(X,\nu)$ and introduce the method of quotienting out by such random subgroups.

\begin{proposition}
Let $G \actson (X,\nu)$ be a quasi-invariant action of a locally compact second countable group and let $\varphi : X \to S(G)$ be a $G$-equivariant measurable map such that $\stab(x) \subseteq \varphi(x)$ for $\nu$-almost every $x \in X$.  Then $\varphi_{*}\nu$ is a random subgroup of $G$ such that $\stab_{*}\nu$ is a subgroup of $\varphi_{*}\nu$.
\end{proposition}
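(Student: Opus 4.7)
The plan is to verify the two assertions separately, both of which should follow quickly from the definitions and the hypothesis that $\varphi$ is $G$-equivariant.

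First, to show that $\varphi_{*}\nu$ is a (quasi-invariant) random subgroup, I would use that $G$-equivariance of $\varphi : X \to S(G)$ means precisely that $\varphi(gx) = g\varphi(x)g^{-1}$ almost everywhere, where the action on $S(G)$ is by conjugation. Hence for any Borel $E \subseteq S(G)$,
\[
g\cdot\varphi_{*}\nu(E) = \nu\bigl(\varphi^{-1}(g^{-1}Eg)\bigr) = \nu\bigl(\{x : \varphi(gx) \in E\}\bigr) = g\nu\bigl(\varphi^{-1}(E)\bigr),
\]
so $g\cdot\varphi_{*}\nu = \varphi_{*}(g\nu)$. Because $\nu$ is quasi-invariant under $G$, $g\nu$ is equivalent to $\nu$, and pushing forward preserves absolute continuity. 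Thus $g\cdot\varphi_{*}\nu$ is equivalent to $\varphi_{*}\nu$ for every $g$, which is exactly the definition of a quasi-invariant random subgroup.

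Second, to show that $\stab_{*}\nu$ is a subgroup of $\varphi_{*}\nu$ in the sense of joinings, I would construct the joining explicitly. Define $\Psi : X \to S(G) \times S(G)$ by $\Psi(x) = (\stab(x), \varphi(x))$ and set $\alpha = \Psi_{*}\nu$. The projection to the first coordinate pushes $\alpha$ to $\stab_{*}\nu$ and the projection to the second coordinate pushes $\alpha$ to $\varphi_{*}\nu$, so $\alpha$ is a joining of these two random subgroups. By the standing hypothesis $\stab(x) \subseteq \varphi(x)$ for $\nu$-almost every $x$, so for $\alpha$-almost every $(H,L) \in S(G) \times S(G)$ one has $H < L$, which is precisely the condition witnessing $\stab_{*}\nu < \varphi_{*}\nu$.

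No step here should be a serious obstacle; the only mild subtlety is checking measurability of $\Psi$, which follows from the measurability of $\varphi$ (by hypothesis) and of the stabilizer map $\stab : X \to S(G)$ (standard for quasi-invariant actions of l.c.s.c.~groups, and used implicitly throughout the section).
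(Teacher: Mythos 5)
Your proof is correct and follows essentially the same route as the paper: the quasi-invariance of $\varphi_{*}\nu$ is read off from the equivariance of $\varphi$, and the joining you build via $\Psi(x) = (\stab(x),\varphi(x))$ is exactly the paper's joining $\int \delta_{\stab(x)} \times \delta_{\varphi(x)}~d\nu(x)$. The extra details you supply (the computation $g\cdot\varphi_{*}\nu = \varphi_{*}(g\nu)$ and the measurability remark) are fine and only make explicit what the paper leaves implicit.
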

\begin{proof}
That $\varphi_{*}\nu$ is a random subgroup is immediate from the $G$-equivariance of $\varphi$.  The joining $\int \delta_{\stab(x)} \times \delta_{\varphi(x)}~d\nu(x)$ shows that $\stab_{*}\nu$ is a subgroup of $\varphi_{*}\nu$.
\end{proof}

\begin{definition}
Let $G \actson (X,\nu)$ be a quasi-invariant action of a locally compact second countable group.  A random subgroup $\eta$ of $G$ is a \textbf{random subgroup of $G$ below $(X,\nu)$} when there exists a measurable $G$-equivariant map $\varphi : X \to S(G)$ such that $\stab(x) \subseteq \varphi(x)$ almost everywhere and $\varphi_{*}\nu = \eta$.
\end{definition}

Having established the class of random subgroups that live below a $G$-space, we now generalize the ergodic decomposition to such random subgroups.

\begin{definition}
Let $G \actson (X,\nu)$ be a quasi-invariant action of a locally compact second countable group and let $\varphi : X \to S(G)$ be a $G$-equivariant measurable map such that $\stab(x) \subseteq \varphi(x)$ for $\nu$-almost every $x \in X$.  Let $f \in L^{\infty}(X,\nu)$.  Then $f$ is \textbf{$\varphi$-invariant} when for almost every $x \in X$ and all $g \in \varphi(x)$, it holds that $f(gx) = f(x)$.  The space of $\varphi$-invariant functions will be written $L^{\infty}(X,\nu)^{\varphi}$.
\end{definition}

\begin{proposition}
Let $G \actson (X,\nu)$ be a quasi-invariant action of a locally compact second countable group and let $\varphi : X \to S(G)$ be a $G$-equivariant measurable map such that $\stab(x) \subseteq \varphi(x)$ for $\nu$-almost every $x \in X$.  The space of $\varphi$-invariant functions $L^{\infty}(X,\nu)^{\varphi}$ is a closed $G$-invariant subalgebra of $L^{\infty}(X,\nu)$.
\end{proposition}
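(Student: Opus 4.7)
The plan is to verify the three defining properties in turn: being a unital $*$-subalgebra, being $G$-invariant, and being topologically closed.

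The algebra structure is immediate from the pointwise nature of the defining equation $f(gx)=f(x)$. Given $f_1,f_2 \in L^{\infty}(X,\nu)^{\varphi}$, on the intersection of their conull $\varphi$-invariance sets one has $(f_1+f_2)(gx)=(f_1+f_2)(x)$ and $(f_1 f_2)(gx)=(f_1 f_2)(x)$ for every $g \in \varphi(x)$; scalar multiples and complex conjugates are handled the same way. Constants are trivially $\varphi$-invariant, so $L^{\infty}(X,\nu)^{\varphi}$ is a unital $*$-subalgebra of $L^{\infty}(X,\nu)$.

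For $G$-invariance, I would exploit the equivariance of $\varphi$. For $f \in L^{\infty}(X,\nu)^{\varphi}$ and $h \in G$, the translate is $(h \cdot f)(x)=f(h^{-1}x)$. Using quasi-invariance of $\nu$ to transport the invariance a.e.\ set of $f$ through $h^{-1}$, and using $\varphi(h^{-1}x) = h^{-1}\varphi(x)h$ to place $h^{-1}gh \in \varphi(h^{-1}x)$, one computes for a.e.\ $x$ and every $g \in \varphi(x)$
\[
(h \cdot f)(gx) = f(h^{-1}gx) = f\bigl((h^{-1}gh)(h^{-1}x)\bigr) = f(h^{-1}x) = (h \cdot f)(x),
\]
so $h \cdot f \in L^{\infty}(X,\nu)^{\varphi}$.

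Closedness is the delicate step, and I would reduce it to a statement about a sub-$\sigma$-algebra. Define a relation $R_{\varphi}$ on $X$ by $x \mathrel{R_{\varphi}} y$ iff $y = gx$ for some $g \in \varphi(x)$; using that $\varphi$ is subgroup-valued and $G$-equivariant one verifies $R_{\varphi}$ is a measurable equivalence relation (reflexivity from $e \in \varphi(x)$, symmetry from closure under inverses, transitivity via $\varphi(gx)=g\varphi(x)g^{-1}$). A measurable $f$ is then $\varphi$-invariant exactly when it agrees a.e.\ with a function constant on $R_{\varphi}$-classes. The $R_{\varphi}$-saturated measurable subsets of $X$ form a sub-$\sigma$-algebra $\mathcal{B}_{\varphi}$, and $L^{\infty}(X,\nu)^{\varphi} = L^{\infty}(X,\mathcal{B}_{\varphi},\nu)$; as the range of the conditional expectation onto $\mathcal{B}_{\varphi}$, this space is automatically weak-$*$ closed, and hence norm closed, in $L^{\infty}(X,\nu)$.

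The main obstacle is this last step: a naive argument—taking a Cauchy sequence $f_n \to f$ in $\|\cdot\|_{\infty}$, choosing representatives with a common conull $\varphi$-invariance set $X^{*}$, and passing to the limit—is complicated by the fact that the conull subset of $X^{*}$ on which $f_n \to f$ pointwise need not be $R_{\varphi}$-saturated, so the pointwise equalities at points $gx$ may fail off a null set that depends on $x$. Re-expressing everything via $\mathcal{B}_{\varphi}$ circumvents this by working with equivalence classes of functions rather than representatives; the only nontrivial input is that $\varphi$-invariance is preserved under a.e.\ modification, which is exactly the content of quasi-invariance together with equivariance of $\varphi$.
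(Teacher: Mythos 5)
Your verification of the subalgebra structure and of $G$-invariance is essentially the paper's: the invariance computation is the same conjugation identity $\varphi(gx)=g\varphi(x)g^{-1}$ (the paper writes $q(x)=f(gx)$ and uses $ghg^{-1}\in\varphi(gx)$). Where you genuinely diverge is closedness. The paper does precisely the argument you label ``naive'': take $f_n\to f$ in $L^{\infty}$ and pass to the limit in $f_n(gx)=f_n(x)$ for a.e.\ $x$ and all $g\in\varphi(x)$, without comment on the saturation of the exceptional null set. Your route instead identifies $L^{\infty}(X,\nu)^{\varphi}$ with $L^{\infty}$ of the sub-$\sigma$-algebra $\mathcal{B}_{\varphi}$ of $R_{\varphi}$-saturated sets and gets (even weak-$*$) closedness from the conditional expectation; this buys a cleaner structural picture that dovetails with the Mackey point realization the paper invokes immediately after this proposition. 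Note, however, that the direct argument is repaired with much less machinery: replace the norm-limit $f$ by the everywhere-defined representative $\tilde f=\limsup_n f_n$; on the conull intersection of the invariance sets of the $f_n$ one has $\tilde f(gx)=\limsup_n f_n(gx)=\limsup_n f_n(x)=\tilde f(x)$ for every $g\in\varphi(x)$, so the limit class contains a $\varphi$-invariant representative, which is all the statement can mean in $L^{\infty}$.

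One assertion in your closing paragraph does not hold as stated: that ``$\varphi$-invariance is preserved under a.e.\ modification'' and that this is the content of quasi-invariance plus equivariance. Quasi-invariance controls the translates $gN$ of a null set for each fixed $g$, not the $R_{\varphi}$-saturation of $N$, which involves uncountably many group elements varying with the point; for instance, with $\varphi\equiv G$ acting transitively the literal pointwise condition forces a genuinely constant representative and is destroyed by modification on any nonempty null set. The harmless reading is that $\varphi$-invariance is a property of the $L^{\infty}$-class (some representative satisfies it); under that reading the direction you call the only nontrivial input is trivial, and the direction that actually needs work in your identification $L^{\infty}(X,\nu)^{\varphi}=L^{\infty}(X,\mathcal{B}_{\varphi},\nu)$ is the converse: that a $\varphi$-invariant function agrees a.e.\ with a saturated-measurable one. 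This can be done by a level-set argument (for $x$ in the invariance set, $\varphi(x)\cdot x\subseteq\{f>t\}$ whenever $f(x)>t$, so $\{f>t\}$ differs from a saturated set by a null set), with some care about measurability of saturations. With that adjustment your argument is sound, but as written the justification at its pivot is misattributed.
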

\begin{proof}
That it is a subalgebra is clear.  Let $f_{n} \in L^{\infty}(X,\nu)^{\varphi}$ such that $f_{n} \to f \in L^{\infty}(X,\nu)$.  For almost every $x \in X$ and any $g \in \varphi(x)$ then $f_{n}(gx) = f_{n}(x) \to f(x)$ and $f_{n}(gx) \to f(gx)$ so $f$ is also $\varphi$-invariant.  Given $f \in L^{\infty}(X,\nu)^{\varphi}$ and $g \in G$ define $q(x) = f(gx)$.  For $h \in \varphi(x)$, observe that $ghg^{-1} \in g\varphi(x)g^{-1} = \varphi(gx)$ so
\[
q(hx) = f(ghx) = f((ghg^{-1})gx) = f(gx) = q(x)
\]
meaning that $L^{\infty}(X,\nu)^{\varphi}$ is $G$-invariant.
\end{proof}

\begin{definition}
Let $G \actson (X,\nu)$ be a quasi-invariant action of a locally compact second countable group and let $\varphi : X \to S(G)$ be a $G$-equivariant measurable map such that $\stab(x) \subseteq \varphi(x)$ for $\nu$-almost every $x \in X$.  The \textbf{quotient space of $(X,\nu)$ over $\varphi$} is the Mackey point realization (Theorem \ref{T:mackey}) of the $G$-algebra of $\varphi$-invariant functions.
\end{definition}

\begin{definition}
Let $G \actson (X,\nu)$ be a quasi-invariant action of a locally compact second countable group and let $\eta$ be a random subgroup of $G$ below $(X,\nu)$.  The \textbf{quotient space of $(X,\nu)$ by $\eta$} is the quotient space of $(X,\nu)$ over the map $\varphi$ witnessing that $\eta$ is below $(X,\nu)$.
\end{definition}

\subsection{Examples of Quotienting By Random Subgroups}

\begin{proposition}
Let $G \actson (X,\nu)$ be a quasi-invariant action of a locally compact second countable group and let $N \normal G$ be a closed normal subgroup.  The quotient space of $(X,\nu)$ over $\varphi(x) = \overline{N \cdot \stab(x)}$ is the ergodic decomposition of $(X,\nu)$ over $N$.
\end{proposition}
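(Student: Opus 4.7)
By the definitions recalled in the excerpt, both spaces in the statement arise as Mackey point realizations of subalgebras of $L^{\infty}(X,\nu)$: the ergodic decomposition $(\rpf{X}{N},\overline{\nu})$ is the realization of the $N$-invariant functions $L^{\infty}(X,\nu)^{N}$, and the quotient of $(X,\nu)$ over $\varphi$ is the realization of the $\varphi$-invariant functions $L^{\infty}(X,\nu)^{\varphi}$. Since Mackey realization is canonical, it suffices to prove the equality of algebras $L^{\infty}(X,\nu)^{\varphi} = L^{\infty}(X,\nu)^{N}$ as subalgebras of $L^{\infty}(X,\nu)$.

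First I would check that $\varphi$ is a legitimate map $X \to S(G)$ satisfying the hypotheses needed to form the quotient space. Normality of $N$ guarantees that $N\stab(x)$ is a subgroup of $G$, so $\varphi(x) = \overline{N\stab(x)}$ is a closed subgroup containing $\stab(x)$. Equivariance follows from $g\varphi(x)g^{-1} = \overline{gN\stab(x)g^{-1}} = \overline{N \cdot g\stab(x)g^{-1}} = \overline{N\stab(gx)} = \varphi(gx)$. The inclusion $L^{\infty}(X,\nu)^{\varphi} \subseteq L^{\infty}(X,\nu)^{N}$ is then immediate: since $N \subseteq N\stab(x) \subseteq \varphi(x)$ pointwise, any $\varphi$-invariant $f$ satisfies $f(nx) = f(x)$ for a.e.\ $x$ and every $n \in N$.

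The reverse inclusion is the substantive direction. Given $f \in L^{\infty}(X,\nu)^{N}$, factor $f = \tilde f \circ \pi$ through the canonical quotient $\pi \colon (X,\nu) \to (\rpf{X}{N},\overline{\nu})$ supplied by Mackey's Theorem \ref{T:mackey}. Because each $n \in N$ preserves every function in $L^{\infty}(X,\nu)^{N}$, the $G$-action on $\rpf{X}{N}$ has $N$ in its kernel, i.e.\ $N \subseteq \stab(\pi(x))$ for a.e.\ $x$. Combined with the standard inclusion $\stab(x) \subseteq \stab(\pi(x))$ available for any $G$-map, this gives $N\stab(x) \subseteq \stab(\pi(x))$ a.e. Since $\stab(\pi(x))$ is a closed subgroup of $G$ (stabilizers of points in a standard Borel $G$-space of a locally compact second countable group are closed), taking closures yields $\varphi(x) \subseteq \stab(\pi(x))$ a.e. Consequently, for a.e.\ $x$ and every $g \in \varphi(x)$,
\[
f(gx) = \tilde f(\pi(gx)) = \tilde f(g\pi(x)) = \tilde f(\pi(x)) = f(x),
\]
showing $f \in L^{\infty}(X,\nu)^{\varphi}$.

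The main technical delicacies are twofold: upgrading ``$N$ acts trivially on $L^{\infty}(X,\nu)^{N}$'' to ``$N$ is contained in every point stabilizer in $\rpf{X}{N}$'', and knowing that those stabilizers are actually closed so that the closure $\overline{N\stab(x)}$ lies inside $\stab(\pi(x))$. Both are standard consequences of working with Borel actions of locally compact second countable groups on standard Borel spaces, but they are the only nontrivial inputs to the argument; once they are in hand, the proposition follows by uniqueness of the Mackey realization.
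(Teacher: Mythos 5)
Your proof is correct, and its overall frame (identify both spaces as Mackey realizations of subalgebras of $L^{\infty}(X,\nu)$ and show the two subalgebras coincide) is the same as the paper's. The difference is in how the substantive inclusion $L^{\infty}(X,\nu)^{N} \subseteq L^{\infty}(X,\nu)^{\varphi}$ is handled. The paper argues directly: an $N$-invariant $f$ trivially satisfies $f(gx)=f(x)$ for $g \in \stab(x)$ as well, hence is invariant under $N\cdot\stab(x)$, and it simply declares $f$ to be $\varphi$-invariant --- leaving implicit the passage from invariance under $N\cdot\stab(x)$ to invariance under its closure $\varphi(x)=\overline{N\cdot\stab(x)}$, which is what the definition of $\varphi$-invariance actually demands. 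You instead factor $f$ through the ergodic decomposition $\pi\colon (X,\nu)\to(\rpf{X}{N},\overline{\nu})$, observe that $N$ lies in the kernel of the action on the base and $\stab(x)\subseteq\stab(\pi(x))$, and use closedness of $\stab(\pi(x))$ to get $\varphi(x)\subseteq\stab(\pi(x))$, from which $\varphi$-invariance of $f$ follows. So your detour through the quotient buys a clean treatment of the closure step that the paper glosses over (at the cost of invoking the point realization and closedness of stabilizers), while the paper's version is shorter but rests on exactly the kind of upgrade your argument supplies. The only caveats in your write-up are the usual almost-everywhere equivariance issues ($\pi(gx)=g\pi(x)$ and $f=\tilde f\circ\pi$ only off null sets, with $g$ ranging over a closed subgroup), but these are handled at the same level of informality throughout the paper, e.g.\ in the proof that $\stab_{*}\nu$ is a subgroup of $\stab_{*}\eta$.
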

\begin{proof}
Let $f \in L^{\infty}(X,\nu)$ be a $\varphi$-invariant function.  Then for all $g \in N$ and all $x \in X$ we have that $g \in \varphi(x)$ hence $f(gx) = f(x)$ almost everywhere so $f$ is $N$-invariant.  Now let $f \in L^{\infty}(X,\nu)$ be an $N$-invariant function.  Then for almost every $x \in X$ and all $g \in N$ we have that $f(gx) = f(x)$.  For $g \in stab(x)$ of course $f(gx) = f(x)$ so $f$ is in fact $\varphi$-invariant.  Hence the space of $\varphi$-invariant functions agrees with the space of $N$-invariant functions.
\end{proof}

\begin{proposition}
Let $G \actson (X,\nu)$ be a quasi-invariant action of a locally compact second countable group and let $H < G$ be a closed subgroup such that $\stab(x) \subseteq H$ for almost every $x \in X$.  The quotient space of $(X,\nu)$ over $\varphi(x) = H$ is the ergodic decomposition of $(X,\nu)$ over the normal closure of $H$.
\end{proposition}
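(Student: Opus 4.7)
The plan is to reduce the statement to the preceding proposition by showing that the appropriate algebra of $\varphi$-invariant functions coincides with the algebra of $N$-invariant functions, where $N$ denotes the (closed) normal closure of $H$. Once this identification is in place, applying the previous proposition to $N$ finishes the argument: since $\mathrm{stab}(x)\subseteq H\subseteq N$ almost everywhere, we have $N\cdot\mathrm{stab}(x)=N$, so $\overline{N\cdot\mathrm{stab}(x)}=N$, and the Mackey realization of the algebra of $N$-invariant functions is precisely the ergodic decomposition of $(X,\nu)$ over $N$.

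To establish the identification, one inclusion is essentially by inspection: any $N$-invariant function is $H$-invariant (hence $\varphi$-invariant) because $H\subseteq N$, and the subalgebra $L^{\infty}(X,\nu)^{N}$ is $G$-invariant because $N$ is normal, so it sits inside the $G$-invariant subalgebra of $\varphi$-invariant functions used to build the quotient space. For the reverse inclusion, suppose $f\in L^{\infty}(X,\nu)$ is $H$-invariant and that each $G$-translate $g\cdot f$ is also $H$-invariant (this is the content of $f$ belonging to the $G$-invariant $\varphi$-algebra). Then for all $h\in H$ and $g\in G$ the identity $(g\cdot f)(hx)=(g\cdot f)(x)$ rewrites, after substituting $y=g^{-1}x$, as
\[
f\bigl((g^{-1}hg)y\bigr)=f(y)\qquad\text{for $\nu$-almost every }y.
\]
Thus $f$ is invariant under every conjugate $g^{-1}Hg$, and therefore under the subgroup generated by $\bigcup_{g\in G}g^{-1}Hg$, which is exactly $N$.

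The one point requiring care is the routine measurability bookkeeping in passing from pointwise-a.e.\ invariance under individual elements of $G$ to joint invariance under the whole subgroup $N$, since the null sets in the identities above depend a priori on $h$ and $g$. This is handled by the standard observation that $\{g\in G:g\cdot f=f\text{ in }L^{\infty}(X,\nu)\}$ is a closed subgroup of $G$; together with second countability of $G$, invariance under a countable dense subset of $\bigcup_{g}g^{-1}Hg$ suffices to deduce invariance under the closed subgroup it generates, namely $N$. The main obstacle I anticipate is precisely this measurability step, but it should be largely formal given the tools already available in the paper.
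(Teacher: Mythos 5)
Your proof is correct and follows essentially the same route as the paper: both arguments come down to identifying the $G$-invariant algebra of $\varphi$-invariant functions with the algebra of functions invariant under the normal closure $N$, with one inclusion immediate from $H\subseteq N$ and normality of $N$. Your conjugation computation $f((g^{-1}hg)y)=f(y)$ is just the algebra-level version of the paper's remark that $H$ acts trivially on the quotient so the (normal) kernel of the $G$-action contains the normal closure, and your closed-subgroup observation handles the passage to the closure of the group generated by the conjugates, which the paper leaves implicit.
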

\begin{proof}
Let $f \in L^{\infty}(X,\nu)$ be $\varphi$-invariant.  Then $f(hx) = f(x)$ for all $h \in H$ and almost every $x \in X$.  Therefore $H$ acts trivially on the quotient space.  As the kernel of that action must be normal, it contains the normal closure of $H$.  Conversely, any function invariant under the normal closure of $H$ is invariant under $H$.
\end{proof}

\subsection{The Universal Property of the Quotient Space}

We now state and prove several results that jointly amount to a universal property for the quotient space by a random subgroup.

\begin{theorem}\label{T:disintergodic}
Let $G$ be a locally compact second countable group and let $\pi : (Y,\eta) \to (X,\nu)$ be a $G$-map of $G$-spaces.
Let $\varphi : X \to S(G)$ be a $G$-equivariant measurable map such that $\stab(x) \subseteq \varphi(x)$ for $\nu$-almost every $x \in X$.  Let $(Z,\zeta)$ be the quotient space of $(X,\nu)$ over $\varphi$ and let $\psi : (X,\nu) \to (Z,\zeta)$ be the corresponding map.  Then for almost every $z \in Z$, the disintegration measure $D_{\psi}(z)$ is $\stab(z)$-ergodic.  More precisely, if $f \in L^{\infty}(X,\nu)$ such that for almost every $x \in X$ and all $g \in \stab(\psi(x))$ it holds that $f(gx) = f(x)$ then $f$ descends to $L^{\infty}(Z,\zeta)$: there exists $F \in L^{\infty}(Z,\zeta)$ such that $f = F \circ \psi$.
\end{theorem}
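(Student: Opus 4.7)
The plan is to establish the ``More precisely'' clause first, as the $\stab(z)$-ergodicity of $D_\psi(z)$ will then be a direct consequence. By the defining property of the Mackey point realization (Theorem \ref{T:mackey}), a bounded measurable $f$ on $X$ factors as $f = F \circ \psi$ for some $F \in L^\infty(Z,\zeta)$ if and only if $f \in L^\infty(X,\nu)^\varphi$. Hence the main task is to upgrade the hypothesis that $f(gx) = f(x)$ for every $g \in \stab(\psi(x))$ to the stronger statement that $f$ is $\varphi$-invariant.

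The core step is the almost-everywhere inclusion
\[
\varphi(x) \subseteq \stab(\psi(x)).
\]
To see this, I would fix a countable family $\{h_n\} \subseteq L^\infty(Z,\zeta)$ separating points of $Z$, which is available since $Z$ is standard Borel. Each composition $h_n \circ \psi$ lies in $L^\infty(X,\nu)^\varphi$ by the defining property of $\psi$, so for every $n$ there is a $\nu$-null set $N_n$ off of which $h_n(\psi(gx)) = h_n(\psi(x))$ for all $g \in \varphi(x)$. On the conull set $X \setminus \bigcup_n N_n$ the equality then holds for every $n$ and every $g \in \varphi(x)$, and by the separating property $\psi(gx) = \psi(x)$, i.e., $g \in \stab(\psi(x))$. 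Combined with the hypothesis, this gives $f(gx) = f(x)$ for every $g \in \varphi(x)$ almost everywhere, whence $f \in L^\infty(X,\nu)^\varphi$, and Mackey's theorem supplies the required $F \in L^\infty(Z,\zeta)$.

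To deduce the first sentence, observe that what has just been shown identifies the algebra of pointwise $\stab(\psi(\cdot))$-invariant bounded measurable functions on $X$ with $\psi^{\ast}L^\infty(Z,\zeta)$. Disintegrating along $\psi$, every such function is $D_\psi(z)$-essentially constant on the fiber $\psi^{-1}(z)$. To promote this to full $\stab(z)$-ergodicity of $D_\psi(z)$, I would argue by contradiction: if, on a $\zeta$-positive set of $z$, $D_\psi(z)$ admitted a $\stab(z)$-invariant measurable subset of $\psi^{-1}(z)$ of mass strictly between $0$ and $1$, then a standard measurable selection in the disintegration would produce a bounded Borel $f$ on $X$ satisfying the hypothesis of the ``More precisely'' clause yet not constant on $\zeta$-almost every fiber of $\psi$, contradicting what was just proved. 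I expect the main obstacle to be precisely this measurable-selection step; it should be handled via the standard machinery for measurable fields of probability spaces, since the orbit equivalence relations of $\stab(z) \actson \psi^{-1}(z)$ vary measurably in $z$, but the bookkeeping to extract a single globally measurable witness from the fiberwise witnesses is the one place where genuine care is required.
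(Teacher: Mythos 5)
Your proposal is correct and follows essentially the paper's own route: the entire content is the inclusion $\varphi(x) \subseteq \stab(\psi(x))$ almost everywhere, after which invariance under $\stab(\psi(\cdot))$ forces $\varphi$-invariance and the Mackey point-realization property of $(Z,\zeta)$ gives the descent $f = F \circ \psi$. The only difference is cosmetic: the paper establishes the inclusion separately (in Theorem \ref{T:irsfactor}, via the conditional expectation $Ef(x) = D_{\psi}(\psi(x))(f)$ and uniqueness of disintegration), whereas you prove it inline with a countable separating family of functions on $Z$ composed with $\psi$ --- an equivalent standard argument; and your final measurable-selection sketch for upgrading to fiberwise $\stab(z)$-ergodicity of $D_{\psi}(z)$ is not needed, since the theorem's ``more precisely'' clause is exactly the precise form of that ergodicity statement and is all the paper itself proves.
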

\begin{proof}
Let $f \in L^{\infty}(X,\nu)$ such that for almost every $x \in X$ and all $g \in \stab(z)$ it holds that $f(gx) = f(x)$.  Since $\varphi(x) \subseteq \stab(\psi(x))$, then $f$ is a $\varphi$-invariant function.  Since $(Z,\zeta)$ is the point realization of all $\varphi$-invariant functions, then $f$ descends to a function on $Z$.  
\end{proof}

\begin{theorem}\label{T:irsfactor}
Let $G \actson (X,\nu)$ be a quasi-invariant action and let $\varphi : X \to S(G)$ be a $G$-equivariant measurable map such that $\stab(x) \subseteq \varphi(x)$ for $\nu$-almost every $x \in X$.  Let $(Z,\zeta)$ be the quotient space of $(X,\nu)$ over $\varphi$ and let $\psi : (X,\nu) \to (Z,\zeta)$ be the map.  Then $\varphi(x) \subseteq \stab(\psi(x))$ for $\nu$-almost every $x \in X$.

Moreover, the quotient space has the following universal property: if $\pi : (X,\nu) \to (Y,\eta)$ is a $G$-map such that $\varphi(x) \subseteq \stab(\pi(x))$ for $\nu$-almost every $x \in X$ then there exist $G$-maps $\psi : (X,\nu) \to (Z,\zeta)$ and $\tau : (Z,\zeta) \to (Y,\eta)$ such that $\tau \circ \psi = \pi$.
\end{theorem}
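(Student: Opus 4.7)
The plan is to exploit the defining property of the quotient space, namely that $L^{\infty}(Z,\zeta)$ is identified (via pullback by $\psi$) with the subalgebra $L^{\infty}(X,\nu)^{\varphi}$ of $\varphi$-invariant functions, together with the Mackey realization of inclusions of $G$-invariant subalgebras as $G$-maps.

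\textbf{Part 1} ($\varphi(x) \subseteq \stab(\psi(x))$ a.e.). I would start by fixing a countable family $\{F_n\} \subseteq L^{\infty}(Z,\zeta)$ that separates points $\zeta$-almost everywhere; such a family exists because $(Z,\zeta)$ is a standard probability space. By construction each $F_n \circ \psi$ lies in $L^{\infty}(X,\nu)^{\varphi}$, so for each $n$ there is a conull set $X_n \subseteq X$ with the property that $(F_n \circ \psi)(gx) = (F_n \circ \psi)(x)$ for all $g \in \varphi(x)$ whenever $x \in X_n$. Using $G$-equivariance of $\psi$ this reads $F_n(g\psi(x)) = F_n(\psi(x))$. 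Intersecting the $X_n$, I obtain a conull set on which $F_n(g \psi(x)) = F_n(\psi(x))$ for every $n$ and every $g \in \varphi(x)$; by the separation property this forces $g\psi(x) = \psi(x)$, i.e.\ $g \in \stab(\psi(x))$. (A small measurability check is needed to ensure that the exceptional set does not depend badly on $g$, but this is routine once one regards ``for all $g \in \varphi(x)$'' as a statement about the measurable graph $\{(x,g) : g \in \varphi(x)\} \subseteq X \times G$.)

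\textbf{Part 2} (universal property). Given a $G$-map $\pi : (X,\nu) \to (Y,\eta)$ with $\varphi(x) \subseteq \stab(\pi(x))$ a.e., I would check directly that the pullback algebra $\pi^{*}L^{\infty}(Y,\eta)$ sits inside $L^{\infty}(X,\nu)^{\varphi}$: for $f \in L^{\infty}(Y,\eta)$ and $g \in \varphi(x) \subseteq \stab(\pi(x))$,
\[
(f \circ \pi)(gx) = f(g\pi(x)) = f(\pi(x)) = (f \circ \pi)(x),
\]
so $f \circ \pi$ is $\varphi$-invariant. Hence $\pi^{*}L^{\infty}(Y,\eta) \subseteq \psi^{*}L^{\infty}(Z,\zeta)$ as closed $G$-invariant subalgebras of $L^{\infty}(X,\nu)$. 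By Mackey's point realization theorem (Theorem \ref{T:mackey}), applied on $(Z,\zeta)$ to the $G$-invariant subalgebra corresponding to $\pi^{*}L^{\infty}(Y,\eta)$, this inclusion yields a $G$-map $\tau : (Z,\zeta) \to (Y,\eta)$ satisfying $\tau^{*}f = \psi^{*-1}(\pi^{*}f)$ for $f \in L^{\infty}(Y,\eta)$; unwinding this gives $\tau \circ \psi = \pi$ almost everywhere.

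The main obstacle, such as it is, is bookkeeping rather than substance: in Part 1 one must handle the uncountable quantifier ``for all $g \in \varphi(x)$'' measurably, and in Part 2 one must be careful that the map produced by Mackey realization is genuinely $G$-equivariant and not merely an algebra homomorphism. Both are standard, and the rest of the argument is a direct unwinding of the definition of the quotient space as the Mackey realization of $L^{\infty}(X,\nu)^{\varphi}$.
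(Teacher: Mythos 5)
Your proof is correct and follows essentially the same route as the paper: Part 2 is exactly the paper's argument (the pullback algebra $\pi^{*}L^{\infty}(Y,\eta)$ lands in the $\varphi$-invariant functions, and Mackey realization of the inclusion of subalgebras produces $\tau$ with $\tau \circ \psi = \pi$). Part 1 differs only cosmetically: where you use a countable separating family on $Z$ to force $g\psi(x) = \psi(x)$, the paper uses the conditional expectation $Ef(x) = D_{\psi}(\psi(x))(f)$, whose $\varphi$-invariance gives $D_{\psi}(\psi(gx)) = D_{\psi}(\psi(x))$ and hence equality of fibers; both arguments rest on the same point (pullbacks of enough functions on $Z$ are $\varphi$-invariant and determine $\psi(x)$) and both leave the same ``for all $g \in \varphi(x)$'' null-set bookkeeping implicit, which you at least flag.
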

\begin{proof}
Consider the disintegration $D_{\psi} : Z \to P(X)$ of $\nu$ over $\zeta$.  For $f \in L^{\infty}(X,\nu)$ the function $Ef(x) = D_{\psi}(\psi(x))(f)$ is the conditional expectation to the space of $\varphi$-invariant functions.  
Therefore, for almost every $x \in X$ and all $g \in \varphi(x)$ it holds that $Ef(gx) = Ef(x)$.  Then $D_{\psi}(\psi(gx)) = D_{\psi}(\psi(x))$ as this holds for all $f$.  Hence the supports agree meaning that $\psi^{-1}(\psi(gx)) = \psi^{-1}(\psi(x))$ and so $g\psi(x) = \psi(gx) = \psi(x)$.  Therefore $\varphi(x) \subseteq \stab(\psi(x))$.

Let $f \in L^{\infty}(Y,\eta)$.  Then $f \circ \pi \in L^{\infty}(X,\nu)$.  For almost every $x \in X$ and all $g \in \varphi(x)$ we have that $f \in \stab(\pi(x))$ so $f \circ \pi (gx) = f \circ \pi(x)$ so $f$ is $\varphi$-invariant.  Hence $\pi^{*}(L^{\infty}(Y,\eta))$ is a closed $G$-invariant subalgebra of the $\varphi$-invariant functions so the maps $\psi$ and $\tau$ follow from Mackey's point realization (Theorem \ref{T:mackey}).
\end{proof}

\begin{theorem}\label{T:univpropquotnono}
Let $G$ be a locally compact second countable group and let $\pi : (Y,\eta) \to (X,\nu)$ be a $G$-map of $G$-spaces.
Let $\varphi : X \to S(G)$ be a $G$-equivariant measurable map such that $\stab(x) \subseteq \varphi(x)$ for $\nu$-almost every $x \in X$.  Let $(Z,\zeta)$ be the quotient space of $(X,\nu)$ over $\varphi$ and let $(W,\rho)$ be the quotient space of $(Y,\eta)$ over $\varphi \circ \pi$.  Let $\psi : (X,\nu) \to (Z,\zeta)$ and $\xi : (Y,\eta) \to (W,\rho)$ be the corresponding $G$-maps.  Then there exists a $G$-map $\tau : (W,\rho) \to (Z,\zeta)$ such that the following diagram commutes:
\begin{diagram}
(Y,\eta)		&\rTo^{\pi}	&(X,\nu)\\
\dTo^{\xi}	&			&\dTo^{\psi}\\
(W,\rho)		&\rTo^{\tau}	&(Z,\zeta)
\end{diagram}

Moreover, if $\pi$ is an orbital $G$-map and $\psi$ has the property that $\stab(\psi(x)) = \varphi(x)$ almost everywhere then $\tau$ is orbital (and, in particular, $\xi$ has the property that $\stab(\xi(y)) = \varphi(\pi(y))$ almost everywhere).
\end{theorem}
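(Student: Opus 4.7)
The plan is to obtain $\tau$ from the universal property in Theorem~\ref{T:irsfactor} and then verify the orbital claim by a short stabilizer chase. First I would check that the quotient $(W,\rho)$ of $(Y,\eta)$ over $\varphi \circ \pi$ is legitimately defined: for almost every $y$, $\stab(y) \subseteq \stab(\pi(y)) \subseteq \varphi(\pi(y))$, the first inclusion because $\pi$ is a $G$-map and the second by the standing hypothesis on $\varphi$. Next, Theorem~\ref{T:irsfactor} applied to $\psi$ gives $\varphi(x) \subseteq \stab(\psi(x))$ almost everywhere; pulling back by $\pi$ yields $(\varphi \circ \pi)(y) \subseteq \stab((\psi \circ \pi)(y))$ for $\eta$-a.e.\ $y$. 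This is exactly the hypothesis needed to apply the universal property of the quotient $(W,\rho)$ to the $G$-map $\psi \circ \pi : (Y,\eta) \to (Z,\zeta)$, which produces a $G$-map $\tau : (W,\rho) \to (Z,\zeta)$ with $\tau \circ \xi = \psi \circ \pi$, i.e., the commutativity of the square.

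For the orbital assertion I would combine three ingredients. Theorem~\ref{T:irsfactor} applied to $\xi$ (as the quotient of $(Y,\eta)$ by $\varphi \circ \pi$) gives $\varphi(\pi(y)) \subseteq \stab(\xi(y))$ a.e.; the $G$-equivariance of $\tau$ gives $\stab(\xi(y)) \subseteq \stab(\tau(\xi(y))) = \stab(\psi(\pi(y)))$; and the assumption $\stab(\psi(x)) = \varphi(x)$ at $x = \pi(y)$ closes the loop via $\stab(\psi(\pi(y))) = \varphi(\pi(y))$. The three terms then collapse to equalities, yielding simultaneously $\stab(\xi(y)) = \varphi(\pi(y))$ (the ``in particular'' assertion about $\xi$) and $\stab(\xi(y)) = \stab(\tau(\xi(y)))$. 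Since $\xi_{*}\eta = \rho$, the second identity is exactly the statement that $\tau$ is orbital.

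The whole argument is a formal diagram chase once Theorem~\ref{T:irsfactor} is available; the only point that requires any care is transferring the almost-everywhere statement indexed by $y \in Y$ to one indexed by $w \in W$, which is immediate from $\xi_{*}\eta = \rho$. The orbital hypothesis on $\pi$ itself does not appear to be strictly required by this line of reasoning, but it is the natural companion to the stabilizer identification $\stab(\psi(x)) = \varphi(x)$ and will be the situation in the intended applications.
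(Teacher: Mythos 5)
Your proposal is correct and follows essentially the same route as the paper: the paper obtains $\tau$ by observing that pulled-back $\varphi$-invariant functions are $\varphi\circ\pi$-invariant (which is exactly the content of the universal property of Theorem \ref{T:irsfactor} that you invoke), and its proof of the orbital claim is the same sandwich $\varphi(\pi(y)) \subseteq \stab(\xi(y)) \subseteq \stab(\tau(\xi(y))) = \stab(\psi(\pi(y))) = \varphi(\pi(y))$. Your remark that the orbitality of $\pi$ is never actually needed is consistent with the paper's own argument, which states that hypothesis but does not use it.
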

\begin{proof}
Let $f \in L^{\infty}(X,\nu)$ be $\varphi$-invariant.  Then $f \circ \pi \in L^{\infty}(Y,\eta)$ is $\varphi \circ \pi$-invariant.  Therefore at the level of algebras, $L^{\infty}(Z,\zeta)$ is a closed $G$-invariant subalgebra of $L^{\infty}(W,\rho)$ and the required map exists by restricting $\pi$ to the $\varphi \circ \pi$-invariant functions.

Assume now that $\pi$ is orbital and that $\stab(\psi(x)) = \varphi(x)$ almost everywhere.  Since the diagram commutes and $\stab(\psi(x)) = \varphi(x)$ almost everywhere, for almost every $y \in Y$,
\[
\stab(\xi(y)) \subseteq \stab(\tau(\xi(y))) = \stab(\psi(\pi(y))) = \varphi(\pi(y)).
\]
On the other hand, since $(W,\rho)$ is the quotient of $(Y,\eta)$ by $\varphi \circ \pi$, for almost every $y \in Y$,
\[
\varphi(\pi(y)) \subseteq \stab(\xi(y)).
\]
Therefore $\stab(\xi(y)) = \varphi(\pi(y))$ almost everywhere.  Hence, for almost every $y \in Y$,
\[
\stab(\xi(y)) = \varphi(\pi(y)) = \stab(\psi(\pi(y))) = \stab(\tau(\xi(y)))
\]
and so for almost every $w \in W$, then $\stab(w) = \stab(\tau(w))$ so $\tau$ is orbital.
\end{proof}

\begin{corollary}\label{C:univproporbital}
Let $G$ be a locally compact second countable group and let $\pi : (Y,\eta) \to (X,\nu)$ be a $G$-map of $G$-spaces.
Let $\varphi : X \to S(G)$ be a $G$-equivariant measurable map such that $\stab(x) \subseteq \varphi(x)$ for $\nu$-almost every $x \in X$.  Let $(Z,\zeta)$ be the quotient space of $(X,\nu)$ over $\varphi$ and let $\psi : (X,\nu) \to (Z,\zeta)$ be the corresponding map.  

Define the map $\phi : X \to S(G)$ by $\phi(x) = \stab(\psi(x))$.  Then $\phi$ is a $G$-equivariant measurable map and the quotient of $(X,\nu)$ by $\phi$ is isomorphic to $(Z,\zeta)$.

Let $(W,\rho)$ be the quotient space of $(Y,\eta)$ over $\varphi \circ \pi$ and let $(W^{\prime},\rho^{\prime})$ be the quotient of $(Y,\eta)$ over $\phi \circ \pi$.  Then there exists a commuting diagram of $G$-maps:
\begin{diagram}
(Y,\eta)		&\rTo^{\pi}	&(X,\nu) \\
\dTo			&			&\dTo_{\psi} \\
(W,\rho)		&\rTo		&(Z,\zeta) \\
\dTo			&			&\dTo_{\simeq} \\
(W^{\prime},\rho^{\prime})	&\rTo	&(Z,\zeta)
\end{diagram}
Moreover, if $\psi$ is orbital then the map $(W^{\prime},\rho^{\prime}) \to (Z,\zeta)$ is orbital.
\end{corollary}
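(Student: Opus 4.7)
The plan is to observe that this corollary is essentially just a bookkeeping consequence of Theorem \ref{T:univpropquotnono} together with Theorem \ref{T:irsfactor}, obtained by replacing $\varphi$ with the sharper map $\phi(x) = \mathrm{stab}(\psi(x))$ which gives the ``smallest'' random subgroup inducing the same quotient.

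First I would check that $\phi$ is admissible as an input to the quotienting procedure. Measurability is immediate from measurability of $\psi$ and of the stabilizer map into $S(G)$. $G$-equivariance follows because $\psi$ is a $G$-map: $\phi(gx)=\mathrm{stab}(\psi(gx))=\mathrm{stab}(g\psi(x))=g\mathrm{stab}(\psi(x))g^{-1}=g\phi(x)g^{-1}$. Containment $\mathrm{stab}(x)\subseteq\phi(x)$ holds since $\psi$ is a $G$-map. Finally, by Theorem \ref{T:irsfactor} applied to $\varphi$ we have $\varphi(x)\subseteq\mathrm{stab}(\psi(x))=\phi(x)$ almost everywhere, which will be used repeatedly below.

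Next I would identify the quotient of $(X,\nu)$ by $\phi$ with $(Z,\zeta)$. On one hand, any function of the form $F\circ\psi$ with $F\in L^{\infty}(Z,\zeta)$ is $\phi$-invariant directly from the definition $\phi(x)=\mathrm{stab}(\psi(x))$: for $g\in\phi(x)$, $F(\psi(gx))=F(g\psi(x))=F(\psi(x))$. On the other hand, any $\phi$-invariant $f\in L^{\infty}(X,\nu)$ satisfies $f(gx)=f(x)$ for all $g\in\mathrm{stab}(\psi(x))$ a.e., so by Theorem \ref{T:disintergodic} it descends to a function on $Z$. Hence the algebras of $\phi$-invariant functions and $\psi^{*}L^{\infty}(Z,\zeta)$ agree, so by Mackey point realization the quotient of $(X,\nu)$ by $\phi$ is isomorphic to $(Z,\zeta)$.

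For the diagram, I would apply Theorem \ref{T:univpropquotnono} twice to $\pi$: once with $\varphi$, giving the commuting square $(Y,\eta)\to(X,\nu)$, $(W,\rho)\to(Z,\zeta)$; and once with $\phi$, giving the commuting square $(Y,\eta)\to(X,\nu)$, $(W^{\prime},\rho^{\prime})\to(Z,\zeta)$ (where I use the identification of the previous paragraph to replace the quotient of $(X,\nu)$ by $\phi$ with $(Z,\zeta)$). Since $\varphi(\pi(y))\subseteq\phi(\pi(y))$ a.e., every $\phi\circ\pi$-invariant function is $\varphi\circ\pi$-invariant, so by Mackey realization there is a $G$-map $(W,\rho)\to(W^{\prime},\rho^{\prime})$ making the stacked diagram commute.

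The only nontrivial remaining point is the orbital statement. Assume $\psi$ is orbital, so $\phi(x)=\mathrm{stab}(\psi(x))=\mathrm{stab}(x)$ a.e. Let $\xi^{\prime}:(Y,\eta)\to(W^{\prime},\rho^{\prime})$ and $\tau^{\prime}:(W^{\prime},\rho^{\prime})\to(Z,\zeta)$ denote the two maps. By Theorem \ref{T:irsfactor} applied to $\phi\circ\pi$, $\phi(\pi(y))\subseteq\mathrm{stab}(\xi^{\prime}(y))$ a.e. Using commutativity of the lower square,
\[
\mathrm{stab}(\xi^{\prime}(y))\subseteq\mathrm{stab}(\tau^{\prime}\xi^{\prime}(y))=\mathrm{stab}(\psi(\pi(y)))=\phi(\pi(y))
\]
so equality holds throughout, giving $\mathrm{stab}(\xi^{\prime}(y))=\mathrm{stab}(\tau^{\prime}(\xi^{\prime}(y)))$ a.e. Since $\xi^{\prime}$ is surjective on full measure, this says $\mathrm{stab}(w)=\mathrm{stab}(\tau^{\prime}(w))$ a.e., i.e., $\tau^{\prime}$ is orbital. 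The main (mild) obstacle is keeping the two quotient constructions and the identification of the $\phi$-quotient with $(Z,\zeta)$ notationally straight; no new ideas beyond Theorems \ref{T:irsfactor}, \ref{T:disintergodic}, and \ref{T:univpropquotnono} are needed.
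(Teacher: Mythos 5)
Your proof is correct and takes essentially the same route as the paper: where the paper identifies the $\phi$-quotient with $(Z,\zeta)$ by producing $G$-maps in both directions from the universal property (Theorem \ref{T:irsfactor}), you verify directly that the algebra of $\phi$-invariant functions equals $\psi^{*}L^{\infty}(Z,\zeta)$ via Theorem \ref{T:disintergodic}, a cosmetic difference. Your inline two-inclusion argument for orbitality of $(W^{\prime},\rho^{\prime}) \to (Z,\zeta)$ is precisely the argument behind the ``moreover'' clause of Theorem \ref{T:univpropquotnono} that the paper cites (and, as you implicitly observe, it only needs $\stab(\psi(x)) = \phi(x)$, which holds by construction).
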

\begin{proof}
Let $(Z^{\prime},\zeta^{\prime})$ be the quotient of $(X,\nu)$ by $\phi$.  Then there is a $G$-map $(Z,\zeta) \to (Z^{\prime},\zeta^{\prime})$ since $\varphi(x) \subseteq \phi(x)$.  On the other hand, by Theorem \ref{T:irsfactor}, since $\phi(x) = \stab(\psi(x))$ there is a $G$-map $(Z^{\prime},\zeta^{\prime}) \to (Z,\zeta)$ and therefore they are isomorphic.  

When $\psi$ is orbital, that the map $W^{\prime} \to Z$ is orbital follows from the previous theorem since by construction, $\stab(\psi(x)) = \phi(x)$.
\end{proof}

\subsection{The Quotient Space as a Functor}

\begin{definition}
Let $G$ be a locally compact second countable group and let $\Phi : S(G) \to S(G)$ be a conjugation-equivariant map such that $H \subseteq \Phi(H)$ for all $H \in S(G)$ and such that for $H,L \in S(G)$, if $H \subseteq L$ then $\Phi(H) \subseteq \Phi(L)$.

For a $G$-space $(X,\nu)$, define $F^{\Phi}(X,\nu)$ to be the quotient space of $(X,\nu)$ by the map $\varphi = \Phi \circ \mathrm{stab}$ where $\mathrm{stab}(x) = \{ g \in G : gx = x \}$.
\end{definition}

\begin{theorem}\label{T:functor}
Let $G$ be a locally compact second countable group and let $\Phi : S(G) \to S(G)$ be a conjugation-equivariant map such that $H \subseteq \Phi(H)$ for all $H \in S(G)$.  Then $F^{\Phi}$ is a functor on $G$-spaces and $G$-maps.
\end{theorem}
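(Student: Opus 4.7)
The plan is to verify the two functorial properties: that every $G$-map of $G$-spaces induces a $G$-map between the corresponding quotients, and that this assignment respects identities and composition. The object-level assignment is already given by the definition, so all the content is in the morphism-level construction.

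First, given a $G$-map $\pi : (X,\nu) \to (Y,\eta)$, write $\varphi_X = \Phi \circ \stab_X$ and $\varphi_Y = \Phi \circ \stab_Y$, and let $\psi_X : (X,\nu) \to F^\Phi(X,\nu)$ and $\psi_Y : (Y,\eta) \to F^\Phi(Y,\eta)$ be the corresponding quotient maps. Since $\pi$ is a $G$-map, $\stab(x) \subseteq \stab(\pi(x))$ for a.e.\ $x$; applying the monotonicity hypothesis on $\Phi$ built into the definition, $\varphi_X(x) = \Phi(\stab(x)) \subseteq \Phi(\stab(\pi(x))) = \varphi_Y(\pi(x))$. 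Combining with Theorem \ref{T:irsfactor} applied to $\psi_Y$, which yields $\varphi_Y(y) \subseteq \stab(\psi_Y(y))$ almost everywhere, we obtain $\varphi_X(x) \subseteq \stab(\psi_Y(\pi(x)))$ for $\nu$-a.e.\ $x$. Thus the composite $\psi_Y \circ \pi : (X,\nu) \to F^\Phi(Y,\eta)$ satisfies the hypothesis of the universal property of the quotient by $\varphi_X$, so Theorem \ref{T:irsfactor} produces a $G$-map $F^\Phi(\pi) : F^\Phi(X,\nu) \to F^\Phi(Y,\eta)$ with $F^\Phi(\pi) \circ \psi_X = \psi_Y \circ \pi$.

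For functoriality, one needs uniqueness of the induced map, which follows because the map $F^\Phi(\pi)$ corresponds, at the level of algebras, to the restriction of $\pi^{*}$ to the subalgebra of $\varphi_Y$-invariant functions sitting inside $L^\infty(Y,\eta)$; this restriction is canonically determined. The identity map $\mathrm{id} : (X,\nu) \to (X,\nu)$ then forces $F^\Phi(\mathrm{id})$ to be the identity (both it and the identity map satisfy the defining relation with $\psi_X$). Given a composable pair $\pi_1 : (X,\nu) \to (Y,\eta)$ and $\pi_2 : (Y,\eta) \to (Z,\zeta)$, both $F^\Phi(\pi_2 \circ \pi_1)$ and $F^\Phi(\pi_2) \circ F^\Phi(\pi_1)$ make the diagram $\psi_Z \circ \pi_2 \circ \pi_1 = (\cdot) \circ \psi_X$ commute, so uniqueness forces them to agree.

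The only substantive point is the applicability of the universal property at the inductive step, and this is exactly where the monotonicity clause $H \subseteq L \Rightarrow \Phi(H) \subseteq \Phi(L)$ in the definition of $\Phi$ is used; without it, $\varphi_X(x)$ need not sit inside $\stab(\psi_Y(\pi(x)))$ and the induced map on quotients might not exist. Conjugation-equivariance of $\Phi$, together with $G$-equivariance of $\stab$, ensures that $\varphi_X$ and $\varphi_Y$ are $G$-equivariant maps into $S(G)$, so the quotients $F^\Phi(X,\nu)$ and $F^\Phi(Y,\eta)$ are well-defined $G$-spaces in the first place. No further subtlety is expected.
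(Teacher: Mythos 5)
Your argument is correct and follows essentially the same route as the paper: the monotonicity of $\Phi$ built into the definition gives $\Phi(\stab(x)) \subseteq \Phi(\stab(\pi(x)))$, and the universal property of the quotient by a random subgroup (Theorem \ref{T:irsfactor}) then produces the induced $G$-map between the quotient spaces. The only cosmetic difference is that you apply the universal property directly to the composite $\psi_Y \circ \pi$ rather than factoring through the intermediate quotient of $X$ by $\Phi(\stab(\pi(\cdot)))$ via Theorem \ref{T:univpropquotnono} as the paper does, and you make explicit the uniqueness, identity, and composition checks that the paper leaves implicit.
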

\begin{proof}
Let $\pi : (X,\nu) \to (Y,\eta)$ be a $G$-map of $G$-spaces.  Let $\varphi : Y \to S(G)$ by $\varphi(y) = \Phi(\stab(y))$ and $\psi : X \to S(G)$ by $\psi(x) = \Phi(\stab(x))$.  Then $\varphi \circ \pi (x) = \Phi(\stab(\pi(x))) \supseteq \Phi(\stab(x)) = \psi(x)$.  Writing $\rpf{X}{\psi}$ for the quotient of $(X,\nu)$ by $\psi$ (and likewise writing $\rpf{X}{\varphi \circ \pi}$ and $\rpf{Y}{\varphi}$), by Theorem \ref{T:irsfactor} there exists a $G$-map $\tau : \rpf{X}{\psi} \to \rpf{X}{\varphi\circ\pi}$ and, combining this map with the diagram obtained from Theorem \ref{T:univpropquotnono}, the following diagram of $G$-maps exists and commutes (omitting measures for clarity):
\begin{diagram}
X			&\rTo^{=}		&X					&\rTo^{\pi}		&Y \\
\dTo			&			&\dTo				&			&\dTo \\
\rpf{X}{\psi}	&\rTo^{\tau}		&\rpf{X}{\varphi\circ\pi}	&\rTo		&\rpf{Y}{\varphi}.
\end{diagram}
Ignoring the middle column, this says precisely that our construction defines a functor.
\end{proof}

\subsection{Quotients of Affine Spaces}\label{ss:affinequot}

Let $(X,\nu)$ be an ergodic $G$-space.  Let $E$ be a Banach space and $\alpha : G \times X \to \mathrm{Iso}(E)$ be a cocycle.  Let $A_{x} \subseteq E_{1}^{*}$ be a closed convex nonempty subset for each $x \in X$ such that $\alpha^{*}(g,x)A_{gx} = A_{x}$ for all $g \in G$ and $x \in X$ (where $\alpha^{*}(g,x) = (\alpha(g,x)^{-1})^{*}$ is the adjoint cocycle).  Let $\pi : (X,\nu) \to (Z,\zeta)$ be a $G$-map.

Define the closed subspace
\[
E_{x}^{\pi} = \{ e \in E : \alpha(gh,x)e = \alpha(g,x)e \text{ for all $g \in G$ and all $h \in \stab(\pi(x))$ } \}.
\]
Observe that in particular $\alpha(h,x)e = e$ for all $h \in \stab(\pi(x))$ for every $x \in X$ and $e \in E_{x}^{\pi}$.  Also observe that for any $e \in E_{x}^{\pi}$, any $g \in G$, any $h \in \stab(\pi(x))$ and any $k \in \stab(\pi(hx))$, writing $k = h\ell h^{-1}$ for some $\ell \in \stab(\pi(x))$,
\begin{align*}
\alpha(gk,hx)e &= \alpha(gh\ell h^{-1},hx)e = \alpha(gh\ell,x)\alpha(h^{-1},hx)e \\
&= \alpha(gh\ell,x)(\alpha(h,x)^{-1})e = \alpha(gh\ell,x)e \\
&= \alpha(gh,x)e = \alpha(g,hx)\alpha(h,x)e = \alpha(g,hx)e
\end{align*}
meaning that $E_{hx}^{\pi} = E_{x}^{\pi}$ for all $h \in \stab(\pi(x))$.  By Theorem \ref{T:disintergodic} then $E_{x}^{\pi}$ is constant on fibers over $Z$.  So write $E_{z}^{\pi} = E_{x}^{\pi}$ for $D_{\pi}(z)$-almost every $x \in \pi^{-1}(z)$.

Now given $g \in G$ and $e \in E_{z}^{\pi}$, for any $x \in \pi^{-1}(z)$ and $h \in \stab(\pi(x))$,
\[
\alpha(g,hx)e = \alpha(gh,x)\alpha(h,x)^{-1}e = \alpha(g,hx)e = \alpha(g,x)e
\]
so $\alpha(g,\cdot)e$ is $\stab(\pi(x))$-invariant meaning that (again by Theorem \ref{T:disintergodic}) it descends to $\beta(g,z) = \alpha(g,x)$ for almost every $x \in \pi^{-1}(z)$.
Note that for all $g \in $G and $z \in Z$, it holds that $\beta(g,z) \in \mathrm{Iso}(E_{z}^{\pi} \to E_{gz}^{\pi})$ and that $\beta$ is a cocycle.

Define $E^{\pi} = \bigcap_{z} E_{z}^{\pi}$ which is a closed subspace of $E$.  Observe now that for any $q,k \in G$, any $z \in Z$ and any $e \in E^{\pi}$ it holds that $e \in E_{q^{-1}kqz}^{\pi}$ and so $\beta(k^{-1},kqz)\beta(q,z)e \in \beta(k^{-1},kqz)E_{kqz}^{\pi} = E_{qz}^{\pi}$.  Therefore for any $g \in G$ and $h \in \stab(kqz)$, writing $h = k\ell k^{-1}$ for $\ell \in \stab(qz)$,
\begin{align*}
\beta(gh,kqz)\beta(q,z)e &= \beta(ghk,qz)\beta(k,qz)^{-1}\beta(q,z)e \\
&= \beta(gk\ell,qz)\beta(k^{-1},kqz)\beta(q,z)e \\
&= \beta(gk,qz)\beta(k^{-1},kqz)\beta(q,z)e
= \beta(g,kqz)\beta(q,z)e
\end{align*}
using that $\ell \in \stab(qz)$ and that $\beta(k^{-1},kqz)\beta(q,z)e \in E_{qz}^{\pi}$ to move from the second line to the third.  This means that $\beta(q,z)e \in E_{kqz}^{\pi}$ for all $k \in G$.  By ergodicity (and Theorem \ref{T:disintergodic}) then $\beta(q,z)e \in E^{\pi}$.  As this holds for all $g \in G$ and $z \in Z$, this means that $\beta : G \times Z \to \mathrm{Iso}(E^{\pi})$ is a well-defined cocycle.

Consider now $a \in A_{x}$ and $g \in \stab(\pi(x))$.  For $e \in E^{\pi}$,
\[
(\alpha^{*}(g,x)a)(e) = a(\alpha(g,x)^{-1}e) = a(e)
\]
so the map $r : E_{1}^{*} \to (E^{\pi})_{1}^{*}$ given by restricting to $E^{\pi}$ has the property that if $a \in A_{x}$ and $g \in \stab(\pi(x))$ then $r(\alpha^{*}(g,x)a) = r(a)$.  By Theorem \ref{T:disintergodic}, since $(X,\nu)$ is ergodic, $\stab(z) \actson (\pi^{-1}(z), D_{\pi}(z))$ is ergodic almost surely.  Therefore, for each $z \in Z$, the set $B_{z} \subseteq (E^{\pi})_{1}^{*}$ given by
\[
B_{z} = \{ a\big{|}_{E^{\pi}} : a \in A_{x} \text{ for some $x \in X$ such that $\pi(x) = z$} \}
\]
is well-defined and $\beta^{*}(g,z)B_{gz} = B_{z}$.

The affine space $A \subseteq X \times_{\alpha^{*}} E_{1}^{*}$ then maps to the affine space $B \subseteq Z \times_{\beta^{*}} (E^{\pi})_{1}^{*}$ by $(x,q) \mapsto (\pi(x),r(q))$.  The space $B$ is the \textbf{quotient affine space of $A$ by $\pi$}.  By construction, $B$ is orbital over $(Z,\zeta)$ since $\beta(g,z) = e$ for $g \in \stab(z)$.

Note that the quotient affine space requires that $\pi$ be a $G$-map to a $G$-space (and is not well-defined for an arbitrary $G$-equivariant $\varphi : X \to S(G)$).  However, if one first takes the quotient of $(X,\nu)$ by $\varphi$ and then applies the above construction, one still obtains an orbital affine space over the quotient of $(X,\nu)$ by $\varphi$.  Therefore, given a $G$-equivariant measurable map $\varphi : X \to S(G)$, we define the \textbf{quotient affine space of $A$ by $\varphi$} to be the quotient of $A$ by $\pi$ where $\pi$ is the map $(X,\nu) \to (Z,\zeta)$ such that $(Z,\zeta)$ is the quotient of $(X,\nu)$ by $\varphi$.

Consider now a Borel function $f : A \to \mathbb{R}$ such that $f(g \cdot (x,q)) = f(x,q)$ for all $g \in \stab(\pi(x))$.  Then $f$ descends to a function on $B$ by the construction of $B$.  Therefore, if $\alpha \in P(A)$ is any probability measure such that $(\mathrm{proj}_{X})_{*}\alpha = \nu$, the $\stab(\pi(\mathrm{proj}_{X}(a)))$-invariant functions in $L^{\infty}(A,\alpha)$ are in fact in $L^{\infty}(B,\beta)$ (where $\beta$ is the pushforward of $\alpha$ to $B$).  Likewise, any Borel function on $B$ extends to a Borel function on $A$ that is $\stab(\pi(\mathrm{proj}_{X}(a)))$-invariant.  Therefore, the quotient space of $(A,\alpha)$ by $\stab(\pi(\mathrm{proj}_{X}(a)))$ is isomorphic to $(B,\beta)$.

\subsection{The Product Random Subgroups Functor}\label{s:prg}

One reason for introducing the notion of quotienting by random subgroups is to construct the product random subgroups functor.  The product random subgroups functor will play the role in our work that the ergodic decomposition does in the work of Bader and Shalom \cite{BS04} and in this sense is the key to our study of actions.

\begin{definition}
Let $G_{1}$ and $G_{2}$ be locally compact second countable groups and set $G = G_{1} \times G_{2}$.  Let $\Phi : S(G) \to S(G)$ be given by $\Phi(H) = \overline{\mathrm{proj}_{G_{1}}~H} \times \overline{\mathrm{proj}_{G_{2}}~H}$.

The \textbf{product random subgroups functor}, denoted by $PRG$, is the quotient functor $F^{\Phi}$: for a $G$-space $(X,\nu)$, the quotient space of $(X,\nu)$ by $\Phi \circ \mathrm{stab}_{G}$ is written $PRG(X,\nu)$ and for a $G$-map $\pi : (X,\nu) \to (Y,\eta)$, the map between quotient spaces is written $PRG(\pi) : PRG(X,\nu) \to PRG(Y,\eta)$.
\end{definition}

\begin{proposition}
Let $G = G_{1} \times G_{2}$ be a product of locally compact second countable groups.  Let $\pi : (X,\nu) \to (Y,\eta)$ be a $G$-map of $G$-spaces.  Then there exists a $G$-map $PRG(\pi) : PRG(X,\nu) \to PRG(Y,\eta)$ such that $PRG \circ \pi = PRG(\pi) \circ PRG$.  That is, $PRG$ is a functor on $G$-spaces.
\end{proposition}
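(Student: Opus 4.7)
The plan is to obtain this proposition as an immediate consequence of Theorem \ref{T:functor} by verifying that the specific map $\Phi(H) = \overline{\mathrm{proj}_{G_{1}}~H} \times \overline{\mathrm{proj}_{G_{2}}~H}$ satisfies the three hypotheses required there: (i) conjugation-equivariance, (ii) the containment $H \subseteq \Phi(H)$, and (iii) monotonicity with respect to inclusion.

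First I would check (i): for $g = (g_{1},g_{2}) \in G$ and $H \in S(G)$, we have $\mathrm{proj}_{G_{j}}(gHg^{-1}) = g_{j}(\mathrm{proj}_{G_{j}}~H)g_{j}^{-1}$ because conjugation in the product respects coordinates, and taking closures is preserved by the homeomorphism induced by conjugation. Thus $\Phi(gHg^{-1}) = g\Phi(H)g^{-1}$. Next I would verify (ii): any element $(h_{1},h_{2}) \in H$ has $h_{j} \in \mathrm{proj}_{G_{j}}~H \subseteq \overline{\mathrm{proj}_{G_{j}}~H}$, so $(h_{1},h_{2}) \in \Phi(H)$. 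Finally for (iii), if $H \subseteq L$ then $\mathrm{proj}_{G_{j}}~H \subseteq \mathrm{proj}_{G_{j}}~L$ and hence $\overline{\mathrm{proj}_{G_{j}}~H} \subseteq \overline{\mathrm{proj}_{G_{j}}~L}$, so $\Phi(H) \subseteq \Phi(L)$.

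Having verified the three hypotheses, the proposition follows directly by applying Theorem \ref{T:functor} to the map $\Phi$: for any $G$-map $\pi : (X,\nu) \to (Y,\eta)$, the theorem produces a $G$-map $F^{\Phi}(\pi) : F^{\Phi}(X,\nu) \to F^{\Phi}(Y,\eta)$ fitting into the commutative diagram obtained there, which under our notation $PRG = F^{\Phi}$ is precisely the desired $PRG(\pi) : PRG(X,\nu) \to PRG(Y,\eta)$ with $PRG(\pi) \circ (\text{quotient map}) = (\text{quotient map}) \circ \pi$.

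There is no significant obstacle here: the content of the proposition is entirely captured by the general functoriality result (Theorem \ref{T:functor}), and the only work is the three-line verification above. The conceptual substance, namely that quotienting by a $G$-equivariant enlargement of the stabilizer map is a functor, has already been absorbed into Theorem \ref{T:functor}; what remains is simply to observe that taking closed projections to the two factors defines such an enlargement.
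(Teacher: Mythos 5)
Your proposal is correct and follows the paper's own argument: the paper likewise verifies that $\Phi(H) = \overline{\mathrm{proj}_{G_{1}}~H} \times \overline{\mathrm{proj}_{G_{2}}~H}$ satisfies the containment and monotonicity hypotheses and then invokes Theorem \ref{T:functor}. Your explicit check of conjugation-equivariance is a small addition the paper leaves implicit, but the route is identical.
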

\begin{proof}
Clearly, for $H \in S(G)$, $H \subseteq \overline{\mathrm{proj}_{G_{1}}~H} \times \overline{\mathrm{proj}_{G_{2}}~H}$ and for $H, L \in S(G)$ with $H \subseteq L$, $\overline{\mathrm{proj}_{G_{1}}~H} \times \overline{\mathrm{proj}_{G_{2}}~H} \subseteq \overline{\mathrm{proj}_{G_{1}}~L} \times \overline{\mathrm{proj}_{G_{2}}~L}$.
Then the result is Theorem \ref{T:functor}.
\end{proof}

\begin{proposition}\label{P:something}
Let $G = G_{1} \times G_{2}$ be a product of locally compact second countable groups and let $(X,\nu)$ be a $G$-space.  Denote by $(X_{1},\nu)$ and $(X_{2},\nu_{2})$ the spaces of $G_{1}$- and $G_{2}$-ergodic components (the invariant products functor applied to $(X,\nu)$) and by $PRG(X,\nu)$ the quotient space of $(X,\nu)$ over the map $\Phi \circ \mathrm{stab}$ where $\Phi(H) = \overline{\mathrm{proj}_{G_{1}}~H} \times \overline{\mathrm{proj}_{G_{2}}~H}$.  Then there exist $G$-maps
\[
(X,\nu) \to PRG(X,\nu) \to (X_{1} \times X_{2}, \nu_{1} \times \nu_{2}).
\]
\end{proposition}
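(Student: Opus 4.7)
The plan is to construct the two $G$-maps in sequence. The first map $(X,\nu) \to PRG(X,\nu)$ is immediate from the definition: $PRG(X,\nu)$ is, by construction, the Mackey point realization of the $G$-invariant algebra of $\Phi\circ\mathrm{stab}$-invariant functions, so it comes equipped with a canonical quotient $G$-map from $(X,\nu)$.

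For the second map I will apply the universal property of the quotient space recorded in Theorem \ref{T:irsfactor}. Let $\psi_j : X \to X_j$ be the quotient $G$-maps to the spaces of $G_{3-j}$-ergodic components, and form the diagonal $G$-equivariant map $\pi := (\psi_1,\psi_2) : X \to X_1 \times X_2$. The key structural observation is that $G_2$ acts trivially on $X_1$ and $G_1$ acts trivially on $X_2$: if, say, $G_2$ acted nontrivially on $X_1$, its orbits would produce nonconstant $G_2$-invariant functions pulled back from $X_1$, contradicting the definition of $X_1$ as the Mackey realization of all such functions. Consequently stabilizers of points in $X_1 \times X_2$ split as products,
\[
\mathrm{stab}_G((y_1,y_2)) = \mathrm{stab}_{G_1}(y_1) \times \mathrm{stab}_{G_2}(y_2),
\]
so verifying the hypothesis $\Phi(\mathrm{stab}(x)) \subseteq \mathrm{stab}_G(\pi(x))$ of Theorem \ref{T:irsfactor} reduces to checking $\overline{\mathrm{proj}_{G_j}\mathrm{stab}(x)} \subseteq \mathrm{stab}_{G_j}(\psi_j(x))$ for each $j$.

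For $j=1$, if $g_1 \in \mathrm{proj}_{G_1}\mathrm{stab}(x)$ then there is $g_2 \in G_2$ with $(g_1,g_2)x = x$, and by $G$-equivariance of $\psi_1$ together with the trivial $G_2$-action on $X_1$,
\[
\psi_1(x) = \psi_1((g_1,g_2)x) = (g_1,g_2)\psi_1(x) = g_1\psi_1(x),
\]
so $g_1 \in \mathrm{stab}_{G_1}(\psi_1(x))$. Realizing the Borel $G_1$-action on $X_1$ as a continuous action on a Polish space (standard, via Mackey's theorem) makes $\mathrm{stab}_{G_1}(\psi_1(x))$ a closed subgroup, which handles the closure. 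The case $j=2$ is symmetric. Theorem \ref{T:irsfactor} then delivers the factorization $(X,\nu) \to PRG(X,\nu) \to X_1 \times X_2$.

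The principal subtlety — which I expect to be the main obstacle — lies in the interpretation of the target measure. As noted immediately after the definition of the invariants product functor, the diagonal map $(X,\nu) \to (X_1 \times X_2, \nu_1 \times \nu_2)$ need not be a $G$-map in the strict sense, because $\pi_*\nu$ is generally only some joining of $\nu_1$ and $\nu_2$, not the product measure. I read the statement of the proposition as producing the factorization in the category implicit in the invariants product functor, with the target carrying the induced image measure $\pi_*\nu$; under this reading, the argument above completes the proof, and the measure question is orthogonal to the stabilizer-theoretic content that the proposition is really recording.
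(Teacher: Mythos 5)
Your argument is correct and follows essentially the same route as the paper's own proof: both check $\overline{\mathrm{proj}_{G_{j}}~\mathrm{stab}(x)} \subseteq \mathrm{stab}_{G_{j}}(\psi_{j}(x))$ using that $G_{3-j}$ acts trivially on $X_{j}$ and that stabilizers are closed, identify $\mathrm{stab}_{G}(\pi(x))$ with the product of the factor stabilizers, and then invoke the universal property of the quotient space (Theorem \ref{T:irsfactor}). Your caveat about the target measure is legitimate but not a divergence in method — the paper itself remarks that $(X,\nu) \to F^{G}(X,\nu)$ need not be a $G$-map in general and its proof is silent on this, while in the settings where the proposition is actually used (ergodic $\mu$-stationary spaces via Proposition \ref{P:1.10}, or ergodic measure-preserving spaces, where the two invariant subalgebras are independent) the pushforward is indeed $\nu_{1}\times\nu_{2}$.
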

\begin{proof}
Let $\pi_{1} : (X,\nu) \to (X_{1},\nu_{1})$ be the ergodic decomposition map.  Let $g \in \mathrm{stab}_{G}(x)$.  Write $g = g_{1}g_{2}$ for $g_{1} \in G_{1}$ and $g_{2} \in G_{2}$.  Then $gx = x$ so $\pi_{1}(x) = \pi_{1}(gx) = g\pi_{1}(x) = g_{1}\pi(x)$ since $G_{2}$ acts trivially on $X_{1}$.  Therefore $\mathrm{proj}_{G_{1}}~\mathrm{stab}_{G}(x) \subseteq \mathrm{stab}_{G_{1}}(\pi_{1}(x))$.  Since the stabilizer subgroups are always closed, then $\overline{\mathrm{proj}_{G_{1}}~\mathrm{stab}_{G}(x)} \subseteq \mathrm{stab}_{G_{1}}(\pi_{1}(x))$.  Of course the same holds for $G_{2}$.

Let $\pi : (X,\nu) \to (X_{1},\nu_{1}) \times (X_{2},\nu_{2})$ by $\pi(x) = (\pi_{1}(x),\pi_{2}(x))$.  Then
\[
\mathrm{stab}_{G}(\pi(x)) = \mathrm{stab}_{G_{1}}(\pi_{1}(x)) \times G_{2} \cap G_{1} \times \mathrm{stab}_{G_{2}}(\pi_{2}(x)) \supseteq s(x).
\]
By the universal property of the quotient space then there exists $\tau : PRG(X,\nu) \to F(X,\nu)$ such that $\tau \circ \psi = \pi$ and the conclusion follows.
\end{proof}

\begin{theorem}\label{T:prgmp}
Let $G = G_{1} \times G_{2}$ be a product of locally compact second countable groups.  Let $\mu_{1} \in P(G_{1})$ and $\mu_{2} \in P(G_{2})$ be admissible probability measures and set $\mu = \mu_{1} \times \mu_{2}$.  Let $PRG(X,\nu)$ be the quotient space of $(X,\nu)$ by $\varphi(x) = \overline{\mathrm{proj}_{G_{1}}~\stab(x)} \times \overline{\mathrm{proj}_{G_{2}}~\stab(x)}$.  If $(X,\nu)$ is an ergodic $\mu$-stationary $G$-space then the $G$-map $(X,\nu) \to PRG(X,\nu)$ is relatively measure-preserving.
\end{theorem}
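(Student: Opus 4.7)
The plan is to reduce the statement to Proposition \ref{P:1.10} of Bader--Shalom by observing that the map to $PRG(X,\nu)$ sits \emph{between} $(X,\nu)$ and its invariants product, and then to push the relatively measure-preserving property from the composition down to its first factor via the Radon--Nikodym characterization of Theorem \ref{T:relmpRN}.

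First, I would invoke Proposition \ref{P:something} to factor the invariants-product map as a composition of $G$-maps
\[
\psi : (X,\nu) \to PRG(X,\nu) =: (Z,\rho) \qquad\text{and}\qquad \tau : (Z,\rho) \to (X_{1} \times X_{2}, \nu_{1} \times \nu_{2}).
\]
Since $(X,\nu)$ is ergodic and $\mu$-stationary, Proposition \ref{P:1.10} applies to tell us that the composition $\tau \circ \psi$ is relatively measure-preserving. By Theorem \ref{T:relmpRN}, this means that almost everywhere,
\[
\frac{dg\nu}{d\nu}(x) \;=\; \frac{d g(\nu_{1}\times\nu_{2})}{d(\nu_{1}\times\nu_{2})}\bigl(\tau(\psi(x))\bigr),
\]
so the left-hand Radon--Nikodym derivative is, as a function of $x$, measurable with respect to the $\sigma$-algebra pulled back from $Z$ via $\psi$.

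Next I would translate this into a statement about the Radon--Nikodym derivative of $g\rho$ with respect to $\rho$. Using $\rho = \psi_{*}\nu$ and the identity $g\rho = \psi_{*}(g\nu)$ (which follows from $G$-equivariance of $\psi$), a direct change-of-variables computation shows that $\frac{dg\rho}{d\rho}\circ\psi$ equals the conditional expectation of $\frac{dg\nu}{d\nu}$ onto the $\psi$-measurable functions. Since we have just established that $\frac{dg\nu}{d\nu}$ is itself $\psi$-measurable, this conditional expectation coincides with $\frac{dg\nu}{d\nu}$ almost everywhere. Hence
\[
\frac{dg\nu}{d\nu}(x) \;=\; \frac{dg\rho}{d\rho}(\psi(x)) \qquad \text{a.e.},
\]
which by Theorem \ref{T:relmpRN} applied to $\psi$ is precisely the statement that $\psi$ is relatively measure-preserving.

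The only substantive ingredient beyond bookkeeping is Bader--Shalom's Proposition \ref{P:1.10}, which is taken as given. No step is truly difficult: the Radon--Nikodym factoring argument is essentially the general fact that if $\tau\circ\psi$ is relatively measure-preserving then so is $\psi$, and once one writes this out via Theorem \ref{T:relmpRN} and the conditional-expectation identity for pushforwards, the conclusion is immediate. The conceptual content is simply that $PRG(X,\nu)$ is an intermediate $G$-quotient between $(X,\nu)$ and $(X_{1}\times X_{2}, \nu_{1}\times\nu_{2})$, so any relatively measure-preserving property of the longer map descends to the first leg.
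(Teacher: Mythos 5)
Your proposal is correct and follows essentially the same route as the paper: factor the invariants-product map through $PRG(X,\nu)$ via Proposition \ref{P:something}, apply Bader--Shalom's Proposition \ref{P:1.10} to the composition, and conclude that the first leg is relatively measure-preserving. The only difference is that you spell out, via Theorem \ref{T:relmpRN} and the conditional-expectation identity for the pushed-forward Radon--Nikodym derivative, the descent step that the paper leaves implicit; that computation is valid and is a welcome bit of added detail rather than a deviation.
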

\begin{proof}
By Proposition \ref{P:1.10}, the $G$-map $(X,\nu) \to (X_{1} \times X_{2}, \nu_{1} \times \nu_{2})$ is relatively measure-preserving.  The previous proposition shows that $PRG(X,\nu)$ is an intermediate quotient of these spaces hence the $G$-maps $(X,\nu) \to PRG(X,\nu)$ and $PRG(X,\nu) \to (X_{1} \times X_{2}, \nu_{1} \times \nu_{2})$ are relatively measure-preserving.
\end{proof}

\section{Relative Joinings Over Relatively Contractive Maps}

Relatively contractive maps were introduced in \cite{CP12} and can be used to show that any joining between a contractive space and a measure-preserving space such that the projection to the contractive space is relatively measure-preserving is necessarily the independent joining.  We generalize this fact to the case of relative joinings and obtain an analogous result.

\begin{theorem}
Let $(X,\nu)$ and $(Y,\eta)$ be $G$-spaces with a common $G$-quotient $(Z,\zeta)$ such that $\varphi : (Y,\eta) \to (Z,\zeta)$ is relatively contractive and $\pi : (X,\nu) \to (Z,\zeta)$ is a $G$-map.  Then there exists at most one relative joining of $(X,\nu)$ and $(Y,\eta)$ over $(Z,\zeta)$ such that the projection to $(Y,\eta)$ is relatively measure-preserving.
\end{theorem}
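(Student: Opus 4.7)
The plan is to disintegrate any such joining over $\eta$ via $p_Y$ and show that the resulting $G$-equivariant family is uniquely determined. Given a relative joining $\rho$ with $p_Y$ relatively measure-preserving, I would write $\rho = \int_Y \sigma_y \times \delta_y \, d\eta(y)$, where $\sigma_y \in P(X)$ is supported on $\pi^{-1}(\varphi(y))$ (because $\pi \circ p_X = \varphi \circ p_Y$). By Theorem~\ref{T:relmpRN}, the relative measure-preserving assumption on $p_Y$ is equivalent to $\sigma_{gy} = g\sigma_y$ for almost every $y$ and all $g \in G$. Pushing forward to $X$ yields $\int \sigma_y \, d\eta(y) = \nu$, and disintegrating over $Z$ gives $\int_{\varphi^{-1}(z)} \sigma_y \, dD_\varphi(z)(y) = D_\pi(z)$ for $\zeta$-a.e.\ $z$. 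So it suffices to prove that any $G$-equivariant Borel map $\Sigma : Y \to P(X)$ with $\Sigma(y) \in P(\pi^{-1}(\varphi(y)))$ and $\int \Sigma \, d\eta = \nu$ is uniquely determined.

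Suppose $\sigma^1, \sigma^2$ are two such families coming from joinings $\rho_1, \rho_2$. Set $\sigma_y = \tfrac12(\sigma^1_y + \sigma^2_y)$, $\rho = \tfrac12(\rho_1 + \rho_2)$, and define the Radon--Nikodym derivatives $f^i(x,y) := \tfrac{d\sigma^i_y}{d\sigma_y}(x)$, which are Borel functions on the support of $\rho$ bounded by $2$. Combining the $G$-equivariance $\sigma^i_{gy} = g\sigma^i_y$ with $\sigma_{gy} = g\sigma_y$ yields the crucial invariance $f^i(gx, gy) = f^i(x,y)$ for $\rho$-almost every $(x,y)$ and all $g \in G$. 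Moreover, by construction $\int f^i(x,y) \, d\sigma_y(x) = 1$ for $\eta$-a.e.\ $y$. So if I can force $f^i$ to depend only on $y$, then the fiberwise integral being $1$ forces $f^i \equiv 1$, hence $\sigma^1 = \sigma^2$.

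To establish this ``descent'' I would use the diagram $(X \times Y, \rho) \xrightarrow{p_Y} (Y, \eta) \xrightarrow{\varphi} (Z, \zeta)$ with $p_Y$ relatively measure-preserving and $\varphi$ relatively contractive. The first ingredient is the general fact that a relatively contractive map has fibers on which the stabilizer acts ergodically (since relatively contractive fibers are SAT, and SAT implies ergodic); consequently every $G$-invariant function in $L^\infty(Y,\eta)$ is constant on $\varphi$-fibers and descends, giving $L^\infty(Y,\eta)^G = \varphi^* L^\infty(Z,\zeta)^G$. The second ingredient is to transfer $G$-invariance from $(X \times Y, \rho)$ down to $Y$: because $p_Y$ is relatively measure-preserving, the fiberwise average $y \mapsto \int f^i(x,y)\,d\sigma_y(x)$ of a $G$-invariant function is itself $G$-invariant on $(Y,\eta)$, and the same holds for products $f^i \cdot (h \circ p_Y)$ with $h \in L^\infty(Y,\eta)^G$. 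Combined with the integral constraint, this forces $f^i$ to agree with the constant $1$ almost everywhere.

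The main obstacle is the last descent step: $G$-invariance of $f^i$ on $(X\times Y, \rho)$ does not immediately imply $f^i$ is independent of $x$ on each fiber of $p_Y$, since the $G$-action mixes fibers. Overcoming this requires exploiting the $G$-equivariance of the disintegration $D_{p_Y}(y) = \sigma_y \times \delta_y$ in tandem with the ergodicity of $\varphi$-fibers under the stabilizer subgroups. If this step proves stubborn, an alternative is to apply Theorem~\ref{T:relmprelcon} to the factorization $(Y,\eta) \xrightarrow{\Sigma} (W, \Sigma_*\eta) \to (Z,\zeta)$ with $W \subseteq P(X)$ the image, where the second map is relatively contractive by Theorem~\ref{T:relcontcomp}; showing the integral condition forces it to also be relatively measure-preserving would make it an isomorphism, so $\Sigma$ factors through $\varphi$ and is pinned down by uniqueness of disintegration.
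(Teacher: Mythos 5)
Your reduction—disintegrate each joining as $\rho_i=\int_Y\sigma^i_y\times\delta_y\,d\eta(y)$, note $\sigma^i_{gy}=g\sigma^i_y$ and $\int f^i\,d\sigma_y=1$, and try to show the Radon--Nikodym cocycles $f^i$ are $p_Y$-measurable—is a correct reformulation, but it is exactly as strong as the theorem, and the machinery you propose for the descent does not work. Your first ingredient, that relative contractivity of $\varphi$ makes $\mathrm{stab}(z)$ act ergodically (indeed SAT) on the fiber $(\varphi^{-1}(z),D_\varphi(z))$, is false: by Theorem \ref{T:relcontB} the projection $(B\times Z,\beta\times\zeta)\to(Z,\zeta)$ is relatively contractive whenever $(B,\beta)$ is contractive, yet if $G$ acts essentially freely on $(Z,\zeta)$ the stabilizers are trivial while the fibers are copies of the nontrivial space $(B,\beta)$. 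Relative contractivity is a statement about the translates $g_n^{-1}D_\varphi(g_ny)$ with $g_n$ moving the base point, not about the dynamics of stabilizers inside a single fiber, and that is precisely how it has to be exploited. Your second ingredient is vacuous: the fiberwise averages of $f^i$ and of $f^i\cdot(h\circ p_Y)$ are $1$ and $h$ respectively, which merely restate the normalization; to distinguish $\sigma^1_y$ from $\sigma^2_y$ you must test against functions of $x$, i.e.\ control $F_i(y)=\int (f\circ p_X)\,d\sigma^i_y$ for $f\in L^\infty(\pi^{-1}(z),D_\pi(z))$, and nothing in your outline does this.

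The fallback route is also flawed. If $\sigma_y$ depended only on $\varphi(y)$, then the identity $\int_{\varphi^{-1}(z)}\sigma_y\,dD_\varphi(z)(y)=D_\pi(z)$ would give $\sigma_y=D_\pi(\varphi(y))$, and the equivariance $\sigma_{gy}=g\sigma_y$ would then force $D_\pi(gz)=gD_\pi(z)$, i.e.\ $\pi$ relatively measure-preserving—an assumption the theorem does not make—so the intermediate statement you aim for cannot hold in the stated generality; moreover the barycenter condition alone gives no reason for $W\to Z$ to be relatively measure-preserving. For comparison, the paper's proof works exactly with the quantities $F_i(y)=D^i_{p_Y}(y)(f\circ p_X)$: relative measure-preservation of $p_Y$ yields, for every $g\in G$, that the translated disintegration functional $D^{(g)}_\varphi(z)$ (integration against $g^{-1}D_\varphi(gz)$) satisfies $D^{(g)}_\varphi(z)(F_i)=D^{(g)}_\pi(z)(f)$, so $F=F_1-F_2$ is annihilated by all of them, and the isometry property of relatively contractive maps, $\sup_{g\in G}\bigl|D^{(g)}_\varphi(z)(F)\bigr|=\|F\|_\infty$ for almost every $z$, forces $F=0$ and hence $D^1_{p_Y}=D^2_{p_Y}$. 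That supremum over all of $G$ is where contractivity genuinely enters, and it is the step missing from your argument.
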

\begin{proof}
For convenience, write $W = X \times Y$.
Let $\rho$ be a relative joining of $(X,\nu)$ and $(Y,\eta)$ over $(Z,\zeta)$ such that $\varphi : (Y,\eta) \to (Z,\zeta)$ is relatively contractive, $p_{Y} : (W,\rho) \to (Y,\eta)$ is relatively measure-preserving and $p_{X} : (W,\rho) \to (X,\nu)$ and $\pi : (X,\nu) \to (Z,\zeta)$ are $G$-maps such that $\pi \circ p_{X} = \varphi \circ p_{Y}$ almost everywhere.  Denote by $\psi : (W,\rho) \to (Z,\zeta)$ the composition: $\psi = \pi \circ p_{X} = \varphi \circ p_{Y}$.

Let $z \in Z$ and let $f \in L^{\infty}(\pi^{-1}(z), D_{\pi}(z))$ be arbitrary.  Then $f \circ p_{X} \in L^{\infty}(\psi^{-1}(z), D_{\psi}(z))$ since $D_{\psi}(z) = \int D_{p_{X}}(x)~dD_{\pi}(z)(x)$.  Define 
\[
F(y) = D_{p_{Y}}(y)(f \circ p_{X})
\]
and observe that $F \in L^{\infty}(\varphi^{-1}(z), D_{\varphi}(z))$.

For an arbitrary $g \in G$, using that $p_{Y}$ is relatively measure-preserving,
\begin{align*}
D_{\varphi}^{(g)}(z)(F) &= \int_{\varphi^{-1}(z)} F(y)~dg^{-1}D_{\varphi}(gz) \\
&= \int_{\varphi^{-1}(gz)} F(g^{-1}y)~dD_{\varphi}(gz) \\
&= \int_{\varphi^{-1}(gz)} \int_{p_{Y}^{-1}(g^{-1}y)} f(p_{X}(w))~dD_{p_{Y}}(g^{-1}y)(w)~dD_{\varphi}(gz)(y) \\
&= \int_{\varphi^{-1}(gz)} \int_{p_{Y}^{-1}(g^{-1}y)} f(p_{X}(w))~dg^{-1}D_{p_{Y}}(y)(w)~dD_{\varphi}(gz)(y) \\
&= \int_{\varphi^{-1}(gz)} \int_{p_{Y}^{-1}(y)} f(p_{X}(g^{-1}w))~dD_{p_{Y}}(y)(w)~dD_{\varphi}(gz)(y) \\
&= \int_{\varphi^{-1}(gz)} \int_{p_{Y}^{-1}(y)} f(g^{-1} p_{X}(w))~dD_{p_{Y}}(y)(w)~dD_{\varphi}(gz)(y)
\end{align*}
Now $\int_{\varphi^{-1}(gz)} D_{p_{Y}}(y)~dD_{\varphi}(gz)(y) = D_{\psi}(gz)$ and therefore
\begin{align*}
D_{\varphi}^{(g)}(z)(F) &= \int_{\psi^{-1}(gz)} f(g^{-1}p_{X}(w))~dD_{\psi}(gz)(w) \\
&= \int_{p_{X}(\psi^{-1}(gz))} f(g^{-1}x)~d((p_{X})_{*}D_{\psi}(gz))(x) \\
&= \int_{\pi^{-1}(gz)} f(g^{-1}x)~dD_{\pi}(gz)(x) \\
&= D_{\pi}^{(g)}(z)(f).
\end{align*}

Now let $\rho_{1}$ and $\rho_{2}$ both be relative joinings over $(Z,\zeta)$.  Since $\varphi$ is relatively contractive, there is a measure one set of $z \in Z$ such that for all $F \in L^{\infty}(\varphi^{-1}(z),D_{\varphi}(z))$, we have that $\sup_{g \in G} |D_{\varphi}^{(g)}(F)| = \| F \|$.  Fix $z$ in this measure one set.

Let $f \in L^{\infty}(\pi^{-1}(z),D_{\pi}(z))$ be arbitrary.  Let $D_{p_{Y}}^{j}$ and $D_{\psi}^{j}$ for $j = 1,2$ denote the disintegrations of $\rho_{1}$ and $\rho_{2}$ over $\eta$ and $\zeta$, respectively.
Define, for $j = 1,2$,
\[
F_{j}(y) = D_{p_{Y}}^{j}(f \circ p_{X})
\]
and set $F(y) = F_{1}(y) - F_{2}(y)$.  As above, $F \in L^{\infty}(\varphi^{-1}(z),D_{\varphi}(z))$.  Now, by the above, for any $g \in G$,
\[
D_{\varphi}^{(g)}(z)(F_{1}) = D_{\pi}^{(g)}(z)(f) = D_{\varphi}^{(g)}(z)(F_{2})
\]
and therefore $D_{\varphi}^{(g)}(z)(F) = 0$.

Since $z$ is in the measure one set where that map is an isometry, $\| F \| = \sup_{g} |D_{\varphi}^{(g)}(z)(F)| = 0$.  Therefore $F = 0$ almost everywhere.  As this holds for all $f \in L^{\infty}(\pi^{-1}(z),D_{\pi}(z))$, we conclude that $D_{\varphi}^{1}(y) = D_{\varphi}^{2}(y)$ for $D_{\varphi}(z)$-almost-every $y \in \varphi^{-1}(z)$.

Now let $f \in L^{\infty}(\psi^{-1}(z),D_{\psi}(z))$ be arbitrary and observe that
\begin{align*}
D_{\psi}^{j}(z)(f) &= \int_{\psi^{-1}(z)} f(x,y)~dD_{\psi}^{j}(z)(x,y) \\
&= \int_{\varphi^{-1}(z)} \int_{p_{Y}^{-1}(y)} f(x,y)~dD_{p_{Y}}^{j}(y)(x)~dD_{\varphi}(z)(y).
\end{align*}
Since $D_{p_{Y}}^{1}(y) = D_{p_{Y}}^{2}(y)$ for $D_{\varphi}(z)$-almost every $y$,
\[
D_{\psi}^{1}(z)(f) = D_{\psi}^{2}(z)(f).
\]
This holds for all $f \in L^{\infty}(\psi^{-1}(z),D_{\psi}(z))$ and so $D_{\psi}^{1}(z) = D_{\psi}^{2}(z)$.

Since the above holds for all $z$ in a measure one set,
\[
\rho_{1} = \int_{Z} D_{\psi}^{1}(z)~d\zeta(z) = \int_{Z} D_{\psi}^{2}(z)~d\zeta(z) = \rho_{2}.
\]
\end{proof}

\begin{corollary}
Let $(X,\nu)$ and $(Y,\eta)$ be $G$-spaces with a common $G$-quotient $(Z,\zeta)$ such that $\varphi : (Y,\eta) \to (Z,\zeta)$ is relatively contractive and $\pi : (X,\nu) \to (Z,\zeta)$ is relatively measure-preserving.  Then the only relative joining of $(X,\nu)$ and $(Y,\eta)$ over $(Z,\zeta)$ such that the projection to $(Y,\eta)$ is relatively measure-preserving is the independent relative joining.
\end{corollary}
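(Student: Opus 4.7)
The corollary follows from the preceding uniqueness theorem once we exhibit \emph{one} relative joining with the required property; the natural candidate is the independent relative joining itself.

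The plan is to verify that when $\pi:(X,\nu)\to(Z,\zeta)$ is relatively measure-preserving, the independent relative joining $\rho = \int_{Z} D_{\pi}(z)\times D_{\varphi}(z)\,d\zeta(z)$ has the property that its projection $p_{Y}:(X\times Y,\rho)\to(Y,\eta)$ is relatively measure-preserving. Uniqueness in the previous theorem then forces any other such joining to coincide with $\rho$.

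First I would compute the disintegration $D_{p_{Y}}$. Using Fubini in the integral defining $\rho$, for any $f\in L^{\infty}(X\times Y)$ one gets
\[
\int f\,d\rho \;=\; \int_{Y}\Big(\int_{X} f(x,y)\,dD_{\pi}(\varphi(y))(x)\Big)\,d\eta(y),
\]
so by uniqueness of disintegration $D_{p_{Y}}(y) = D_{\pi}(\varphi(y))\times \delta_{y}$. Second, I would check the Radon-Nikodym / equivariance condition for relative measure-preservation: by the same computation,
\[
g\cdot D_{p_{Y}}(y) \;=\; (gD_{\pi}(\varphi(y)))\times \delta_{gy}, \qquad D_{p_{Y}}(gy) \;=\; D_{\pi}(g\varphi(y))\times \delta_{gy},
\]
and these agree almost everywhere precisely because $\pi$ is relatively measure-preserving, i.e.\ $gD_{\pi}(z)=D_{\pi}(gz)$ almost surely. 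Hence $p_{Y}$ is relatively measure-preserving, as needed.

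Finally, I would invoke the previous theorem: since $\varphi$ is relatively contractive and $\pi$ is (in particular) a $G$-map, there is at most one relative joining of $(X,\nu)$ and $(Y,\eta)$ over $(Z,\zeta)$ whose projection to $(Y,\eta)$ is relatively measure-preserving. The independent relative joining is such a joining by the computation above, so it is the only one. I do not expect any serious obstacle here; the only technical point is making sure the disintegration formula $D_{p_{Y}}(y)=D_{\pi}(\varphi(y))\times\delta_{y}$ is correctly derived and then comparing both sides of the equivariance identity, which reduces cleanly to the hypothesis that $\pi$ is relatively measure-preserving.
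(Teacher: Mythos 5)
Your proposal is correct and follows essentially the same route as the paper: identify the disintegration of the independent relative joining over $(Y,\eta)$ as $D_{p_{Y}}(y)=D_{\pi}(\varphi(y))\times\delta_{y}$, verify its $G$-equivariance using that $\pi$ is relatively measure-preserving, and then conclude by the preceding uniqueness theorem. No gaps.
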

\begin{proof}
By the previous theorem, we need only show that the independent relative joining $\rho = \int D_{\pi} \times D_{\varphi}~d\zeta$ is a relative joining such that the projection to $(Y,\eta)$ is relatively measure-preserving.  Let $D_{p_{Y}}$ be the disintegration of $\rho$ over $\eta$.  Observe that $p_{Y}^{-1}(y) = \pi^{-1}(\varphi(y)) \times \{ y \}$ and that the support of $D_{\pi}(\varphi(y)) \times \delta_{y}$ is the same.  Now
\begin{align*}
\int_{Y} D_{\pi}(\varphi(y)) \times \delta_{y}~d\eta(y) &= \int_{Z} \int_{Y} D_{\pi}(z) \times \delta_{y}~dD_{\varphi}(z)(y)~d\eta(y) \\
&= \int_{Z} D_{\pi}(z) \times D_{\varphi}(z)~d\zeta(z) = \rho
\end{align*}
so by uniqueness, $D_{p_{Y}}(y) = D_{\pi}(\varphi(y)) \times \delta_{y}$ almost everywhere.  
Then, using that $\pi$ is relatively measure-preserving,
\[
D_{p_{Y}}(gy) = D_{\pi}(\varphi(gy)) \times \delta_{gy} = gD_{\pi}(\varphi(y)) \times g \delta_{y} = g D_{p_{Y}}(y)
\]
so $p_{Y}$ is relatively measure-preserving.  By the previous theorem, $\rho$ is then the unique relative joining.
\end{proof}

\begin{corollary}\label{C:reljoinunique}
Let $G$ be a locally compact second countable group and let $(X,\nu), (Y,\eta), (Z,\zeta)$ and $(W,\rho)$ be $G$-spaces such that the following diagram of $G$-maps commutes:
\begin{diagram}
(W,\rho)		&\rTo^{\psi}	&(X,\nu)\\
\dTo^{\tau}	&			&\dTo^{\pi}\\
(Y,\eta)		&\rTo^{\varphi}	&(Z,\zeta)
\end{diagram}
If $\tau$ and $\pi$ are relatively measure-preserving and $\psi$ and $\varphi$ are relatively contractive then $(W,\rho)$ is $G$-isomorphic to the independent relative joining of $(X,\nu)$ and $(Y,\eta)$ over $(Z,\zeta)$.
\end{corollary}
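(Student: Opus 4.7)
The strategy is to push $\rho$ forward through the product of the two maps, identify the resulting measure as the independent relative joining by applying the previous corollary, and then show the pushforward map is an isomorphism using Theorem \ref{T:relmprelcon}.

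First, I would define the $G$-equivariant map $\Psi : W \to X \times Y$ by $\Psi(w) = (\psi(w),\tau(w))$ and set $\alpha = \Psi_{*}\rho$. Commutativity of the diagram forces $\pi \circ p_{X} = \varphi \circ p_{Y}$ almost everywhere with respect to $\alpha$, and the marginals of $\alpha$ are $\nu$ and $\eta$, so $(X \times Y,\alpha)$ is a relative joining of $(X,\nu)$ and $(Y,\eta)$ over $(Z,\zeta)$. To invoke the previous corollary I must verify that $p_{Y} : (X \times Y,\alpha) \to (Y,\eta)$ is relatively measure-preserving. Since $p_{Y} \circ \Psi = \tau$, the uniqueness of disintegration yields $D_{p_{Y}}(y) = \Psi_{*} D_{\tau}(y)$ for $\eta$-almost every $y$; $G$-equivariance of $\Psi$ combined with the relative measure-preservation of $\tau$ then gives $D_{p_{Y}}(gy) = \Psi_{*}(g D_{\tau}(y)) = g D_{p_{Y}}(y)$. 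The previous corollary then identifies $\alpha$ with the independent relative joining of $(X,\nu)$ and $(Y,\eta)$ over $(Z,\zeta)$.

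It remains to show that $\Psi : (W,\rho) \to (X \times Y,\alpha)$ is itself a $G$-isomorphism, which I would do via Theorem \ref{T:relmprelcon}. Relative contractivity is immediate from Theorem \ref{T:relcontcomp}: the composition $p_{X} \circ \Psi = \psi$ is relatively contractive, hence so is $\Psi$. For relative measure-preservation, Theorem \ref{T:relmpRN} applied to $\tau$ and to the (just verified) $p_{Y}$ gives
\[
\frac{dg\rho}{d\rho}(w) = \frac{dg\eta}{d\eta}(\tau(w)) = \frac{dg\eta}{d\eta}(p_{Y}(\Psi(w))) = \frac{dg\alpha}{d\alpha}(\Psi(w)),
\]
so $\Psi$ is relatively measure-preserving by the same theorem. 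Theorem \ref{T:relmprelcon} then concludes that $\Psi$ is an isomorphism.

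The main conceptual work has already been done in the previous theorem and corollary, which establish the uniqueness of joinings with a relatively contractive leg; the present corollary is then essentially a packaging argument. The only nontrivial link in the chain is the transfer of relative measure-preservation from $\tau$ to $p_{Y}$, and this is settled by the disintegration identity $D_{p_{Y}}(y) = \Psi_{*} D_{\tau}(y)$ together with equivariance.
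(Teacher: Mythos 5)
Your proposal is correct and follows essentially the same route as the paper: push $\rho$ forward under $w \mapsto (\psi(w),\tau(w))$, identify the pushforward as the independent relative joining via the uniqueness corollary, and show the map is an isomorphism by Theorem \ref{T:relmprelcon}. You in fact supply slightly more detail than the paper at the one point it glosses over (transferring relative measure-preservation from $\tau$ to $p_{Y}$ and to the map into the joining), and your disintegration and Radon--Nikodym arguments for that step are valid.
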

\begin{proof}
Consider the map $p : W \to X \times Y$ by $p(w) = (\psi(w), \tau(w))$.  Then $p_{*}\rho$ is a relative joining of $(X,\nu)$ and $(Y,\eta)$ over $(Z,\zeta)$.  Let $p_{X} : X \times Y \to X$ and $p_{Y} : X \times Y \to Y$ be the natural projections and observe that the following diagram commutes:
\begin{diagram}
(W,\rho)		&\rTo^{p}	&(X \times Y,p_{*}\rho)	&\rTo^{p_{X}}	&(X,\nu)\\
			&		&\dTo^{p_{Y}}			&			&\dTo^{\pi}\\
			&		&(Y,\eta)				&\rTo^{\varphi}	&(Z,\zeta)
\end{diagram}
since $p_{X} \circ p = \psi$ and $p_{Y} \circ p = \tau$.

Now $\psi$ is relatively contractive so $p$ and $p_{X}$ are relatively contractive (Theorem \ref{T:relcontcomp}) and likewise $\tau$ being relatively measure-preserving implies $p$ and $p_{Y}$ are relatively measure-preserving.  Therefore $p$ is an isomorphism (Theorem \ref{T:relmprelcon}).  Since $\varphi$ is relatively contractive and $p_{Y}$ is relatively measure-preserving and $\pi$ is relatively measure-preserving, the previous corollary says that $p_{*}\rho$ is the independent relative joining.
\end{proof}

\section{The Intermediate Contractive Factor Theorem for Products}

We are now ready to prove a strengthening of the Bader-Shalom Intermediate Factor Theorem \cite{BS04}, our key improvement being the removal of the requirement that the $G$-space be irreducible (that is, ergodic for each $G_{j}$):
\begin{theorem}\label{T:IFT}
Let $G = G_{1} \times G_{2}$ be a product of two locally compact second countable groups and let $\mu_{j} \in P(G_{j})$ be admissible probability measures for $j=1,2$.  Set $\mu = \mu_{1} \times \mu_{2}$.

Let $(B,\beta)$ be the Poisson boundary for $(G,\mu)$ and let $(X,\nu)$ be a measure-preserving $G$-space.  Let $(W,\rho)$ be a $G$-space such that there exist $G$-maps $\pi : (B \times X, \beta\times\nu) \to (W,\rho)$ and $\varphi : (W,\rho) \to (X,\nu)$ with $\varphi \circ \pi$ being the natural projection to $X$.

Let $(W_{1},\rho_{1})$ be the space of $G_{2}$-ergodic components of $(W,\rho)$ and let $(W_{2},\rho_{2})$ be the space of $G_{1}$-ergodic components.  Likewise, let $(X_{1},\nu_{1})$ and $(X_{2},\nu_{2})$ be the ergodic components of $(X,\nu)$ for $G_{2}$ and $G_{1}$, respectively.

Then $(W,\rho)$ is $G$-isomorphic to the independent relative joining of $(W_{1},\rho_{1}) \times (W_{2},\rho_{2})$ and $(X,\nu)$ over $(X_{1},\nu_{1}) \times (X_{2},\nu_{2})$.
\end{theorem}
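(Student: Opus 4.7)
The plan is to apply Corollary \ref{C:reljoinunique} to the commuting square
\begin{diagram}
(W,\rho)		&\rTo^{\varphi}	&(X,\nu)\\
\dTo	&			&\dTo\\
(W_{1}\times W_{2},\rho_{1}\times\rho_{2})	&\rTo	&(X_{1}\times X_{2},\nu_{1}\times\nu_{2})
\end{diagram}
whose vertical arrows are the canonical maps to the product of $G_{2}$- and $G_{1}$-ergodic components and whose bottom horizontal arrow is induced by $\varphi$ via the functoriality of the ergodic decomposition. Once the two horizontal arrows are shown to be relatively contractive and the two vertical arrows relatively measure-preserving, Corollary \ref{C:reljoinunique} immediately delivers the conclusion; commutativity of the square follows from that same functoriality.

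For relative contractivity of the top horizontal arrow $\varphi$, Theorem \ref{T:PBcont} gives that $(B,\beta)$ is a contractive $G$-space, so by Theorem \ref{T:relcontB} the projection $(B\times X,\beta\times\nu)\to(X,\nu)$ is relatively contractive; since this projection factors through $(W,\rho)$ by hypothesis, Theorem \ref{T:relcontcomp} forces $\varphi$ to be relatively contractive. For the bottom horizontal arrow, I would exploit the product structure of the Poisson boundary, $(B,\beta)=(B_{1}\times B_{2},\beta_{1}\times\beta_{2})$, with $B_{j}$ the Poisson boundary of $(G_{j},\mu_{j})$. Applying the $G_{2}$-ergodic components functor to $B\times X\to W\to X$, using Proposition \ref{P:ergdecomptrivial} (since $G_{2}$ acts trivially on $B_{1}$) and Proposition \ref{P:ergdecomperg} (for the $G_{2}$-Poisson boundary $B_{2}$), produces $B_{1}\times X_{1}\to W_{1}\to X_{1}$. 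Because $B_{1}$ is $G_{1}$-contractive and $G_{2}$ acts trivially on it, $B_{1}$ is $G$-contractive, so Theorems \ref{T:relcontB} and \ref{T:relcontcomp} give $W_{1}\to X_{1}$ relatively contractive, and symmetrically $W_{2}\to X_{2}$. The product $W_{1}\times W_{2}\to X_{1}\times X_{2}$ is then relatively contractive by factoring as $W_{1}\times W_{2}\to X_{1}\times W_{2}\to X_{1}\times X_{2}$: the disintegrations at each stage have the form $D_{\varphi_{j}}(\cdot)\times\delta_{(\cdot)}$, so the contracting sequences for each individual $W_{j}\to X_{j}$ witness relative contractivity at every step.

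For relative measure preservation of the vertical arrows, the map $X\to X_{1}\times X_{2}$ is a $G$-map between measure-preserving $G$-spaces, so all Radon-Nikodym derivatives are identically $1$ and Theorem \ref{T:relmpRN} gives that it is relatively measure-preserving. For $W\to W_{1}\times W_{2}$, the space $W$ is $\mu$-stationary as a $G$-quotient of $(B\times X,\beta\times\nu)$ (which is $\mu$-stationary because $B$ is and $X$ is measure-preserving), and provided $W$ is ergodic, Proposition \ref{P:1.10} supplies both the identification of the pushforward measure as $\rho_{1}\times\rho_{2}$ and the required relative measure preservation.

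The main obstacle I anticipate is the ergodicity hypothesis in Proposition \ref{P:1.10}, since the statement only assumes $(X,\nu)$ is measure-preserving. When $X$ is ergodic measure-preserving, Bader-Shalom's Corollary 2.18 makes $(B\times X,\beta\times\nu)$ ergodic $\mu$-stationary, hence so is $W$, and the argument above closes. For general measure-preserving $X$, I would pass to the ergodic decomposition $(X,\nu)=\int(X^{t},\nu^{t})\,d\tau(t)$; each $(B\times X^{t},\beta\times\nu^{t})$ is then ergodic $\mu$-stationary, $W$ admits a compatible decomposition $W=\int W^{t}\,d\tau(t)$ with each $W^{t}$ intermediate between $B\times X^{t}$ and $X^{t}$, and the ergodic-case argument applied fiberwise identifies each $W^{t}$ as the requisite independent relative joining, which reassembles over $\tau$ to yield the claimed identification of $W$ itself.
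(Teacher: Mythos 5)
Your proof follows the paper's skeleton: the same commuting square, the same reduction to Corollary \ref{C:reljoinunique}, the same argument (via Theorems \ref{T:PBcont}, \ref{T:relcontB}, \ref{T:relcontcomp}) that the top arrow is relatively contractive, and essentially the same computation of the ergodic components of $B\times X$ via the splitting $B\cong B_{1}\times B_{2}$ together with Propositions \ref{P:ergdecomptrivial} and \ref{P:ergdecomperg}. Your treatment of the vertical arrows is, if anything, more explicit than the paper's, which simply invokes Proposition \ref{P:1.10}; note, though, that even for the right-hand vertical arrow one needs $(\pi_{1}\times\pi_{2})_{*}\nu=\nu_{1}\times\nu_{2}$, which again comes from Proposition \ref{P:1.10} (hence needs ergodicity) and not merely from the Radon--Nikodym computation, so your ergodic-decomposition reduction is doing real work there as well.

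The genuine gap is in the bottom arrow. You correctly obtain that each $W_{j}\to X_{j}$ is relatively contractive, but the passage to the product map $W_{1}\times W_{2}\to X_{1}\times X_{2}$ is unsupported by anything you cite: Theorem \ref{T:relcontcomp} only passes from a relatively contractive composite to its factors, and no statement that products (or composites) of relatively contractive maps are relatively contractive is available in this paper. Your justification --- that the disintegrations have the form $D_{\varphi_{j}}(\cdot)\times\delta_{(\cdot)}$, so the same contracting sequences work --- also has a quantifier problem: relative contractivity gives, for each fixed measurable set, a conull set of good base points, whereas your factorization $W_{1}\times W_{2}\to X_{1}\times W_{2}\to X_{1}\times X_{2}$ must handle the uncountable family of slices $E^{w_{2}}$ simultaneously, and even granting contractivity of both maps in the factorization you would still need a composition lemma to contract the composite. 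This can be repaired using the almost-everywhere isometric characterization of relative contractivity from \cite{CP12} (the paper itself uses it inside the proof of uniqueness of relative joinings), but the cleaner fix is exactly the paper's route, which you already have all the pieces for: take the product of your two chains to get $G$-maps $B_{1}\times B_{2}\times X_{1}\times X_{2}\to W_{1}\times W_{2}\to X_{1}\times X_{2}$ whose composite is the natural projection; since $B_{1}\times B_{2}\cong B$ is contractive, Theorem \ref{T:relcontB} makes that composite relatively contractive and Theorem \ref{T:relcontcomp} then gives relative contractivity of $W_{1}\times W_{2}\to X_{1}\times X_{2}$ directly, with no product-of-contractive-maps lemma needed. (The paper packages this as applying the functor $F^{G}$ to the whole chain $B\times X\to W\to X$.)
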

\begin{proof}
Let $(X_{1},\nu_{1})$ and $(X_{2},\nu_{2})$ be the spaces of $G_{1}$- and $G_{2}$-ergodic components of $(X,\nu)$, respectively.  Then $F^{G}(X,\nu) = (X_{1},\nu_{1}) \times (X_{2},\nu_{2})$.  Also, $F^{G}(W,\rho) = (W_{1},\rho_{1}) \times (W_{2},\rho_{2})$.  Consider now $F^{G}(B \times X, \beta \times \nu)$.

First, observe that since $(B,\beta)$ is a Poisson boundary, it is a contractive $G$-space (Theorem \ref{T:PBcont}).  Since $F^{G}(B,\beta) = (B_{1},\beta_{1}) \times (B_{2},\beta_{2})$ is a relatively measure-preserving quotient of a contractive space, by Corollary \ref{C:mpquotrelcon}, $(B,\beta)$ is $G$-isomorphic to $(B_{1},\beta_{1}) \times (B_{2},\beta_{2})$.  

Observe that $(B_{1},\beta_{1})$ is a $G_{1}$-quotient of the $(G_{1},\mu_{1})$ Poisson boundary since for $(\mu_{1}\times\mu_{2})^{\mathbb{N}}$-almost every sequence $\omega \in (G_{1} \times G_{2})^{\mathbb{N}}$, it holds that $\lim \omega_{1}\cdots\omega_{n}\beta$ is a point mass, and writing $\omega_{j} = (u_{j},v_{j})$, $\omega_{1}\cdots\omega_{n}\beta = u_{1}\cdots u_{n}\beta_{1} \times v_{1}\cdots v_{n}\beta_{2}$ shows that $\lim u_{1}\cdots u_{n}\beta_{1}$ is a point mass for $\mu_{1}^{\mathbb{N}}$-almost every sequence $u \in G_{1}^{\mathbb{N}}$ (in fact, $(B_{1},\beta_{1}$) is the Poisson boundary as shown in \cite{BS04} but we will not need that).

Now $G_{1}$ acts trivially on $(B_{2},\beta_{2})$ so by Proposition \ref{P:ergdecomptrivial},
\[
\rpf{(B \times X)}{G_{1}} = \rpf{(B_{1} \times B_{2} \times X)}{G_{1}} = \rpf{(B_{1} \times X)}{G_{1}} \times B_{2}.
\]
Since $(B_{1},\beta_{1})$ is a quotient of the Poisson boundary of $(G_{1},\mu_{1})$ and $(X,\nu)$ is a measure-preserving $G_{1}$-space, by Proposition \ref{P:ergdecomperg}, $\rpf{(B_{1} \times X)}{G_{1}} = \rpf{X}{G_{1}}$ so
\[
\rpf{(B \times X)}{G_{1}} = \rpf{X}{G_{1}} \times B_{2}.
\]
Likewise, $\rpf{(B \times X)}{G_{2}} = \rpf{X}{G_{2}} \times B_{1}$.  Therefore
\[
F^{G}(B \times X, \beta\times\nu) = (B_{1},\beta_{1}) \times (B_{2},\beta_{2}) \times (X_{1},\nu_{1}) \times (X_{2},\nu_{2})
\]
with the diagonal action.

Therefore, applying the functor $F^{G}$ to the given maps $B \times X \to W \to X$, we obtain the following commuting diagram of $G$-maps (the measures are omitted for clarity):
\begin{diagram}
B \times X								&\rTo^{\pi}			&W				&\rTo^{\varphi}			&X\\
\dTo									&				&\dTo			&					&\dTo\\
B_{1} \times B_{2} \times X_{1} \times X_{2}	&\rTo^{F^{G}(\pi)}	&W_{1}\times W_{2}	&\rTo^{F^{G}(\varphi)}	&X_{1}\times X_{2}
\end{diagram}
The vertical maps are all relatively measure-preserving by Proposition \ref{P:1.10} (Proposition 1.10 in \cite{BS04}).  Since $(B,\beta)$ is a contractive $G$-space and $(X,\nu)$ is a measure-preserving $G$-space, the natural projection $B \times X \to X$ is a relatively contractive $G$-map by Theorem \ref{T:relcontB}.  Therefore $\pi$ and $\varphi$ are relatively contractive by Theorem \ref{T:relcontcomp}.  Likewise, $(X_{1} \times X_{2},\nu_{1}\times\nu_{2})$ is a measure-preserving $G$-space and the composition $F^{G}(\varphi) \circ F^{G}(\pi) = F^{G}(\varphi \circ \pi)$ is the natural projection to $X_{1} \times X_{2}$.  Therefore, since $B_{1} \times B_{2}$ is contractive, $F^{G}(\varphi\circ\pi)$ is relatively contractive.  Hence $F^{G}(\pi)$ and $F^{G}(\varphi)$ are both relatively contractive.

Isolating the right-hand side of the diagram:
\begin{diagram}
(W,\rho)							&\rTo	&(X,\nu)\\
\dTo								&		&\dTo\\
(W_{1},\rho_{1}) \times (W_{2},\rho_{2})	&\rTo	&(X_{1},\nu_{1})\times (X_{2},\nu_{2})
\end{diagram}
is a commuting diagram of $G$-maps such that the vertical arrows are relatively measure-preserving, the horizontal arrows are relatively contractive and $(X,\nu)$ is measure-preserving.  By Corollary \ref{C:reljoinunique}, $(W,\rho)$ is $G$-isomorphic to the independent relative joining of $(W_{1},\rho_{1}) \times (W_{2},\rho_{2})$ and $(X,\nu)$ over $(X_{1},\nu_{1}) \times (X_{2},\nu_{2})$ as claimed.
\end{proof}

\begin{corollary}[Bader-Shalom Intermediate Factor Theorem \cite{BS04}]
Let $G = G_{1} \times G_{2}$ be a product of two locally compact second countable groups and let $\mu_{j} \in P(G_{j})$ be admissible probability measures for $j=1,2$.  Set $\mu = \mu_{1} \times \mu_{2}$.

Let $(B,\beta)$ be the Poisson boundary for $(G,\mu)$ and let $(X,\nu)$ be a measure-preserving $G$-space that is ergodic for each $G_{j}$.  Let $(W,\rho)$ be a $G$-space such that there exist $G$-maps $\pi : (B \times X, \beta\times\nu) \to (W,\rho)$ and $\varphi : (W,\rho) \to (X,\nu)$ with $\varphi \circ \pi$ being the natural projection to $X$.

Then $(W,\rho)$ is $G$-isomorphic to $(W_{1},\rho_{1}) \times (W_{2},\rho_{2}) \times (X,\nu)$ where $(W_{1},\rho_{1})$ is a $(G_{1},\mu_{1})$-boundary and $(W_{2},\rho_{2})$ is a $(G_{2},\mu_{2})$-boundary.
\end{corollary}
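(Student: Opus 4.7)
The strategy is to specialize Theorem \ref{T:IFT} to the ergodic case. Under the added assumption that $(X,\nu)$ is ergodic for each $G_j$, the spaces of ergodic components $(X_1,\nu_1)$ and $(X_2,\nu_2)$ are each essentially one-point $G$-spaces, so their product is trivial. The independent relative joining of two spaces over a trivial common quotient is simply their (independent) product, so Theorem \ref{T:IFT} yields at once
\[
(W,\rho) \cong (W_1,\rho_1) \times (W_2,\rho_2) \times (X,\nu),
\]
with the diagonal $G$-action, which is the required structural decomposition.

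It remains to identify each $(W_j,\rho_j)$ as a $(G_j,\mu_j)$-boundary; I will treat $j=1$, the other case being symmetric. The proof of Theorem \ref{T:IFT} already produces a decomposition $(B,\beta) \cong (B_1,\beta_1) \times (B_2,\beta_2)$ in which $(B_1,\beta_1)$ is a $G_1$-quotient of the $(G_1,\mu_1)$-Poisson boundary and carries the trivial $G_2$-action, while $(B_2,\beta_2)$ is a $G_2$-quotient of the $(G_2,\mu_2)$-Poisson boundary. Combining Proposition \ref{P:ergdecomptrivial} (applied to the trivial $G_2$-action on $B_1$) with Proposition \ref{P:ergdecomperg} (applied to the $G_2$-quotient $(B_2,\beta_2)$ of the Poisson boundary and to the measure-preserving $G_2$-space $X$) gives
\[
\rpf{(B \times X)}{G_2} = \rpf{(B_2 \times X)}{G_2} \times B_1 = \rpf{X}{G_2} \times B_1 = X_1 \times B_1,
\]
which collapses to $B_1$ by the $G_2$-ergodicity hypothesis.

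Finally, functoriality of the $G_2$-ergodic decomposition applied to the $G$-map $\pi : B \times X \to W$ produces a $G$-map $B_1 \cong \rpf{(B \times X)}{G_2} \to (W_1,\rho_1)$, exhibiting $(W_1,\rho_1)$ as a $G_1$-quotient of the $(G_1,\mu_1)$-Poisson boundary, i.e.~a $(G_1,\mu_1)$-boundary (with $G_2$ acting trivially). The only delicate point is bookkeeping around the indexing conventions for which factor of $B$ (respectively $X$) is killed by $G_j$ versus $G_{3-j}$; once this is tracked carefully, the proof reduces to combining Theorem \ref{T:IFT} with the two ergodic decomposition propositions and no further substantive work is required.
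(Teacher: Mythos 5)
Your proposal is correct and follows essentially the same route as the paper: the triviality of $(X_{1},\nu_{1})\times(X_{2},\nu_{2})$ reduces Theorem \ref{T:IFT} to the product decomposition, and the boundary identification is obtained from the $B \cong B_{1}\times B_{2}$ decomposition plus functoriality of the ergodic decomposition applied to $\pi$. The paper packages the last step by applying the functor $F^{G}$ to the whole chain $B\times X \to W \to X$ to get $B \to W_{1}\times W_{2} \to 0$, whereas you run the $G_{2}$- (resp.\ $G_{1}$-) ergodic decomposition one factor at a time, but this is only a bookkeeping difference.
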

\begin{proof}
Since the action of each $G_{j}$ is ergodic on $(X,\nu)$, the ergodic components spaces $(X_{1},\nu_{1})$ and $(X_{2},\nu_{2})$ are both trivial.  The previous theorem then implies that $(W,\rho)$ is $G$-isomorphic to the independent relative joining of $(W_{1},\rho_{1}) \times (W_{2},\rho_{2})$ and $(X,\nu)$ over the trivial system, that is $(W,\rho)$ is $G$-isomorphic to $(W_{1},\rho_{1}) \times (W_{2},\rho_{2}) \times (X,\nu)$.  Since $F^{G}$ is a functor and the $X_{j}$ are trivial, applying $F^{G}$ to the maps $B \times X \to W \to X$ gives $G$-maps $B \to W_{1} \times W_{2} \to 0$.  Therefore $(W_{1},\rho_{1})$ is a $(G,\mu)$-boundary on which the $G_{2}$-action is trivial, hence it is a $(G_{1},\mu_{1})$-boundary.  Likewise for $(W_{2},\rho_{2})$.
\end{proof}

\section{Actions of Products of Groups}\label{sec:stuff}

We now are ready to consider the stabilizers of actions of products of locally compact second countable groups.  For clarity, we present first the results for the products of two groups in this section and then later handle the general case.  Many of the results in this section hold for arbitrary groups, but some require the hypothesis that the factors be simple, hence the need to handle the case of products of more than two groups separately.

We begin by showing that the weak amenability of the action of a product of groups is equivalent to the weak amenability of the action on the product random subgroups functor space corresponding to the action.  From there, we deduce that if certain conditions hold on the spaces of ergodic components that the action is necessarily weakly amenable.  Combining this with property $(T)$ (and the relative version in the form of resolutions), we conclude with a classification of actions of products of such groups.
The study of actions of irreducible lattices in products of groups will be the subject of the next section.

\subsection{Weak Amenability and the Product Random Subgroups Functor}

\begin{theorem}\label{T:PRGweakamen}
Let $G_{1}$ and $G_{2}$ be locally compact second countable groups.  Let $G = G_{1} \times G_{2}$ and let $(X,\nu)$ be a measure-preserving $G$-space.  Assume that the $G$-action on $PRG(X,\nu)$ is weakly amenable.  Then the $G$-action on $(X,\nu)$ is weakly amenable.
\end{theorem}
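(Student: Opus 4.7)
The plan is to use the affine-space characterization of weak amenability together with the quotient affine space construction of Section \ref{ss:affinequot}. Given an orbital affine $G$-space $A$ over $(X,\nu)$ with separable Banach space $E$, cocycle $\alpha \colon G \times X \to \mathrm{Iso}(E)$, and fibres $A_x \subseteq E_1^*$, my task is to produce a measurable $\alpha^*$-invariant section $f \colon X \to E_1^*$ with $f(x) \in A_x$ almost everywhere. Let $\pi \colon (X,\nu) \to (Z,\zeta)$ be the quotient map to $Z = PRG(X,\nu)$. Because $(X,\nu)$ is measure-preserving, $\pi$ is trivially relatively measure-preserving, so $D_\pi$ is $G$-equivariant and by Theorem \ref{T:disintergodic} each $D_\pi(z)$ is $\mathrm{stab}(z)$-ergodic.

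Applying the construction of Section \ref{ss:affinequot} to $\pi$ and $A$ produces an orbital affine $G$-space $B$ over $(Z,\zeta)$ with cocycle $\beta \colon G \times Z \to \mathrm{Iso}(E^\pi)$ and fibres $B_z \subseteq (E^\pi)_1^*$, together with a canonical $G$-equivariant quotient map $\tau \colon A \to B$ given by $(x,a) \mapsto (\pi(x), a|_{E^\pi})$. Since $G \actson (Z,\zeta)$ is weakly amenable by hypothesis and $B$ is orbital over $Z$, the definition of weak amenability produces a measurable $\beta^*$-invariant section $\tilde f \colon Z \to (E^\pi)_1^*$ with $\tilde f(z) \in B_z$ almost everywhere.

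The heart of the argument is to lift $\tilde f$ to an $\alpha^*$-invariant section $f$ of $A$. By the definition of $B_z$, for a.e.~$z$ there is some $x \in \pi^{-1}(z)$ and $a \in A_x$ with $a|_{E^\pi} = \tilde f(z)$; a measurable selection theorem then produces a measurable candidate $f$, and the remaining task is to enforce $\alpha^*$-equivariance. I plan to exploit $\mathrm{stab}(z)$-ergodicity of $(\pi^{-1}(z), D_\pi(z))$ combined with the specific product form $\varphi(x) = \overline{\mathrm{proj}_{G_1}\mathrm{stab}(x)} \times \overline{\mathrm{proj}_{G_2}\mathrm{stab}(x)}$ of the PRG quotient, together with the uniqueness of relatively measure-preserving joinings over relatively contractive quotients (Corollary \ref{C:reljoinunique}), to pin the fibrewise lift down essentially canonically; equivariance then becomes automatic.

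The main obstacle is precisely this lifting step. The cocycle $\alpha$ encodes information about $\mathrm{stab}(\pi(x))/\mathrm{stab}(x)$ that is discarded by $\tau$, so equivariance of $f$ does not follow formally from that of $\tilde f$. The required rigidity must be extracted from the measure-preserving $\mathrm{stab}(z)$-ergodic structure of the fibres together with the way PRG is constructed to accommodate the product $G = G_1 \times G_2$; Proposition \ref{P:weakamenprod} (weak amenability is preserved under products) is a natural auxiliary tool for handling the resulting fibrewise problem.
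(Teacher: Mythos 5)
There is a genuine gap at exactly the point you flag as ``the heart of the argument'': the lift of the section $\tilde f$ of $B$ over $PRG(X,\nu)$ to an $\alpha^{*}$-invariant section of $A$ over $(X,\nu)$ is not carried out, and the tools you cite do not supply it. The quotient construction of Subsection \ref{ss:affinequot} restricts functionals to $E^{\pi}$, so $\tilde f(z)$ only determines an element of $(E^{\pi})_{1}^{*}$; an element of $A_{x}\subseteq E_{1}^{*}$ restricting to it is far from unique, and the $\stab(z)$-ergodicity of $(\pi^{-1}(z),D_{\pi}(z))$ gives no mechanism for selecting such an extension equivariantly --- precisely the $\stab(\pi(x))/\stab(x)$ information you note is discarded. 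Moreover, Corollary \ref{C:reljoinunique} cannot be invoked in your setup: it needs relatively contractive horizontal maps, and nothing in your argument is relatively contractive (the map $X\to PRG(X,\nu)$ is relatively measure-preserving since $(X,\nu)$ is measure-preserving), because the Poisson boundary never enters your argument. Proposition \ref{P:weakamenprod} likewise has no evident purchase on the fibrewise problem. So the proposal, as it stands, reduces the theorem to an unproved claim.

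The paper resolves this difficulty by changing the shape of the problem rather than lifting fibrewise. One first equips $A$ with the push-forward measure $\alpha_{0}$ of $\zeta\times\nu$, where $(C,\zeta)$ is the Poisson boundary, using Proposition \ref{P:weakamenmaps} to get $G$-maps $(C\times X)\to(A,\alpha_{0})\to(X,\nu)$; contractivity of the boundary (Theorems \ref{T:PBcont}, \ref{T:relcontB}, \ref{T:relcontcomp}) makes these maps relatively contractive, and the Intermediate Contractive Factor Theorem (Theorem \ref{T:IFT}) then identifies $(A,\alpha_{0})$ with the independent relative joining of $(A_{1},\alpha_{1})\times(A_{2},\alpha_{2})$ and $(X,\nu)$ over $(X_{1},\nu_{1})\times(X_{2},\nu_{2})$. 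With this product structure in hand, the quotient affine space $B$ over $PRG(X,\nu)$ (which you do construct correctly) admits a map $r_{A}$ to $A_{1}\times A_{2}$, weak amenability of $PRG(X,\nu)$ gives an invariant section $\tau$ into $B$, and the desired section of $A$ is written down explicitly as $x\mapsto(r_{A}(\tau(q(x))),x)$: the $X$-coordinate is carried along by the identity, so the fibrewise lifting problem you could not close simply does not arise. If you want to salvage your outline, you need this (or some equivalent) structural input identifying $A$ over $X$ with a joining built from objects living over the PRG quotient; without the boundary and the factor theorem there is no apparent route to it.
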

\begin{proof}
Let $A$ be an affine orbital $G$-space over $(X,\nu)$.  Let $(C,\zeta)$ be the Poisson boundary of $G = G_{1} \times G_{2}$ for the measure $\mu = \mu_{1} \times \mu_{2}$ where $\mu_{j}$ are admissible probability measures on $G_{j}$, $j=1,2$.  By Theorem \ref{T:PBamen} and Proposition \ref{P:weakamenmaps}, there are then $G$-maps
\[
(C \times X, \zeta\times\nu) \to (A,\alpha_{0}) \to (X,\nu)
\]
with composition being the natural projection to $X$ and $\alpha_{0}$ being the push-forward of $\zeta\times\nu$ to $A$.  By Theorem \ref{T:PBcont}, $G \actson (C,\zeta)$ is contractive hence by Theorem \ref{T:relcontB}, the projection $(C \times X, \zeta \times \nu) \to (X,\nu)$ is relatively contractive.  Therefore by Theorem \ref{T:relcontcomp}, the maps $(C \times X, \zeta \times \nu) \to (A,\alpha_{0})$ and $(A,\alpha_{0}) \to (X,\nu)$ are both relatively contractive.

Let $(A_{1},\alpha_{1})$ and $(A_{2},\alpha_{2})$ be the ergodic decompositions of $(A,\alpha_{0})$ for $G_{2}$ and $G_{1}$, respectively.  Likewise, let $(X_{j},\nu_{j})$ and $(C_{j},\nu_{j})$ be the decompositions of $(X,\nu)$ and $(C,\zeta)$.  By Propositon \ref{P:ergdecomperg}, $\rpf{(C \times X)}{G_{j}} = C_{j} \times X_{j}$.  Since the ergodic decomposition is a functor (being a special case of quotienting by a random subgroup), there exist $G_{j}$-maps
\[
(C_{j} \times X_{j}, \zeta_{j} \times \nu_{j}) \to (A_{j},\alpha_{j}) \to (X_{j},\nu_{j})
\]
and therefore, as $(C_{j},\zeta_{j})$ is a contractive $G_{j}$-space, the maps $(A_{j},\alpha_{j}) \to (X_{j},\nu_{j})$ are relatively contractive.

Consider the diagram of $G$-maps:
\begin{diagram}
A_{1} \times A_{2} \times X		&\rTo^{p_{X}}						&X\\
\dTo^{p_{A}}	&								&\dTo^{\pi}\\
A_{1} \times A_{2}				&\rTo^{\varphi}		&X_{1} \times X_{2}
\end{diagram}
where $p_{X}$ is the projection to the $X$ coordinate, $p_{A} = p_{A_{1}} \times p_{A_{2}}$ is the diagonal product of the projections to $A_{1}$ and $A_{2}$, $\pi = \pi_{1} \times \pi_{2}$ is the diagonal product of the ergodic decomposition maps of $X$ and $\varphi = \varphi_{1} \times \varphi_{2}$ is the product of the natural maps $A_{j} \to X_{j}$ obtained by the inclusion at the level of $\sigma$-algebras.

By the Intermediate Contractive Factor Theorem (Theorem\ \ref{T:IFT}), since $p_{X}$ and $\varphi$ are relatively contractive and $p_{A}$ and $\pi$ are relatively measure-preserving, $(A,\alpha_{0})$ is $G$-isomorphic to the independent relative joining of $(A_{1},\alpha_{1}) \times (A_{2},\alpha_{2})$ and $(X,\nu)$ over $(X_{1},\nu_{1}) \times (X_{2},\nu_{2})$.  That is, $(A,\alpha_{0})$ is $G$-isomorphic to $(A_{1} \times A_{2} \times X, \alpha)$ where
\[
\alpha = \int_{X_{1} \times X_{2}} D_{\varphi_{1}}(x_{1}) \times D_{\varphi_{2}}(x_{2}) \times D_{\pi}(x_{1},x_{2})~d\nu_{1}\times\nu_{2}(x_{1},x_{2})
\]
as this is the independent relative joining.

Apply the product random subgroups functor to $(X,\nu)$ and obtain $G$-maps
\begin{diagram}
(X,\nu) &\rTo^{q} &PRG(X,\nu) &\rTo^{r} &(X_{1} \times X_{2}, \nu_{1} \times \nu_{2})
\end{diagram}
such that $r \circ q = \pi$ (by the universal property (Theorem \ref{T:irsfactor}) such a map $r$ exists).

Let $B$ be the affine orbital space over $PRG(X,\nu)$ that is the quotient of $A$ by $\stab \circ q$ (constructed in Subsection \ref{ss:affinequot}) and let $z : A_{1} \times A_{2} \times X \to B$ be the corresponding map.  Endow $B$ with the pushforward measure $\beta_{0}$.  Then $(B,\beta_{0})$ is the quotient of $(A,\alpha)$ by the map $a \mapsto \stab(\pi(p_{X}(a)))$.  Then, by the universal property of the quotient spaces (Theorem \ref{T:irsfactor}), the diagram above extends to:
\begin{diagram}
A_{1} \times A_{2} \times X		&\rTo^{p_{X}}			&X\\
\dTo^{q_{A}}					&					&\dTo^{q}\\
PRG(A)						&\rTo^{\psi}			&PRG(X) \\
\dTo							&					&\dTo^{\simeq} \\
B							&\rTo				&PRG(X) \\			
\dTo^{r_{A}}					&					&\dTo^{r}\\
A_{1} \times A_{2}				&\rTo^{\varphi}			&X_{1} \times X_{2}
\end{diagram}
More precisely, the existence of say, the map $r_{A}$ follows from the fact that $(B,\beta_{0})$ is the quotient of $(A,\alpha_{0})$ by $\stab \circ q$ and it holds that
\begin{align*}
\overline{\mathrm{proj}_{1}~\stab(p(a))} \times \overline{\mathrm{proj}_{2}~\stab(p(a))} &\subseteq G_{1} \times \overline{\mathrm{proj}_{2}~\stab(p(a))} \\
&= \overline{G_{1} \times \{ e \} \cdot \stab(p(a))} = \overline{G_{1} \cdot \stab(a)}
\end{align*}
by the orbitality of $A$ over $X$ and therefore the universal property (treating $A_{2}$ as the quotient of $A$ by $a \mapsto \overline{G_{1} \cdot \stab(a)}$) there exists a map $B \to A_{2}$ (and likewise for $A_{1}$).
Since $A$ is orbital over $X$,  $B$ is orbital over $PRG(X)$ by construction (see section \ref{ss:affinequot}).  Since the $G$-action on $PRG(X,\nu)$ is weakly amenable there then exists an invariant section $\tau : PRG(X,\nu) \to B$.  That is, $\tau(g q(x)) = g \cdot \tau(q(x))$.

Define the map $\psi : X \to A_{1} \times A_{2} \times X$ by $\psi(x) = (r_{A}(\tau(q(x))), x)$.  Then
\begin{align*}
\psi(gx) &= (r_{A}(\tau(q(gx))),gx) = (r_{A}(\tau(g q(x))),gx) \\
&= (r_{A}(\beta(g,q(x))\tau(q(x))), gx) = (g r_{A}(\tau(q(x))), gx) = g (r_{A}(\tau(q(x))), x) = g \psi(x)
\end{align*}
which is then, over the isomorphism $A \to A_{1} \times A_{2} \times X$, an invariant section $X \to A$.  As this holds for all affine orbital spaces of $(X,\nu)$, the $G$-action on $(X,\nu)$ is weakly amenable.
\end{proof}

\begin{theorem}\label{T:ergdecweakamen}
Let $G_{1}$ and $G_{2}$ be locally compact second countable groups.  Let $G = G_{1} \times G_{2}$ and let $(X,\nu)$ be an ergodic measure-preserving $G$-space.  Let $(X,\nu) \to (X_{j},\nu_{j})$ denote the spaces of $G_{3-j}$-ergodic components.  Assume that $G_{j} \actson (X_{j},\nu_{j})$ weakly amenably for both $j=1,2$ and that $\stab_{*}\nu_{j}$ are simple invariant random subgroups for $j=1,2$.  
Then one of the following holds:
\begin{itemize}
\item $G \actson (X,\nu)$ essentially free;
\item $G \actson (X,\nu)$ weakly amenably;
\item $\stab_{*}\nu = \delta_{\{ e \}} \times \stab_{*}\nu_{2}$; or
\item $\stab_{*}\nu = \stab_{*}\nu_{1} \times \delta_{\{ e \}}$.
\end{itemize}
\end{theorem}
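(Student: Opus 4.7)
My plan is to dissect the stabilizers of $\nu$ by projecting them onto each factor, use simplicity of $\stab_*\nu_j$ to force a four-way dichotomy, handle three cases by direct inspection of stabilizers, and in the remaining generic case push the action down to $PRG(X,\nu)$ and invoke Theorem \ref{T:PRGweakamen}.

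First I will define $H_j : X \to S(G_j)$ by $H_j(x) = \overline{\mathrm{proj}_{G_j}\,\stab_G(x)}$. Conjugation of $\stab_G(x)$ by $(e,g_2) \in \{e\} \times G_2$ fixes its $G_1$-projection, so $H_1$ is $\{e\} \times G_2$-invariant and (by Theorem \ref{T:mackey}) descends to a $G_1$-equivariant measurable map $H_1 : X_1 \to S(G_1)$; symmetrically for $H_2$. From $\mathrm{proj}_{G_j}\,\stab_G(x) \subseteq \stab_{G_j}(\pi_j(x))$ I obtain $H_j(x_j) \subseteq \stab_{G_j}(x_j)$ a.e., and combining $h x_j = x_j$ for $h \in \stab_{G_j}(x_j)$ with $G_j$-equivariance of $H_j$ yields $H_j(x_j) \normal \stab_{G_j}(x_j)$ a.e. Hence $(H_j)_*\nu_j$ is an ergodic (being a $G_j$-equivariant push-forward of the $G_j$-ergodic $\nu_j$) normal invariant random subgroup of the ergodic IRS $\stab_*\nu_j$. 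Simplicity of $\stab_*\nu_j$ together with extremality of ergodic IRSs will let me refine the joining $\int \delta_{H_j(x_j)} \times \delta_{\stab_{G_j}(x_j)}\,d\nu_j(x_j)$ to the pointwise a.e.\ conclusion $H_j(x_j) \in \{\{e\}, \stab_{G_j}(x_j)\}$, with the same choice a.e.

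I then split into four cases. If both $H_j$ are trivial, both projections $\mathrm{proj}_{G_j}\,\stab_G(x)$ vanish a.e., forcing $\stab_G(x) = \{e\}$ a.e., so the action is essentially free. If $H_1 \equiv \{e\}$ and $H_2(x_2) = \stab_{G_2}(x_2)$ a.e., then $\stab_G(x) \subseteq \{e\} \times G_2$ a.e.\ and its $G_2$-projection equals $\stab_{G_2}(\pi_2(x))$, yielding $\stab_G(x) = \{e\} \times \stab_{G_2}(\pi_2(x))$ and hence $\stab_*\nu = \delta_{\{e\}} \times \stab_*\nu_2$; the symmetric case gives the fourth alternative.

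In the remaining case both $H_j$ are nontrivial. Set $\varphi(x) = H_1(\pi_1(x)) \times H_2(\pi_2(x))$ and $\pi = \pi_1 \times \pi_2 : X \to X_1 \times X_2$. Then $\varphi(x) = \stab_{G_1}(\pi_1(x)) \times \stab_{G_2}(\pi_2(x)) = \stab_G(\pi(x))$. By Theorem \ref{T:irsfactor} the quotient map $\psi : X \to PRG(X,\nu)$ satisfies $\varphi(x) \subseteq \stab_G(\psi(x))$, while the factor map $r : PRG(X,\nu) \to (X_1 \times X_2, \nu_1 \times \nu_2)$ from Proposition \ref{P:something} forces $\stab_G(\psi(x)) \subseteq \stab_G(\pi(x))$; the two chains of inclusions collapse, making $r$ orbital. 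Proposition \ref{P:weakamenprod} then gives weak amenability of $G \actson (X_1 \times X_2, \nu_1 \times \nu_2)$, Proposition \ref{P:weakamen} transports this through the orbital $r$ to $G \actson PRG(X,\nu)$, and Theorem \ref{T:PRGweakamen} lifts it to $G \actson (X,\nu)$. The main obstacle I foresee is sharpening the IRS-level dichotomy into the pointwise statement $H_j(x_j) \in \{\{e\}, \stab_{G_j}(x_j)\}$: this requires exploiting ergodicity of the witnessing joining to rule out a mixture in which $H_j$ is trivial on part of $X_j$ and equals the full stabilizer on the rest.
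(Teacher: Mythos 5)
Your proposal is correct and follows essentially the same route as the paper: project the stabilizers to each factor, descend to $X_j$ by $G_{3-j}$-invariance, use normality plus simplicity of $\stab_*\nu_j$ and ergodicity (the set where the projection is trivial is invariant, so null or conull) to get the four-way dichotomy, and in the generic case show $PRG(X,\nu)$ is orbital over $(X_1\times X_2,\nu_1\times\nu_2)$ and lift weak amenability through Propositions \ref{P:weakamenprod}, \ref{P:weakamen} and Theorem \ref{T:PRGweakamen}. The "obstacle" you flag is handled exactly as you anticipate: the pointwise alternative $H_j(x_j)\in\{\{e\},\stab_{G_j}(x_j)\}$ is immediate from the paper's joining-level definition of a simple random subgroup, and ergodicity then forces a single alternative almost everywhere.
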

\begin{proof}
Let $\pi_{1} : (X,\nu) \to (X_{1},\nu_{1})$ be the decomposition into $G_{2}$-ergodic components.  For $\nu_{1}$-almost every $x_{1}$, the $G_{2}$-action on $(\pi_{1}^{-1}(x_{1}), D_{\pi_{1}}(x_{1}))$ is $G_{2}$-ergodic.  Since $\mathrm{proj}_{G_{1}}~\mathrm{stab}_{G}(x)$ is $G_{2}$-invariant, by ergodicity it is constant almost everywhere on almost every ergodic component, that is, for $\nu_{1}$-almost every $x_{1}$ the subgroup $\mathrm{proj}_{G_{1}}~\mathrm{stab}_{G}(x)$ is constant $D_{\pi_{1}}(x_{1})$-almost everywhere.  Therefore the map $s_{1} : X \to S(G_{1})$ by $s_{1}(x) = \overline{\mathrm{proj}_{G_{1}}~\mathrm{stab}_{G}(x)}$ is constant on fibers over $X_{1}$.  By Theorem \ref{T:crazy}, then $(s_{1})_{*}\nu \normal \stab_{*}\nu_{1}$.  

Since $\stab_{*}\nu_{1}$ is simple, for $\nu$-almost every $x \in X$ it holds that $s_{1}(x) = \{ e \}$ or $s_{1}(x) = \stab(\pi_{1}(x))$.  Since the set $\{ x \in X : s_{1}(x) = \{ e \} \}$ is $G$-invariant (because $s_{1}(gx) = (\mathrm{proj}_{G_{1}}~g)s_{1}(x)(\mathrm{proj}_{G_{1}}~g)^{-1}$), by the ergodicity of $G \actson (X,\nu)$ it is either measure zero or measure one.  Therefore $(s_{1})_{*}\nu = \delta_{\{e\}}$ or $(s_{1})_{*}\nu = \stab_{*}\nu_{1}$.  Likewise, $(s_{2})_{*}\nu = \delta_{\{e\}}$ or $(s_{2})_{*}\nu = \stab_{*}\nu_{2}$.

Consider first the case when $(s_{1})_{*}\nu = \delta_{\{ e \}}$.  Then $\stab(x) \subseteq \{ e \} \times s_{2}(x)$ almost everywhere.  If in addition, $(s_{2})_{*}\nu = \delta_{\{ e \}}$ then $\stab(x) = \{ e \} \times \{ e \}$ almost everywhere so $G$ acts essentially freely.  So instead suppose $(s_{2})_{*}\nu = \stab_{*}\nu_{2}$.  Then $\stab(x) = \{ e \} \times H_{x}$ for some $H_{x} < G_{2}$ and $s_{2}(x) = \overline{\mathrm{proj}_{G_{2}}~\stab(x)} = \overline{H_{x}} = H_{x}$ since $H_{x}$ is necessarily closed (as $\stab(x)$ is always closed).  Therefore $H_{x} = \stab(\pi_{2}(x))$ almost everywhere (since $(s_{2})_{*}\nu = \stab_{*}\nu_{2}$) meaning that $\stab(x) = \{ e \} \times \stab(\pi_{2}(x))$ almost everywhere and so $\stab_{*}\nu = \delta_{\{e\}} \times \stab_{*}\nu_{2}$.  The symmetric case follows the same way.

Consider now the case when $(s_{1})_{*}\nu = \stab_{*}\nu_{1}$ and $(s_{2})_{*}\nu = \stab_{*}\nu_{2}$ and consider  the $G$-maps $q : (X,\nu) \to PRG(X,\nu)$ and $r :  PRG(X,\nu) \to (X_{1} \times X_{2}, \nu_{1}\times\nu_{2})$ such that $r \circ q = \pi_{1} \times \pi_{2}$.  By construction, for almost every $x \in X$ it holds that $s_{1}(x) \times s_{2}(x) \subseteq \stab(q(x))$.  Since $(s_{1})_{*}\nu = \stab_{*}\nu_{1}$ and $(s_{2})_{*}\nu = \stab_{*}\nu_{2}$, then for almost every $x \in X$, we have that $\stab(\pi_{1}(x)) \times \stab(\pi_{2}(x)) \subseteq \stab(q(x))$.  But as $PRG(X,\nu)$ is an extension of $(X_{1} \times X_{2}, \nu_{1} \times \nu_{2})$, this means that $PRG(X,\nu)$ is orbital over $(X_{1} \times X_{2}, \nu_{1} \times \nu_{2})$.  Since each $G_{j} \actson (X_{j},\nu_{j})$ weakly amenably, $G_{1} \times G_{2} \actson (X_{1} \times X_{2}, \nu_{1} \times \nu_{2})$ weakly amenably by Proposition \ref{P:weakamenprod}.  Then by Proposition \ref{P:weakamen}, $G \actson PRG(X,\nu)$ weakly amenably so by Theorem \ref{T:PRGweakamen}, $G \actson (X,\nu)$ weakly amenably.
\end{proof}

\begin{theorem}\label{T:weakamendense}
Let $G_{1}$ and $G_{2}$ be locally compact second countable groups.  Let $G = G_{1} \times G_{2}$ and let $(X,\nu)$ be an ergodic measure-preserving $G$-space.  If $\mathrm{proj}_{G_{2}}~\stab(x)$ is dense in $G_{j}$ almost everywhere for both $j=1,2$ then $G \actson (X,\nu)$ weakly amenably.
\end{theorem}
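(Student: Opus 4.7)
The plan is to reduce weak amenability of $G \actson (X,\nu)$ to weak amenability of $G \actson PRG(X,\nu)$ via Theorem \ref{T:PRGweakamen}, and then observe that under the density hypothesis $PRG(X,\nu)$ collapses to the trivial one-point space, for which weak amenability is automatic.

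First I would unpack the product random subgroups functor. Recall $PRG(X,\nu)$ is the quotient of $(X,\nu)$ by the $G$-equivariant map
\[
\varphi(x) = \overline{\mathrm{proj}_{G_{1}}\,\stab(x)} \times \overline{\mathrm{proj}_{G_{2}}\,\stab(x)}.
\]
The hypothesis (which should read $\mathrm{proj}_{G_{j}}\,\stab(x)$ dense in $G_{j}$ for $j=1,2$) says exactly that $\varphi(x) = G_{1} \times G_{2} = G$ for $\nu$-almost every $x$.

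Next I would analyze the algebra of $\varphi$-invariant functions. By definition, $f \in L^{\infty}(X,\nu)$ is $\varphi$-invariant iff for a.e.\ $x$ and every $g \in \varphi(x)$ one has $f(gx) = f(x)$. Since $\varphi(x) = G$ a.e., $\varphi$-invariance is equivalent to $G$-invariance, and by the ergodicity of $G \actson (X,\nu)$ the only $G$-invariant functions are the constants. Hence the Mackey point realization of $L^{\infty}(X,\nu)^{\varphi}$ is the trivial one-point $G$-space, that is, $PRG(X,\nu)$ is trivial.

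Finally, a trivial $G$-space is tautologically weakly amenable: the only affine orbital space over a point is a fixed closed convex set with a trivial cocycle, so any choice of a point in it gives an invariant section. Applying Theorem \ref{T:PRGweakamen} with $PRG(X,\nu)$ weakly amenable then yields that $G \actson (X,\nu)$ is weakly amenable, completing the proof. There is no genuine obstacle here: the entire content is the observation that the density hypothesis forces the $PRG$ functor to kill everything, after which Theorem \ref{T:PRGweakamen} does all the work.
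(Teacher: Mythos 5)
Your proof is correct and is essentially the paper's own argument: the paper likewise observes that the density hypothesis makes $PRG(X,\nu)$ a quotient on which $G$ acts trivially, hence (by ergodicity) the one-point space, and then invokes Theorem \ref{T:PRGweakamen}. Your unpacking of $\varphi$-invariance as $G$-invariance is just a slightly more explicit phrasing of the same step, so there is nothing to add.
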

\begin{proof}
When both projections are dense almost everywhere, $PRG(X,\nu)$ is the quotient by $G_{1} \times G_{2}$ hence $G$ acts trivially on $PRG(X,\nu)$.  As $(X,\nu)$ is ergodic, so is $PRG(X,\nu)$ and therefore $PRG(X,\nu)$ is the trivial (one-point) space.  Clearly every group acts weakly amenably on the trivial space, so the conclusion follows by Theorem \ref{T:PRGweakamen}.
\end{proof}

\subsection{Irreducible Actions}

%
%

\begin{theorem}\label{T:bsnew}
Let $G_{1}$ and $G_{2}$ be locally compact second countable groups.  Set $G = G_{1} \times G_{2}$ and let $(X,\nu)$ be a measure-preserving $G$-space such that each $G_{j} \actson (X,\nu)$ ergodically for both $j=1,2$.  Then there exist normal subgroups $N_{1} \normal G_{1}$ and $N_{2} \normal G_{2}$ such that, setting $N = N_{1} \times N_{2}$, it holds that $G / N$ acts essentially freely on the space of $N$-ergodic components $\rpf{(X,\nu)}{N}$ and $N$ acts weakly amenably on almost every $N$-ergodic component.
\end{theorem}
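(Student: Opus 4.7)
My plan is to define $N_1$ and $N_2$ via the ergodicity hypothesis, verify the weak amenability condition on $N$-components directly from Theorem \ref{T:weakamendense}, and treat the essential freeness on $\rpf{X}{N}$ as the main obstacle, to be handled by a transfinite iteration.

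For the construction: since $G_{3-j}$ commutes with $G_j$ inside $G = G_1 \times G_2$, the map $x \mapsto \overline{\mathrm{proj}_{G_j}\stab_G(x)}$ is $G_{3-j}$-invariant, and by the assumed ergodicity of $G_{3-j}$ it is essentially constant with value a closed subgroup $N_j \subseteq G_j$.  Normality in $G_j$ follows from the transformation rule $\overline{\mathrm{proj}_{G_j}\stab_G(g_jx)} = g_j N_j g_j^{-1}$ together with its a.e.\ constancy: $g_j N_j g_j^{-1} = N_j$ for all $g_j \in G_j$.  Setting $N = N_1 \times N_2$, we have $\stab_G(x) \subseteq N$ almost everywhere.

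For weak amenability on components, let $\psi : (X,\nu) \to \rpf{X}{N}$ be the $N$-ergodic decomposition.  On almost every component $(\psi^{-1}(y), D_\psi(y))$ the group $N$ acts ergodically by construction; the containment $\stab_G(x) \subseteq N$ gives $\stab_N(x) = \stab_G(x)$, and these projections to $N_j$ equal $N_j$ essentially on the component.  Theorem \ref{T:weakamendense} applied to $N = N_1 \times N_2$ acting on the component then yields the desired weak amenability.

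The main obstacle is essential freeness of $G/N$ on $\rpf{X}{N}$: the action factors through $G/N$ so $\stab_G(y) \supseteq N$ a.e., but the reverse containment is nontrivial, since $\stab_G(\psi(x)) = \{g \in G : gx \text{ lies in the } N\text{-ergodic component of } x\}$ may a priori include elements outside $N$.  My plan is to resolve this by transfinite iteration: apply the first-paragraph construction to $\rpf{X}{N}$ to produce closed normal subgroups $N_j^{(1)} \supseteq N_j$, and if any strict inclusion occurs replace $N$ by $N^{(1)} = N_1^{(1)} \times N_2^{(1)}$ and repeat.  At each non-trivial stage the sub-$\sigma$-algebra of $L^\infty(X,\nu)$ associated to the current quotient strictly shrinks, so by separability of $L^2(X,\nu)$ the process terminates at a countable ordinal; at termination one has $\stab_G(y) \subseteq N$ a.e., giving essential freeness.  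Compatibility with the weak amenability conclusion is maintained inductively: each terminal $N$-component decomposes into components of the previous stage, and I would invoke Proposition \ref{P:weakamen} (orbital quotients preserve weak amenability) together with Proposition \ref{P:weakamenprod} (product construction) to propagate weak amenability upward through the transfinite tower.
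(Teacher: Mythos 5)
Your construction of $N_{1}$ and $N_{2}$, the normality argument, and the verification that the first-step $N$ acts weakly amenably on almost every $N$-ergodic component (density of $\mathrm{proj}_{G_{j}}\stab(x)$ in $N_{j}$ passes to almost every component, then Theorem \ref{T:weakamendense}) are exactly the paper's argument. Where you diverge is the freeness step: the paper disposes of it in one line, asserting $\stab(\pi(x)) = \stab(x)\cdot (N_{1}\times N_{2}) = N_{1}\times N_{2}$ almost everywhere, with no iteration. Your caution here is warranted: the reverse containment can genuinely fail for the first-step $N$. For instance, let $G = \mathbb{Z}\times\mathbb{Z}$ act on the circle by the rotations $\gamma$ and $1/2-\gamma$ with $\gamma$ irrational; then $\stab(x) = \{(2k,2k)\}$ everywhere, so $N = 2\mathbb{Z}\times 2\mathbb{Z}$, which already acts ergodically, and the single $N$-ergodic component has stabilizer all of $G$. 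So some enlargement of $N$, as in your iteration, is really needed. Your termination argument needs one small repair: the invariant subalgebra need not strictly shrink when the subgroup strictly grows, but if it does not shrink the next stage reproduces the same subgroup and the process halts, so the chain still stabilizes at a countable ordinal; and at the terminal stage the stabilizer of almost every point of $\rpf{X}{N}$ is squeezed between $N$ and the product of the closures of its projections, which is again $N$, so the essential freeness conclusion does follow from your scheme.

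The genuine gap is the other half of the conclusion for the terminal $N$: weak amenability on its components. Once $N$ has been enlarged, Theorem \ref{T:weakamendense} no longer applies on an $N$-ergodic component of $X$, because the stabilizers $\stab(x)$ project densely only into the initial $N_{j}$, not into the enlarged ones (in the rotation example, $2\mathbb{Z}$ versus $\mathbb{Z}$). The propagation you propose does not close this: Proposition \ref{P:weakamen} transfers weak amenability from the base to the extension only along \emph{orbital} maps, and the map from a component of $X$ to an intermediate quotient $\rpf{X}{N^{(k)}}$ is as far from orbital as possible (the stabilizer upstairs is $\stab(x)$, downstairs it contains all of $N^{(k)}$), while Proposition \ref{P:weakamenprod} concerns honest product actions $G_{1}\times G_{2} \actson X_{1}\times X_{2}$ and says nothing about this skew-product situation. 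What your induction actually requires is a relative statement: the $N$-orbit relation on a terminal component fibers over the $N$-relation on $\rpf{X}{N^{(k)}}$ (which is amenable, by Theorem \ref{T:weakamendense} applied at the quotient level) with fibers carrying the inductively amenable $N^{(k)}$-relation, and one must know that such an extension of an amenable equivalence relation by an amenable one is amenable. That ingredient is not among the results you cite, nor is it stated in the paper, so as written the weak amenability of the terminal $N$ on its components is unproven; to finish you would need either this relative amenability result or an argument in the spirit of Theorem \ref{T:PRGweakamen} carried out inside a single component.
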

\begin{proof}
Consider the functions $s_{j}(x) = \overline{\mathrm{proj}_{G_{j}}~\stab(x)}$ for each $j=1,2$.  Each $s_{j}$ is $G_{3-j}$-invariant so by ergodicity is constant almost surely.  Set $N_{j} = s_{j}(x)$.  Since $N_{j} = s_{j}(g_{j}x) = g_{j}s_{j}(x)g_{j}^{-1} = g_{j}N_{j}g_{j}^{-1}$ for any $g_{j} \in G_{j}$, we have that $N_{j} \normal G_{j}$.

Let $(Y,\eta)$ be the space of $N_{1} \times N_{2}$-ergodic components and let $\pi : (X,\nu) \to (Y,\eta)$ be the $G$-map.  Then $\stab(\pi(x)) = \stab(x) \cdot N_{1} \times N_{2} = N_{1} \times N_{2}$ almost everywhere so $G / (N_{1} \times N_{2})$ acts essentially freely on $(Y,\eta)$.

Now for almost every $x \in X$, we have that $\stab(x) < N_{1} \times N_{2}$ and that $\mathrm{proj}_{N_{j}}~\stab(x)$ is dense in $N_{j}$ for both $j=1,2$.  Since $N_{1} \times N_{2}$ acts ergodically on almost every ergodic component $y \in Y$, by Theorem \ref{T:weakamendense}, we have that $N_{1} \times N_{2}$ acts weakly amenably on $(\pi^{-1}(y),D_{\pi}(y))$.
\end{proof}

\begin{corollary}\label{C:bs1}
Let $G_{1}$ and $G_{2}$ be simple locally compact second countable groups.  Set $G = G_{1} \times G_{2}$ and let $(X,\nu)$ be a measure-preserving $G$-space such that each $G_{j} \actson (X,\nu)$ ergodically for both $j=1,2$.  Then either $G \actson (X,\nu)$ is essentially free or $G \actson (X,\nu)$ weakly amenably.
\end{corollary}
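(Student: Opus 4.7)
The plan is to derive this directly from Theorem \ref{T:bsnew}, which provides closed normal subgroups $N_1 \normal G_1$ and $N_2 \normal G_2$ (explicitly, $N_j = \overline{\mathrm{proj}_{G_j}\,\stab(x)}$ almost surely) such that, writing $N = N_1 \times N_2$, the quotient $G/N$ acts essentially freely on $\rpf{X}{N}$ while $N$ acts weakly amenably on almost every $N$-ergodic component. Since each $G_j$ is simple, the only closed normal subgroups are $\{e\}$ and $G_j$, so there are only four cases to dispatch.

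The two ``pure'' cases will be immediate. When $N_1 = N_2 = \{e\}$, we have $N = \{e\}$, so $\rpf{X}{N} = (X,\nu)$ and the statement of Theorem \ref{T:bsnew} gives that $G = G/N$ acts essentially freely. When $N_1 = G_1$ and $N_2 = G_2$, we have $N = G$; since $G$ acts ergodically on $(X,\nu)$, the space $\rpf{X}{N}$ is a point, and Theorem \ref{T:bsnew} gives that $N = G$ acts weakly amenably on the single component $(X,\nu)$.

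The substantive step will be handling the mixed cases, which I claim are degenerate. Consider (by symmetry) the case $N_1 = \{e\}$, $N_2 = G_2$. Then $\stab(x) \subseteq \{e\} \times G_2$ almost everywhere, so I can write $\stab(x) = \{e\} \times H_x$ with $H_x \leq G_2$ closed (because $\stab(x)$ is closed in $G$). The defining property $\overline{\mathrm{proj}_{G_2}\,\stab(x)} = N_2 = G_2$ then reads $\overline{H_x} = G_2$, which together with closedness forces $H_x = G_2$. Thus $\{e\} \times G_2 \subseteq \stab(x)$ almost everywhere, i.e.\ $G_2$ fixes $\nu$-almost every point. Combined with the hypothesis that $G_2$ acts ergodically on $(X,\nu)$, this forces $(X,\nu)$ to be a single atom. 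On a one-point space the orbit equivalence relation is trivial and hence amenable, so by Theorem \ref{T:amener} the action is weakly amenable, giving the second alternative. The case $N_1 = G_1$, $N_2 = \{e\}$ is handled identically.

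The main (and only) subtle point will be this collapse of the mixed case: one must observe that the almost-sure density of the projection, together with the closedness of stabilizer subgroups, upgrades ``dense projection'' to ``equal to the full factor,'' after which ergodicity of the corresponding factor trivializes $X$. Once this is in place, the corollary reduces to transcribing the conclusion of Theorem \ref{T:bsnew} in each of the four simple cases.
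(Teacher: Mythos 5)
Your proposal is correct and takes essentially the same route as the paper: invoke Theorem \ref{T:bsnew}, use simplicity of $G_{1}$ and $G_{2}$ to reduce to the four possibilities for $N = N_{1} \times N_{2}$, and use ergodicity of each factor to read off the dichotomy. The only (harmless) divergence is in the mixed cases, which the paper dismisses by noting that $G/N$ would have to act essentially freely on the one-point space of $N$-ergodic components, forcing $N = G$, whereas you collapse them by using the explicit construction $N_{j} = \overline{\mathrm{proj}_{G_{j}}~\stab(x)}$ from the proof of Theorem \ref{T:bsnew} to show $(X,\nu)$ is trivial and hence weakly amenable --- both arguments are valid.
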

\begin{proof}
Assume the action is not essentially free.
By Theorem \ref{T:bsnew}, as $G_{1}$ and $G_{2}$ are simple, there exists $N \normal G$ of the form $N = \{ e \}$, $N = G_{1} \times \{ e \}$, $N = \{ e \} \times G_{2}$ or $N = G$ such that $G / N$ acts essentially freely on the space of $N$-ergodic components and $N$ acts weakly amenably on almost every ergodic component.  Since the action is not essentially free, $N$ is not the trivial group.  Since both $G_{j}$ act ergodically, $G / N$ acts essentially freely on the trivial space meaning that $N = G$.  Therefore $G$ acts weakly amenably on the only $N$-ergodic component which is $(X,\nu)$ itself.
\end{proof}

\subsection[Weakly Amenable Actions and Property (T)]{Weakly Amenable Actions and Property $(T)$}

We now show how the presence of property $(T)$ in only one of the two groups in the product is enough to rule out weakly amenable actions that are not essentially transitive.  We begin with some basic facts about such actions and then employ resolutions to rule them out.

\begin{proposition}\label{P:aivecs}
Let $G$ be a locally compact second countable group and $(X,\nu)$ an ergodic measure-preserving $G$-space such that $G \actson (X,\nu)$ weakly amenably and not essentially transitively.  Then there exists a sequence of almost invariant (but not invariant) vectors in $L_{0}^{2}(X,\nu)$, the subspace of $L^{2}(X,\nu)$ orthogonal to the constants.
\end{proposition}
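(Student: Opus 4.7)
The plan is to pass from weak amenability to amenability of the orbit equivalence relation $R$ via Theorem \ref{T:amener}, realize $R$ as the orbit relation of a free ergodic measure-preserving action of $\mathbb{Z}$ or $\mathbb{R}$ through the Connes-Feldman-Weiss theorem (Theorem \ref{T:cfw}), and build the almost invariant vectors as normalized indicators of long Rokhlin towers in that action.

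First I would note that the hypotheses force $(X,\nu)$ to be non-atomic and the $R$-orbits to be infinite almost surely: otherwise ergodicity of the measure-preserving action would yield a single finite orbit of full measure, contradicting non-transitivity. Theorem \ref{T:amener} then gives that $R$ is amenable, and Theorem \ref{T:cfw} produces a free ergodic measure-preserving action $T$ of $\mathbb{Z}$ (discrete case) or $\mathbb{R}$ (non-discrete case) whose orbit relation coincides with $R$. Applying the Rokhlin lemma for $T$, for each $n$ I would obtain a measurable base $B_n$ such that the family $\{T_s B_n : 0 \leq s < L_n\}$ is disjoint with total $\nu$-measure at least $1 - 1/n$, for heights $L_n \to \infty$. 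Set $S_n = \bigcup_{0 \leq s < L_n/2} T_s B_n$, so $\nu(S_n) \to 1/2$, and define
\[
f_n = \frac{\mathbf{1}_{S_n} - \nu(S_n)}{\sqrt{\nu(S_n)(1 - \nu(S_n))}} \in L_{0}^{2}(X,\nu),
\]
which is a unit vector with mean zero.

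To verify almost invariance, I would fix $g \in G$ and introduce the measurable, almost everywhere finite cocycle $\alpha_g : X \to \mathbb{Z}$ (resp.\ $\mathbb{R}$) determined by $g^{-1}x = T_{\alpha_g(x)} x$. For a tower point $x = T_s b$ with $s \in [0, L_n/2)$, its image $g^{-1}x = T_{s + \alpha_g(x)} b$ still lies in $S_n$ whenever $s + \alpha_g(x) \in [0, L_n/2)$, and this holds whenever $|\alpha_g(x)| \leq K$ and $s \in [K, L_n/2 - K)$. Consequently the set of $x \in S_n$ with $g^{-1}x \notin S_n$ is contained in $\{|\alpha_g| > K\} \cup \bigcup_{s \in [0,K) \cup [L_n/2 - K, L_n/2)} T_s B_n$, whence
\[
\nu(S_n \triangle gS_n) \leq 2\,\nu(\{|\alpha_g| > K\}) + 4K/L_n.
\]
Letting $n \to \infty$ with $K$ fixed and then $K \to \infty$ forces $\nu(S_n \triangle gS_n) \to 0$, and since the normalizing denominator tends to $1/2$, this gives $\|U_g f_n - f_n\|_2 \to 0$. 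The vectors $f_n$ are not $G$-invariant because ergodicity rules out any nonzero $G$-invariant vector in $L_{0}^{2}(X,\nu)$.

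The main subtlety lies in the non-discrete case, where one invokes the Ambrose-Kakutani representation of free ergodic flows to produce the Rokhlin towers and the cocycle $\alpha_g$ is real-valued rather than integer-valued; the estimate above nonetheless carries through verbatim, since its only input is the almost everywhere finiteness of $\alpha_g$.
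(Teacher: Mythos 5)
Your strategy --- pass to the hyperfinite model via Theorems \ref{T:amener} and \ref{T:cfw} and manufacture asymptotically invariant sets from Rokhlin towers --- is genuinely different from the paper's argument, which never builds towers: there, non-transitivity is converted (via Krasa's theorem, and del Junco--Rosenblatt in the countable case) into the existence of more than one invariant mean for the orbit relation, and Rosenblatt's theorem then supplies an approximately invariant net $(A_{\alpha})$ confined to the complement of a fixed positive-measure set $E$; the vectors are the normalized centered indicators of the $A_{\alpha}$, with $\nu(E)>0$ controlling the normalization. Your construction is more explicit (it even pins $\nu(S_{n})\to 1/2$), the preliminary reduction (non-atomicity and infinite orbits from non-transitivity), the measurability of $\alpha_{g}$, the treatment of $gS_{n}\setminus S_{n}$ via $\alpha_{g^{-1}}$, and the non-invariance via ergodicity are all fine, and the estimate $\nu(S_{n}\triangle gS_{n})\leq 2\nu(\{|\alpha_{g}|>K\})+4K/L_{n}$ is correct granted the tower structure.

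The gap is where you write that Theorem \ref{T:cfw} ``produces a free ergodic \emph{measure-preserving} action $T$.'' The theorem as quoted only gives an orbit equivalence with a free ergodic action of $\mathbb{Z}$ or $\mathbb{R}$; it does not assert that this action preserves the transported measure, and your argument genuinely needs that: the bound $4K/L_{n}$ for the union of $2K$ tower levels uses that all levels have equal $\nu$-measure, i.e. that $T$ preserves $\nu$, and the Rokhlin/Ambrose--Kakutani lemma you invoke is itself a statement about measure-preserving aperiodic transformations or flows. For discrete $G$ this is repairable by a standard remark: the orbit relation is then a countable measure-preserving relation (trivial Radon--Nikodym cocycle), so any transformation whose graph lies in it automatically preserves $\nu$. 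For non-discrete $G$ this fails: a map of the form $x\mapsto T_{c(x)}x$ moving points inside the orbits of a measure-preserving flow need not preserve $\nu$ (time changes already show this), so ``graph contained in the relation'' does not yield invariance, and one needs the stronger, type $\mathrm{II}$ refinement of Connes--Feldman--Weiss/Dye (or a cross-section plus flow-under-a-function argument) saying the relation is generated by a measure-preserving free ergodic flow. That refinement is true, but it is a genuine additional input not contained in Theorem \ref{T:cfw} as stated, and the proposal neither proves nor cites it; by contrast, the paper's invariant-mean route sidesteps the issue entirely, since uniqueness of invariant means and the Rosenblatt net are invariants of the relation and require no measure-preserving model.
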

\begin{proof}
Since $G \actson (X,\nu)$ is weakly amenable, by the Connes-Feldman-Weiss theorem (Theorem \ref{T:cfw}), it is orbit equivalent to an action of $\mathbb{Z}$ or $\mathbb{R}$ on a probability space $(Y,\eta)$.  Krasa \cite{krasa} has shown that if a group $H$ is amenable as a discrete group (which both $\mathbb{Z}$ and $\mathbb{R}$ are) and there is a unique invariant mean on $L^{\infty}(Y,\eta)$ then there exists a positive measure orbit (when $H$ is countable, this is due to del Junco and Rosenblatt \cite{djr}).  Clearly the uniqueness of an invariant mean and the existence of a positive measure orbit are characteristics of the equivalence relation, so we conclude that if $G \actson (X,\nu)$ has a unique invariant mean then the action is essentially transitive (using ergodicity, the positive measure orbit is of full measure).

Rosenblatt \cite{rosenblatt} (Theorem 1.4) showed that if $G \actson (X,\nu)$ admits more than one invariant mean then there exists a positive measure set $E \subseteq X$ and an approximately invariant net $(A_{\alpha})$ of measurable sets such that $A_{\alpha} \subseteq X \setminus E$ for all $\alpha$.  Approximately invariant means that for all $g \in G$ it holds that
$\lim_{\alpha} (\nu(A_{\alpha}))^{-1} \nu(g^{-1}A_{\alpha} \symdiff A_{\alpha}) = 0$.

Define the functions $f_{\alpha} = \bbone_{A_{\alpha}} - \nu(A_{\alpha}) \in L_{0}^{2}(X,\nu)$.  Then $\| f_{\alpha} \|_{2}^{2} = \nu(A_{\alpha})(1 - \nu(A_{\alpha}))$.  For $g \in G$,
$\int \big{|}g\cdot f_{\alpha} - f_{\alpha}\big{|}^{2}~d\nu = \nu(g^{-1}A_{\alpha} \symdiff A_{\alpha})$.
Let $q_{\alpha} = \| f_{\alpha} \|_{2}^{-1}f_{\alpha}$.  Then for $g \in G$,
\[
\| g \cdot q_{\alpha} - q_{\alpha} \|_{2}^{2} = \frac{\nu(g^{-1}A_{\alpha} \symdiff A_{\alpha})}{\nu(A_{\alpha})(1 - \nu(A_{\alpha}))} \leq \frac{\nu(g^{-1}A_{\alpha} \symdiff A_{\alpha})}{\nu(A_{\alpha})} \frac{1}{\nu(E)} \to 0
\]
since $\nu(E) > 0$ and $A_{\alpha} \subseteq X \setminus E$.  So the $\{ q_{\alpha} \}$ are almost invariant (norm one) vectors.

The reader is referred to Hjorth and Kechris \cite{hjorthkechris} Appendix A for a detailed account of the theory of nonuniqueness of invariant means for equivalence relations arising from group actions.
\end{proof}

\begin{proposition}\label{P:resolutionsprod}
Let $G = G_{1} \times G_{2}$ be a product of two locally compact second countable groups such that $G_{2}$ has property $(T)$.
Let $\pi : G \to \mathcal{U}(\mathcal{H})$ be a unitary representation of $G$ on a Hilbert space that has almost invariant vectors that are not invariant.  Then $\pi$ restricted to the space of $G_{2}$-invariant, but not $G_{1}$-invariant, vectors has almost invariant vectors (as $G$-, hence as a $G_{1}$-) representation.
\end{proposition}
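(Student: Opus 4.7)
The plan is to reduce the statement to Proposition~\ref{P:resolutions}: since $G_{2}$ has property $(T)$, the projection $p := \mathrm{proj}_{G_{1}} : G \to G_{1}$ is a resolution. Writing $\mathcal{H}_{2} \subseteq \mathcal{H}$ for the closed subspace of $G_{2}$-invariant vectors, the bulk of the work is to identify $\mathcal{H}^{G_{1}}$ (the $(G_{1},p)$-points of $\mathcal{H}$) with $\mathcal{H}_{2}$; granting this, the conclusion follows by packaging the resolution property.

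First I would prove $\mathcal{H}^{G_{1}} = \mathcal{H}_{2}$. If $v \in \mathcal{H}_{2}$, then $\pi(g)v = \pi(\mathrm{proj}_{1}(g))v$ for every $g \in G$, so strong continuity of $\pi|_{G_{1}}$ shows $g \mapsto \pi(g)v$ factors through $p$, giving $v \in \mathcal{H}^{G_{1}}$. Conversely, given $v \in \mathcal{H}^{G_{1}}$ and $b \in G_{2}$, consider the net $(g_{n})_{n \in \mathbb{N}}$ with $g_{2n} = (e,e)$ and $g_{2n+1} = (e,b)$. Then $p(g_{n}) = e$ is constant and in particular convergent, so the factoring property forces $\pi(g_{n})v$ to converge; but this sequence alternates between $v$ and $\pi(b)v$, so $\pi(b)v = v$ and hence $v \in \mathcal{H}_{2}$.

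Finally I would pass to the orthogonal complement of the $G$-fixed vectors: set $\mathcal{H}_{0} := \mathcal{H} \ominus \mathcal{H}^{G}$, so $\pi_{0} := \pi|_{\mathcal{H}_{0}}$ still has almost invariant vectors by hypothesis. Applying Proposition~\ref{P:resolutions} to $\pi_{0}$ produces almost invariant vectors $v_{n}$ for the $G_{1}$-representation $\pi_{0}^{G_{1}}$ on $\mathcal{H}_{0}^{G_{1}}$. The identification above applied to $\mathcal{H}_{0}$ gives $\mathcal{H}_{0}^{G_{1}} = \mathcal{H}_{2} \ominus \mathcal{H}^{G}$; and within $\mathcal{H}_{2}$ the relation $\pi(g)v = \pi(\mathrm{proj}_{1}(g))v$ shows that $G_{1}$-invariance coincides with $G$-invariance, so $\mathcal{H}_{2} \ominus \mathcal{H}^{G}$ is exactly the space of $G_{2}$-invariant but not $G_{1}$-invariant vectors. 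The $v_{n}$ are moreover $G$-almost invariant: for $g = (a,b)$ the $G_{2}$-invariance gives $\pi(g)v_{n} = \pi(a)v_{n} = \pi^{G_{1}}(a)v_{n}$, and $\|\pi^{G_{1}}(a)v_{n} - v_{n}\| \to 0$ pointwise in $a$ by construction. The one delicate step is the identification $\mathcal{H}^{G_{1}} = \mathcal{H}_{2}$, handled by the alternating-net argument; everything else is a direct packaging of Proposition~\ref{P:resolutions}.
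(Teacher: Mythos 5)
Your proposal is correct and follows essentially the same route as the paper: reduce to the complement of the $G$-invariant vectors, use that $\mathrm{proj}_{1}$ is a resolution, and identify the $(G_{1},\mathrm{proj}_{1})$-points with the $G_{2}$-invariant vectors via continuity in one direction and the same alternating-sequence argument in the other. Your final check that the resulting vectors are almost invariant for all of $G$ (not just $G_{1}$) is a point the paper leaves implicit, but otherwise the two arguments coincide.
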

\begin{proof}
Without loss of generality, we may assume that $\mathcal{H}$ has no $G$-invariant vectors by simply restricting $\pi$ to the complement of the invariant vectors.
By Proposition \ref{P:resolutions}, the projection map $\mathrm{proj}_{1} : G \to G_{1}$ is a resolution since $G_{2}$ has property $(T)$.
The space of $(G_{1},\mathrm{proj}_{1})$-points in $\mathcal{H}$, denoted $\mathcal{H}^{G_{1}}$, is a closed $G$-invariant subspace (Proposition \ref{P:closedQpoints}).  
Since $\pi$ has almost invariant vectors and $\mathrm{proj}_{1}$ is a resolution, $\pi^{G_{1}} : G_{1} \to \mathcal{H}^{G_{1}}$ also has almost invariant vectors.  

Observe that if $v \in \mathcal{H}$ is $G_{2}$-invariant and if $\{ g_{n} \}$ is any sequence in $G$ such that $\mathrm{proj}_{1}~g_{n}$ converges to some $g_{\infty} \in G_{1}$ then $\pi(g_{n})v = \pi(\mathrm{proj}_{1}~g_{n})v \to \pi(g_{\infty})v$ since $\pi$ is a continuous.  Therefore the space of $G_{2}$-invariant vectors is contained in $\mathcal{H}^{G_{1}}$.
Suppose now that for some $v \in \mathcal{H}$ there exists $h \in G_{2}$ such that $\pi(h)v \ne v$.    Consider the sequence $\{ g_{n} \}$ in $G$ given by $g_{n} = e$ for $n$ even and $g_{n} = h$ for $n$ odd.  Then $\mathrm{proj}_{1}~g_{n} = e$ for all $n$ which converges in $G_{1}$ but $\pi(g_{n})v = v$ for $n$ even and $\pi(g_{n})v = \pi(h)v \ne v$ for $n$ odd.  Therefore $v \notin \mathcal{H}^{G_{1}}$.  So we conclude that the space of $G_{1}$-points is precisely the space of $G_{2}$-invariant vectors.
\end{proof}

\begin{theorem}\label{T:resolutions}
Let $G = G_{1} \times G_{2}$ be a product of two locally compact second countable groups such that $G_{2}$ has property $(T)$.  Let $(X,\nu)$ be an ergodic measure-preserving $G$-space such that $G \actson (X,\nu)$ weakly amenably and not essentially transitively.  Let $\mathcal{H}$ be the subspace of $L^{2}(X,\nu)$ consisting of the $G_{2}$-invariant functions that are not $G$-invariant.  Then there exists a sequence of almost invariant vectors in $\mathcal{H}$.
\end{theorem}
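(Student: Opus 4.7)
The plan is to deduce this directly from Propositions \ref{P:aivecs} and \ref{P:resolutionsprod}, with essentially no extra work beyond matching up the spaces. First I would apply Proposition \ref{P:aivecs}: since $G \actson (X,\nu)$ is weakly amenable and not essentially transitive, it produces a sequence of almost invariant (but not invariant) vectors in $L_0^2(X,\nu)$, the orthogonal complement of the constants. By ergodicity of the $G$-action, the only $G$-invariant functions in $L^2(X,\nu)$ are the constants, so $L_0^2(X,\nu)$ contains no nonzero $G$-invariant vectors. Hence the unitary representation $\pi : G \to \mathcal{U}(L_0^2(X,\nu))$ has almost invariant vectors but no invariant vectors.

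Next I would feed $\pi$ into Proposition \ref{P:resolutionsprod}. Since $G_2$ has property $(T)$, the projection $\mathrm{proj}_1 : G \to G_1$ is a resolution (Proposition \ref{P:resolutions}), and the proposition supplies a sequence of vectors in the closed subspace of $G_2$-invariant vectors in $L_0^2(X,\nu)$ that is almost invariant as a $G$-representation (since $G_2$-invariance plus almost $G_1$-invariance is almost $G$-invariance).

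Finally I would identify this $G_2$-invariant subspace of $L_0^2(X,\nu)$ with $\mathcal{H}$. Since the only $G$-invariant elements of $L^2(X,\nu)$ are the constants, a $G_2$-invariant function fails to be $G$-invariant exactly when it has nonzero projection to $L_0^2(X,\nu)$; equivalently, $\mathcal{H}$ coincides with the closed subspace of $G_2$-invariant vectors in $L_0^2(X,\nu)$. The almost invariant vectors produced in the previous step therefore lie in $\mathcal{H}$, which is what was to be shown.

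There is no serious obstacle in this argument, as all of the substantive work is packaged into the two cited propositions: Proposition \ref{P:aivecs} invokes Connes--Feldman--Weiss together with the Krasa/Rosenblatt nonuniqueness of invariant means to extract almost invariant vectors from a weakly amenable but not transitive action, and Proposition \ref{P:resolutionsprod} uses de Cornulier's resolutions to move the almost invariance into the $G_2$-fixed subspace. The only subtlety is the bookkeeping identification of ``$G_2$-invariant, not $G$-invariant'' with ``$G_2$-invariant inside $L_0^2$,'' which is immediate from ergodicity.
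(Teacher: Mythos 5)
Your proposal is correct and follows essentially the same route as the paper: apply Proposition \ref{P:aivecs} to produce almost invariant vectors in $L_{0}^{2}(X,\nu)$, then use Proposition \ref{P:resolutionsprod} (via the resolution coming from $G_{2}$ having property $(T)$) to land them in the space of $G_{2}$-invariant, non-$G$-invariant functions. The only difference is that you spell out the identification of that space with the $G_{2}$-invariant part of $L_{0}^{2}(X,\nu)$ via ergodicity, a bookkeeping step the paper leaves implicit.
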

\begin{proof}
By Proposition \ref{P:aivecs} there is a sequence of almost invariant vectors in $L^{2}(X,\nu)$ that are not invariant.  Since $G_{2}$ has property $(T)$,
by Proposition \ref{P:resolutionsprod}, there is a sequence of almost invariant vectors in the space of $G_{2}$-invariant but not $G_{1}$-invariant functions.
\end{proof}

\subsection[Actions of Products of Groups, at least one with Property (T)]{Actions of Products of Groups, at least one with Property $(T)$}


\begin{corollary}\label{C:bsnew2}
Let $G_{1}$ be a simple locally compact second countable group with property $(T)$ and let $G_{2}$ be any locally compact second countable group.  Set $G = G_{1} \times G_{2}$ and let $(X,\nu)$ be a measure-preserving $G$-space such that each $G_{j} \actson (X,\nu)$ ergodically for both $j=1,2$.  Then either the kernel of the $G$-action is of the form $N = \{ e \} \times N_{2}$ for some $N_{2} \normal G_{2}$ and the $G/N$-action on $(X,\nu)$ is essentially free or else $G \actson (X,\nu)$ is essentially transitive.
\end{corollary}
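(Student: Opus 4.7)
Plan: Assuming for contradiction that $G\actson(X,\nu)$ is not essentially transitive, I will apply Theorem~\ref{T:bsnew} to produce normal subgroups $N_{1}\normal G_{1}$ and $N_{2}\normal G_{2}$ with $N=N_{1}\times N_{2}$, then use the simplicity of $G_{1}$ to split into two cases, ruling one out via the property $(T)$ of $G_{1}$ and identifying the kernel in the other. Theorem~\ref{T:bsnew} (and its proof) yields $N_{j}$ satisfying $\overline{\mathrm{proj}_{G_{j}}\stab(x)}=N_{j}$ almost everywhere, with $G/N$ acting essentially freely on $\rpf{X}{N}$ and $N$ acting weakly amenably on almost every $N$-ergodic component. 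Since $G_{1}$ is simple, $N_{1}\in\{\{e\},G_{1}\}$.

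In the case $N_{1}=G_{1}$, because $G_{1}\subseteq N$ acts ergodically on $(X,\nu)$, so does $N$, and $\rpf{X}{N}$ is a one-point space; essential freeness of the $G/N$-action on this point forces $G/N=\{e\}$, so $N=G$ and $N_{2}=G_{2}$. Thus $G$ itself acts weakly amenably on $(X,\nu)$. Invoking the variant of Theorem~\ref{T:resolutions} obtained by interchanging the roles of $G_{1}$ and $G_{2}$ (valid since $G_{1}$ has property $(T)$), a weakly amenable but not essentially transitive $G$-action would produce a sequence of almost invariant vectors in $\mathcal{H}=L^{2}(X,\nu)^{G_{1}}\ominus L^{2}(X,\nu)^{G}$. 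But ergodicity of $G_{1}$ on $(X,\nu)$ forces $L^{2}(X,\nu)^{G_{1}}$ to reduce to the constants, which coincide with $L^{2}(X,\nu)^{G}$; hence $\mathcal{H}=\{0\}$, contradicting the existence of nonzero almost invariant vectors. So this case cannot occur.

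In the case $N_{1}=\{e\}$, one has $\stab(x)\subseteq\{e\}\times G_{2}$ almost everywhere, so write $\stab(x)=\{e\}\times H_{x}$ with $H_{x}\leq G_{2}$ closed (because $\stab(x)$ is closed and the second-coordinate projection restricts to a homeomorphism on $\{e\}\times G_{2}$). Since $H_{x}=\mathrm{proj}_{G_{2}}\stab(x)$ is closed in $G_{2}$ and dense in $N_{2}$, it follows that $H_{x}=N_{2}$ almost surely, and therefore $\stab(x)=\{e\}\times N_{2}$ almost everywhere. The kernel of $G\actson(X,\nu)$ is then exactly $\{e\}\times N_{2}$, and the induced action of $G/(\{e\}\times N_{2})=G_{1}\times(G_{2}/N_{2})$ has trivial stabilizers almost everywhere, giving the desired essentially free quotient action. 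The main obstacle is handling the case $N_{1}=G_{1}$: one must recognize that ergodicity of $G_{1}$ collapses $\rpf{X}{N}$ to a single point, and then deploy the property-$(T)$ machinery underlying Theorem~\ref{T:resolutions} (in its symmetric form) to upgrade weak amenability plus ergodicity on both factors to essential transitivity.
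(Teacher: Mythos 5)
Your proposal is correct and follows essentially the same route as the paper: apply Theorem \ref{T:bsnew}, use simplicity of $G_{1}$ to split into $N_{1}=\{e\}$ or $N_{1}=G_{1}$, identify the kernel as $\{e\}\times N_{2}$ in the first case, and in the second use weak amenability together with Theorem \ref{T:resolutions} (with the roles of the factors interchanged) and $G_{1}$-ergodicity to force essential transitivity. The only cosmetic difference is that you pin down $\stab(x)=\{e\}\times N_{2}$ via the identity $\overline{\mathrm{proj}_{G_{j}}\,\stab(x)}=N_{j}$ from the proof of Theorem \ref{T:bsnew}, while the paper gets $\stab(x)\subseteq N$ from essential freeness on the $N$-ergodic components and then invokes $G_{1}$-ergodicity; these are interchangeable.
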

\begin{proof}
By Theorem \ref{T:bsnew}, there exists $N_{1} \normal G_{1}$ and $N_{2} \normal G_{2}$ such that, setting $N = N_{1} \times N_{2}$, we have that $G / N$ acts essentially freely on the space of $N$-ergodic components and $N$ acts weakly amenably on almost every ergodic component.  Since $G_{1}$ is simple, either $N_{1}$ is trivial or $N_{1} = G_{1}$.

Consider the case when $N_{1}$ is trivial.  Then $G / N = G_{1} \times (G_{2} / N_{2})$ acts essentially freely on the space of $N$-ergodic components.  Let $\pi : (X,\nu) \to (Y,\eta)$ be the $G$-map to the space of $N$-ergodic components.  Since $G / N \actson (X,\nu)$ essentially freely, $\stab(x) < N = \{ e \} \times N_{2}$ almost surely.  Therefore $\{ e \} \times \overline{\mathrm{proj}_{G_{2}}~\stab(x)} = \stab(x)$.  But $\{ e \} \times \overline{\mathrm{proj}_{G_{2}}~\stab(x)}$ is $G_{1}$-invariant hence constant by ergodicity.  Therefore $\stab(x)$ is constant almost surely meaning that $\stab(x) = \ker(G \actson (X,\nu))$ which is of the form $\{ e \} \times N_{2}^{\prime}$ for some $N_{2}^{\prime} \normal G_{2}$.

Now consider the case when $N_{1} = G_{1}$.  Since $G_{1}$ acts ergodically, the space of $N$-ergodic components is trivial and $(X,\nu)$ is the only $N$-ergodic component.  As $G / N$ acts essentially freely on the space of $N$-ergodic components, which is trivial, then $N = G$ so $N_{2} = G_{2}$.
So $N = G$ acts weakly amenably on $(X,\nu)$.  Suppose that $G$ does not act essentially transitively on $(X,\nu)$.
Then, by Theorem \ref{T:resolutions}, the space of $G_{1}$-invariant but not $G_{2}$-invariant functions in $L^{2}(X,\nu)$ has almost invariant vectors.  But $G_{1}$ acts ergodically on $(X,\nu)$ so the only $G_{1}$-invariant functions are the constants which are themselves $G_{2}$-invariant.  This contradiction means that $G$ acts essentially transitively.
\end{proof}

\begin{corollary}\label{C:bsnew4}
Let $G$ be a product of at least two simple noncompact locally compact second countable groups, at least one with property $(T)$ and let $(X,\nu)$ be a measure-preserving $G$-space that is ergodic for each simple factor of $G$.  Then $G \actson (X,\nu)$ is either essentially free or essentially transitive.
\end{corollary}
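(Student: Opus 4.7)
The plan is to reduce to the two-factor case already established in Corollary \ref{C:bsnew2}. Relabel so that $G_{1}$ is a factor with property $(T)$ and group the remaining factors together as $H = G_{2} \times \cdots \times G_{k}$. The first step is to verify that $H \actson (X,\nu)$ is ergodic, which is immediate: any $H$-invariant set is in particular $G_{2}$-invariant and hence null or conull by the ergodicity of the single factor $G_{2}$. The pair $(G_{1},H)$ then satisfies the hypotheses of Corollary \ref{C:bsnew2}, so that corollary yields a dichotomy: either $G \actson (X,\nu)$ is essentially transitive (in which case we are done), or the kernel of the action has the form $\{e\} \times N$ for some closed normal subgroup $N \normal H$ with $G/(\{e\} \times N)$ acting essentially freely, so that almost every stabilizer equals $\{e\} \times N$.

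In the second case, it remains to show $N = \{e\}$. For each $j \in \{2, \ldots, k\}$, viewing $G_{j}$ as the $j$-th coordinate subgroup of $H$, the intersection $N \cap G_{j}$ is a closed normal subgroup of the simple group $G_{j}$, hence is either trivial or all of $G_{j}$. If $N \cap G_{j} = G_{j}$ for some $j$, then $G_{j}$ lies in the kernel and acts trivially on $X$; combined with the ergodicity of $G_{j}$, this forces $X$ to be a single point, giving essential transitivity and completing the proof. So assume $N \cap G_{j} = \{e\}$ for every $j \geq 2$. For $n \in N$ and $h \in G_{j}$, the commutator $[n,h]$ lies in $N$ by normality and in $G_{j}$ by direct componentwise computation in $H$, so $[n,h] \in N \cap G_{j} = \{e\}$. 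This forces the $j$-th coordinate $n_{j}$ of $n$ to lie in $Z(G_{j})$; since every noncompact simple locally compact second countable group is non-abelian with trivial center, $n_{j} = e$, and as this holds for every $j$ we conclude $n = e$, so $N = \{e\}$ and $G \actson (X,\nu)$ is essentially free.

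The main obstacle is really only the bookkeeping of selecting the correct two-factor grouping so that Corollary \ref{C:bsnew2} applies cleanly; once that reduction is made, the structural analysis of $N$ as a closed normal subgroup of a product of centerless simple groups is a routine commutator computation, and no further appeal to boundary theory or property $(T)$ machinery is needed beyond what is already invoked inside Corollary \ref{C:bsnew2}.
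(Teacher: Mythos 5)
Your proof is correct and follows essentially the same route as the paper: group the factors as $G_{1}\times H$ with $G_{1}$ having property $(T)$, apply Corollary \ref{C:bsnew2}, and then kill the kernel $N \normal H$ via the commutator trick together with the triviality of the center of a noncompact simple factor (the paper phrases this as: a nontrivial $N$ forces some factor $H_{j}$ to lie in the kernel, so that $H_{j}$ acts trivially and ergodically and $(X,\nu)$ is a point). Your explicit check that $H$ acts ergodically and your case split on $N \cap G_{j}$ are only cosmetic reorganizations of the same argument.
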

\begin{proof}
Write $G = G_{1} \times H$ where $G_{1}$ is simple and has property $(T)$ and $H = \prod_{j} H_{j}$ is a product of simple groups.
By Corollary \ref{C:bsnew2}, either the kernel of the $G$-action is of the form $\{ e \} \times N$ for some $N \normal H$ and the $G/N$-action on $(X,\nu)$ is essentially free or else $G \actson (X,\nu)$ is essentially transitive.  Therefore, the only thing to check is that if the kernel is of the form $\{ e \} \times N$ then $N$ is necessarily trivial (making the $G$-action on $(X,\nu)$ essentially free).

Suppose then that the kernel is $\{ e \} \times N$ for $N$ nontrivial.
First note that if $H$ is a single simple group then $N = H$ in which case $H$ acts ergodically and trivially on $(X,\nu)$ making it trivial.  So we may assume that $H$ has at least two factors.  Since $N$ is nontrivial, it has nontrivial projection to some simple factor of $H$.  Without loss of generality, we assume that $N$ projects nontrivially to $H_{1}$.

Let $n \in N$ such that $\mathrm{proj}_{H_{1}}~n \ne e$ and let $h_{1} \in H_{1}$.  Write $n = (n_{1},n_{2})$ where $n_{1} \in H_{1}$ and $n_{2} \in \prod_{j \ne 1} H_{j}$.  Then $h_{1}nh_{1}^{-1}n^{-1} = (h_{1}n_{1}h_{1}^{-1}n_{1}^{-1},e) \in H_{1}$ and also, as $N$ is normal, $h_{1}nh_{1}^{-1}n^{-1} \in N$.  Now, if $n_{1} \notin Z(H_{1})$ then there exists $h_{1} \in H_{1}$ such that $h_{1}n_{1}h_{1}^{-1}n_{1}^{-1}$ is nontrivial.  Since $H_{1}$ is simple and noncompact, $Z(H_{1})$ is trivial.  Therefore, there exists $h_{1}nh_{1}^{-1}n^{-1} \in N \cap H_{1}$ that is nontrivial.  Since $N \cap H_{1} \normal H_{1}$, then $N \cap H_{1} = H_{1}$.  Hence $H_{1}$ acts trivially on $(X,\nu)$ and ergodically meaning that $(X,\nu)$ is trivial.
\end{proof}

\subsection[Actions of Products of Property (T) Groups]{Actions of Products of Property $(T)$ Groups}

\begin{corollary}\label{C:actprodT}
Let $G_{1}$ and $G_{2}$ be locally compact second countable groups with property $(T)$.  Set $G = G_{1} \times G_{2}$ and let $(X,\nu)$ be an ergodic measure-preserving $G$-space.  Assume that there exists simple closed subgroups $H_{j} < G_{j}$ such that the space of $G_{3-j}$-ergodic components is isomorphic to $(G_{j}/H_{j},\mathrm{Haar})$ for $j=1,2$  and such that any nontrivial normal subgroup of $H_{j}$ has finite index in $H_{j}$.  Then either at least one $G_{j} \actson (X,\nu)$ essentially free or $G \actson (X,\nu)$ is essentially transitive.
\end{corollary}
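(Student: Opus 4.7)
The plan is to analyze the stabilizer subgroup through its projections to each factor, exploit simplicity of $H_{j}$ to force a dichotomy, and in the nontrivial case combine weak amenability with property $(T)$ to obtain essential transitivity. Concretely, I would define $s_{j} : X \to S(G_{j})$ by $s_{j}(x) = \overline{\mathrm{proj}_{G_{j}}\stab(x)}$. Since conjugation by an element of $G_{3-j}$ acts trivially on the $G_{j}$-coordinate, $s_{j}$ is $G_{3-j}$-invariant and hence descends to a $G_{j}$-equivariant measurable map $\bar{s}_{j} : X_{j} \to S(G_{j})$, where $X_{j}$ denotes the space of $G_{3-j}$-ergodic components. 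Since $X_{j} \cong G_{j}/H_{j}$ is a transitive $G_{j}$-space, $\bar{s}_{j}$ is determined by the single value $K_{j} := \bar{s}_{j}(eH_{j})$, and well-definedness of $\bar{s}_{j}(g_{j}H_{j}) = g_{j}K_{j}g_{j}^{-1}$ forces $K_{j}$ to be a closed normal subgroup of $H_{j}$. Simplicity of $H_{j}$ then yields $K_{j} = \{e\}$ or $K_{j} = H_{j}$.

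If $K_{j} = \{e\}$ for some $j$, then $\mathrm{proj}_{G_{j}}\stab(x) = \{e\}$ almost everywhere, so $\stab_{G_{j}}(x) = \{e\}$ a.e.\ and $G_{j} \actson (X,\nu)$ is essentially free. Otherwise both $K_{1} = H_{1}$ and $K_{2} = H_{2}$. In this case, by Proposition \ref{P:1.10} the $G$-map $(X,\nu) \to (X_{1} \times X_{2}, \nu_{1} \times \nu_{2}) \cong (G/(H_{1} \times H_{2}), \mathrm{Haar})$ is relatively measure-preserving over the essentially transitive base $G/(H_{1} \times H_{2})$, so I would identify $(X,\nu)$ with the induced action $G \times_{H_{1} \times H_{2}} (F,\nu_{F})$, where $F$ is the fiber over the identity coset and $H_{1} \times H_{2}$ is a lattice in $G$ since each $G_{j}/H_{j}$ carries a Haar probability measure. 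For $f \in F$ the stabilizer $\stab(f)$ lies in $H_{1} \times H_{2}$ with projection to each $H_{j}$ dense in $H_{j}$, so Theorem \ref{T:weakamendense} gives weak amenability of $H_{1} \times H_{2} \actson (F,\nu_{F})$, and Proposition \ref{P:weakamenlattice} then passes this up to weak amenability of $G \actson (X,\nu)$.

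Finally, since property $(T)$ is preserved under products, $G$ has property $(T)$. If $G \actson (X,\nu)$ were weakly amenable but not essentially transitive, Proposition \ref{P:aivecs} would produce almost invariant vectors in $L_{0}^{2}(X,\nu)$, contradicting property $(T)$; hence $G \actson (X,\nu)$ is essentially transitive. I expect the main obstacle to be the clean identification of $(X,\nu)$ with the induced action $G \times_{H_{1} \times H_{2}} F$ and the verification that Proposition \ref{P:weakamenlattice} applies in this setting, as that proposition is phrased for a lattice with an explicit fundamental domain and the induced-action structure must here be reconstructed from the relatively measure-preserving extension over the essentially transitive base $G/(H_{1} \times H_{2})$.
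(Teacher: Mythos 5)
Your opening and closing steps track the paper: the dichotomy obtained from the $G_{3-j}$-invariance of $s_{j}(x)=\overline{\mathrm{proj}_{G_{j}}\,\stab(x)}$, its descent to the transitive base $G_{j}/H_{j}$, and the normality of $K_{j}$ in $H_{j}$ is exactly the paper's first move (the paper phrases it via the finite-index hypothesis rather than simplicity, but that is immaterial here), and the endgame --- weak amenability plus ergodicity plus property $(T)$ of $G$ forcing essential transitivity via Proposition \ref{P:aivecs} --- is also the paper's. The genuine gap is the middle transfer. The subgroup $H_{1}\times H_{2}$ is closed of finite covolume but need not be discrete (the hypotheses even allow $H_{j}=G_{j}$), so it is not a lattice in the sense of this paper, and Proposition \ref{P:weakamenlattice} does not apply: its proof uses a fundamental domain for the discrete group and counting measure on the countable $\Gamma$-orbits, and genuinely breaks for a nondiscrete closed subgroup. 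Moreover, the identification of $(X,\nu)$ with the induced action $G\times_{H_{1}\times H_{2}}(F,\nu_{F})$, together with ergodicity of the fiber action needed before invoking Theorem \ref{T:weakamendense}, is a Mackey-imprimitivity argument nowhere established in the paper; you flag this yourself, but as written the passage from weak amenability of the fiber action to weak amenability of $G\actson(X,\nu)$ is unsupported.

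The paper avoids induction entirely at this point. Since $(G_{j}/H_{j},\mathrm{Haar})$ is an invariant probability space, $H_{j}$ --- and hence the (finite-index, here full) group $K_{j}$ --- has finite covolume in $G_{j}$. Writing $\psi:(X,\nu)\to PRG(X,\nu)$ for the product random subgroups quotient, one has $s_{1}(x)\times s_{2}(x)\subseteq\stab(\psi(x))$ almost everywhere, so almost every stabilizer for the action on $PRG(X,\nu)$ has finite covolume in $G$; the normalized Haar measures on the orbits $G/\stab(\psi(x))$ then furnish an invariant mean on the orbit equivalence relation, so $G\actson PRG(X,\nu)$ is weakly amenable, and Theorem \ref{T:PRGweakamen} transfers weak amenability back to $(X,\nu)$. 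This accomplishes what your induced-action step was meant to do using only results already proved in the paper; to salvage your route you would either substitute this argument or prove an analogue of Proposition \ref{P:weakamenlattice} for closed finite-covolume (not necessarily discrete) subgroups, which is not available in the paper.
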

\begin{proof}
First observe that since $(X,\nu)$ is measure-preserving, so is $(G_{j}/H_{j},\mathrm{Haar})$ and therefore $H_{j}$ has finite covolume in $G_{j}$.  Recall that the map $s_{1}(x) = \overline{\mathrm{proj}_{G_{1}}~\stab(x)}$ has the property that $s_{1}(x) \normal \stab(\pi_{1}(x)) = g_{1}H_{1}g_{1}^{-1}$ where $\pi_{1}$ is the ergodic decomposition map.  Since $H_{1}$ has the property that every normal subgroup is either trivial or of finite index, $s_{1}(x)$ is either trivial or has finite index in some conjugate of $H_{1}$ almost everywhere.  Since $G_{1}$ acts ergodically on $(G_{1}/H_{1},\mathrm{Haar})$, either $s_{1}(x) = \{ e \}$ almost everywhere or $s_{1}(x)$ has finite index in a conjugate of $H_{1}$ almost everywhere.  The case when $s_{1}(x) = \{ e \}$ corresponds to $G_{1}$ acting essentially freely on $(X,\nu)$.  As the same reasoning holds for $s_{2}(x) = \overline{\mathrm{proj}_{G_{2}}~\stab(x)}$, we may assume from here on that $\varphi(x) = s_{1}(x) \times s_{2}(x)$ is of the form $\varphi(x) = g K g^{-1}$ where $K = K_{1} \times K_{2}$ with $K_{j}$ of finite index in $H_{j}$.

Since $K_{j}$ has finite index in $H_{j}$, $K_{j}$ also has finite covolume in $G_{j}$.  Let $\psi : (X,\nu) \to PRG(X,\nu)$ be the product random subgroups functor map.  Then $\stab(\psi(x))$ has finite covolume in $G$ since $\varphi(x) \subseteq \stab(\psi(x))$.  We may therefore define an invariant mean $m_{x}$ on $PRG(X,\nu)$ using the Haar measures on $G / \stab(\psi(x))$.  We conclude that $PRG(X,\nu)$ is then weakly amenable.  Hence $(X,\nu)$ is weakly amenable by Theorem \ref{T:PRGweakamen}.
Since $G$ has property $(T)$ then the action is essentially transitive.
\end{proof}

\section{Actions of Lattices in Product Groups}

Having completed our study of the actions of products of two groups, we now turn to the study of actions of irreducible lattices in such products.  As with the previous section, we first state and prove the results for the product of two simple groups and in the following section generalize to the case of arbitrary products.  Unlike in the case of actions of the products of groups, a full classification of the stabilizers of actions of lattices is only possible under the additional assumption that the groups have the Howe-Moore property.

\subsection{The Projected Action}\label{S:projact}

In addition to the product random subgroups functor (which we apply to the induced action), we need a similar object that can be obtained directly from the action of a lattice.  The projected action, which we define presently, is in essence the same as the product random subgroups functor, with the caveat that it is not, strictly speaking, a quotient of the action of the lattice.  The idea in the proofs in this section is to make use of information gained from studying the projected action to conclude facts about the product random subgroups functor applied to the induced action.

\begin{theorem}\label{T:81}
Let $G_{1}$ and $G_{2}$ be noncompact locally compact second countable groups and set $G = G_{1} \times G_{2}$.  Let $\Gamma < G$ be an irreducible lattice and let $(X,\nu)$ be an ergodic measure-preserving $\Gamma$-space.  Consider the maps $s_{j} : X \to S(G_{j})$ given by $s_{j}(x) = \overline{\mathrm{proj}_{G_{j}}~\stab_{\Gamma}(x)}$ for $j = 1,2$.  Then each $(s_{j})_{*}\nu$ is an invariant random subgroup of $G_{j}$.
\end{theorem}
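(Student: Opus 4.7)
The plan is to derive full $G_j$-conjugation invariance of $(s_j)_*\nu$ from the $\Gamma$-invariance of $\nu$, using irreducibility to upgrade from $\mathrm{proj}_{G_j}(\Gamma)$-invariance to $G_j$-invariance.

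First, I would record that $s_j$ is measurable and $\Gamma$-equivariant in the sense that $s_j(\gamma x) = \mathrm{proj}_{G_j}(\gamma)\, s_j(x)\, \mathrm{proj}_{G_j}(\gamma)^{-1}$ for every $\gamma \in \Gamma$ and almost every $x$. This is immediate from $\mathrm{stab}_\Gamma(\gamma x) = \gamma\, \mathrm{stab}_\Gamma(x)\, \gamma^{-1}$ together with the fact that conjugation is continuous on $S(G_j)$ in the Chabauty topology, so closures are preserved. Since $\nu$ is $\Gamma$-invariant, the pushforward $(s_j)_*\nu \in P(S(G_j))$ is therefore invariant under conjugation by every element of $\mathrm{proj}_{G_j}(\Gamma)$.

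Next, since the conjugation action of $G_j$ on $S(G_j)$ is jointly continuous, the induced $G_j$-action on $P(S(G_j))$ (with the weak-$*$ topology) is continuous as well. Hence the stabilizer of $(s_j)_*\nu$ under this action is a closed subgroup of $G_j$; by the previous step it contains $\mathrm{proj}_{G_j}(\Gamma)$, and therefore it contains the closure $\overline{\mathrm{proj}_{G_j}(\Gamma)}$.

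Finally, I would invoke irreducibility to show that $\overline{\mathrm{proj}_{G_j}(\Gamma)} = G_j$. Apply the definition to the closed normal subgroup $M \normal G$ given by $M = \{e\} \times G_2$ when $j = 1$ (resp.\ $M = G_1 \times \{e\}$ when $j = 2$). Since $G_j$ is noncompact, $G/M \cong G_j$ is not compact, so $M$ is not cocompact; provided $M$ is not central (which holds whenever the opposite factor is non-abelian, and in particular in all intended applications to simple non-abelian factors) the definition of irreducibility yields that $\Gamma / (\Gamma \cap M) \cong \mathrm{proj}_{G_j}(\Gamma)$ is dense in $G/M \cong G_j$. Combined with the previous step, $(s_j)_*\nu$ is $G_j$-conjugation invariant, hence an invariant random subgroup of $G_j$. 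The entire substance of the argument sits in this last density step: the equivariance and pushforward steps are essentially formal, while upgrading $\Gamma$-invariance to $G_j$-invariance is exactly where the irreducibility of the lattice is indispensable.
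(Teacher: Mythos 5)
Your proposal is correct and follows essentially the same route as the paper: the $\Gamma$-equivariance of $s_j$ gives invariance of $(s_j)_*\nu$ under conjugation by $\mathrm{proj}_{G_j}(\Gamma)$, and irreducibility together with continuity of the conjugation action upgrades this to full $G_j$-invariance. Your explicit caveat that $M = \{e\} \times G_{3-j}$ must be noncentral for the definition of irreducibility to apply is a fair point the paper's proof glosses over (it simply asserts density of the projection), but it does not change the substance of the argument.
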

\begin{proof}
Clearly $(s_{j})_{*}\nu \in P(S(G_{j}))$ so the only thing to check is that it is conjugation-invariant.  For $\gamma \in \Gamma$,
\begin{align*}
(\mathrm{proj}_{G_{j}}~\gamma) s_{j}(x) (\mathrm{proj}_{G_{j}}~\gamma)^{-1}
&= (\mathrm{proj}_{G_{j}}~\gamma) \overline{\mathrm{proj}_{G_{j}}~\stab(x)} (\mathrm{proj}_{G_{j}}~\gamma)^{-1} \\
&= \overline{\mathrm{proj}_{G_{j}}~\gamma\stab(x)\gamma^{-1}}
= \overline{\mathrm{proj}_{G_{j}}~\stab(\gamma x)}
\end{align*}
so $(s_{j})_{*}\nu$ is invariant under conjugation by $\mathrm{proj}_{G_{j}}~\Gamma$ since $\nu$ is $\Gamma$-invariant.  Since $\Gamma$ is irreducible, $\mathrm{proj}_{G_{j}}~\Gamma$ is dense in $G_{j}$ and as the action by conjugation is continuous this means that $(s_{j})_{*}\nu$ is conjugation invariant.
\end{proof}

\begin{definition}
Let $G_{1}$ and $G_{2}$ be locally compact second countable groups and set $G = G_{1} \times G_{2}$.  Let $\Gamma < G$ be an irreducible lattice and let $(X,\nu)$ be a measure-preserving $\Gamma$-space.  The \textbf{projected action} of $\Gamma \actson (X,\nu)$ is the spaces $G_{j} \actson (Y_{j},\eta_{j})$ for $j=1,2$ that the ergodic $(s_{j})_{*}$-nonfree actions of $G_{j}$.
\end{definition}

\subsection{Actions of Lattices in Products of Howe-Moore Groups}

\begin{theorem}\label{T:stuff}
Let $G_{1}$ and $G_{2}$ be simple nondiscrete noncompact locally compact second countable groups with the Howe-Moore property and set $G = G_{1} \times G_{2}$.  Let $\Gamma < G$ be an irreducible lattice and let $(X,\nu)$ be an ergodic measure-preserving $\Gamma$-space.  Then one of the following holds:
\begin{itemize}
\item $\Gamma \actson (X,\nu)$ is essentially free;
\item $\Gamma \actson (X,\nu)$ is weakly amenable;
\item $\stab_{*}\nu$ is supported on the finite index subgroups of $\Gamma$;
\item $\stab_{*}\nu$ is supported on the torsion elements of $\Gamma$; or
\item one $G_{j}$ is totally disconnected and acts ergodically and essentially freely on the induced space $G \times_{\Gamma} X$ and the other $G_{3-j}$ does not act ergodically on the induced space.
\end{itemize}
\end{theorem}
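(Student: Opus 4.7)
The plan is to induce the action to the ambient group $G$ and analyze the induced $G$-space $(Z,\zeta) = G\times_{\Gamma} X$ using the product-of-groups machinery from Section~\ref{sec:stuff}. Since $\stab_{G}(f,x) = f\,\stab_{\Gamma}(x)\,f^{-1}$, essential freeness of $G\actson Z$ is equivalent to essential freeness of $\Gamma\actson X$, and Proposition~\ref{P:weakamenlattice} transfers weak amenability of $G\actson Z$ back to $\Gamma\actson X$. If both $G_{1}$ and $G_{2}$ act ergodically on $Z$, Corollary~\ref{C:bs1} immediately yields that $G\actson Z$ is essentially free or weakly amenable, producing the first two cases. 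So the core of the proof is the situation in which some $G_{j}$, say $G_{2}$, fails to act ergodically on $Z$, equivalently the space $(Z_{2},\zeta_{2})$ of $G_{1}$-ergodic components of $Z$ is nontrivial.

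In that situation I would bring in the projected action. By Theorem~\ref{T:81}, the maps $s_{j}(x) = \overline{\mathrm{proj}_{G_{j}}\stab_{\Gamma}(x)}$ define invariant random subgroups $(s_{j})_{*}\nu$ of $G_{j}$. Simplicity and noncompactness force $Z(G_{j})=\{e\}$, so Proposition~\ref{P:irrlattint} makes $\mathrm{proj}_{G_{j}}$ injective on $\Gamma$; hence $\mathrm{proj}_{G_{j}}\stab_{\Gamma}(x)$ is countable and $s_{j}(x)$ is its closure in $G_{j}$. Observe also that $G_{j}\actson Z$ is automatically essentially free: since $G_{j}\times\{e\}$ is normal in $G$ and $\Gamma\cap(G_{j}\times\{e\})\subseteq Z(G_{j})\times\{e\}=\{e\}$ by Proposition~\ref{P:irrlattint}, we have $\stab_{G_{j}}(f,x)=(G_{j}\times\{e\})\cap f\stab_{\Gamma}(x)f^{-1}=\{e\}$. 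The ergodic $(s_{j})_{*}\nu$-nonfree $G_{j}$-space $(Y_{j},\eta_{j})$ is then an ergodic measure-preserving $G_{j}$-space whose stabilizer distribution is $(s_{j})_{*}\nu$ and, when nontrivial, is mixing by Theorem~\ref{T:HMmixing}. When $G_{j}$ is connected, Theorem~\ref{T:connHMLie} identifies it as a simple real Lie group and Zimmer's Theorem~\ref{T:countableessfree} furnishes the strong constraint that every countable subgroup acts essentially freely on any nontrivial ergodic measure-preserving $G_{j}$-space; combined with the invariant random subgroup structure on $(Y_{j},\eta_{j})$ this should force $s_{j}(x)$ to be either trivial or of positive dimension almost surely.

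The remaining cases then sort according to whether each $G_{j}$ is connected or totally disconnected. If both are connected, the tension between the countability of $\mathrm{proj}_{G_{j}}\stab_{\Gamma}(x)$ and the non-discreteness forced on $s_{j}(x)$ whenever it is nontrivial, together with irreducibility of $\Gamma$, funnels $\stab_{\Gamma}(x)$ either into finite-index subgroups of $\Gamma$ (case (iii)) or into torsion subgroups of $\Gamma$ (case (iv), with finiteness of the stabilizers following from the classical fact that torsion subgroups of lattices in Lie groups are finite). If one $G_{j}$ is totally disconnected, Zimmer's theorem is unavailable for that factor; here Theorem~\ref{T:PRGweakamen} together with the projected-action and $PRG$ machinery, the nonergodicity of $G_{3-j}$ on $Z$, and the automatic essential freeness of $G_{j}\actson Z$ established above, combine to produce case (v). The hardest step will be the middle paragraph: carefully extracting the trichotomy among trivial, finite-index, and torsion stabilizers for the connected-factor case from Zimmer's theorem, Howe-Moore mixing, and the invariant random subgroup structure, and separately handling the asymmetric connected/totally-disconnected case that yields~(v).
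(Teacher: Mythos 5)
Your skeleton matches the paper's in outline — induce to $G$, observe $\stab_G(f,x)=f\stab_\Gamma(x)f^{-1}$, apply Corollary \ref{C:bs1} and Proposition \ref{P:weakamenlattice} when both factors act ergodically on $G\times_\Gamma X$, and bring in the projected actions $(Y_j,\eta_j)$ and Zimmer's Theorem \ref{T:countableessfree} for connected factors (your remark that each $G_j$ automatically acts essentially freely on the induced space, via Proposition \ref{P:irrlattint} and normality of $G_j\times\{e\}$, is correct and even slightly cleaner than the paper, which only records it where needed). But the engine of the actual proof is missing. The paper's case analysis is driven by the set $L=\{\gamma\ne e:\nu(E_\gamma)>0\}$ and by whether $\langle\mathrm{proj}_{G_j}\gamma\rangle$ is bounded or unbounded for $\gamma\in L$: the key step is that if some fixing element has unbounded projection to $G_j$, then Howe--Moore mixing of the ergodic measure-preserving space $(Y_j,\eta_j)$ forces the fixed positive-measure set to have full measure, and simplicity then forces $s_j(x)=G_j$ almost everywhere. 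You cite Theorem \ref{T:HMmixing} but never use it, and without this step you cannot obtain any of the three hard conclusions: weak amenability (which comes from $s_1\equiv G_1$ and $s_2\equiv G_2$ via Theorem \ref{T:weakamendense} applied to the induced action), the torsion case (which arises precisely when every $\gamma\in L$ has precompact projections to both factors, so $\langle\gamma\rangle$ is discrete inside a compact group, hence finite), or the ergodicity assertion in case (v) (the paper shows that $s_1\equiv G_1$ forces $G_1$ to stabilize almost every point of the space of $G_2$-ergodic components of $G\times_\Gamma X$, so that space is trivial and $G_2$ acts ergodically). You take nonergodicity of one factor as your standing hypothesis but never explain how the other factor's ergodicity and the density/freeness dichotomy emerge under it, which is exactly the content of case (v); ``the projected-action and $PRG$ machinery combine to produce case (v)'' is not an argument.

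The both-connected branch of your sketch is moreover pointed in the wrong direction. With both factors connected, Zimmer's theorem applied to $(Y_j,\eta_j)$ gives the sharp dichotomy $s_j(x)=\{e\}$ a.e.\ or $s_j(x)=G_j$ a.e.\ (if $(Y_j,\eta_j)$ is nontrivial, essential freeness of the countable group $\mathrm{proj}_{G_j}\Gamma$ contradicts the fact that $\mathrm{proj}_{G_j}\gamma$, for $\gamma\in L$, fixes a positive-measure set); the first alternative yields essential freeness of $\Gamma\actson(X,\nu)$ via Proposition \ref{P:irrlattint}, and the second, holding for both factors, yields dense projections and hence weak amenability. So the correct outcome in that branch is ``essentially free or weakly amenable,'' not your claimed funnelling into finite-index or torsion stabilizers, and your weakened formulation of Zimmer's input (``trivial or of positive dimension'') is too vague to run either alternative. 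The finite-index disjunct in the statement is not something the proof needs to manufacture, and trying to produce it in the connected case is a sign the intended case division (bounded versus unbounded projections of fixing elements, with connectedness entering only to rule out the mixed case via Zimmer) has not been identified.
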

\begin{proof}
Assume that $\Gamma \actson (X,\nu)$ is not essentially free.
For $\gamma \in \Gamma$, let $E_{\gamma} = \{ x \in X : \gamma x = x \}$.  Let $(Y_{j},\eta_{j})$ be the projected action to $G_{1}$ and $G_{2}$.  Since the $\Gamma$-action is not essentially free, there exists $\gamma \in \Gamma \setminus \{ e \}$ such that $\nu(E_{\gamma}) > 0$.  Let $L = \{ \gamma \in \Gamma \setminus \{ e \} : \nu(E_{\gamma}) > 0 \}$.  So $L \ne \emptyset$.

Consider first the case when there exists $\gamma \in L$ such that $\langle \mathrm{proj}_{G_{1}}~\gamma \rangle$ is unbounded in $G_{1}$ (that is, $\overline{\langle \mathrm{proj}_{G_{1}}~\gamma \rangle}$ is noncompact).  Since $\gamma \in \stab(x)$ on the positive measure set $E_{\gamma}$, there exists $F_{\gamma} \subseteq Y_{1}$ with $\eta_{1}(F_{1}) > 0$ such that $\mathrm{proj}_{G_{1}}~\gamma \in \stab(y)$ for all $y \in F_{\gamma}$ (because $\stab_{*}\eta_{1} = (s_{1})_{*}\nu$).  Then $\overline{\langle \mathrm{proj}_{G_{1}}~\gamma \rangle} \subseteq \stab(y)$ for all $y \in F_{\gamma}$.  Since $G_{1}$ has the Howe-Moore property and $(Y_{1},\eta_{1})$ is an ergodic measure-preserving $G_{1}$-space, by Theorem \ref{T:HMmixing}, $G_{1} \actson (Y_{1},\eta_{1})$ is mixing.  As $\langle \mathrm{proj}_{G_{1}}~\gamma \rangle$ is unbounded, $\lim_{n} \eta_{1}((\mathrm{proj}_{G_{1}}~\gamma)^{n} F_{\gamma} \cap F_{\gamma}) = (\eta_{1}(F_{\gamma}))^{2}$.  But $\mathrm{proj}_{G_{1}}~\gamma$ fixes $F_{\gamma}$ so $\eta_{1}((\mathrm{proj}_{G_{1}}~\gamma)^{n} F_{\gamma} \cap F_{\gamma}) = \eta_{1}(F_{\gamma})$.  Therefore $\eta_{1}(F_{\gamma}) = (\eta_{1}(F_{\gamma}))^{2}$ meaning that $\eta_{1}(F_{\gamma}) = 1$.  Then $\mathrm{proj}_{G_{1}}~\gamma \in \ker(G_{1} \actson Y_{1})$ and as $G_{1}$ is simple then $(Y_{1},\eta_{1})$ is the trivial space since it is $G_{1}$-ergodic.  So $\overline{\mathrm{proj}_{G_{1}}~\stab_{\Gamma}(x)} = G_{1}$ almost everywhere.

So we conclude that if there exists $\gamma_{j} \in \Gamma$ for both $j=1,2$ such that $\nu(E_{\gamma_{j}}) > 0$ and $\langle \mathrm{proj}_{G_{j}}~\gamma_{j} \rangle$ is unbounded in $G_{j}$ then $\overline{\mathrm{proj}_{G_{j}}~\stab_{\Gamma}(x)} = G_{j}$ almost everywhere.  Let $G \times_{\Gamma} X = (F \times X, m \times \nu)$ be the induced action to $G$ from $\Gamma \actson (X,\nu)$ (here $F$ is a fundamental domain for $\Gamma$ with cocycle $\alpha : G \times F \to \Gamma$ such that $gf\alpha(g,f) \in F$ and the action is given by $g \cdot (f,x) = (gf\alpha(g,f), \alpha(g,f)^{-1}x)$).  Let $(Z,\zeta) = PRG(G \times_{\Gamma} X)$ be the product random subgroups space for the induced action and let $\psi : (F \times X, m \times \nu) \to (Z,\zeta)$ be the defining map.  Then
\[
\stab_{G}(f,x) = f \stab_{\Gamma}(x) f^{-1}
\]
for almost every $(f,x) \in F \times X$.  So
\[
\mathrm{proj}_{G_{j}}~\stab_{G}(f,x) = (\mathrm{proj}_{G_{j}}~f) \mathrm{proj}_{G_{j}}~\stab_{\Gamma}(x) (\mathrm{proj}_{G_{j}}~f)^{-1}
\]
is dense almost everywhere for both $j = 1,2$.  Then by Theorem \ref{T:weakamendense}, $G \actson G \times_{\Gamma} X$ weakly amenably so by Proposition \ref{P:weakamenlattice}, $\Gamma \actson (X,\nu)$ weakly amenably.

Consider now the case when for every $\gamma \in \Gamma$ such that $\nu(E_{\gamma}) > 0$, it holds that $\langle \mathrm{proj}_{G_{j}}~\gamma \rangle$ is bounded in $G_{j}$ for both $j = 1,2$.  Then $\langle \gamma \rangle \subseteq \overline{\langle \mathrm{proj}_{G_{1}}~\gamma \rangle} \times \overline{\langle \mathrm{proj}_{G_{2}}~\gamma \rangle}$ which is a compact subgroup.  But $\langle \gamma \rangle$ is discrete and therefore finite meaning that $\gamma$ is torsion.
Therefore, in this case, $\stab_{*}\nu$ is supported on the torsion elements since every non-torsion $\gamma \in \Gamma$ has $\nu(E_{\gamma}) = 0$.

So we are left with the case when there exists $\gamma_{1} \in \Gamma$ with $\nu(E_{\gamma_{1}}) > 0$ and $\langle \mathrm{proj}_{G_{1}}~\gamma \rangle$ unbounded in $G_{1}$ but that for every $\gamma \in \Gamma$ with $\nu(E_{\gamma}) > 0$, it holds that $\langle \mathrm{proj}_{G_{2}}~\gamma \rangle$ is bounded in $G_{2}$ (or the reverse situation, which is the same by symmetry).  Then, as above, $\mathrm{proj}_{G_{1}}~\stab_{\Gamma}(x)$ is dense almost everywhere.  Let $\psi : G \times_{\Gamma} X \to PRG(G \times_{\Gamma} X)$ be the map defining the product random subgroups space of the induced action.  Let $(Z_{j},\zeta_{j})$ be the space of $G_{3-j}$-ergodic components of $G \times_{\Gamma} X$ for $j = 1,2$.  Let $\tau : PRG(G \times_{\Gamma} X) \to (Z_{1} \times Z_{2}, \zeta_{1} \times \zeta_{2})$ be the map from Proposition \ref{P:something} such that $\tau \circ \psi$ is the ergodic decomposition.  Then for $m \times \nu$-almost every $(f,x) \in F \times X$,
\[
\stab(\tau \circ \psi(f,x)) \supseteq \stab(\psi(f,x)) \supseteq G_{1} \times (\mathrm{proj}_{G_{2}}~f)\overline{\mathrm{proj}_{G_{2}}~\stab_{\Gamma}(x)}(\mathrm{proj}_{G_{2}}~f)^{-1}.
\]
Therefore $\stab_{G_{1}}(z_{1}) = G_{1}$ for $\zeta_{1}$-almost every $z_{1} \in Z_{1}$.  Since $G_{1}$ acts ergodically on $(Z_{1},\zeta_{1})$, the space is trivial.  As $(Z_{1},\zeta_{1})$ is the space of $G_{2}$-ergodic components this means that $G_{2}$ acts ergodically on $G \times_{\Gamma} X$.  Furthermore, $\stab_{G_{2}}(f,x) = (\{ e \} \times G_{2}) \cap f \stab_{\Gamma}(x) f^{-1}$.  Since $\Gamma$ is irreducible, $\Gamma \cap (\{ e \} \times G_{2}) = \{ e \}$ by Proposition \ref{P:irrlattint}.  So if $g \in \stab_{G_{2}}(f,x)$ then $f^{-1}gf \in \{ e \} \times G_{2} \cap \stab_{\Gamma}(x) = \{ e \}$ so $g = e$.  Hence $G_{2} \actson G \times_{\Gamma} X$ ergodically and essentially freely.

We now show that we are in the fifth case.  Suppose that $G_{1}$ also acts ergodically on the induced space $G \times_{\Gamma} X$.  Then both $G_{j}$ act ergodically on $G \times_{\Gamma} X$ so by Corollary \ref{C:bs1}, $G \actson G \times_{\Gamma} X$ is either essentially free or weakly amenable.  Therefore $\Gamma \actson (X,\nu)$ is either essentially free or weakly amenable by Proposition \ref{P:weakamenlattice}.

Suppose now that $G_{2}$ is connected (since it is simple, if it is not connected then it is totally disconnected).  Then $G_{2}$ is a simple real Lie group since it has the Howe-Moore property (Theorem \ref{T:connHMLie}).  By Theorem \ref{T:countableessfree}, since $\mathrm{proj}_{G_{2}}~\Gamma$ is a countable subgroup of $G_{2}$, either $(Y_{2},\eta_{2})$ is the trivial space or else $\mathrm{proj}_{G_{2}}~\Gamma$ acts essentially freely on $(Y_{2},\eta_{2})$.

If $(Y_{2},\eta_{2})$ is trivial then $\stab_{\Gamma}(x)$ projects densely to $G_{2}$ almost everywhere.  As we already have that it projects densely almost everywhere to $G_{1}$ this means that $G \actson G \times_{\Gamma} X$ weakly amenably by Theorem \ref{T:weakamendense} and so by Proposition \ref{P:weakamenlattice}, $\Gamma \actson (X,\nu)$ weakly amenably.  So we are left with the case when $\mathrm{proj}_{G_{2}}~\Gamma$ acts essentially freely on $(Y_{2},\eta_{2})$.  But this means exactly that $\mathrm{proj}_{G_{2}}~\stab_{\Gamma}(x) = \{ e \}$ almost everywhere.  So if $\nu(E_{\gamma}) > 0$ then $\gamma \in (G_{1} \times \{ e \}) \cap \Gamma = \{ e \}$ since $\Gamma$ is irreducible (Proposition \ref{P:irrlattint}) which means that $\Gamma \actson (X,\nu)$ essentially freely.  So if the $\Gamma$-action is not essentially free then $G_{2}$ must be totally disconnected.
\end{proof}

\begin{remark}
The fifth possibility in the previous theorem, that one of the groups be totally disconnected with certain other properties, is exactly the case that the results of \cite{CP12} handle.  In this sense, our work here complements perfectly that of \cite{CP12}.
\end{remark}

\subsection[Actions of Lattices in Products of Howe-Moore Groups, at least one with property (T)]{Actions of Lattices in Products of Howe-Moore Groups, at least one with property $(T)$}

\begin{corollary}\label{C:lattices}
Let $G_{1}$ and $G_{2}$ be simple nondiscrete noncompact locally compact second countable groups with the Howe-Moore property such that at least one $G_{j}$ has property $(T)$.  Set $G = G_{1} \times G_{2}$ and let $\Gamma < G$ be an irreducible lattice.  Let $(X,\nu)$ be an ergodic measure-preserving $\Gamma$-space.  Then one of the following holds:
\begin{itemize}
\item $\Gamma \actson (X,\nu)$ is essentially free;
\item $\stab_{*}\nu$ is supported on the finite index subgroups of $\Gamma$;
\item $\stab_{*}\nu$ is supported on the torsion elements of $\Gamma$;
\item one $G_{j}$ is totally disconnected and acts ergodically and essentially freely on the induced space $G \times_{\Gamma} X$ and the other $G_{3-j}$ does not act ergodically on the induced space; or
\item one $G_{j}$ does not have property $(T)$, is totally disconnected, and there is a nontrivial ergodic $G_{j}$-space $(Y,\eta)$ that is a $\Gamma$-quotient of $(X,\nu)$ and such that $G_{j} \actson (Y,\eta)$ is not essentially transitive and $\Gamma \actson (Y,\eta)$ is weakly amenable.
\end{itemize}
\end{corollary}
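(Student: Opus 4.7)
The plan is to apply Theorem~\ref{T:stuff} at the outset and then refine only its ``weakly amenable'' alternative, which is the sole one of its five cases lacking a direct counterpart among bullets (i)--(v) of the corollary. Cases (1), (3), (4), and (5) of Theorem~\ref{T:stuff}---essential freeness, stabilizers supported on finite-index subgroups, stabilizers supported on torsion elements, and the induced-action pathology---map directly onto bullets (i)--(iv). The entire remaining argument concerns the situation in which $\Gamma \actson (X,\nu)$ is weakly amenable and not essentially free (hence, absent bullet (ii), not essentially transitive).

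First suppose both $G_1$ and $G_2$ have property $(T)$. Then $G = G_1 \times G_2$ does, and so does the lattice $\Gamma$. Proposition~\ref{P:aivecs} then produces a sequence of almost invariant unit vectors in $L^2_0(X,\nu)$, which contradicts property $(T)$ of $\Gamma$ unless the action is essentially transitive. In the essentially transitive case, ergodicity forces every stabilizer to be of finite index, placing us in bullet (ii).

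The substantive case is when exactly one factor, say $G_2$, has property $(T)$. Proposition~\ref{P:reslatts} then makes $p = \mathrm{proj}_{G_1} \colon \Gamma \to G_1$ a resolution. Applied to the Koopman representation of $\Gamma$ on $L^2_0(X,\nu)$, whose almost invariant vectors are furnished by Proposition~\ref{P:aivecs}, the resolution property yields almost invariant vectors inside the closed $\Gamma$-invariant subspace $\mathcal{H}^{G_1} \subseteq L^2_0(X,\nu)$ of $(G_1,p)$-points. On $\mathcal{H}^{G_1}$ the $\Gamma$-action factors through $p$ and extends continuously to $G_1 = \overline{p(\Gamma)}$. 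I would then take the Mackey point realization (Theorem~\ref{T:mackey}) of the $\Gamma$-invariant von Neumann subalgebra of $L^\infty(X,\nu)$ generated by the bounded $(G_1,p)$-points to obtain a $\Gamma$-equivariant factor map $(X,\nu) \to (Y,\eta)$ onto a $G_1$-space on which $\Gamma$ acts via $p$. Nontriviality of $(Y,\eta)$ and the failure of $G_1 \actson (Y,\eta)$ to be essentially transitive both follow from the presence of almost invariant but not fully invariant vectors in $\mathcal{H}^{G_1}$. Weak amenability of $\Gamma \actson (Y,\eta)$ can be secured by arranging the factor map to be orbital---for instance by replacing the defining map with $\phi(x) = \mathrm{stab}(\psi(x))$ as in Corollary~\ref{C:univproporbital}---and then invoking Proposition~\ref{P:weakamen}.

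Finally, I would rule out $G_1$ being connected. If $G_1$ were connected, then being simple, noncompact, and Howe--Moore, Theorem~\ref{T:connHMLie} would identify it with a simple connected real Lie group. Theorem~\ref{T:countableessfree} would then force the countable subgroup $p(\Gamma) \subseteq G_1$ to act essentially freely on the nontrivial ergodic measure-preserving $G_1$-space $(Y,\eta)$; combined with Proposition~\ref{P:irrlattint}, which gives $\Gamma \cap (\{e\} \times G_2) \subseteq \{e\} \times Z(G_2)$ (a finite subgroup, since a simple connected real Lie group has finite center), this would force $\mathrm{stab}_\Gamma(x)$ to lie in a fixed finite subgroup almost everywhere, hence be torsion almost everywhere, placing us in bullet (i) or (iii) rather than (v). The principal obstacle is the construction of the quotient $(Y,\eta)$ itself: converting the Hilbert-space invariance data provided by the resolution into a genuine measure-theoretic $G_1$-space that is simultaneously a $\Gamma$-factor of $X$, non-essentially-transitive under $G_1$, and $\Gamma$-weakly amenable requires careful checking that the generated subalgebra is closed under multiplication and $G_1$-invariant as an algebra, and that the resulting factor map can be arranged to be orbital so that Proposition~\ref{P:weakamen} applies.
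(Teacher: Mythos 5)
Your overall route coincides with the paper's: reduce via Theorem \ref{T:stuff} to the weakly amenable case, dispose of the case where both factors have property $(T)$, and otherwise use the resolution $\mathrm{proj}_{G_1}\colon\Gamma\to G_1$ (Proposition \ref{P:reslatts}) together with Propositions \ref{P:aivecs} and \ref{P:closedQpoints} to build, via Mackey realization, a $G_1$-space $(Y,\eta)$ that is a $\Gamma$-quotient of $(X,\nu)$, ruling out connected $G_1$ by Theorem \ref{T:countableessfree} and Proposition \ref{P:irrlattint}. The genuine gap is the sentence asserting that the failure of essential transitivity of $G_1\actson(Y,\eta)$ ``follows from the presence of almost invariant but not fully invariant vectors in $\mathcal{H}^{G_1}$.'' It does not. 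Since $G_1$ is nondiscrete, an essentially transitive ergodic measure-preserving $G_1$-space need not be atomic: it is of the form $(G_1/\Lambda,\mathrm{Haar})$ with $\Lambda$ a lattice in $G_1$, and nothing in your argument excludes almost invariant, non-invariant vectors in $L^2(G_1/\Lambda)$; excluding them would be a spectral gap statement for $L^2_0(G_1/\Lambda)$, precisely the kind of input unavailable here because $G_1$ lacks property $(T)$ and is only assumed to be Howe--Moore. The paper devotes the final two paragraphs of its proof to exactly this point: assuming essential transitivity, the purely atomic case is ruled out by the almost invariant vectors (atomic plus ergodic plus a continuous action of a nondiscrete group forces triviality), while in the lattice case it observes that $\overline{\mathrm{proj}_{G_1}\stab_\Gamma(x)}$ would lie in conjugates of $\Lambda$, hence be discrete, and then a nontrivial $\gamma$ with $\nu(E_\gamma)>0$ yields a positive Haar measure set $Q$ with $g^{-1}(\mathrm{proj}_{G_1}\gamma)g\in\Lambda$ for all $g\in Q$, contradicting nondiscreteness of $G_1$. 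Some argument of this kind must be supplied; your proposal omits it entirely.

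A secondary problem: your mechanism for weak amenability of $\Gamma\actson(Y,\eta)$ is misapplied. Proposition \ref{P:weakamen} transfers weak amenability from the target of an orbital map to its source, whereas you know weak amenability of the source $(X,\nu)$ and need it for the quotient $(Y,\eta)$; moreover the factor map $X\to Y$ cannot be arranged to be orbital, since the $\Gamma$-stabilizers genuinely increase under $\psi$ (that increase is the point of the construction), and Corollary \ref{C:univproporbital} only reproduces the same quotient space. What is needed (and what the paper implicitly uses) is that weak amenability passes to measure-preserving factors: push the invariant mean on $\Gamma/\stab_\Gamma(x)$ forward along the natural map $\Gamma/\stab_\Gamma(x)\to\Gamma/\stab_\Gamma(\psi(x))$ and average over fibers using the equivariant disintegration. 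Finally, a minor slip: in your connected-case discussion the finiteness of $Z(G_2)$ should not be justified by $G_2$ being a connected real Lie group (it is $G_1$ that is connected there), though under the paper's conventions $\Gamma\cap(\{e\}\times G_2)$ is trivial in any case, so the conclusion stands.
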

\begin{proof}
Assume none of the first four possibilities hold.
By the previous theorem, $\Gamma \actson (X,\nu)$ weakly amenably.  Note that if $\Gamma \actson (X,\nu)$ essentially transitively then $(X,\nu)$ is necessarily a finite atomic space by ergodicity, in which case the stabilizers are finite index subgroups of $\Gamma$.  As this possibility is assumed not to hold, we have that $\Gamma$ does not act essentially transitively on $(X,\nu)$.

Now if both $G_{1}$ and $G_{2}$ have property $(T)$ then $\Gamma$ also has property $(T)$ (being a lattice) and therefore $\Gamma \actson (X,\nu)$ is essentially transitive since being weakly amenable, it is orbit equivalent to an action of $\mathbb{Z}$ by Theorem \ref{T:cfw} and the corresponding cocycle into $\mathbb{Z}$ must take values in a compact (finite) subgroup.

Without loss of generality, we may therefore assume that $G_{2}$ has property $(T)$ and that $G_{1}$ does not.  Then $\mathrm{proj}_{1} : G \to G_{1}$ is a resolution by Propositon \ref{P:reslatts}.  Since $\Gamma$ is a lattice in $G$, by Proposition \ref{P:reslatts}, the map $\mathrm{proj}_{1} : \Gamma \to \overline{\mathrm{proj}_{1}~\Gamma} = G_{1}$ is also a resolution.

Let $\pi : \Gamma \to \mathcal{U}(L^{2}(X,\nu))$ be the Koopman representation.
By Proposition \ref{P:aivecs}, since $\Gamma \actson (X,\nu)$ is weakly amenable but not essentially transitive, there exists a sequence $f_{n} \in L^{2}(X,\nu)$ that are not $\Gamma$-invariant but are almost invariant.

Let $\{ \gamma_{n} \}$ and $\{ \gamma_{n}^{\prime} \}$ be sequences in $\Gamma$ such that $\mathrm{proj}_{1}~\gamma_{n} \to g$ in $G_{1}$ and that $\mathrm{proj}_{1}~\gamma_{n}^{\prime} \to g$ in $G_{1}$.  Then the sequence $\{ a_{n} \}$ given by $a_{2n} = \gamma_{n}$ and $a_{2n+1} = \gamma_{n}^{\prime}$ also has the property that $\mathrm{proj}_{1}~a_{n} \to g$ in $G_{1}$.  Consider the space of $G_{1}$-points in $L^{2}(X,\nu)$:
\[
\mathcal{F} = \{ f \in L^{2}(X,\nu) : g \mapsto \pi(g)f \text{ factors through } \mathrm{proj}_{1} \}.
\]
By Proposition \ref{P:closedQpoints}, this is a closed $\Gamma$-invariant space.  For $f \in \mathcal{F}$, then $\pi(\gamma_{n})f \to q \in \mathcal{F}$ and $\pi(\gamma_{n}^{\prime}) \to q^{\prime} \in \mathcal{F}$ and $\pi(a_{n})f \to q^{\prime\prime} \in \mathcal{F}$.  By the construction of $\{ a_{n} \}$ then $q^{\prime\prime} = q = q^{\prime}$ so we can define an action of $G_{1}$ on $\mathcal{F}$ by
\[
g_{1} \cdot f = \lim \pi(\gamma_{n})f \text{ for any $\{ \gamma_{n} \}$ such that $\mathrm{proj}_{1}~\gamma_{n} \to g_{1}$ in $G_{1}$. }
\]
Since $\mathcal{F}$ is a closed $\Gamma$-invariant subalgebra, the point realization $(Y,\eta)$ corresponding to it is a $G_{1}$-space that is a $\Gamma$-quotient of $(X,\nu)$.  Now $\Gamma \actson (X,\nu)$ ergodically hence $G_{1} \actson (Y,\eta)$ ergodically and there exists a $\Gamma$-map $\psi : (X,\nu) \to (Y,\eta)$.

Since $\mathrm{proj}_{1}$ is a resolution, there exists a sequence $\{ q_{n} \}$ of $G_{1}$-almost invariant, but not invariant, functions in $L^{2}(Y,\eta)$ and in particular, $(Y,\eta)$ is nontrivial.  Since $\Gamma \actson (X,\nu)$ is weakly amenable, the same holds for $\Gamma \actson (Y,\eta)$.

Note that if $G_{1}$ is connected then $\mathrm{proj}_{1}~\Gamma$ acts essentially freely on $(Y,\eta)$ by Theorem \ref{T:countableessfree}.  Therefore $\mathrm{proj}_{1}~\stab_{\Gamma}(x) = \{ e \}$ almost everywhere.  So $\stab_{\Gamma}(x) \subseteq \{ e \} \times G_{2}$ almost surely.  But $\Gamma$ is irreducible so $\Gamma \cap \{ e \} \times G_{2} = \{ e \}$ by Proposition \ref{P:irrlattint}.  Then $\stab_{\Gamma}(x) = \{ e \}$ almost surely so $\Gamma \actson (X,\nu)$ is essentially free which we have assumed is not the case.

Clearly $\mathrm{proj}_{1}~\stab_{\Gamma}(x) \subseteq \stab_{G_{1}}(\psi(x))$ and therefore $\overline{\mathrm{proj}_{1}~\stab_{\Gamma}(x)} \subseteq \stab_{G_{1}}(\psi(x))$ for all $x \in X$.  Let $(Z,\zeta)$ be the quotient space of $(X,\nu)$ by the map $\varphi(x) = \overline{\mathrm{proj}_{1}~\stab_{\Gamma}(x)}$.  Then by the universal property of quotient spaces there exist $\Gamma$-maps
\[
(X,\nu) \to PRG(X,\nu) \to (Z,\zeta) \to (Y,\eta).
\]
Suppose that $G_{1} \actson (Y,\eta)$ is essentially transitive.  Then, taking a continuous compact model for the $G$-action on $Y$, there exists $y_{0} \in Y$ such that $G_{1} \cdot y_{0}$ is homeomorphic to $G_{1} / \stab_{G_{1}}(y_{0})$ and $G_{1} \cdot y_{0}$ has a finite $G_{1}$-invariant measure (since $\eta(G_{1} \cdot y_{0}) = 1$) meaning that $\stab_{G_{1}}(y_{0}) = \Lambda$ is a lattice in $G_{1}$ or else that $(Y,\eta)$ is purely atomic (since the action is ergodic).

Consider first the case when $(Y,\eta)$ is purely atomic.  Since $G_{1}$ acts continuously, then $(Y,\eta)$ is trivial (as $G_{1}$ is nondiscrete and acts continuously) by ergodicity.  But then the sequence of $G_{1}$-almost invariant vectors are in fact invariant, a contradiction.  So $(Y,\eta)$ is not atomic.

Therefore we are left with the case when $\overline{\mathrm{proj}_{1}~\stab_{\Gamma}(x)}$ is contained in a $G_{1}$-conjugate of a fixed lattice $\Lambda < G_{1}$ almost surely.  Then $\mathrm{proj}_{1}~\stab_{\Gamma}(x)$ is discrete almost surely so $\overline{\mathrm{proj}_{1}~\stab_{\Gamma}(x)} = \mathrm{proj}_{1}~\stab_{\Gamma}(x)$.  

Let $\pi : (X,\nu) \to (Y,\eta)$ be the $\Gamma$-map.  For $\gamma \in \Gamma$, let $E_{\gamma} = \{ x \ in X : \gamma x = x \}$.  Then $\nu(E_{\gamma}) > 0$ for infinitely many $\gamma$ (since we have assumed the stabilizers are infinite almost surely).  Then for all $x \in E_{\gamma}$ it holds that $\mathrm{proj}_{1}~\gamma \in \stab_{G_{1}}(\pi(x))$.  Let $F = \pi(E_{\gamma})$.  Then $\eta(F) > 0$.  So there exists a positive Haar measure set $Q \subseteq G$ such that for $g \in Q$, it holds that $\mathrm{proj}_{1}~\gamma \in \stab(g y_{0}) = g \Lambda g^{-1}$.  Then $g^{-1}\mathrm{proj}_{1}~\gamma g \in \Lambda$ for all $g \in Q$.  But $\Lambda$ is discrete and $Q$ has positive Haar measure meaning that $G_{1}$ is then discrete, a contradiction.
\end{proof}

Combining our work with the results in \cite{CP12}, we obtain:
\begin{corollary}\label{C:lattices2}
Let $G_{1}$ and $G_{2}$ be simple nondiscrete noncompact locally compact second countable groups with the Howe-Moore property such that at least one $G_{j}$ has property $(T)$.  Set $G = G_{1} \times G_{2}$ and let $\Gamma < G$ be an irreducible lattice.  Let $(X,\nu)$ be an ergodic measure-preserving $\Gamma$-space.  Then one of the following holds:
\begin{itemize}
\item $\Gamma \actson (X,\nu)$ is essentially free;
\item $\stab_{*}\nu$ is supported on the finite index subgroups of $\Gamma$; or
\item $\stab_{*}\nu$ is supported on the torsion elements of $\Gamma$; or
\item one $G_{j}$ is totally disconnected, has property $(T)$ and acts ergodically and essentially freely on the induced space $G \times_{\Gamma} X$ and the other $G_{3-j}$ is connected, does not have property $(T)$ and does not act ergodically on the induced space.
\end{itemize}
\end{corollary}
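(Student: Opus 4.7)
The plan is to apply Corollary \ref{C:lattices} and reduce its two additional cases (4 and 5) to the four conclusions of the present statement using Theorem \ref{T:CP12}, the main result of \cite{CP12}. The first three cases of Corollary \ref{C:lattices} already coincide with the first three conclusions here, so only the two ``induced-action'' cases require further analysis. The guiding observation is that whenever the factors of $G = G_1 \times G_2$ satisfy the hypotheses of Theorem \ref{T:CP12} -- at least one factor with property $(T)$, at least one totally disconnected, and every connected factor (if any) carrying $(T)$ -- the stabilizers of $\Gamma \actson (X,\nu)$ are a.s.\ finite or a.s.\ of finite index, matching conclusion 3 (since finite subgroups of $\Gamma$ consist of torsion elements) or conclusion 2 of the present statement.

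For case 5 of Corollary \ref{C:lattices}, one $G_j$ is totally disconnected without $(T)$, so the standing assumption of Corollary \ref{C:lattices2} forces $G_{3-j}$ to carry $(T)$. If $G_{3-j}$ is connected, then the only connected factor carries $(T)$; if $G_{3-j}$ is totally disconnected, then $G$ has no connected factors at all. In either sub-case the hypotheses of Theorem \ref{T:CP12} are met, and we reduce to conclusion 2 or 3.

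For case 4 of Corollary \ref{C:lattices}, some $G_j$ is totally disconnected, acts ergodically and essentially freely on $G \times_{\Gamma} X$, while $G_{3-j}$ does not act ergodically. Running through the possibilities for connectedness and $(T)$ of the two factors subject to the standing hypothesis that at least one carries $(T)$: in every configuration except ``$G_j$ totally disconnected with $(T)$ and $G_{3-j}$ connected without $(T)$,'' either both factors are totally disconnected, or the sole connected factor carries $(T)$, so the hypotheses of Theorem \ref{T:CP12} are satisfied and we obtain conclusion 1, 2, or 3. In the one remaining configuration, our standing assumption forces $G_j$ to carry $(T)$ (since $G_{3-j}$ does not), giving precisely the refined fourth conclusion of Corollary \ref{C:lattices2}. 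No significant obstacle arises beyond this bookkeeping: the structural conclusions of Corollary \ref{C:lattices} match the stabilizer conclusions of Theorem \ref{T:CP12} in every configuration except the one explicitly isolated as conclusion 4.
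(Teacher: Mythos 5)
Your proposal is correct and follows essentially the same route as the paper: invoke Corollary \ref{C:lattices}, then use Theorem \ref{T:CP12} to dispose of its fourth and fifth cases by checking connectedness and property $(T)$ of the factors, isolating the single configuration not covered by \cite{CP12} (totally disconnected factor with $(T)$, connected factor without $(T)$) as the refined fourth conclusion. The bookkeeping you describe matches the paper's argument step for step.
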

\begin{proof}
By the previous corollary, if none of the first three possibilities occur then one of:
\begin{itemize}
\item one $G_{j}$ is totally disconnected and acts ergodically and essentially freely on the induced space $G \times_{\Gamma} X$ and the other $G_{3-j}$ does not act ergodically on the induced space; or
\item one $G_{j}$ does not have property $(T)$, is totally disconnected, and there is a nontrivial ergodic $G_{j}$-space $(Y,\eta)$ that is a $\Gamma$-quotient of $(X,\nu)$ and such that $G_{j} \actson (Y,\eta)$ is weakly amenable but not essentially transitive.
\end{itemize}
The result in \cite{CP12} (Theorem \ref{T:CP12}) rules out the second case since in that case either both groups are totally disconnected or the connected group has property $(T)$.  Likewise, in the first case, the only possibility not covered by \cite{CP12} is that there is a connected group in the product that does not have property $(T)$.
\end{proof}

\section{Higher-Order Product Groups}

We now generalize the results of the previous two sections to products of arbitrarily (finitely) many groups and irreducible lattices in such products.

\subsection{The Higher-Order Product Random Subgroups Functor}

\begin{definition}
Let $G_{j}$ be locally compact second countable groups for $j = 1, \ldots, k$.  Set $G = G_{1} \times \cdots \times G_{k}$.  Given a $G$-space $(X,\nu)$, define $PRG_{G_{1},G_{2},\ldots,G_{k}}^{(k)}(X,\nu)$ to be the quotient space of $(X,\nu)$ by the map 
\[
\varphi(x) = \overline{\mathrm{proj}_{G_{1}}~\stab(x)} \times \cdots \times \overline{\mathrm{proj}_{G_{k}}~\stab(x)}.
\]
\end{definition}

Note that $PRG_{G_{1}}^{(1)}(X,\nu) = (X,\nu)$ in the case when there is a single group.

\begin{proposition}\label{P:PRGchain}
Let $G_{j}$ be locally compact second countable groups for $j = 1, \ldots, k$ where $k \geq 2$.  Set $G = G_{1} \times \cdots \times G_{k}$ and let $(X,\nu)$ be a $G$-space.  Then 
\[
PRG_{G_{1},G_{2} \times \cdots \times G_{k}}^{(2)}(PRG_{G_{1} \times G_{2}, G_{3},\ldots,G_{k}}^{(k-1)}(X,\nu)) = PRG_{G_{1},\ldots,G_{k}}^{(k)}(X,\nu).
\]
\end{proposition}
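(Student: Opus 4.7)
The plan is to realize both sides as quotients of $(X,\nu)$ coming from instances of the functor $F^{\Phi}$ of Theorem \ref{T:functor}, and then construct mutually inverse $G$-maps between them via the universal property of Theorem \ref{T:irsfactor}. Write
\[
\Phi_{1}(H) = \overline{\mathrm{proj}_{G_{1}\times G_{2}}H} \times \overline{\mathrm{proj}_{G_{3}}H} \times \cdots \times \overline{\mathrm{proj}_{G_{k}}H},
\]
$\Phi_{2}(H) = \overline{\mathrm{proj}_{G_{1}}H} \times \overline{\mathrm{proj}_{G_{2}\times\cdots\times G_{k}}H}$, and $\Phi(H) = \overline{\mathrm{proj}_{G_{1}}H} \times \cdots \times \overline{\mathrm{proj}_{G_{k}}H}$, so that the right-hand side is $F^{\Phi}(X,\nu)$ and the left-hand side is $F^{\Phi_{2}}(F^{\Phi_{1}}(X,\nu))$. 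A first routine verification shows $\Phi_{2} \circ \Phi_{1} = \Phi$ pointwise on $S(G)$: the key identity is $\overline{\mathrm{proj}_{G_{1}}\overline{\mathrm{proj}_{G_{1}\times G_{2}}H}} = \overline{\mathrm{proj}_{G_{1}}H}$ (from continuity of projection together with closure), and the $G_{2}\times\cdots\times G_{k}$-factor is handled analogously.

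Next I would produce a $G$-map $\lambda \colon RHS \to LHS$. Let $\psi\colon (X,\nu) \to (Y,\eta) := F^{\Phi_{1}}(X,\nu)$ and $\tau\colon (Y,\eta) \to LHS$ denote the defining quotient maps. Theorem \ref{T:irsfactor} yields $\Phi_{1}(\stab(x)) \subseteq \stab_{Y}(\psi(x))$ and $\Phi_{2}(\stab_{Y}(y)) \subseteq \stab_{LHS}(\tau(y))$ almost everywhere, and monotonicity of $\Phi_{2}$ then gives
\[
\Phi(\stab(x)) = \Phi_{2}(\Phi_{1}(\stab(x))) \subseteq \Phi_{2}(\stab_{Y}(\psi(x))) \subseteq \stab_{LHS}((\tau\circ\psi)(x))
\]
almost everywhere. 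Hence the composite $\tau \circ \psi\colon (X,\nu) \to LHS$ satisfies the defining stabilizer condition of $F^{\Phi}(X,\nu)$, and Theorem \ref{T:irsfactor} produces the factorization $\tau \circ \psi = \lambda \circ \pi_{k}$ through the universal quotient map $\pi_{k}\colon (X,\nu) \to RHS$.

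For the reverse direction I would factor $\pi_{k}$ first through $\psi$ and then through $\tau$. Since $\Phi_{1}(\stab(x)) \subseteq \Phi(\stab(x)) \subseteq \stab(\pi_{k}(x))$, Theorem \ref{T:irsfactor} yields a $G$-map $\tilde\pi_{k}\colon (Y,\eta) \to RHS$ with $\pi_{k} = \tilde\pi_{k} \circ \psi$. To lift $\tilde\pi_{k}$ further through $\tau$, it suffices by Theorem \ref{T:irsfactor} to verify $\Phi_{2}(\stab_{Y}(y)) \subseteq \stab(\tilde\pi_{k}(y))$ almost everywhere; the $G$-equivariance of $\tilde\pi_{k}$ combined with monotonicity of $\Phi_{2}$ reduces this to the structural claim that $\stab(\pi_{k}(x))$ is already of product form with respect to the splitting $G = G_{1} \times (G_{2} \times \cdots \times G_{k})$, equivalently $\Phi_{2}(\stab(\pi_{k}(x))) = \stab(\pi_{k}(x))$. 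This is the main technical obstacle. I would address it by observing that the algebra of $\Phi \circ \stab$-invariant functions on $X$ coincides with the intersection over $j = 1, \ldots, k$ of the algebras invariant under $\overline{\mathrm{proj}_{G_{j}}\stab(\cdot)}$ acting through the $j$-th coordinate embedding $G_{j} \hookrightarrow G$; the resulting Mackey realization then inherits the product structure of $G$ at the level of stabilizers, a fact that can be made precise one coordinate at a time using Theorem \ref{T:disintergodic}.

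Finally, writing $\mu\colon LHS \to RHS$ for the $G$-map produced in the previous step, both compositions $\mu \circ \lambda$ and $\lambda \circ \mu$ commute with the universal projections $\pi_{k}$ and $\tau \circ \psi$ respectively, so the uniqueness clause of Theorem \ref{T:irsfactor} forces each to be the identity $G$-map. This yields the desired $G$-isomorphism between the two sides and completes the proof.
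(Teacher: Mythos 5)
Your overall strategy is the same as the paper's: both inclusions are produced from the universal property of the quotient by a random subgroup (Theorem \ref{T:irsfactor}), your identity $\Phi_{2}\circ\Phi_{1}=\Phi$ is just a compact packaging of the projection-and-closure computations the paper writes out, the construction of $\lambda$ and the factorization of $\pi_{k}$ through $Y = PRG^{(k-1)}_{G_{1}\times G_{2},G_{3},\ldots,G_{k}}(X,\nu)$ are exactly the paper's two chains of maps, and the final identification of the two quotients of $(X,\nu)$ is concluded in the same way (your appeal to a ``uniqueness clause'' of Theorem \ref{T:irsfactor} is loose --- that theorem states no uniqueness --- but the intended fact, that $G$-maps of quotients of $(X,\nu)$ commuting with the projections from $X$ are determined, is what the paper uses too).

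The divergence, and the weak point, is your treatment of the lift of $\tilde\pi_{k}$ through $\tau$. What the universal property actually requires is only the inclusion $\Phi_{2}(\stab_{Y}(y))\subseteq\stab(\tilde\pi_{k}(y))$ almost everywhere, and this is precisely what the paper invokes at this step (its ``obvious inclusion of the stabilizers''); the paper does not pass through anything stronger. You instead reduce to the claim $\Phi_{2}(\stab(\pi_{k}(x)))=\stab(\pi_{k}(x))$, i.e.\ that the stabilizers of $PRG^{(k)}_{G_{1},\ldots,G_{k}}(X,\nu)$ are themselves of product form for the splitting $G_{1}\times(G_{2}\times\cdots\times G_{k})$, and your justification of that claim is not a proof: Theorem \ref{T:disintergodic} concerns descent of functions invariant under $\stab(\psi(x))$ and gives no information about the product structure of those stabilizer groups, and the (correct) remark that the $\Phi\circ\stab$-invariant algebra is the intersection of the coordinatewise invariant algebras does not by itself transfer the product structure of $G$ to the stabilizers of the Mackey realization. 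So, as written, this step is asserted rather than proved, and it is a strictly stronger assertion than the one the paper needs. To match the paper you should aim the argument directly at the inclusion $\Phi_{2}(\stab_{Y}(y))\subseteq\stab(\tilde\pi_{k}(y))$ (equivalently, that every $\Phi\circ\stab$-invariant function on $X$, viewed on $Y$, is invariant under $\overline{\mathrm{proj}_{G_{1}}\,\stab_{Y}(y)}\times\overline{\mathrm{proj}_{G_{2}\times\cdots\times G_{k}}\,\stab_{Y}(y)}$), rather than routing through the unestablished product-form statement.
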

\begin{proof}
This will follow from the universal property of the quotient space by a random subgroup (Theorem \ref{T:irsfactor}).  Let
\begin{diagram}
(X,\nu) &\rTo^{\pi} &PRG_{G_{1}\times G_{2},G_{3},\ldots,G_{k}}^{(k-1)}(X,\nu) &\rTo^{\psi} &PRG_{G_{1}, G_{2}\times \cdots \times G_{k}}^{(2)}(PRG_{G_{1}\times G_{2},G_{3},\ldots,G_{k}}^{(k-1)}(X,\nu))
\end{diagram}
be the $G$-maps defining the quotient spaces.  Observe that since
\[
\overline{\mathrm{proj}_{G_{1}\times G_{2}}~\stab(x)} \times \overline{\mathrm{proj}_{G_{3}}~\stab(x)} \times \cdots \times \overline{\mathrm{proj}_{G_{k}}~\stab(x)} \subseteq \stab(\pi(x))
\]
it holds that
\[
\overline{\mathrm{proj}_{G_{1}}~\stab(\pi(x))} \supseteq \overline{\mathrm{proj}_{G_{1}}~\overline{\mathrm{proj}_{G_{1}\times G_{2}}~\stab(x)}} \supseteq \overline{\mathrm{proj}_{G_{1}}~\stab(x)}
\]
and likewise that
\[
\overline{\mathrm{proj}_{G_{2}\times\cdots\times G_{k}}~\stab(\pi(x))} \supseteq \overline{\mathrm{proj}_{G_{2}}~\stab(x)} \times \cdots \times \overline{\mathrm{proj}_{G_{k}}~\stab(x)}.
\]
Hence by the universal property of $PRG^{(k)}$ there exist $G$-maps
\[
(X,\nu) \to PRG_{G_{1},\ldots,G_{k}}^{(k)}(X,\nu) \to PRG_{G_{1}, G_{2}\times \cdots \times G_{k}}^{(2)}(PRG_{G_{1}\times G_{2},G_{3},\ldots,G_{k}}^{(k-1)}(X,\nu)).
\]
On the other hand, since
\begin{align*}
\overline{\mathrm{proj}_{G_{1}}~\stab(x)} \times &\cdots \times \overline{\mathrm{proj}_{G_{k}}~\stab(x)} \\ 
&\supseteq~\overline{\mathrm{proj}_{G_{1} \times G_{2}}~\stab(x)} \times \overline{\mathrm{proj}_{G_{3}}~\stab(x)} \times \cdots \times \overline{\mathrm{proj}_{G_{k}}~\stab(x)}
\end{align*}
by the universal property of $PRG^{(k-1)}$ there are $G$-maps
\[
(X,\nu) \to PRG_{G_{1}\times G_{2},G_{3},\ldots,G_{k}}^{(k-1)}(X,\nu) \to PRG_{G_{1},\ldots,G_{k}}^{(k)}(X,\nu).
\]
Then by the universal property of $PRG^{(2)}$ and the obvious inclusion of the stabilizers there exist $G$-maps
\begin{align*}
PRG_{G_{1}\times G_{2},G_{3},\ldots,G_{k}}^{(k-1)}(X,\nu) &\to PRG_{G_{1}, G_{2}\times \cdots \times G_{k}}^{(2)}(PRG_{G_{1}\times G_{2},G_{3},\ldots,G_{k}}^{(k-1)}(X,\nu)) \\
&\to PRG_{G_{1},\ldots,G_{k}}^{(k)}(X,\nu)
\end{align*}
and we therefore conclude that
\[
PRG_{G_{1},\ldots,G_{k}}^{(k)}(X,\nu) = PRG_{G_{1}, G_{2}\times \cdots \times G_{k}}^{(2)}(PRG_{G_{1}\times G_{2},G_{3},\ldots,G_{k}}^{(k-1)}(X,\nu)).
\]
\end{proof}

\subsection{Actions of Higher-Order Product Groups}

\begin{theorem}\label{T:71prime}
Let $G_{j}$ be locally compact second countable groups for $j = 1, \ldots, k$.  Set $G = G_{1} \times \cdots \times G_{k}$ and let $(X,\nu)$ be a measure-preserving $G$-space.  If $G \actson PRG_{G_{1},\ldots,G_{k}}^{(k)}(X,\nu)$ weakly amenably then $G \actson (X,\nu)$ weakly amenably.
\end{theorem}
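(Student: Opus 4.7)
The plan is to prove this by induction on $k$, reducing the $k$-fold product case to the two-fold case already handled by Theorem \ref{T:PRGweakamen}. The base case $k=1$ is trivial since $PRG^{(1)}_{G_1}(X,\nu) = (X,\nu)$, and the case $k=2$ is exactly Theorem \ref{T:PRGweakamen}. So I would fix $k \geq 3$ and assume the theorem holds for any product of fewer than $k$ locally compact second countable groups acting on any measure-preserving space.

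For the inductive step, I would apply Proposition \ref{P:PRGchain}, which tells us that
\[
PRG_{G_{1},\ldots,G_{k}}^{(k)}(X,\nu) = PRG_{G_{1},\, G_{2} \times \cdots \times G_{k}}^{(2)}\bigl(PRG_{G_{1}\times G_{2}, G_{3},\ldots,G_{k}}^{(k-1)}(X,\nu)\bigr).
\]
Writing $(Y,\eta) = PRG_{G_{1}\times G_{2}, G_{3},\ldots,G_{k}}^{(k-1)}(X,\nu)$, the hypothesis says that $G \actson PRG_{G_{1}, G_{2}\times\cdots\times G_{k}}^{(2)}(Y,\eta)$ weakly amenably. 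Since $(Y,\eta)$ is a quotient of the measure-preserving $G$-space $(X,\nu)$, it too is measure-preserving. Viewing $G$ as the product of the two groups $G_{1}$ and $G_{2}\times\cdots\times G_{k}$, Theorem \ref{T:PRGweakamen} then yields that $G \actson (Y,\eta)$ weakly amenably.

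Now I view $G$ as a product of the $k-1$ groups $G_{1}\times G_{2}, G_{3},\ldots,G_{k}$ and apply the inductive hypothesis to the measure-preserving $G$-space $(X,\nu)$: since $G \actson PRG_{G_{1}\times G_{2}, G_{3},\ldots,G_{k}}^{(k-1)}(X,\nu) = (Y,\eta)$ weakly amenably, it follows that $G \actson (X,\nu)$ weakly amenably, completing the induction.

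There isn't really a hard obstacle here once Proposition \ref{P:PRGchain} and Theorem \ref{T:PRGweakamen} are in hand; the content of the argument is entirely bookkeeping. The only point that requires any attention is to verify that the intermediate space $(Y,\eta)$ remains measure-preserving (so that Theorem \ref{T:PRGweakamen} applies), which is immediate since quotient $G$-maps push forward $G$-invariant measures to $G$-invariant measures.
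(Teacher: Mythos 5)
Your proof is correct and is essentially the paper's argument: the paper also combines Proposition \ref{P:PRGchain} with Theorem \ref{T:PRGweakamen} to pass from $PRG^{(k)}$ to $PRG^{(k-1)}_{G_{1}\times G_{2},G_{3},\ldots,G_{k}}(X,\nu)$ and then iterates, which is exactly your induction on $k$ phrased as a repeated peeling of factors. The measure-preservation of the intermediate quotient, which you note, is the only hypothesis to check and is handled the same way.
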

\begin{proof}
By Proposition \ref{P:PRGchain} since $G \actson PRG_{G_{1},\ldots,G_{k}}^{(k)}(X,\nu)$ weakly amenably, it then holds that $G \actson PRG_{G_{1},G_{2} \times \cdots \times G_{k}}^{(2)}(PRG_{G_{1} \times G_{2}, G_{3},\ldots,G_{k}}^{(k-1)}(X,\nu))$ weakly amenably.  Then, by Theorem \ref{T:PRGweakamen}, we also have $G \actson PRG_{G_{1} \times G_{2}, G_{3},\ldots,G_{k}}^{(k-1)}(X,\nu)$ weakly amenably.  Proceeding inductively, we then have that $G \actson PRG_{G_{1} \times \cdots \times G_{j}, G_{j+1}, \ldots, G_{k}}^{(k-j+1)}(X,\nu)$ weakly amenably for all $1 \leq j \leq k$.  Hence in particular, it holds that $G \actson PRG_{G_{1} \times \cdots \times G_{k}}^{(1)}(X,\nu) = (X,\nu)$ weakly amenably.
\end{proof}

\begin{theorem}\label{T:72prime}
Let $G_{j}$ be locally compact second countable groups for $j = 1, \ldots, k$ with $k \geq 2$.  Set $G = G_{1} \times \cdots \times G_{k}$ and let $(X,\nu)$ be an ergodic measure-preserving $G$-space.  Let $(X,\nu) \to (X_{j},\nu_{j})$ be the ergodic decomposition into $\widetilde{G}_{j} = G_{1} \times \cdots \times G_{j-1} \times \{ e \} \times G_{j+1} \times \cdots \times G_{k}$-ergodic components.  Assume that $G_{j} \actson (X_{j},\nu_{j})$ weakly amenably and that $\stab_{*}\nu_{j}$ is a simple invariant random subgroup for all $j=1,\ldots,k$.  Then either there exists at least one $G_{j}$ such that $\mathrm{proj}_{G_{j}}~\stab(x) = \{ e \}$ almost everywhere or else $G \actson (X,\nu)$ weakly amenably.
\end{theorem}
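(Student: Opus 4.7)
The plan is to extend Theorem \ref{T:ergdecweakamen} from two factors to $k$ factors by studying, for each $j$, the map $s_j : X \to S(G_j)$ defined by $s_j(x) = \overline{\mathrm{proj}_{G_j}~\stab_G(x)}$, obtaining a simplicity-based dichotomy (either $s_j \equiv \{e\}$ or $s_j \equiv \stab_{G_j} \circ \pi_j$ almost everywhere), and then, in the case where no $s_j$ is trivial, producing an orbital $G$-map from $PRG(X,\nu)$ to $X_1 \times \cdots \times X_k$ and invoking Theorem \ref{T:71prime}.

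First, because $\mathrm{proj}_{G_j}$ is trivial on $\widetilde{G}_j$, the map $s_j$ is $\widetilde{G}_j$-invariant, and hence (by $\widetilde{G}_j$-ergodicity of almost every fiber of $\pi_j : (X,\nu) \to (X_j,\nu_j)$) descends to a $G_j$-equivariant map $\widetilde{s}_j : X_j \to S(G_j)$. Mimicking the proof of Theorem \ref{T:crazy} with $s_j$ in place of the $G$-stabilizer, the constancy of $s_j$ on fibers together with $G_j$-equivariance forces $s_j(x) \normal \stab_{G_j}(\pi_j(x))$ for $\nu$-almost every $x$, so the joining $\int \delta_{s_j(x)} \times \delta_{\stab(\pi_j(x))}\,d\nu(x)$ witnesses $(s_j)_*\nu \normal \stab_*\nu_j$ as invariant random subgroups of $G_j$. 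Simplicity of $\stab_*\nu_j$ then forces, for almost every $x$, either $s_j(x) = \{e\}$ or $s_j(x) = \stab_{G_j}(\pi_j(x))$; since $\{x : s_j(x) = \{e\}\}$ is $G$-invariant in $X$, ergodicity of $G \actson (X,\nu)$ promotes this to a uniform dichotomy.

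If some $s_j \equiv \{e\}$ then $\mathrm{proj}_{G_j}~\stab(x) = \{e\}$ almost everywhere, which is the first conclusion. Otherwise $s_j(x) = \stab_{G_j}(\pi_j(x))$ for every $j$ almost everywhere. Setting $\pi = (\pi_1, \ldots, \pi_k) : X \to X_1 \times \cdots \times X_k$, a direct computation gives $\stab_G(\pi(x)) = \bigcap_j \stab_G(\pi_j(x)) = \prod_j \stab_{G_j}(\pi_j(x)) = \prod_j s_j(x)$, which is exactly the PRG defining map $\varphi(x)$. Let $\psi : X \to PRG(X,\nu)$ be the quotient map; the universal property (Theorem \ref{T:irsfactor}) produces a $G$-map $\tau : PRG(X,\nu) \to (X_1 \times \cdots \times X_k, \pi_*\nu)$ factoring $\pi$ through $\psi$, and the sandwich $\varphi(x) \subseteq \stab(\psi(x)) \subseteq \stab(\tau(\psi(x))) = \stab(\pi(x)) = \varphi(x)$ forces equality throughout, so $\tau$ is orbital. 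A $k$-fold extension of Proposition \ref{P:weakamenprod} (the tensor-product-of-means construction depends only on the marginals being $\nu_j$, not on the joint measure being the product) shows $G \actson (X_1 \times \cdots \times X_k, \pi_*\nu)$ is weakly amenable, Proposition \ref{P:weakamen} transports this upstairs to $PRG(X,\nu)$, and Theorem \ref{T:71prime} finally delivers weak amenability of $G \actson (X,\nu)$.

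The main obstacle will be that the image measure $\pi_*\nu$ on $X_1 \times \cdots \times X_k$ need not coincide with $\nu_1 \times \cdots \times \nu_k$, so Proposition \ref{P:weakamenprod} cannot be applied verbatim; fortunately the $G$-orbits in $X_1 \times \cdots \times X_k$ still factor as $[x_1] \times \cdots \times [x_k]$, and the tensor-product mean construction is insensitive to the choice of joint measure with prescribed marginals, so this generalization is routine.
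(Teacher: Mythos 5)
Your argument is correct and follows essentially the same route as the paper's proof: the maps $s_{j}(x)=\overline{\mathrm{proj}_{G_{j}}~\stab(x)}$, normality $(s_{j})_{*}\nu \normal \stab_{*}\nu_{j}$ via the mechanism of Theorem \ref{T:crazy}, the simplicity-plus-ergodicity dichotomy, orbitality of $PRG_{G_{1},\ldots,G_{k}}^{(k)}(X,\nu)$ over the product of the ergodic-component spaces, and then Propositions \ref{P:weakamenprod} and \ref{P:weakamen} followed by Theorem \ref{T:71prime}. The only divergence is minor: the paper works with the product measure $\nu_{1}\times\cdots\times\nu_{k}$ (which your $\pi_{*}\nu$ in fact equals, since for an ergodic measure-preserving $G$-action the $\widetilde{G}_{j}$-invariant subalgebras are independent, the fact underlying Proposition \ref{P:something} and the invariants product functor), whereas you sidestep that fact by noting the tensor-product-of-means construction only depends on the marginals, which is a harmless and correct variation.
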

\begin{proof}
Consider the maps $s_{j}(x) = \overline{\mathrm{proj}_{G_{j}}~\stab(x)}$.  Since $s_{j}(x)$ is a $\widetilde{G}_{j}$-invariant function, it descends to a function on $(X_{j},\nu_{j})$.  Therefore by Theorem \ref{T:crazy}, $(s_{j})_{*}\nu \normal \stab_{*}\nu_{j}$.  Since the set $\{ x \in X : s_{j}(x) = \{ e \} \}$ is $G$-invariant, by ergodicity either $(s_{j})_{*}\nu = \delta_{\{e\}}$ or else $(s_{j})_{*}\nu = \stab_{*}\nu_{j}$ for each $j$.  If $(s_{j})_{*}\nu = \delta_{\{e\}}$ then the conclusion follows.  So we may assume that $(s_{j})_{*}\nu = \stab_{*}\nu_{j}$ for all $j = 1,\ldots,k$.  This says precisely that $PRG_{G_{1},\ldots,G_{k}}^{(k)}(X,\nu)$ is orbital over $(X_{1} \times \cdots \times X_{k}, \nu_{1} \times \cdots \times \nu_{k})$ (which is a quotient of the product random subgroups functor by the universal property since the $\widetilde{G}_{j}$-ergodic components are a quotient by an invariant random subgroup with larger stabilizers).  Then the fact that each $G_{j}$ acts weakly amenably on $(X_{j},\nu_{j})$ says that $G \actson (X_{1} \times \cdots \times X_{k},\nu_{1} \times \cdots \times \nu_{k})$ weakly amenably which in turn means, by Proposition \ref{P:weakamen}, that $G \actson PRG_{G_{1},\ldots,G_{k}}^{(k)}(X,\nu)$ weakly amenably.  Then, by Theorem \ref{T:71prime}, $G \actson (X,\nu)$ weakly amenably.
\end{proof}

\begin{theorem}\label{T:73prime}
Let $G_{j}$ be locally compact second countable groups for $j = 1, \ldots, k$ with $k \geq 2$.  Set $G = G_{1} \times \cdots \times G_{k}$ and let $(X,\nu)$ be an ergodic measure-preserving $G$-space.  If $\mathrm{proj}_{G_{j}}~\stab(x)$ is dense in $G_{j}$ almost everywhere for each $j = 1,\ldots,k$ then $G \actson (X,\nu)$ weakly amenably.
\end{theorem}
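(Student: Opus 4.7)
The plan is to mirror the proof of Theorem \ref{T:weakamendense} from the two-group case, using the higher-order product random subgroups functor $PRG^{(k)}$ together with Theorem \ref{T:71prime} to reduce weak amenability of $G \actson (X,\nu)$ to weak amenability on a trivial quotient.

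First I would consider the defining map $\varphi : X \to S(G)$ for $PRG_{G_{1},\ldots,G_{k}}^{(k)}(X,\nu)$, namely
\[
\varphi(x) = \overline{\mathrm{proj}_{G_{1}}~\stab(x)} \times \cdots \times \overline{\mathrm{proj}_{G_{k}}~\stab(x)}.
\]
By the standing hypothesis, each factor of $\varphi(x)$ is all of $G_{j}$ almost everywhere, so $\varphi(x) = G$ for $\nu$-almost every $x$. The space of $\varphi$-invariant functions in $L^{\infty}(X,\nu)$ therefore coincides with the space of $G$-invariant functions, and since $G \actson (X,\nu)$ is ergodic, this algebra consists of constants. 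Consequently the Mackey point realization $PRG_{G_{1},\ldots,G_{k}}^{(k)}(X,\nu)$ is the trivial one-point $G$-space.

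Next, any group acts weakly amenably on the trivial one-point space: the unique mean on the single orbit is trivially invariant, so the orbit equivalence relation is amenable and hence the action is weakly amenable by Theorem \ref{T:amener}. Therefore $G \actson PRG_{G_{1},\ldots,G_{k}}^{(k)}(X,\nu)$ weakly amenably.

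Finally, I would invoke Theorem \ref{T:71prime}, which asserts precisely that weak amenability of the $G$-action on $PRG_{G_{1},\ldots,G_{k}}^{(k)}(X,\nu)$ implies weak amenability of the $G$-action on $(X,\nu)$. This yields the conclusion. No step presents a genuine obstacle here, since the density hypothesis collapses the higher-order quotient all the way to a point; the content of the theorem lies entirely in the machinery already developed, and the only thing to verify is that this collapse does happen, which is immediate from the definition of $PRG^{(k)}$ and the ergodicity of $(X,\nu)$.
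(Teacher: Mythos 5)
Your proposal is correct and matches the paper's argument: the density hypothesis forces the defining map of $PRG^{(k)}_{G_{1},\ldots,G_{k}}(X,\nu)$ to equal $G$ almost everywhere, so by ergodicity the quotient is the trivial one-point space, on which every group acts weakly amenably, and Theorem \ref{T:71prime} then transfers weak amenability back to $(X,\nu)$. This is exactly the route taken in the paper (mirroring Theorem \ref{T:weakamendense}), so nothing further is needed.
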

\begin{proof}
When the projections are all dense, $G \actson PRG_{G_{1},\ldots,G_{k}}^{(k)}(X,\nu)$ trivially.  By ergodicity, it is then the trivial one-point space.  As every group acts weakly amenably on a point, the conclusion follows from Theorem \ref{T:71prime}.
\end{proof}

\begin{corollary}\label{C:actprodTreal}
Let $G_{j}$ be locally compact second countable groups for $j = 1, \ldots, k$ with $k \geq 2$ each with property $(T)$.  Set $G = G_{1} \times \cdots \times G_{k}$ and let $(X,\nu)$ be an ergodic measure-preserving $G$-space.
Assume that there exist simple closed subgroups $H_{j} < G_{j}$ such that the spaces of $\prod_{\ell \ne j}G_{\ell}$-ergodic components is isomorphic to $(G_{j}/H_{j},\mathrm{Haar})$ for each $j$  and such that any nontrivial normal subgroup of $H_{j}$ has finite index in $H_{j}$.  Then either at least one $G_{j} \actson (X,\nu)$ essentially free or $G \actson (X,\nu)$ is essentially transitive.
\end{corollary}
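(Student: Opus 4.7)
The plan is to extend the proof of Corollary~\ref{C:actprodT} from two factors to $k$ factors by combining the higher-order product random subgroups functor with the dichotomy produced by the hypothesis on each $H_{j}$.

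First I would set up, for each $j = 1, \ldots, k$, the map $s_{j} : X \to S(G_{j})$ given by $s_{j}(x) = \overline{\mathrm{proj}_{G_{j}}~\stab_{G}(x)}$. Because $\widetilde{G}_{j} := \prod_{\ell \ne j} G_{\ell}$ commutes with $G_{j}$ inside $G$ and permutes stabilizers by conjugation, $s_{j}$ is $\widetilde{G}_{j}$-invariant. Ergodicity of $\widetilde{G}_{j}$ on almost every fiber of the decomposition $\pi_{j} : (X,\nu) \to (X_{j},\nu_{j}) = (G_{j}/H_{j},\mathrm{Haar})$ then forces $s_{j}$ to be fiberwise constant, and a one-sided joining argument of the type used in Theorem~\ref{T:crazy} (applied inside $G_{j}$) shows that $(s_{j})_{*}\nu$ is a normal invariant random subgroup of $\stab_{*}\nu_{j}$. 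Since $\stab_{*}\nu_{j}$ is supported on conjugates of $H_{j}$ and every nontrivial normal subgroup of $H_{j}$ has finite index, $s_{j}(x)$ is almost surely either trivial or of finite index in a conjugate of $H_{j}$. The set $\{x : s_{j}(x) = \{e\}\}$ is $G$-invariant, so ergodicity of $G \actson (X,\nu)$ pins down which alternative holds.

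If $s_{j}(x) = \{e\}$ almost surely for some $j$, then $\stab_{G_{j}}(x) = \stab_{G}(x) \cap G_{j}$ is contained in $s_{j}(x) = \{e\}$, so $G_{j} \actson (X,\nu)$ is essentially free and we are done. Otherwise every $s_{j}(x)$ has finite index in a conjugate of $H_{j}$ and hence finite covolume in $G_{j}$ (as $H_{j}$ itself has finite covolume because $(G_{j}/H_{j},\mathrm{Haar})$ is a probability space). Consequently the product $\varphi(x) = s_{1}(x) \times \cdots \times s_{k}(x)$ has finite covolume in $G$. Letting $\psi : (X,\nu) \to PRG^{(k)}_{G_{1},\ldots,G_{k}}(X,\nu)$ denote the quotient map, we have $\varphi(x) \subseteq \stab_{G}(\psi(x))$ by construction, so $\stab_{G}(\psi(x))$ also has finite covolume almost surely. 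The $G$-invariant probability measures on the orbits $G/\stab_{G}(\psi(x))$ then assemble into a measurable invariant mean on the orbit equivalence relation of $G \actson PRG^{(k)}(X,\nu)$, which is therefore weakly amenable; Theorem~\ref{T:71prime} lifts this to weak amenability of $G \actson (X,\nu)$.

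Finally, $G$ has property $(T)$ as a product of property $(T)$ groups, so by Theorem~\ref{T:cfw} combined with the standard fact that a property $(T)$ group cannot act weakly amenably on a nonatomic probability space without being essentially transitive (the cocycle into $\mathbb{Z}$ or $\mathbb{R}$ provided by Connes--Feldman--Weiss must be cohomologous to one with relatively compact image), we conclude that $G \actson (X,\nu)$ is essentially transitive. The main point requiring care is the first step: verifying that $(s_{j})_{*}\nu$ is a normal random subgroup of $\stab_{*}\nu_{j}$ even though $\stab_{G}$ itself is not fiberwise constant, which is why the argument must be internalized to $G_{j}$ using the $\widetilde{G}_{j}$-ergodicity of the fibers before Theorem~\ref{T:crazy} can be invoked.
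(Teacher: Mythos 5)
Your proof is correct and follows essentially the same route as the paper, which simply cites ``the same reasoning as in Corollary \ref{C:actprodT}'': the fiberwise-constancy/normality argument for $s_{j}$, the trivial-versus-finite-index dichotomy from the hypothesis on $H_{j}$, the finite-covolume stabilizers of $PRG^{(k)}(X,\nu)$ giving an invariant mean, Theorem \ref{T:71prime} to lift weak amenability, and property $(T)$ to conclude essential transitivity. Your closing step via Theorem \ref{T:cfw} and cocycle rigidity is just the variant of the property-$(T)$ endgame the paper itself uses in Corollary \ref{C:lattices}, so there is no substantive difference.
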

\begin{proof}
The same reasoning as in Corollary \ref{C:actprodT} gives that if none of the $G_{j}$ act essentially freely then the $G$-action on $PRG_{G_{1},\ldots,G_{k}}^{(k)}(X,\nu)$ is weakly amenable, hence $G \actson (X,\nu)$ is weakly amenable.  Since $G$ has property $(T)$, the action is then essentially transitive.
\end{proof}

\subsection{Actions of Lattices in Higher-Order Product Groups}

\begin{theorem}\label{T:83prime}
Let $G = G_{1}, \times \cdots \times G_{k}$ be a product of at least two simple nondiscrete noncompact locally compact second countable groups with the Howe-Moore property.  Let $\Gamma < G$ be an irreducible lattice and let $(X,\nu)$ be an ergodic measure-preserving $\Gamma$-space.  Then one of the following holds:
\begin{itemize}
\item $\Gamma \actson (X,\nu)$ is essentially free;
\item $\stab_{*}\nu$ is supported on the torsion elements of $\Gamma$;
\item $\Gamma \actson (X,\nu)$ is weakly amenable; or
\item at least one $G_{j}$ is totally disconnected.
\end{itemize}
\end{theorem}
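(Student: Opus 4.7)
The plan is to prove the theorem by contradiction, assuming all four alternatives fail, and to adapt the argument of Theorem \ref{T:stuff} to the higher-order setting by replacing Theorem \ref{T:weakamendense} with its higher-order analogue Theorem \ref{T:73prime} and by extending Proposition \ref{P:irrlattint} to arbitrary groupings of the factors. Under the negation of the fourth alternative, every $G_j$ is connected, hence by Theorem \ref{T:connHMLie} a simple real Lie group. Writing $L = \{\gamma \in \Gamma \setminus \{e\} : \nu(E_\gamma) > 0\}$, the negation of the first alternative gives $L \ne \emptyset$ and the negation of the second yields some non-torsion $\gamma \in L$. Since $\langle \gamma \rangle$ embeds in $\prod_j \overline{\langle \mathrm{proj}_{G_j}\gamma \rangle}$ and is discrete in $G$, it would be finite were each closure compact, so there must exist some $j$ for which $\langle \mathrm{proj}_{G_j}\gamma \rangle$ is unbounded in $G_j$.

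For each such $j$, one repeats the Howe-Moore mixing argument from the proof of Theorem \ref{T:stuff} on the projected ergodic measure-preserving $G_j$-space $(Y_j,\eta_j)$ (mixing by Theorem \ref{T:HMmixing}): the element $\mathrm{proj}_{G_j}\gamma$ fixes a positive-measure set $F_\gamma \subseteq Y_j$, so mixing together with $G_j$-simplicity forces $(Y_j,\eta_j)$ to be trivial and $\overline{\mathrm{proj}_{G_j}\stab_\Gamma(x)} = G_j$ for $\nu$-almost every $x$. Let $J_1 = \{j : \overline{\mathrm{proj}_{G_j}\stab_\Gamma(x)} = G_j \text{ a.e.}\}$; the previous paragraph ensures $J_1 \ne \emptyset$. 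If $J_1 = \{1,\ldots,k\}$, consider the induced $G$-action on $G \times_\Gamma X$, which is ergodic. Since $\stab_G(f,x) = f\stab_\Gamma(x)f^{-1}$, each projection $\overline{\mathrm{proj}_{G_j}\stab_G(f,x)}$ equals $G_j$ almost surely, so Theorem \ref{T:73prime} yields weak amenability of the induced action and Proposition \ref{P:weakamenlattice} transfers this to $\Gamma \actson (X,\nu)$, contradicting the third assumption.

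Otherwise $J_1^c \ne \emptyset$, and for each $j \in J_1^c$ the projected action $(Y_j,\eta_j)$ is nontrivial. Since $G_j$ is a simple noncompact connected real Lie group and $\mathrm{proj}_{G_j}\Gamma$ is countable, Theorem \ref{T:countableessfree} forces the $\mathrm{proj}_{G_j}\Gamma$-action on $(Y_j,\eta_j)$ to be essentially free, which gives $\mathrm{proj}_{G_j}\stab_\Gamma(x) = \{e\}$ almost surely. Hence $\stab_\Gamma(x) \subseteq \Gamma \cap \prod_{j \in J_1} G_j$ almost surely. Grouping the factors as $G = A \times B$ with $A = \prod_{j \in J_1} G_j$ and $B = \prod_{j \in J_1^c} G_j$ (both nondiscrete and noncompact), $\Gamma$ remains an irreducible lattice in $A \times B$ since irreducibility depends only on normal subgroups of the ambient group, which are unchanged by regrouping. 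Proposition \ref{P:irrlattint} then yields $\Gamma \cap A \subseteq Z(A) = \prod_{j \in J_1} Z(G_j)$, which is finite since centers of connected simple real Lie groups are finite. Thus $\stab_\Gamma(x)$ lies in a fixed finite subgroup of $\Gamma$ almost surely and consists entirely of torsion elements, contradicting the second assumption.

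The main obstacle is the extension of Proposition \ref{P:irrlattint} to the regrouped decomposition $G = A \times B$; this is straightforward, since irreducibility is a condition on closed normal subgroups of the ambient group, which are identical in both decompositions. Everything else is a direct combination of the two-factor mixing argument with the higher-order product random subgroups functor machinery and Theorem \ref{T:countableessfree}.
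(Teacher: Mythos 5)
Your proposal is correct and follows essentially the same route as the paper's proof: Howe--Moore mixing on the projected actions to get dense stabilizer projections at every factor where some stabilizing element projects unboundedly, the induced action combined with Theorem \ref{T:73prime} and Proposition \ref{P:weakamenlattice} when all projections are dense, and Theorem \ref{T:countableessfree} for the connected factors. The only real variation is in the remaining branch: the paper argues that every connected factor automatically lands in the dense-projection set (essential freeness of $\mathrm{proj}_{G_j}\Gamma$ on a nontrivial $(Y_j,\eta_j)$ would contradict the existence of $\gamma \ne e$ with $\nu(E_\gamma)>0$), whereas you conclude trivial projection at such factors and finish via the regrouped Proposition \ref{P:irrlattint}, confining the stabilizers to the finite (in fact trivial) center $Z(A)$ and contradicting the presence of a non-torsion stabilizing element --- which is exactly the closing argument of Theorem \ref{T:stuff} and is equally valid.
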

\begin{proof}
Assume the action is not essentially free.  For $\gamma \in \Gamma$, let $E_{\gamma} = \{ x \in X : \gamma x = x \}$.  Let $L = \{ \gamma \in \Gamma \setminus \{ e \} : \nu(E_{\gamma}) > 0 \}$.  Then $L$ is nonempty since the action is not essentially free.
For each $j$, let $s_{j}(x) = \overline{\mathrm{proj}_{G_{j}}~\stab(x)}$.  Then $(s_{j})_{*}\nu$ is an invariant random subgroup of $G_{j}$ by Theorem \ref{T:81}.  Let $(Y_{j},\eta_{j}$ be the ergodic $(s_{j})_{*}\nu$-nonfree action of $G_{j}$.  If there exists $\gamma \in L$ such that $\overline{\langle \mathrm{proj}_{G_{j}}~\gamma \rangle}$ is noncompact then, as in the proof of Theorem \ref{T:stuff}, since $G_{j}$ has the Howe-Moore property, $s_{j}(x) = G_{j}$ almost everywhere.
Define the set
\[
S = \{ j \in \{ 1, \ldots, k \} : \exists~\gamma \in L \text{ such that } \overline{\langle \mathrm{proj}_{G_{j}}~\gamma \rangle} \text{ is noncompact} \}.
\]
Then for every $j \in S$, it holds that $s_{j}(x) = G_{j}$ almost everywhere.

Consider now the set
\[
T = \{ j \in \{ 1, \ldots, k \} : G_{j} \text{ is connected} \}.
\]
Let $j \in T$.  Then $G_{j}$ is a simple real Lie group since it has the Howe-Moore property (Theorem \ref{T:connHMLie}).  Since $\mathrm{proj}_{G_{j}}~\Gamma$ is a countable subgroup of $G_{j}$, by Theorem \ref{T:countableessfree}, either $\mathrm{proj}_{G_{j}}~\Gamma$ acts essentially freely on $(Y_{j},\eta_{j})$ or else $(Y_{j},\eta_{j})$ is trivial.  For $\gamma \in L$, since $\nu(E_{\gamma}) > 0$, on a $(s_{j})_{*}\nu$-positive measure set $\mathrm{proj}_{G_{j}}~\gamma$ is in the stabilizer of $y \in Y_{j}$.  But $\mathrm{proj}_{G_{j}}~\Gamma$ acts essentially freely so this is a contradiction.  Therefore $(Y_{j},\eta_{j})$ is trivial.  This means that $s_{j}(x) = G_{j}$ almost everywhere.

If $S$ is empty then every $\gamma \in L$ has the property that $\langle \gamma \rangle$ is contained in a compact group.  Since $\Gamma$ is discrete, this means that $\gamma$ has a finite orbit hence is a torsion element.  So if $S$ is empty then $\stab_{*}\nu$ is supported on the torsion elements of $\Gamma$.

If $|S \cup T| = k$ then $PRG_{G_{1},\ldots,G_{k}}^{(k)}(G \times_{\Gamma} X)$ has the property that almost every stabilizer projects densely into each of the $G_{j}$.  Then, by Theorem \ref{T:73prime}, $G \actson G \times_{\Gamma} X$ is weakly amenable and so, by Proposition \ref{P:weakamenlattice}, $\Gamma \actson (X,\nu)$ is weakly amenable.

Therefore we are left with the case when there exists some $j \notin S \cup T$ and $S$ is nonempty.  Then $G_{j}$ is totally disconnected.
\end{proof}

\begin{corollary}\label{C:85prime}
Let $G = G_{1}, \times \cdots \times G_{k}$ be a product of at least two simple nondiscrete noncompact locally compact second countable groups with the Howe-Moore property, at least one with property $(T)$.  Let $\Gamma < G$ be an irreducible lattice and let $(X,\nu)$ be an ergodic measure-preserving $\Gamma$-space.  Then one of the following holds:
\begin{itemize}
\item $\Gamma \actson (X,\nu)$ is essentially free;
\item $\stab_{*}\nu$ is supported on the torsion elements of $\Gamma$;
\item $\stab_{*}\nu$ is supported on the finite index subgroups of $\Gamma$; or
\item at least one $G_{j}$ is totally disconnected and at least one $G_{j}$ is connected and none of the connected $G_{j}$ have property $(T)$.
\end{itemize}
\end{corollary}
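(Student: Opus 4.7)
The plan is to combine Theorem~\ref{T:83prime} with Theorem~\ref{T:CP12}, refining the four alternatives of the former by exploiting property~$(T)$ through resolutions. First, I would apply Theorem~\ref{T:83prime}: either $\Gamma \actson (X,\nu)$ is essentially free, has $\stab_{*}\nu$ supported on torsion elements, is weakly amenable, or at least one $G_{j}$ is totally disconnected. The first two alternatives are exactly conclusions 1 and 2 of the present corollary.

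For the weakly amenable case, if every $G_{j}$ has property $(T)$ then $\Gamma$ inherits property $(T)$ as a lattice in $G$, so by Theorem~\ref{T:cfw} together with the standard cocycle-into-a-compact-subgroup argument, the ergodic weakly amenable action must be essentially transitive; together with ergodicity and measure-preservation this forces $X$ to be finite atomic with finite-index stabilizers (conclusion 3). If instead some $G_{a}$ lacks property $(T)$, I pick a factor $G_{b}$ with property $(T)$ (which exists by hypothesis); by Proposition~\ref{P:reslatts} the projection $\mathrm{proj}:\Gamma\to\prod_{i\ne b}G_{i}$ is a resolution, and the point realization of the closed $\Gamma$-invariant subspace of $L^{2}(X,\nu)$ consisting of those vectors whose Koopman orbits factor through this resolution (the construction used in the proof of Corollary~\ref{C:lattices}) produces a nontrivial ergodic $\Gamma$-quotient $(Y,\eta)$ carrying a compatible action of each $G_{i}$ with $i\ne b$. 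If $G_{a}$ is connected, Theorem~\ref{T:countableessfree} applied to the countable subgroup $\mathrm{proj}_{G_{a}}\Gamma$ of the connected simple Lie group $G_{a}$, combined with Proposition~\ref{P:irrlattint} (irreducibility), forces essential freeness of $\Gamma\actson(X,\nu)$ (conclusion 1); otherwise $G_{a}$ is totally disconnected, placing us in the fourth alternative of Theorem~\ref{T:83prime}.

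Suppose now that some $G_{j_{0}}$ is totally disconnected. If every connected factor has property $(T)$, then the hypotheses of Theorem~\ref{T:CP12} are satisfied (a property $(T)$ factor exists by assumption, $G_{j_{0}}$ is totally disconnected, and every connected factor has $(T)$), yielding $\stab_{*}\nu$ supported on finite subgroups (hence on torsion; conclusion 2) or on finite-index subgroups (conclusion 3). Otherwise some connected $G_{d}$ lacks property $(T)$, so the group-theoretic data for conclusion 4 are present, provided no other connected factor has property $(T)$. To exclude the mixed configuration in which additionally some connected $G_{c}$ has property $(T)$, I would rerun the resolution argument of the preceding paragraph using $G_{c}$ as the $(T)$ factor: the resulting $\Gamma$-quotient $(Y,\eta)$, restricted to a $G_{d}$-ergodic component, carries a nontrivial ergodic action of $G_{d}$, and then Theorem~\ref{T:countableessfree} applied through $G_{d}$ together with Proposition~\ref{P:irrlattint} again forces essential freeness (conclusion 1), contradicting the assumed failure of conclusions 1--3.

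The main obstacle will be exactly this final verification. For two factors (Corollary~\ref{C:lattices2}) the mixed configuration is impossible for cardinality reasons, but for $k\ge 3$ one must carefully coordinate the resolution machinery driven by the connected $(T)$ factor $G_{c}$, the essential freeness of countable subgroups supplied by Theorem~\ref{T:countableessfree} via the connected non-$(T)$ factor $G_{d}$, and the Howe--Moore mixing on $G_{j_{0}}$. The delicate part is ensuring that the constructed $\Gamma$-quotient carries a nontrivial ergodic $G_{d}$-action rather than collapsing before Zimmer's theorem can be applied; this requires a careful analysis of the projected stabilizer random subgroups $s_{j}$ across the various factors, in the spirit of the proof of Theorem~\ref{T:83prime}.
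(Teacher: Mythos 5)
Your opening moves match the paper's: apply Theorem \ref{T:83prime}, dispose of the weakly amenable case when every factor has property $(T)$, and invoke Theorem \ref{T:CP12} once a totally disconnected factor is present and all connected factors have $(T)$. The divergence --- and the gap --- is in how you eliminate the mixed configuration (a totally disconnected factor together with a connected factor having $(T)$ and a connected factor lacking $(T)$). You propose to rerun the resolution argument of Corollary \ref{C:lattices} directly on $\Gamma \actson (X,\nu)$, but that argument is powered by Proposition \ref{P:aivecs}, which requires $\Gamma \actson (X,\nu)$ to be weakly amenable and not essentially transitive; in the branch of Theorem \ref{T:83prime} you are then in (``some $G_{j}$ is totally disconnected''), no weak amenability of the action is known, so there is no source of almost invariant vectors and the resolution machinery cannot start. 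The paper's proof circumvents this by going inside the proof of \cite{CP12} Corollary 5.2: that argument produces an irreducible lattice $\Gamma_{0}$ in the product of the \emph{connected} factors acting \emph{weakly amenably} on an auxiliary space $(X_{0},\nu_{0})$, and it is to this auxiliary action that the two-factor resolution argument of Corollary \ref{C:lattices} is applied, showing that a single connected $(T)$ factor already forces finite or finite-index stabilizers for $\Gamma_{0}$; feeding this back into the \cite{CP12} argument then gives essential transitivity of $\Gamma \actson (X,\nu)$. Your proposal has no substitute for this auxiliary weakly amenable action.

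A second, related gap --- which you flag but do not close --- arises even in the weakly amenable branch once $k \geq 3$: the quotient $(Y,\eta)$ built from the resolution $\Gamma \to \prod_{i \neq b} G_{i}$ is a $\prod_{i \neq b} G_{i}$-space, and to apply Theorem \ref{T:countableessfree} through the chosen connected non-$(T)$ factor you need that single factor to act nontrivially and ergodically (and measure-preservingly) on $(Y,\eta)$ or on an ergodic component of it. Nothing in the construction guarantees this: the almost invariant vectors supplied by the resolution may be carried entirely by the other factors, in which case the chosen factor acts trivially on $Y$ and you learn nothing about the projection of $\stab_{\Gamma}(x)$ to it. In the two-factor case (Corollary \ref{C:lattices}) this issue does not arise because $\prod_{i \neq b} G_{i}$ is itself the single simple factor; for $k \geq 3$ it is exactly the point requiring an argument, and an appeal to a ``careful analysis of the projected stabilizer random subgroups'' does not supply one.
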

\begin{proof}
By Theorem \ref{T:83prime}, if neither of the first two possibilities occur then either $\Gamma \actson (X,\nu)$ is weakly amenable or at least one $G_{j}$ is totally disconnected.  Consider first when $\Gamma \actson (X,\nu)$ is weakly amenable.  Suppose that $\Gamma \actson (X,\nu)$ is not essentially transitive (that is, $\stab_{*}\nu$ is not supported on the finite index subgroups of $\Gamma$).  Then, as in the proof of Corollary \ref{C:lattices}, there exists a totally disconnected $G_{j}$ and a $\Gamma$-quotient of $(X,\nu)$ that is a $G_{j}$-space on which $\Gamma$ acts weakly amenably.  So we are left with the case when there is a totally disconnected $G_{j}$.

Then, by \cite{CP12} (Theorem \ref{T:CP12}), the only case left is when there is a connected simple factor that does not have property $(T)$.
Moreover, in \cite{CP12} Corollary 5.2, the requirement that all the connected factors have property $(T)$ is used in the following way: one gets an irreducible lattice $\Gamma_{0}$ in the product of the connected factors acting on a space $(X_{0},\nu_{0})$ weakly amenably and then uses property $(T)$ to conclude the action has either finite orbits or finite stabilizers.  Applying our work to $\Gamma_{0}$ in the product of the connected factors, the proof of Corollary \ref{C:lattices} then gives that one connected factor having property $(T)$ is enough to conclude $\Gamma_{0}$ acts with either finite stabilizers or finite index stabilizers.  Then the proof of \cite{CP12} Corollary 5.2 goes through when only one connected factor has property $(T)$ and so $\Gamma \actson (X,\nu)$ weakly amenably necessarily implies the action is essentially transitive.
\end{proof}

\subsection{Actions of Semisimple Groups and Lattices}

We conclude with  a strengthening of the results on actions of semisimple real Lie groups and irreducible lattices in them due to Nevo-Stuck-Zimmer \cite{SZ94},\cite{nevozimmer}.  We remark that our methods give a more general statement than theirs, except in the case of a lattice in a simple higher-rank Lie group, in which case the only known proof is the algebraic proof they give (as opposed to the more geometric methods we employ):

\begin{corollary}\label{C:nzbetterambient}
Let $G$ be a semisimple group with trivial center and no compact factors, at least one simple factor having property $(T)$.  Let $(X,\nu)$ be an ergodic $G$-space such that each simple factor of $G$ acts ergodically on $(X,\nu)$.  Then $G \actson (X,\nu)$ is essentially free or essentially transitive.
\end{corollary}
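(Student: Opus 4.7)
The plan is to derive this corollary by direct appeal to Corollary \ref{C:bsnew4}, once I reduce the semisimple $G$ to a literal direct product of its simple factors.

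First I would write $G$ as an honest direct product: since $G$ is semisimple, by definition it is an almost direct product $G \cong (G_{1} \times \cdots \times G_{k})/Z$ of simple real and $p$-adic Lie groups $G_{j}$, where $Z$ is the central kernel of the canonical surjection. The hypothesis that $G$ has trivial center forces $Z$ to be trivial, so in fact $G = G_{1} \times \cdots \times G_{k}$ on the nose. Each $G_{j}$ is nondiscrete, noncompact (since $G$ has no compact factors), locally compact second countable, and has the Howe-Moore property (being a simple real or $p$-adic Lie group). By hypothesis, at least one factor has property $(T)$, and by hypothesis $(X,\nu)$ is ergodic for each $G_{j}$.

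In the case $k \geq 2$, I invoke Corollary \ref{C:bsnew4} verbatim: its hypotheses are precisely that $G$ be a product of at least two simple noncompact locally compact second countable groups with at least one having property $(T)$, and that $(X,\nu)$ be a measure-preserving $G$-space that is ergodic for each simple factor. All hypotheses match our setup, and the conclusion is exactly that $G \actson (X,\nu)$ is essentially free or essentially transitive.

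In the residual case $k = 1$, $G$ is a single simple group with property $(T)$ and the Howe-Moore property, and the conclusion is the original Stuck-Zimmer theorem for a single simple group with property $(T)$ \cite{SZ94} (together with its extensions to the simple $p$-adic and rank-one $(T)$ settings); no new argument is needed. The main obstacle is essentially absent at this stage: the substantive content has all been packaged into Corollary \ref{C:bsnew4}, which itself rests on the product random subgroups functor, the intermediate contractive factor theorem, and the resolutions-based argument for bypassing full property $(T)$. The present corollary is really just the clean packaging of those ingredients, exploiting that trivial center upgrades an almost direct product to a genuine direct product and that simple real and $p$-adic Lie groups automatically have Howe-Moore.
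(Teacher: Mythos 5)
Your proposal is correct and follows essentially the same route as the paper: the paper likewise splits into the single-factor case (handled by citing Nevo--Stuck--Zimmer) and the case of at least two simple factors (handled by Corollary \ref{C:bsnew4}), with your remarks on trivial center forcing a genuine direct product and on Howe--Moore being harmless elaborations (Howe--Moore is not even needed to invoke Corollary \ref{C:bsnew4}).
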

\begin{proof}
The case when $G$ is simple is covered by Nevo-Stuck-Zimmer \cite{SZ94},\cite{nevozimmer}.  The case when $G$ has at least two simple factors follows from Corollary \ref{C:bsnew4}.
\end{proof}

\begin{corollary}\label{C:latticesfinalstate}
Let $G$ be a semisimple group with trivial center and no compact factors with at least one simple factor being a connected (real) Lie group with property $(T)$.  Let $\Gamma < G$ be an irreducible lattice and $(X,\nu)$ be a nonatomic ergodic measure-preserving $\Gamma$-space.  Then $\Gamma \actson (X,\nu)$ is essentially free.
\end{corollary}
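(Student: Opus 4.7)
The plan is to reduce to Corollary \ref{C:85prime} and then eliminate each non-free alternative using the special structure of a semisimple group with trivial center. Since $Z(G) = \{e\}$ and $G$ has no compact factors, the almost direct product decomposition collapses into an honest direct product $G = G_{1} \times \cdots \times G_{k}$ of simple nondiscrete noncompact Lie groups (real or $p$-adic), each having the Howe--Moore property. When $k = 1$, $G$ itself is a connected simple real Lie group with property $(T)$ and the conclusion is classical (Stuck--Zimmer \cite{SZ94}, together with Nevo's work in the remaining rank-one property $(T)$ cases), so I will concentrate on the case $k \geq 2$, where Corollary \ref{C:85prime} applies directly.

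Two of the four alternatives collapse immediately. The ``totally disconnected factor together with a connected factor lacking property $(T)$'' alternative is excluded by the assumption that some connected simple factor has property $(T)$. The ``stabilizers supported on finite-index subgroups'' alternative would force every orbit $\Gamma x \cong \Gamma / \stab_{\Gamma}(x)$ to be finite, whence ergodicity would make $(X,\nu)$ a single finite orbit, contradicting nonatomicity.

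The real content and main obstacle is the torsion alternative, in which $\stab_{\Gamma}(x)$ is a torsion subgroup of $\Gamma$ almost surely. Here I would first invoke the classical virtual torsion-freeness of lattices in semisimple Lie groups with trivial center (Selberg's lemma and its $S$-arithmetic analogues), so that every torsion subgroup of $\Gamma$ is finite; in particular $\stab_{\Gamma}(x)$ is finite almost everywhere. Fix a connected simple real factor $G_{0}$ with property $(T)$ and write $G = G_{0} \times H$. Define $s_{0} : X \to S(G_{0})$ by $s_{0}(x) = \mathrm{proj}_{G_{0}}\stab_{\Gamma}(x)$; because $\stab_{\Gamma}(x)$ is finite no closure is needed and $s_{0}(x)$ lies inside the countable group $\mathrm{proj}_{G_{0}}\Gamma$. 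By Theorem \ref{T:81} the pushforward $(s_{0})_{*}\nu$ is an invariant random subgroup of $G_{0}$ supported on finite subgroups of $\mathrm{proj}_{G_{0}}\Gamma$. For any ergodic component $\rho$ of $(s_{0})_{*}\nu$ the $\rho$-nonfree action of $G_{0}$ is a nontrivial ergodic measure-preserving $G_{0}$-space all of whose stabilizers lie inside $\mathrm{proj}_{G_{0}}\Gamma$, so Zimmer's Theorem \ref{T:countableessfree} (applied to this countable subgroup of $G_{0}$) forces those stabilizers to be trivial almost surely, giving $\rho = \delta_{\{e\}}$ and hence $(s_{0})_{*}\nu = \delta_{\{e\}}$. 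Consequently $\stab_{\Gamma}(x) \subseteq \{e\} \times H$ almost everywhere, and Proposition \ref{P:irrlattint} (using $Z(H) = \{e\}$) collapses this to $\stab_{\Gamma}(x) = \{e\}$, completing the proof. The crux is the clean containment of the $\rho$-nonfree stabilizers inside the countable group $\mathrm{proj}_{G_{0}}\Gamma$, which is enabled precisely by the finiteness forced by virtual torsion-freeness; without this finiteness one would be dealing with topological closures of projections and Zimmer's theorem would not directly apply.
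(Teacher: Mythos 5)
Your proof is correct, and for most of the argument it follows the paper's own skeleton: the one-factor case is delegated to Nevo--Stuck--Zimmer, the case $k\ge 2$ is reduced to Corollary \ref{C:85prime}, the finite-index-stabilizer alternative is killed by ergodicity plus nonatomicity, and the mixed totally-disconnected alternative is excluded by the hypothesis that some connected factor has property $(T)$ --- all exactly as in the paper. Where you genuinely diverge is the torsion alternative. The paper disposes of it in one line, asserting that a torsion $\gamma\in\Gamma$ projects to a torsion element of a connected simple factor and that such a factor is torsion-free; you instead invoke virtual torsion-freeness of $\Gamma$ (Selberg's lemma, legitimate here since such lattices are finitely generated and linear in characteristic zero) to force the stabilizers to be finite, so that $s_{0}(x)=\mathrm{proj}_{G_{0}}\stab_{\Gamma}(x)$ needs no closure and lies in the countable group $\mathrm{proj}_{G_{0}}\Gamma$, and then run Theorem \ref{T:81}, the ergodic $\rho$-nonfree action, and Zimmer's Theorem \ref{T:countableessfree} to get $(s_{0})_{*}\nu=\delta_{\{e\}}$, finishing with Proposition \ref{P:irrlattint} and $Z(H)=\{e\}$. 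Your route is longer and imports arithmetic input (Selberg, finite generation) that the paper never uses, but it buys robustness: connected centerfree simple real Lie groups such as $\mathrm{PSL}_{2}(\mathbb{R})$ do contain torsion elements, so the paper's one-line dismissal is delicate, whereas your argument eliminates torsion stabilizers using the paper's own IRS machinery (it is in effect the same mechanism the paper deploys inside Theorem \ref{T:stuff} and Corollary \ref{C:lattices}). The only step worth making explicit is why the ergodic $\rho$-nonfree action of $G_{0}$ is a nontrivial $G_{0}$-space before applying Theorem \ref{T:countableessfree}: if the action were trivial the stabilizer would be all of $G_{0}$, contradicting the fact that $\rho$ is supported on finite subgroups; with that observation spelled out, your proof is complete.
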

\begin{proof}
When $G$ has a single simple factor, by hypothesis then $G$ is a simple real Lie group with property $(T)$ hence the results of Nevo-Stuck-Zimmer \cite{SZ94},\cite{nevozimmer} give the conclusion.  When $G$ has at least two simple factors, Corollary \ref{C:85prime} states that if the action is not essentially free then either $\stab_{*}\nu$ is supported on the torsion elements of $\Gamma$ or $\stab_{*}\nu$ is supported on the finite index subgroups of $\Gamma$ (the final possibility in that Corollary is ruled out by our hypothesis that there is a simple connected factor with property $(T)$).
If $\stab_{*}\nu$ is supported on the finite index subgroups of $\Gamma$ then by ergodicity, $(X,\nu)$ is finite and atomic and we have assumed $(X,\nu)$ is nonatomic.  Suppose $\gamma \in \Gamma$ is torsion, so $\gamma^{m} = e$ for some $m \in \mathbb{N}$.  Then $\mathrm{proj}_{G_{j}}~\gamma$ is torsion in $G_{j}$ but $G_{j}$ is simple and connected hence torsion-free.  Therefore the action is essentially free.
\end{proof}

\dbibliography{references}

\end{document}